\newcommand{\al}{\alpha}
\newcommand{\area}{\operatorname{area}}
\newcommand{\Aut}{\operatorname{Aut}}
\newcommand{\C}{{\mathbb C}}
\newcommand{\Coker}{\operatorname{Coker}}
\newcommand{\const}{\operatorname{const.}}
\newcommand{\D}{\partial}
\newcommand{\diam}{\operatorname{diam}}
\newcommand{\Diff}{\operatorname{Diff}}
\newcommand{\dist}{\operatorname{dist}}
\newcommand{\dvol}{\operatorname{dvol}}
\newcommand{\GL}{\operatorname{GL}}
\newcommand{\HH}{\operatorname{H}}
\newcommand{\hU}{\widehat{U}}
\newcommand{\Image}{\operatorname{Im}}
\newcommand{\Int}{\operatorname{int}}
\newcommand{\Isom}{\operatorname{Isom}}
\newcommand{\Ker}{\operatorname{Ker}}
\newcommand{\Nil}{\operatorname{Nil}}
\newcommand{\Or}{{\mathcal O}}
\newcommand{\PSL}{\operatorname{PSL}}
\newcommand{\R}{{\mathbb R}}
\newcommand{\Ric}{\operatorname{Ric}}
\newcommand{\Rm}{\operatorname{Rm}}
\newcommand{\SL}{\operatorname{SL}}
\newcommand{\SO}{\operatorname{SO}}
\newcommand{\Sol}{\operatorname{Sol}}
\newcommand{\supp}{\operatorname{supp}}
\newcommand{\vol}{\operatorname{vol}}
\newcommand{\Z}{{\mathbb Z}}
\newcommand{\cangle}{\widetilde{\angle}}
\newcommand{\eps}{\epsilon}
\newcommand{\ga}{\gamma}
\newcommand{\ra}{\rightarrow}
\numberwithin{equation}{section}
\theoremstyle{plain}
\newtheorem{assumption}[equation]{Assumption}
\newtheorem{lemma}[equation]{Lemma}
\newtheorem{theorem}[equation]{Theorem}
\newtheorem{proposition}[equation]{Proposition}
\newtheorem{corollary}[equation]{Corollary}
\theoremstyle{definition}
\newtheorem{definition}[equation]{Definition}
\theoremstyle{remark}
\newtheorem{remark}[equation]{Remark}
\begin{document}

\begin{abstract}
A three-dimensional closed orientable orbifold (with no bad
suborbifolds) is known to have a geometric decomposition from
work of Perelman \cite{Perelman,Perelman2} 
in the manifold case, along with earlier work of
Boileau-Leeb-Porti \cite{BLP}, Boileau-Maillot-Porti \cite{BMP},
Boileau-Porti \cite{BP},
Cooper-Hodgson-Kerckhoff \cite{CHK} and Thurston \cite{Thurston2}.
We give a new, logically independent, unified
proof of the geometrization of orbifolds, using Ricci flow.
Along the way we develop
some tools for the geometry of orbifolds that may be of independent interest.
\end{abstract}

\title{Geometrization of Three-Dimensional Orbifolds via Ricci Flow}
\author{Bruce Kleiner}
\address{Courant Institute of Mathematical Sciences\\
251 Mercer St. \\ 
New York, NY  10012}
\email{bkleiner@cims.nyu.edu}

\author{John Lott}
\address{Department of Mathematics\\
University of California at Berkeley\\
Berkeley, CA 94720}
\email{lott@math.berkeley.edu}
\thanks{Research supported by NSF grants DMS-0903076 and DMS-1007508}
\date{May 28, 2014}
\maketitle

\tableofcontents

\section{Introduction} \label{sect1}

\subsection{Orbifolds and geometrization}
Thurston's geometrization conjecture for $3$-manifolds states that 
every closed 
 orientable 
$3$-manifold has a canonical decomposition
into geometric pieces.  In the early 1980's Thurston announced a proof
of the conjecture for Haken manifolds \cite{Thurston3}, with written 
proofs appearing much later \cite{Kapovich,McMullen,Otal1,Otal2}.  
The conjecture was settled completely a few years ago by Perelman in his
spectacular work using Hamilton's Ricci flow \cite{Perelman,Perelman2}. 

Thurston also formulated a geometrization conjecture
for orbifolds.    We recall that orbifolds are 
similar to manifolds, except that they are locally modelled
on quotients of the form $\R^n/G$, where $G\subset O(n)$ is
a finite subgroup of the orthogonal group.  
Although the terminology is relatively recent, orbifolds have a long
history in mathematics, going back to the classification of 
crystallographic groups and Fuchsian groups.   In this paper,
using Ricci flow,
we will give a new proof of the 
geometrization conjecture for orbifolds:

\begin{theorem} \label{theorem1.1}
Let $\Or$ be a closed connected orientable three-dimensional orbifold
which does not contain any bad embedded  $2$-dimensional suborbifolds.
Then $\Or$ has a geometric decomposition.
\end{theorem}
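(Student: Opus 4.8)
The plan is to adapt the Ricci-flow-with-surgery strategy of Perelman from the manifold setting to the orbifold setting. First I would reduce to the case of an orbifold $\Or$ whose underlying topological space is not already covered by the known building blocks: if $\Or$ is spherical, Euclidean, or contains essential spherical or bad $2$-suborbifolds one disposes of these cases by hand (the bad-suborbifold hypothesis is exactly what rules out the genuinely non-geometrizable local pictures, e.g.\ teardrops and footballs). After this reduction, I would put a Riemannian orbifold metric on $\Or$ and run the Ricci flow. The essential point is that Ricci flow is a local geometric evolution, so it makes sense on the smooth strata and, by averaging/equivariance, across the singular locus; one should check that all the basic analytic inputs — short-time existence, the maximum principle, Hamilton--Ivey pinching, the no-local-collapsing theorem via Perelman's $\L$-length and reduced volume, and the canonical neighborhood theorem — carry over to orbifolds. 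This is where I would expect to spend most of the work, since each of these requires redoing Perelman's estimates in charts of the form $\R^3/G$ and verifying nothing degenerates at the orbifold points; the monotonicity formulae and the compactness theorem for $\kappa$-solutions need an orbifold version of Cheeger--Gromov convergence, which the paper presumably develops in its "tools for the geometry of orbifolds."

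Next I would develop Ricci flow with surgery for orbifolds. Singularities form in finite time; near a singularity the canonical neighborhood theorem says the high-curvature region is modelled on a $\kappa$-solution, which in the orbifold case means either a shrinking quotient of $S^3$ or $S^2\times\R$ (more precisely a quotient thereof by a finite group acting on the $S^2$-factor, possibly with fixed points giving orbifold necks) or an evolving $\eps$-horn. Surgery is performed along the necks/horns exactly as in the manifold case, but one must allow orbifold necks $(S^2/G)\times(-1,1)$ and cap them off with the corresponding quotient of the standard cap; I would need an orbifold version of the standard solution and its uniqueness. The surgery parameters $(r(t),\de(t),\kappa(t))$ are chosen by the usual inductive argument, with all the a priori assumptions (canonical neighborhood, pinching, non-collapsing) propagated through surgery times; the finiteness of surgeries on any finite time interval follows as before from a volume-drop estimate (using the orbifold volume, suitably normalized so that the singular locus contributes correctly).

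Finally I would extract the geometric decomposition from the long-time behavior, following the thick-thin analysis. The thick part, after rescaling, converges to finite-volume hyperbolic orbifolds, and one proves incompressibility of the cusp cross-sections (Euclidean $2$-orbifolds) using a minimal-surface / harmonic-map argument adapted to orbifolds together with the topological rigidity of the situation — this step, and the companion statement that the complementary thin part is a graph orbifold (collapsed with bounded curvature, hence locally a Seifert-fibred or $\Sol$ piece over a $1$- or $2$-orbisegment), is the other main technical hurdle, requiring orbifold analogues of the collapsing theory (Cheeger--Fukaya--Gromov / Morgan--Tian / Kleiner--Lott) and of Seifert fibre space recognition. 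Combining: cutting $\Or$ along the $2$-spheres/bad-free spherical $2$-suborbifolds produced by surgery, then along the incompressible Euclidean $2$-suborbifolds separating hyperbolic from graph pieces, and invoking the known geometrization of Seifert-fibred and $\Sol$ orbifolds, yields the canonical geometric decomposition of $\Or$. I expect the orbifold collapsing theory in the presence of a possibly nonempty singular locus to be the single hardest ingredient, since one must control how the $1$-dimensional singular graph sits inside the collapsing fibred structure.
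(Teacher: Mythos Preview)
Your overall architecture matches the paper's: develop orbifold Riemannian geometry (compactness, soul theorem, critical point theory), run Ricci flow with surgery using canonical neighborhoods and $\kappa$-solutions, then do thick-thin analysis with hyperbolic pieces on the thick side and a collapsing theorem on the thin side. Two points deserve correction. First, the role of the no-bad-suborbifold hypothesis is not a preliminary reduction but a surgery issue: if a neckpinch has cross-section a bad $2$-orbifold $\Sigma$, there is no way to cap it off by a $3$-orbifold while preserving Hamilton--Ivey pinching (see Remark~\ref{remark7.30}), so the hypothesis is invoked exactly when one sets up the surgery procedure, not before.

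Second, and more seriously, your proposed incompressibility argument via minimal surfaces is precisely what the paper avoids, and for a concrete reason. Hamilton's manifold argument relies on Meeks--Yau to produce embedded minimal disks, and Meeks--Yau in turn uses a tower construction analogous to the one in Dehn's Lemma. It is not clear that this tower argument extends to orbifolds (Dehn's Lemma for orbifolds is itself delicate), so proposing ``a minimal-surface / harmonic-map argument adapted to orbifolds'' papers over a real obstacle. The paper instead uses Perelman's monotonic-quantity approach: one shows $\sigma(\Or)=-\tfrac{3}{2}\widehat{V}^{2/3}$ where $\widehat{V}$ is the minimal hyperbolic volume among all decompositions, and then argues that if a cusp cross-section were compressible one could build a metric with $R\ge -\tfrac{3}{2}$ and volume strictly below the hyperbolic volume, contradicting the formula. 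This requires auxiliary constructions (volume-collapsing metrics on strong graph orbifolds, Lemma~\ref{lem-collapsingstronggraphmanifolds}, and an explicit Dehn-filling-type metric on solid-toric orbifolds) but bypasses minimal surfaces entirely. You should replace the minimal-surface step with this $\sigma$-invariant argument.
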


The existing proof of Theorem \ref{theorem1.1} is based on
a canonical splitting
of  $\Or$ along spherical and Euclidean
$2$-dimensional suborbifolds, which is analogous to the prime
and JSJ decomposition of 
$3$-manifolds.  This splitting reduces Theorem \ref{theorem1.1} to two separate 
cases --  when 
$\Or$ is a manifold, and when  
$\Or$ has a nonempty singular locus and satisfies an irreducibility condition.  
The first case is Perelman's theorem for manifolds.   Thurston
announced a proof of the latter case in \cite{Thurston2} and gave an outline.
A detailed  proof of the latter case
was given by Boileau-Leeb-Porti \cite{BLP},
after work of Boileau-Maillot-Porti \cite{BMP}, Boileau-Porti
\cite{BP}, Cooper-Hodgson-Kerckhoff \cite{CHK} and
Thurston \cite{Thurston2}.
The monographs \cite{BMP,CHK} give excellent expositions of
$3$-orbifolds and their geometrization.

\subsection{Discussion of the proof}
The main purpose of this paper is to provide a new proof of Theorem \ref{theorem1.1}.
Our proof is an extension of   Perelman's proof of geometrization for 
$3$-manifolds to orbifolds, 
bypassing \cite{BLP,BMP,BP,CHK,Thurston2}.
The motivation for this alternate approach
is twofold.    First,  anyone interested in the geometrization of general 
orbifolds as in Theorem \ref{theorem1.1} will necessarily 
have to go through Perelman's Ricci flow proof in the manifold case, and 
also absorb foundational results about orbifolds.   At that point, the 
additional effort required to deal with general orbifolds is relatively 
minor 
in comparison to the proof
in \cite{BLP}. This latter proof involves a number of 
ingredients, including Thurston's 
geometrization of Haken manifolds, 
the deformation and collapsing theory of hyperbolic cone manifolds, and some Alexandrov
space theory.   Also, in contrast to the existing proof of Theorem \ref{theorem1.1},  the Ricci flow
argument gives a unified approach to  geometrization for both manifolds and  orbifolds.

Many of the steps in Perelman's proof have evident
orbifold generalizations, whereas some do not. 
It would be
unwieldy to rewrite all the details of Perelman's proof, on the level of
\cite{Kleiner-Lott}, while making notational changes from manifolds
to orbifolds.  Consequently, we focus on the steps in Perelman's proof 
where an orbifold extension is not immediate. For a step where the
orbifold extension is routine, we make the precise orbifold statement 
and indicate where the analogous manifold proof occurs in \cite{Kleiner-Lott}.

In the course of proving  Theorem \ref{theorem1.1}, we needed to develop a 
number of foundational results about the geometry of orbifolds.  Some of these
may be of independent interest, or of use for subsequent work in this area,
such as
the compactness theorem for Riemannian orbifolds, critical point theory,
and the soul theorem.

Let us mention one of the steps where the orbifold
extension could
{\it a priori} be an issue. 
This is where one characterizes the topology of the thin part of
the large-time orbifold.  
To do this, one
first needs a sufficiently flexible proof in the manifold case.
We provided such a proof in \cite{Kleiner-Lott2}.
The proof in \cite{Kleiner-Lott2} uses some basic techniques from 
Alexandrov geometry, combined with smoothness results in appropriate places.
It provides a decomposition of the thin part into various pieces
which together give an explicit realization of the thin part as
a graph manifold.  When combined with 
preliminary results that are proved in this
paper, we can extend the techniques of \cite{Kleiner-Lott2} to orbifolds.
We get a decomposition of the thin part of the large-time orbifold
into various pieces, similar to those in \cite{Kleiner-Lott2}. 
We show that these pieces give an explicit realization of each
component of the thin part as
either a graph orbifold or one of a few exceptional cases.
This is more involved to prove in the orbifold case than in the
manifold case but the basic strategy is the same.

\subsection{Organization of the paper}

The structure of this paper is as follows.  One of our tasks is to
provide a framework for the topology and Riemannian
geometry of orbifolds, so that results about Ricci flow
on manifolds extend as easily as possible to orbifolds. In Section
\ref{sect2} we recall the relevant notions that we need from
orbifold topology.  We then introduce
Riemannian orbifolds and prove the orbifold
versions of some basic results from Riemannian geometry, such as
the de Rham decomposition and critical point theory.

Section \ref{sect3} is concerned with noncompact nonnegatively curved 
orbifolds.  
We prove the orbifold version of 
the Cheeger-Gromoll soul theorem.
We list the diffeomorphism
types of noncompact nonnegatively curved 
orbifolds with dimension at most three.

In Section \ref{sect4} we prove a compactness theorem for Riemannian
orbifolds. Section \ref{sect5} contains some preliminary information about
Ricci flow on orbifolds, along with the classification of 
the diffeomorphism types of
compact nonnegatively curved three-dimensional orbifolds. 
We also show how to extend
Perelman's no local collapsing theorem to orbifolds. 

Section \ref{sect6} is devoted to $\kappa$-solutions. 
Starting in Section \ref{sect7}, we specialize to three-dimensional
orientable orbifolds with no bad $2$-dimensional suborbifolds.
We show how to extend Perelman's results in order to construct
a Ricci flow with surgery.

In Section \ref{sect8} we show that the thick part of the
large-time geometry approaches a finite-volume orbifold of constant negative
curvature.  Section \ref{sect9} contains the topological
characterization of the thin
part of the large-time geometry.

Section \ref{sect10} concerns the incompressibility of hyperbolic
cross-sections. Rather than using minimal disk techniques as
initiated by Hamilton \cite{Hamilton (1999)}, we
follow an approach introduced by Perelman \cite[Section 8]{Perelman2}
that uses a monotonic quantity, as modified in 
\cite[Section 93.4]{Kleiner-Lott}. 

The appendix contains topological facts about graph orbifolds.
We show that a ``weak'' graph orbifold 
is the result of performing $0$-surgeries (i.e. connected sums)
on a ``strong'' graph orbifold.
This material is probably known to some experts but we were unable
to find references in the literature, so we include complete proofs.

After writing this paper we learned that Daniel Faessler
independently proved Proposition 9.7, which is the orbifold version of the
collapsing theorem \cite{Faessler}.

\subsection{Acknowledgements}
We thank Misha Kapovich and Sylvain Maillot for orbidiscussions.
We thank the referee for a careful reading of the paper and for corrections.

\section{Orbifold topology and geometry} \label{sect2}

In this section we first review the differential topology of orbifolds.
Subsections \ref{subsect2.1} 
and \ref{subsect2.2} contain information about orbifolds in
any dimension.
In some cases we give precise definitions and in other
cases we just recall salient properties, referring to 
the monographs \cite{BMP,CHK} for more detailed information. 
Subsections \ref{subsect2.3} and \ref{subsect2.4} 
are concerned with low-dimensional orbifolds. 

We then give a short exposition of aspects of the 
differential geometry of orbifolds, in Subsection \ref{subsect2.5}. 
It is hard to find a comprehensive reference for this material and
so we flag the relevant notions; see
\cite{Borzellino1} for further discussion of some points.
Subsection \ref{subsect2.6} shows how to
do critical point theory on orbifolds. Subsection 
\ref{subsect2.7} discusses
the smoothing of functions on orbifolds.

For notation, $B^n$ is the open unit $n$-ball, $D^n$ is the closed 
unit $n$-ball and $I = [-1,1]$. We let $D_k$ denote the dihedral
group of order $2k$.

\subsection{Differential topology of orbifolds} \label{subsect2.1}

An {\em orbivector space} is a triple $(V, G, \rho)$, where
\begin{itemize}
\item $V$ is a vector space, 
\item $G$ is a finite group and
\item $\rho : G \rightarrow \Aut(V)$ is a 
faithful 
linear representation.
\end{itemize}
A (closed/ open/ convex/...) {\em subset} of $(V, G, \rho)$ is a
$G$-invariant subset of $V$ which is (closed/ open/ convex/...)
A {\em linear map} from $(V, G, \rho)$ to $(V^\prime, G^\prime, \rho^\prime)$
consists of a linear map $T : V \rightarrow V^\prime$ and a homomorphism
$h : G \rightarrow G^\prime$ so that for all $g \in G$,
$\rho^\prime(h(g)) \circ T = T \circ \rho(g)$.
The linear map is {\em injective} (resp. {\em surjective}) if
$T$ is {\em injective} (resp. {\em surjective}) and
$h$ is {\em injective} (resp. {\em surjective}).
An {\em action} of a group $K$ on $(V,G,\rho)$ is given by a
short exact sequence $1 \rightarrow G \rightarrow L \rightarrow K
\rightarrow 1$ and a homomorphism $L \rightarrow \Aut(V)$ that
extends $\rho$.

A {\em local model} is a pair $(\hU, G)$, where $\hU$ is a connected
open subset of a Euclidean space
and $G$ is a finite group that acts smoothly and effectively on $\hU$, 
on the right. 
(Effectiveness means that the homomorphism $G \rightarrow \Diff(\hU)$ is
injective.) We will sometimes write $U$ for $\hU/G$,
endowed with the quotient topology.

A {\em smooth map} between local models $(\hU_1, G_1)$ and $(\hU_2, G_2)$
is given by a smooth map $\widehat{f} \: : \: \hU_1 \rightarrow \hU_2$ and a
homomorphism $\rho \: : \: G_1 \rightarrow G_2$ so that 
$\widehat{f}$ is $\rho$-equivariant, i.e. 
$\widehat{f}(x g_1) \: = \: \widehat{f}(x) \rho(g_1)$. We do not
assume that $\rho$ is injective or surjective. The map between local
models is an {\em embedding} if $\widehat{f}$ is an embedding; 
it follows from effectiveness that
$\rho$ is injective 
in this case.

\begin{definition} \label{defn2.1}
An {\em atlas} for an $n$-dimensional
orbifold $\Or$ consists of \\
1. A Hausdorff paracompact
topological space $|\Or|$, \\
2. An open covering
$\{U_\alpha\}$ of $|\Or|$, \\ 
3. Local models $\{(\hU_\alpha, G_\alpha)\}$ with
each $\hU_\alpha$ a connected open subset of $\R^n$ and\\
4. Homeomorphisms $\phi_\alpha \: : \: 
U_\alpha \rightarrow \hU_\alpha/G_\alpha$
so that \\
5. If $p \in U_1 \cap U_2$ then there is a local model $(\hU_3, G_3)$ with
$p \in U_3$ along with embeddings $(\hU_3, G_3) \rightarrow (\hU_1, G_1)$
and $(\hU_3, G_3) \rightarrow (\hU_2, G_2)$.
\end{definition}

An {\em orbifold} $\Or$
is an equivalence class of such atlases, where two atlases are
equivalent if they are both included in a third atlas.
With a given atlas, the orbifold $\Or$ is
{\em oriented} if each $\hU_\alpha$ is oriented, the action of
$G_\alpha$ is orientation-preserving, and the embeddings
$\hU_3 \rightarrow \hU_1$ and $\hU_3 \rightarrow \hU_2$ are
orientation-preserving. We say that $\Or$ is {\em connected} (resp.
{\em compact}) if
$|\Or|$ is connected (resp. compact).

An {\em orbifold-with-boundary} $\Or$ is defined similarly, with
$\hU_\alpha$ being a connected open subset of $[0, \infty) \times 
\R^{n-1}$. The {\em boundary} $\partial \Or$ is a boundaryless
$(n-1)$-dimensional orbifold, with $|\partial \Or|$ consisting of
points in $|\Or|$ whose local lifts lie in $\{0\} \times \R^{n-1}$. 
Note that it is possible that $\partial \Or \: = \: \emptyset$ while
$|\Or|$ is a topological manifold with a nonempty boundary.

\begin{remark} In this paper we only deal with {\em effective}
orbifolds, meaning that in a local model $(\widehat{U}, G)$, the
group $G$ always acts effectively.  It would be more natural
in some ways to remove this effectiveness assumption.  However,
doing so would hurt the readability of the paper, so we will stick to
effective orbifolds.
\end{remark}

Given a point $p \in |\Or|$ and a local model $(\hU, G)$ around $p$, let
$\widehat{p} \in \hU$ project to $p$. The {\em local group} $G_p$ is the
stabilizer group
$\{g \in G \: : \: \widehat{p}g \: = \: \widehat{p}$ \}. Its isomorphism
class is independent of the choices made. We can always find a local
model with $G = G_p$.

The {\em regular part} 
$|\Or|_{reg} \subset |\Or|$ consists of the points with $G_p \: = \: \{e\}$. It
is a smooth manifold that forms an open dense subset of $|\Or|$.

Given an open subset $X \subset |\Or|$, there is an induced orbifold
$\Or \Big|_X$ with $\left| \Or \Big|_X \right| = X$. In some cases
we will have a subset $X \subset |\Or|$, possibly not open, for which
$\Or \Big|_X$ is an orbifold-with-boundary.

The {\em ends} of $\Or$ are the ends of $|\Or|$.

A {\em smooth map} $f \: : \: \Or_1 \rightarrow \Or_2$ between orbifolds
is given by a
continuous map $|f| \: : \: |\Or_1| \rightarrow |\Or_2|$ with the 
property that for each $p \in |\Or_1|$, there are
\begin{itemize}
\item Local models
$(\hU_1, G_1)$ and $(\hU_2, G_2)$ for $p$ and $f(p)$, respectively, and
\item A smooth map $\widehat{f} \: : \:
(\hU_1, G_1) \rightarrow (\hU_2, G_2)$ between local models 
\end{itemize}
so that the
diagram
\begin{equation}
\begin{matrix}
\hU_1 & \stackrel{\widehat{f}}{\rightarrow} & \hU_2 \\
\downarrow & & \downarrow \\
U_1 & \stackrel{|f|}{\rightarrow} & U_2
\end{matrix}
\end{equation}
commutes.

There is an induced homomorphism from $G_p$ to $G_{f(p)}$.
We emphasize that to define a smooth map $f$ between two orbifolds,
one must first define a map $|f|$ between their underlyihg spaces.

We write $C^\infty(\Or)$ for the space of smooth maps $f : \Or \rightarrow \R$.

A smooth map $f : \Or_1 \rightarrow \Or_2$ is {\em proper} if
$|f| : |\Or_1| \rightarrow |\Or_2|$ is a proper map.

A {\em diffeomorphism} $f \: : \: \Or_1 \rightarrow \Or_2$ is a smooth map
with a smooth inverse. Then $G_p$ is isomorphic to $G_{f(p)}$.

If a discrete group $\Gamma$ acts properly discontinuously on a
manifold $M$ then there is a quotient orbifold, which we denote
by $M//\Gamma$. It has $|M//\Gamma| = M/\Gamma$.
Hence if $\Or$ is an orbifold and $(\hU, G)$ is a local model for $\Or$
then we can say that $\Or \Big|_{U}$ is diffeomorphic
to $\hU//G$. An orbifold $\Or$ is {\em good} if $\Or = M//\Gamma$ for
some manifold $M$ and some discrete group $\Gamma$. It is
{\em very good} if $\Gamma$ can be taken to be finite.
A {\em bad} orbifold is one that is not good.

Similarly, suppose that a discrete group $\Gamma$ acts 
by diffeomorphisms on an orbifold 
$\Or$. We say that it acts {\em properly
discontinuously} if the action of $\Gamma$ on $|\Or|$ is properly 
discontinuous. Then there is a quotient orbifold $\Or//\Gamma$, with
$|\Or//\Gamma| = |\Or|/\Gamma$; see Remark \ref{remark2.10}.

An {\em orbifiber bundle} 
consists of a smooth map
$\pi : \Or_1 \rightarrow \Or_2$ between two orbifolds, along with
a third orbifold $\Or_3$ such that 
\begin{itemize}
\item $|\pi|$ is surjective, and
\item For each $p \in |\Or_2|$, there is a local model
$(\hU, G_p)$ around $p$,
where $G_p$ is the local group at $p$,
along with an action of $G_p$ on $\Or_3$
and a diffeomorphism $(\Or_3 \times \hU)//G_p \rightarrow 
\Or_1 \Big|_{|\pi|^{-1}(U)}$
so that the diagram
\begin{equation}
\begin{matrix}
(\Or_3 \times \hU)//G_p &  \rightarrow & \Or_1 \\
 \downarrow & & \downarrow  \\
\widehat{U}//G_p &  \rightarrow  & \Or_2
\end{matrix}
\end{equation}
commutes.
\end{itemize}

(Note that if $\Or_2$ is a manifold then 
the orbifiber bundle $\pi:\Or_1\ra \Or_2$ 
has a local
product structure.)
The {\em fiber} of the orbifiber bundle is $\Or_3$. Note that for
$p_1 \in |\Or_1|$, the homomorphism
$G_{p_1} \rightarrow G_{|\pi|(p_1)}$ is surjective.
 
A {\em section} of 
an orbifiber bundle $\pi : \Or_1 \rightarrow \Or_2$
is a smooth map $s : \Or_2 \rightarrow \Or_1$
such that $\pi \circ s$ is the identity on $\Or_2$.

A {\em covering map} $\pi : \Or_1 \rightarrow \Or_2$ is a orbifiber 
bundle with a 
zero-dimensional fiber.
Given $p_2 \in |\Or_2|$ and $p_1 \in |\pi|^{-1}(p_2)$, 
there are a local model $(\hU, G_2)$ around $p_2$ and
a subgroup 
$G_1 \subset G_2$
so that $(\hU, G_1)$ is a local model around
$p_1$ and the map $\pi$ is locally $(\hU, G_1) \rightarrow (\hU, G_2)$. 

A rank-$m$ {\em orbivector bundle} ${\mathcal V} \rightarrow \Or$ over $\Or$ 
is locally isomorphic to $(V \times \hU)/G_p$, where 
$V$ is an $m$-dimensional orbivector space on which 
$G_p$ acts linearly.

The {\em tangent bundle} $T\Or$ of an orbifold ${\Or}$ is an
orbivector bundle which is locally diffeomorphic to 
$T\hU_\al//G_\al$. Given $p \in |\Or|$, if $\widehat{p} \in \hU$ covers $p$
then the {\em tangent space}
$T_p \Or$ is isomorphic to the orbivector space $(T_{\widehat{p}} \hU, G_p)$.
The {\em tangent cone} at $p$ is $C_p|\Or| \cong T_{\widehat{p}} \hU/G_p$.

A smooth {\em vector field} $V$ is a smooth section of $T\Or$. 
In terms of a local model
$(\hU, G)$, the vector field $V$ restricts to a vector field on $\hU$ 
which is $G$-invariant. 

A smooth map $f : \Or_1 \rightarrow \Or_2$ gives rise to the 
{\em differential}, an orbivector bundle map
$df : T\Or_1 \rightarrow T\Or_2$. At a point $p \in |\Or|$, in terms of
local models we have a map
$\widehat{f} : (\hU_1, G_1) \rightarrow (\hU_2, G_2)$ 
which gives rise to a $G_p$-equivariant map 
$d\widehat{f}_p : T_{\widehat{p}} \hU_1
\rightarrow T_{\widehat{f}(\widehat{p})} \hU_2$ and hence to a 
linear map $df_p : T_p\Or_1 \rightarrow T_{|f|(p)} \Or_2$.

Given a smooth map $f : \Or_1 \rightarrow \Or_2$ and a point $p \in |\Or_1|$,
we say that $f$ is a {\em submersion at $p$}
(resp. {\em immersion at $p$}) if
the map $df_p : T_p \Or_1 \rightarrow
T_{|f|(p)} \Or_2$ is surjective (resp. injective).

\begin{lemma} \label{lemma2.3}
If $f$ is a submersion at $p$ then there 
is an orbifold
$\Or_3$ on which $G_{|f|(p)}$ acts, 
along with a local model $(\hU_2, G_{|f|(p)})$ around
$|f|(p)$,  
so that
$f$ is equivalent near $p$ to the
projection map $(\Or_3 \times \hU_2)//G_{|f|(p)} \rightarrow 
\hU_2//G_{|f|(p)}$.
\end{lemma}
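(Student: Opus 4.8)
The plan is to put $f$ into an equivariant local normal form on the level of local models, and then to rewrite the resulting local quotient of $\Or_1$---which is \emph{a priori} a quotient by $G_p$---as a quotient by $G:=G_{f(p)}$.

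First I would choose local models with the full local groups at both points: a local model $(\hU_1,H)$ around $p$ with $H=G_p$, a local model $(\hU_2,G)$ around $q:=|f|(p)$, and a smooth map $\hf:(\hU_1,H)\to(\hU_2,G)$ between local models which represents $f$ near $p$, equivariant with respect to the induced homomorphism $\rho:H\to G$. (One arranges the full local groups by shrinking $\hU_1$ to a connected $H$-invariant ball around a lift $\hat p$ disjoint from its translates under the original group, likewise for $\hU_2$, and then shrinking $\hU_1$ further so that $\hf(\hU_1)\subset\hU_2$.) Unwinding the definition, the hypothesis that $f$ is a submersion at $p$ is exactly the statement that the orbivector space map $df_p=(d\hf_{\hat p},\rho)$ is surjective, i.e.\ that $d\hf_{\hat p}\colon T_{\hat p}\hU_1\to T_{\hat q}\hU_2$ is surjective \emph{and} that $\rho$ is surjective; both facts enter the argument.

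Next I would produce an equivariant normal form for $\hf$. Since $H$ fixes $\hat p$ and $G$ fixes $\hat q$, Bochner linearization of a finite group action near a fixed point (average a chart over the group, normalized so its differential at the fixed point is the identity) lets me assume that $\hU_1,\hU_2$ are invariant neighborhoods of the origin in the vector spaces $V_1=T_{\hat p}\hU_1$, $V_2=T_{\hat q}\hU_2$, with $H,G$ acting linearly, $\hf(0)=0$, and $d\hf_0=A$ the given surjective equivariant linear map. Let $W=\Ker A\subset V_1$, choose an $H$-invariant complement $W'$, and use the equivariant isomorphism $A|_{W'}\colon W'\to V_2$ to identify $V_2$ with $W'$. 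Then $\Phi(x):=(\pr_W(x),\hf(x))$ is $H$-equivariant with $d\Phi_0=\Id$, hence an equivariant diffeomorphism onto its image after shrinking; shrinking once more yields invariant balls $\hF\subset W$ and $\hU_2\subset W'$ about $0$ and an $H$-equivariant diffeomorphism $\Phi\colon\hU_1\xrightarrow{\sim}\hF\times\hU_2$ with $\pr_{\hU_2}\circ\Phi=\hf$, where $H$ acts on $\hF\times\hU_2$ through its action on $\hF$ and through $\rho$ on $\hU_2$.

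Finally I would pass from the $H$-quotient to the $G$-quotient. Since $\Or_1\big|_{U_1}\cong\hU_1//H\cong(\hF\times\hU_2)//H$, set $K=\ker\rho$, a normal subgroup of $H$ acting trivially on $\hU_2$ and, since $H$ acts effectively on $\hF\times\hU_2$, acting effectively on $\hF$. Using that quotients by a normal subgroup iterate correctly for orbifolds (cf.\ Remark~\ref{remark2.10}), $(\hF\times\hU_2)//H\cong\bigl((\hF\times\hU_2)//K\bigr)//(H/K)\cong\bigl((\hF//K)\times\hU_2\bigr)//(H/K)$, and $\rho$ induces an isomorphism $H/K\xrightarrow{\sim}G$. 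Taking $\Or_3:=\hF//K$ with the induced $G$-action, we get $\Or_1\big|_{U_1}\cong(\Or_3\times\hU_2)//G$; moreover $\hU_2//G$ is a local model for $\Or_2$ around $q$, and under these identifications $f$ becomes the projection $(\Or_3\times\hU_2)//G\to\hU_2//G$. The only non-formal ingredient is the equivariant normal form of the preceding paragraph (Bochner linearization plus the invariant-complement trick), which is standard; the subtlest bookkeeping point is the identity $\hU_1//H\cong\bigl((\hF//K)\times\hU_2\bigr)//(H/K)$, i.e.\ that iterated quotients by a normal subgroup behave correctly for orbifolds, which belongs to the foundational material on quotient orbifolds.
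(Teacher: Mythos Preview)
The paper states Lemma~\ref{lemma2.3} without proof, so there is no argument to compare against. Your proof is correct and is the natural one: the two nontrivial inputs are exactly the ones you isolate---the surjectivity of the homomorphism $\rho:G_p\to G_{|f|(p)}$ (built into the paper's definition of a surjective orbivector-space map, hence of a submersion) and the equivariant local normal form for a submersion at a fixed point, obtained from Bochner linearization together with an $H$-invariant complement to $\Ker(d\hf_{\hat p})$. The passage from the $H$-quotient to the $G$-quotient via $K=\ker\rho$ is the right bookkeeping; the effectiveness of the $K$-action on $\hF$ follows, as you note, from the effectiveness of $H$ on $\hU_1$, and the residual $H/K\cong G$-action on $\Or_3\times\hU_2$ is effective because $G$ already acts effectively on $\hU_2$.
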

\begin{proof}
Let $\rho : G_p \rightarrow G_{|f|(p)}$ be the surjective homomorphism
associated to $df_p$.
Let $\widehat{f} \: : \: (\widehat{U}_1, G_p) \rightarrow
(\widehat{U}_2, G_{|f|(p)})$ be a local model for $f$ near $p$; it is
necessarily $\rho$-equivariant.
Let $\widehat{p} \in \widehat{U}_1$ be a lift of $p \in U_1$. Put
$\widehat{W} = \widehat{f}^{-1}(\widehat{f}(\widehat{p}))$.
Since $\widehat{f}$ is a submersion at $\widehat{p}$, after reducing
$\widehat{U}_1$ and $\widehat{U}_2$ if necessary, there is a
$\rho$-equivariant diffeomorphism $\widehat{W} \times \widehat{U}_2 \rightarrow
\widehat{U}_1$ so that the diagram
\begin{equation}
\begin{matrix}
\widehat{W} \times \widehat{U}_2 & \rightarrow & \widehat{U}_1 \\
\downarrow & & \downarrow \\
\widehat{U}_2 & \rightarrow & \widehat{U}_2
\end{matrix}
\end{equation}
commutes and is $G_p$-equivariant. Now $\Ker(\rho)$ acts on $\widehat{W}$.
Put ${\mathcal O}_3 = \widehat{W}//\Ker(\rho)$.
Then there is a commuting diagram of orbifold maps
\begin{equation}
\begin{matrix}
{\mathcal O}_3 \times \widehat{U}_2 & 
\rightarrow & \widehat{U}_1//\Ker(\rho) \\
\downarrow & & \downarrow \\
\widehat{U}_2 & \rightarrow & \widehat{U}_2
\end{matrix}.
\end{equation}
Further quotienting by $G_{|f|(p)}$ gives a commutative diagram
\begin{equation}
\begin{matrix}
({\mathcal O}_3 \times \widehat{U}_2)//G_{|f|(p)} & 
\rightarrow &  \widehat{U}_1//G_p \\ \downarrow & & \downarrow \\
\widehat{U}_2//G_{|f|(p)} & \rightarrow & \widehat{U}_2//G_{|f|(p)}
\end{matrix}
\end{equation}
whose top horizontal line is an orbifold diffeomorphism.
\end{proof}

We say that $f : \Or_1 \rightarrow \Or_2$ is a 
{\em submersion} (resp. {\em immersion}) 
if it is a submersion (resp. immersion) at $p$ for all
$p \in |\Or_1|$.

\begin{lemma} \label{lemma2.4}
A proper surjective submersion $f : \Or_1 \rightarrow \Or_2$, with
$\Or_2$ connected,
defines an orbifiber bundle with compact fibers.
\end{lemma}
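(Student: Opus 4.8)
The plan is to establish the orbifold version of Ehresmann's fibration theorem. Two things are asserted: that the fibers are compact, and that $f$ is an orbifiber bundle. Compactness is immediate: for $p\in|\Or_2|$ the set $|f|^{-1}(p)$ is compact because $|f|$ is proper, and $\Or_1\Big|_{|f|^{-1}(p)}$ is an orbifold by the local product structure of Lemma~\ref{lemma2.3}. For the bundle structure, since $\Or_2$ is connected it suffices to prove \emph{local triviality}: for each $p\in|\Or_2|$ there should be a local model $(\hU,G_p)$ around $p$, an orbifold $\Or_3$ with a $G_p$-action, and a diffeomorphism $\Or_1\Big|_{|f|^{-1}(U)}\to(\Or_3\times\hU)//G_p$ covering $U=\hU/G_p$. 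Granting this, the fiber $\Or_3$ produced near $p$ can be taken to agree with the one produced near any nearby point $p'$ (restrict the trivialization to a local model around $p'$ contained in the given one, with its center a lift of $p'$), so the diffeomorphism type of $\Or_3$ is locally constant, hence constant on the connected space $|\Or_2|$; transporting the various $G_{p'}$-actions through the trivializations then exhibits a single $\Or_3$ fulfilling the definition of an orbifiber bundle.

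To prove local triviality, fix $p$, write $G=G_p$, and choose a local model $(\hU,G)$ around $p$ with $\hU\subseteq\R^k$ a $G$-invariant open ball about the origin, $G\subseteq O(k)$ acting linearly, and $0$ lying over $p$; put $U=\hU/G$ and $F=|f|^{-1}(p)$, which is compact. The idea is to replace the orbifold base $U$ by the manifold $\hU$. Applying Lemma~\ref{lemma2.3} at the points of $F$ and using compactness of $F$, one constructs (after shrinking $\hU$) the ``$G$-cover of $f$ over $U$'': a $G$-orbifold $\hZ$, a $G$-equivariant proper submersion $\widehat\pi:\hZ\to\hU$ \emph{onto the manifold} $\hU$, and a diffeomorphism $\hZ//G\cong\Or_1\Big|_{|f|^{-1}(U)}$ under which $\widehat\pi$ descends to $f$. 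Locally this is just the observation that Lemma~\ref{lemma2.3} presents $f$ as a projection $(\Or_3'\times\hV)//G\to\hV//G$ with $\hV\subseteq\hU$, so that the matching chart of $\hZ$ is $\Or_3'\times\hV$; the content is to check that these charts patch $G$-equivariantly to an orbifold satisfying Definition~\ref{defn2.1}. Set $\Or_3:=\widehat\pi^{-1}(0)$, a compact orbifold (preimage of a point under a proper submersion) carrying a $G$-action because $0$ is $G$-fixed. It remains to $G$-equivariantly trivialize $\widehat\pi$ near $0$.

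This last step is equivariant Ehresmann over a manifold base. Give $\hZ$ a $G$-invariant Riemannian orbifold metric (average an arbitrary one over the finite group $G$) and $\hU$ its flat metric, which is $G$-invariant since $G\subseteq O(k)$; let $H\subseteq T\hZ$ be the horizontal distribution, the orthogonal complement of $\ker d\widehat\pi$, a $G$-invariant subbundle on which $d\widehat\pi$ restricts to a fiberwise isomorphism. For $\widehat y$ in a small ball $\hU'\ni0$ with $\ol{\hU'}\subseteq\hU$ and for $q\in\Or_3$, let $\Phi^{-1}(q,\widehat y)\in\hZ$ be the endpoint of the horizontal lift, starting at $q$, of the segment $t\mapsto t\widehat y$, $t\in[0,1]$. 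This lift exists because $\widehat\pi$ is proper --- the solution remains in the compact set $\widehat\pi^{-1}(\ol{\hU'})$, where the lifting vector fields are bounded --- and it ends over $\widehat y$. The smooth map $\Phi^{-1}:\Or_3\times\hU'\to\hZ$ covers the projection to $\hU'$; it is $G$-equivariant because the metrics are $G$-invariant, so horizontal lifting commutes with $G$, and $G$ acts linearly on $\R^k$ fixing $0$; and along $\Or_3\times\{0\}$ its differential is the identity on $T\Or_3$ plus the horizontal lift on $T\hU$, hence an isomorphism onto $T\hZ$. By the standard argument --- compactness of $\Or_3$ and properness of $\widehat\pi$ give injectivity of $\Phi^{-1}$ and surjectivity onto $\hZ\Big|_{\widehat\pi^{-1}(\hU')}$ after shrinking $\hU'$ once more --- $\Phi^{-1}$ is a $G$-equivariant diffeomorphism. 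Taking $G$-quotients yields a diffeomorphism $\Or_1\Big|_{|f|^{-1}(U')}\cong(\Or_3\times\hU')//G$ over $U'=\hU'/G$, which is the required local triviality.

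I expect the main obstacle to be the construction of the $G$-cover $\hZ$ in the second paragraph: the foundational point that the local submersion structure of Lemma~\ref{lemma2.3} can be assembled $G$-equivariantly over $\hU$ into an honest orbifold. By contrast, the equivariant Ehresmann argument of the third paragraph is the classical one transplanted to orbifolds. (If one prefers to avoid Riemannian metrics, which are introduced only later, the third step may instead be run through the partition-of-unity proof of Ehresmann's theorem, averaging the vector-field lifting operator over $G$ so as to keep the trivialization $G$-equivariant.)
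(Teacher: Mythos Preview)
Your proposal is correct and follows essentially the same strategy as the paper's sketch in Remark~\ref{remark2.10.5}: both pass to the $G_p$-equivariant pullback $\hZ\to\hU$ over a local model of the base and then trivialize using a Riemannian-geometric argument. The only cosmetic difference is in the trivialization step: you lift radial segments horizontally (classical Ehresmann), while the paper equips $f$ with a Riemannian submersion metric and uses the normal exponential map of the fiber $\widehat f^{-1}(\widehat p)$; for a Riemannian submersion over a Euclidean ball these coincide, since horizontal geodesics project to base geodesics. Your explicit flag that the construction of $\hZ$ is the foundational point matches the paper's implicit ``pulling back $f|_{f^{-1}(U)}$ to $\widehat U$'', which is indeed the step requiring care.
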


We will sketch a proof of Lemma \ref{lemma2.4} in Remark \ref{remark2.10.5}.

In particular, a proper surjective local diffeomorphism to a connected
orbifold is a covering
map with finite fibers.

An immersion $f : \Or_1 \rightarrow \Or_2$ has a
{\em normal bundle} $N\Or_1 \rightarrow \Or_1$ whose fibers have the 
following local description. Given $p \in |\Or_1|$, let $f$
be described in terms of local models
$(\hU_1, G_p)$ and $(\hU_2, G_{|f|(p)})$
by a $\rho$-equivariant immersion $\widehat{f} \: : \:
\hU_1 \rightarrow \hU_2$. Let $F_p \subset G_{|f|(p)}$ be the
subgroup which fixes $\Image(d\widehat{f}_p)$. Then the
{\em normal space} $N_p \Or_1$ is the orbivector space $\left( 
\Coker(d\widehat{f}_p), F_p \right)$.

A {\em suborbifold} of $\Or$ is 
given by an orbifold  $\Or^\prime$ and an immersion
$f : \Or^\prime
\rightarrow \Or$ for which $|f|$ maps $|\Or^\prime|$ homeomorphically
to its image in $|\Or|$. From effectiveness, for each
$p \in |\Or^\prime|$, the homomorphism
$\rho_p : G_p \rightarrow G_{|f|(p)}$ is injective.
Note that $\rho_p$ need not be an isomorphism.
We will identify $\Or^\prime$ with its image in $\Or$.
There is a neighborhood of $\Or^\prime$ which is diffeomorphic to
the normal bundle $N\Or^\prime$.
We say that the suborbifold $\Or^\prime$ is {\em embedded} if
$\Or \Big|_{|\Or^\prime|} = \Or^\prime$. 
Then for each $p \in |\Or^\prime|$, the homomorphism $\rho_p$ is an
isomorphism.

If $\Or^\prime$ is an 
embedded
codimension-$1$ suborbifold of $\Or$ then
we say that $\Or^\prime$ is {\em two-sided} if the normal bundle
$N\Or^\prime$ has a nowhere-zero section.  
If $\Or$ and $\Or^\prime$ are both orientable then $\Or^\prime$ is
two-sided. 
We say that $\Or^\prime$ is {\em separating} if
$|\Or^\prime|$ is separating in $|\Or|$.

We can talk about two suborbifolds meeting {\em transversely}, as
defined using local models. 

Let $\Or$ be an oriented orbifold
(possibly disconnected). Let $D_1$ and $D_2$ be disjoint codimension-zero
embedded
suborbifolds-with-boundary, both oriented-diffeomorphic to $D^n//\Gamma$. Then
the operation of performing {\em $0$-surgery  along $D_1$, $D_2$} produces the new oriented orbifold
$\Or^\prime = (\Or - \Int(D_1) - \Int(D_2)) 
\bigcup_{\partial D_1 \sqcup \partial D_2} 
(I \times (D^n//\Gamma))$.  In the manifold case, a connected sum is the
same thing as a $0$-surgery along a pair $\{D_1, D_2\}$ which lie in different
connected components of $\Or$.  Note
that unlike in the manifold case, $\Or^\prime$ is generally not uniquely
determined up to diffeomorphism 
by knowing the connected components containing $D_1$ and
$D_2$. 
For example, even if $\Or$ is connected, 
$D_1$ and $D_2$ may or may not lie on the same connected component
of the singular set.

If $\Or_1$ and $\Or_2$ are oriented orbifolds, with $D_1 \subset \Or_1$ and
$D_2 \subset \Or_2$  both oriented diffeomorphic to $D^n//\Gamma$,
then we may write $\Or_1 \#_{S^{n-1}//\Gamma} \Or_2$ for the connected
sum.  This notation is slightly ambiguous since
the location of $D_1$ and $D_2$ is implicit. We will write
$\Or \#_{S^{n-1}//\Gamma}$ to denote a $0$-surgery on a single
orbifold $\Or$. Again the notation is slightly ambiguous, since the
location of $D_1, D_2 \subset \Or$ is implicit.

An {\em involutive distribution} on $\Or$ is a subbundle $E \subset T\Or$ with 
the property that for any two sections $V_1, V_2$ of $E$, the Lie bracket
$[V_1, V_2]$ is also a section of $E$.

\begin{lemma} \label{lemma2.5}
Given an involutive distribution $E$ on $\Or$,
for any $p \in |\Or|$ there is a unique maximal suborbifold passing
through $p$ which is tangent to $E$.
\end{lemma}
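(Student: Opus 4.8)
The plan is to reduce the orbifold Frobenius theorem to the classical (manifold) Frobenius theorem by working in local models and exploiting the equivariance of all the data involved. Fix $p \in |\Or|$ and a local model $(\hU, G_p)$ around $p$, with $\hp \in \hU$ projecting to $p$. The involutive distribution $E \subset T\Or$ restricts, over $U = \hU/G_p$, to a $G_p$-invariant smooth subbundle $\hE \subset T\hU$; invariance here means that each $g \in G_p$ carries $\hE$ to itself, since $g$ acts on $\hU$ by diffeomorphisms and the bundle $E$ is intrinsically defined on $\Or$. Involutivity of $E$ translates to involutivity of $\hE$: if $\hV_1, \hV_2$ are local sections of $\hE$ then, after averaging over $G_p$ (or simply by naturality of the construction in the definition of a vector field on $\Or$ as a $G_p$-invariant vector field on $\hU$), $[\hV_1,\hV_2]$ is a section of $\hE$. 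Hence the ordinary Frobenius theorem applies on $\hU$: there is a unique maximal connected integral submanifold $\hL \subset \hU$ through $\hp$ tangent to $\hE$, and locally $\hU$ is foliated by the leaves of $\hE$.

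The key point is that $\hL$ is $G_p$-invariant as a subset of $\hU$. Indeed, for $g \in G_p$, the translate $\hL g$ is again a connected integral submanifold of $\hE$ (because $g_* \hE = \hE$) passing through $\hp g$. Now by definition of the local group, $\hp g = \hp$, so $\hL g$ is a connected integral submanifold through $\hp$, and by the uniqueness clause of the classical Frobenius theorem $\hL g = \hL$. Therefore $G_p$ acts on $\hL$, and the pair $(\hL, G_p|_{\hL})$ — where we must pass to the subgroup of $G_p$ acting effectively on $\hL$, or rather keep track of the kernel as part of the orbifold structure — defines a suborbifold $\L = \hL // G_p$ of $\Or$ through $p$, tangent to $E$ by construction. (One should note that the action of $G_p$ on $\hL$ need not be effective; the suborbifold structure records this via the local group, consistent with the definition of suborbifold and its induced homomorphism $G_q \to G_{|f|(q)}$ in Subsection \ref{subsect2.1}.)

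For maximality and uniqueness: suppose $\L'$ is any connected suborbifold through $p$ tangent to $E$. Locally near $p$, $\L'$ is described by a $G_p$-invariant integral submanifold of $\hE$ in $\hU$ through $\hp$, hence is contained in $\hL$ by classical uniqueness, so $\L'$ agrees with $\L$ near $p$. To globalize, one runs the standard leaf-gluing argument: cover a putative maximal leaf by local models, and on overlaps the local integral suborbifolds agree (by the local uniqueness just established applied at each point), so they patch to a well-defined maximal connected suborbifold $\L_{\max}$ through $p$; any other integral suborbifold through $p$ is, leafwise, contained in it. The only subtlety compared with the manifold case is bookkeeping of local groups along the leaf, but since $E$ has constant rank and the local groups of $\Or$ act on the ambient $\hU$'s preserving $\hE$, the leaf inherits a consistent orbifold atlas from the local integral submanifolds.

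The main obstacle I expect is not the existence of the local integral submanifold — that is immediate from the manifold Frobenius theorem — but rather verifying cleanly that the $G_p$-action descends to give a genuine suborbifold in the sense of Subsection \ref{subsect2.1}, i.e.\ that the induced maps on local groups are injective where required and that the transition data between overlapping local models restricts to the integral submanifolds. Concretely, given two local models $(\hU_1, G_1)$, $(\hU_2, G_2)$ with the compatibility embeddings $(\hU_3, G_3) \to (\hU_i, G_i)$, one must check that these embeddings carry the local integral submanifold of $\hE$ in $\hU_3$ to those in $\hU_1, \hU_2$; this follows because the embeddings are $\rho$-equivariant diffeomorphisms onto their images and pull $\hE$ back to $\hE$, so they send integral submanifolds to (open subsets of) integral submanifolds, and then classical uniqueness pins them down. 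Once this patching is in place, both existence and the ``unique maximal'' conclusion of Lemma \ref{lemma2.5} follow exactly as in the manifold proof.
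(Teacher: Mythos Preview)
The paper does not supply a proof of this lemma; it is simply asserted. Your reduction to the classical Frobenius theorem in local models is the natural approach, and the key observation---that $G_p$ fixes $\widehat{p}$ and preserves $\widehat{E}$, so uniqueness in the manifold Frobenius theorem forces $\widehat{L}g=\widehat{L}$---is exactly the right idea.

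However, the step you label ``bookkeeping of local groups'' hides a genuine obstruction under the paper's definitions, not just a technicality. The paper works with \emph{effective} orbifolds, and its notion of immersion requires the induced homomorphism $G'_p\to G_{|f|(p)}$ to be injective. When the kernel $K$ of the $G_p$-action on $\widehat{L}$ is nontrivial, the effective local group of the leaf at $p$ is $G_p/K$, and turning the inclusion $\widehat{L}\hookrightarrow\widehat{U}$ into a map of local models then requires an injective homomorphism $G_p/K\hookrightarrow G_p$ whose composition with $G_p\to G_p/K$ is the identity---i.e.\ a splitting of $1\to K\to G_p\to G_p/K\to 1$. This need not exist. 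Take $\Or=\R^4//\Z_4$ with the generator acting by $(x_1,x_2,x_3,x_4)\mapsto(-x_2,x_1,-x_3,-x_4)$, and $E=\spann(\partial_{x_3},\partial_{x_4})$. At the origin the leaf is the $(x_3,x_4)$-plane, on which the generator acts as $-I$; thus $K\cong\Z_2$ is the order-two subgroup, $G_p/K\cong\Z_2$, and the extension $1\to\Z_2\to\Z_4\to\Z_2\to 1$ does not split. One checks directly that no suborbifold in the paper's strict sense passes through the origin tangent to $E$. So either the lemma tacitly relaxes the notion of suborbifold (allowing ineffective isotropy on the leaf), or it should be read as holding in the situations where the paper actually invokes it---chiefly Lemma~\ref{lemma2.12}, where the conclusion $\Or=\Or_1\times\Or_2$ forces $G_p\cong G_{p_1}\times G_{p_2}$ and the requisite splitting is automatic.
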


Orbifolds have partitions of unity.

\begin{lemma} \label{lemma2.6}
Given an open cover $\{U_\al\}_{\al \in A}$ of $|\Or|$, there is a collection
of functions $\rho_\al \in C^\infty(\Or)$ such that
\begin{itemize}
\item $0 \le \rho_\al \le 1$.
\item 
$\supp(\rho_\al) \subset U_{\al^\prime}$ for some $\alpha^\prime
= \alpha^\prime(\alpha) \in A$.
\item For all $p \in |\Or|$, $\sum_{\al \in A} \rho_\al(p) = 1$. 
\end{itemize}
\end{lemma}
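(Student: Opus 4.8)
The plan is to run the standard manifold construction, inserting one orbifold-specific twist: bump functions produced in a local model $(\hU,G)$ must first be averaged over $G$ so that they descend. The operative fact about descent is just the definition of $C^\infty(\Or)$: a function $f$ on $|\Or|$ lies in $C^\infty(\Or)$ if and only if, in each local model $(\hU_\al, G_\al)$, the pullback of $f$ is a $G_\al$-invariant smooth function on $\hU_\al$ (since the target $\R$ has trivial local group, $\rho$-equivariance forces $G_\al$-invariance). Conversely, a compatible family of such $G_\al$-invariant functions glues to an element of $C^\infty(\Or)$.

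First I would set up the cover. Since $|\Or|$ is Hausdorff, paracompact, and locally compact (it is locally modelled on $\hU/G$ with $\hU\subset\R^n$ open and $G$ finite, and such quotients are locally compact), standard point-set topology for locally compact Hausdorff paracompact spaces provides a locally finite open refinement $\{W_j\}_{j\in J}$ of $\{U_\al\}$ — say $W_j\subset U_{\al(j)}$ — together with a shrinking $\{W_j'\}_{j\in J}$, $\overline{W_j'}\subset W_j$, that still covers $|\Or|$, and we may arrange that each $W_j$ has compact closure in $|\Or|$ and is the image $\hU_j/G_j$ of a local model $(\hU_j,G_j)$: first refine $\{U_\al\}$ by images of local models each contained in some $U_\al$, then pass to a locally finite refinement whose members have compact closure, then shrink.

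Next comes the averaging step. For each $j$ write $\pi_j\colon\hU_j\to W_j$ for the quotient map; then $K_j:=\pi_j^{-1}(\overline{W_j'})$ is a $G_j$-invariant subset of $\hU_j$, and it is compact because $\overline{W_j'}$ is compact and $\pi_j$, being the quotient map of a finite group action, is proper. Choose an ordinary Euclidean bump function $\widehat{\psi}_j\in C^\infty(\hU_j)$ with $0\le\widehat{\psi}_j\le1$, $\widehat{\psi}_j\equiv1$ on $K_j$, and $\supp\widehat{\psi}_j$ a compact subset of $\hU_j$, and set $\widehat{\rho}_j:=\tfrac1{|G_j|}\sum_{g\in G_j}g^*\widehat{\psi}_j$. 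This is $G_j$-invariant, satisfies $0\le\widehat{\rho}_j\le1$, equals $1$ on the $G_j$-invariant set $K_j$, and has support $\bigcup_{g\in G_j}(\supp\widehat{\psi}_j)\cdot g$, again compact in $\hU_j$; hence it descends to $\rho_j\in C^\infty(\Or)$ with $0\le\rho_j\le1$, $\supp\rho_j\subset W_j\subset U_{\al(j)}$, and $\rho_j\equiv1$ on $W_j'$. Finally, local finiteness of $\{\supp\rho_j\}$ makes $\sigma:=\sum_{j\in J}\rho_j$ a well-defined element of $C^\infty(\Or)$ (in each local model it is locally a finite sum of smooth $G$-invariant functions), and $\sigma\ge1$ on $|\Or|$ because the $W_j'$ cover $|\Or|$; thus $1/\sigma\in C^\infty(\Or)$, and $\rho_\al:=\sigma^{-1}\sum_{j:\al(j)=\al}\rho_j$ (a locally finite sum, hence in $C^\infty(\Or)$) are the desired functions.

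There is no deep obstacle here. The only genuinely orbifold-specific point — and the one I would be most careful to get right in the write-up — is the descent of $\widehat{\rho}_j$: that the $G_j$-average of a Euclidean bump function is $G_j$-invariant, still has support compactly contained in $\hU_j$ so that the local pieces patch to a global element of $C^\infty(\Or)$, and remains identically $1$ near the chosen point precisely because we averaged a function already equal to $1$ on the relevant $G_j$-orbit. Everything else — the existence of the locally finite cover by images of precompact local models, its shrinking, and properness of the quotient maps $\pi_j$ — is routine point-set topology.
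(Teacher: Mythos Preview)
Your proof is correct and follows exactly the approach the paper sketches: the paper's one-line proof says only that one imitates the manifold argument using local models $(\hU,G)$ together with compactly supported $G$-invariant smooth functions on $\hU$, and you have simply filled in those details, including the $G$-averaging step that produces the invariant bump functions.
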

\begin{proof}
The proof is similar to the manifold case, using local models
$(\hU, G)$ consisting of coordinate neighborhoods, along with compactly
supported $G$-invariant smooth functions on $\hU$.
\end{proof}

A {\em curve} in an orbifold is a smooth map 
$\gamma \: : \: I \rightarrow \Or$
defined on an interval $I \subset \R$. 
A {\em loop} is a curve $\gamma$ with $|\gamma|(0) = |\gamma|(1)
\in |\Or|$.

\subsection{Universal cover and fundamental group} \label{subsect2.2}

We follow the presentation in \cite[Chapter 2.2.1]{BMP}.
Choose a regular point $p \in |\Or|$.
A {\em special curve} from $p$
is a curve $\gamma : [0,1] \rightarrow \Or$ such that
\begin{itemize}
\item $|\gamma|(0) = p$ and 
\item $|\gamma|(t)$ lies in $|\Or|_{reg}$ for all but a finite number of $t$.
\end{itemize}

Suppose that $(\hU, G)$ is a local model and that $\widehat{\gamma} :
[a, b] \rightarrow \hU$ is a lifting of $\gamma_{[a,b]}$, for some
$[a,b] \subset [0,1]$. An {\em elementary homotopy} between two special
curves is a smooth homotopy
of $\widehat{\gamma}$ in $\hU$, relative to $\widehat{\gamma}(a)$ and
$\widehat{\gamma}(b)$. A {\em homotopy} of $\gamma$ is what's generated
by elementary homotopies. 

If $\Or$ is connected then the {\em universal cover} $\widetilde{\Or}$ of $\Or$
can be constructed as the set of special curves starting at 
$p$, modulo homotopy. 
It has a natural orbifold structure. The {\em fundamental group}
$\pi_1(\Or, p)$ is given by special loops (i.e. special curves $\gamma$ with
$|\gamma|(1) = p$) modulo homotopy. Up to isomorphism, 
$\pi_1(\Or, p)$ is independent of
the choice of $p$.

If $\Or$ is connected and a discrete group 
$\Gamma$ acts properly discontinuously on $\Or$
then there is a short exact sequence
\begin{equation}
1 \longrightarrow \pi_1(\Or, p) \longrightarrow \pi_1(\Or//\Gamma, p\Gamma)
\longrightarrow \Gamma \longrightarrow 1.
\end{equation}

\begin{remark} \label{remark2.8}
A more enlightening way to think of an orbifold is to consider it
as a smooth effective proper \'etale groupoid ${\mathcal G}$, as 
explained in 
\cite{Adem-Leida-Ruan (2007),Bridson-Haefliger (1999),Moerdijk}.  
We recall that a {\em Lie groupoid} ${\mathcal G}$ essentially consists of a 
smooth manifold
${\mathcal G}^{(0)}$ (the space of units), another smooth manifold 
${\mathcal G}^{(1)}$
and submersions $s,r : {\mathcal G}^{(1)} \rightarrow 
{\mathcal G}^{(0)}$ (the source
and range maps), along with a partially defined multiplication
${\mathcal G}^{(1)} \times {\mathcal G}^{(1)} \rightarrow {\mathcal G}^{(1)}$
which satisfies certain compatibility conditions.
A Lie groupoid is {\em \'etale} if $s$ and $r$ are local
diffeomorphisms.  It is {\em proper} if $(s,r) : 
{\mathcal G}^{(1)} \rightarrow
{\mathcal G}^{(0)} \times {\mathcal G}^{(0)}$ 
is a proper map. There is also a notion
of an \'etale groupoid being {\em effective}. 

To an orbifold one can associate an 
effective proper \'etale groupoid as follows.
Given an orbifold $\Or$, 
a local model $(\widehat{U}_\alpha, G_\alpha)$ and some 
$\widehat{p}_\alpha \in \widehat{U}_\alpha$,
let $p \in |\Or|$ be the corresponding point. 
There is a quotient map
$A_{\widehat{p}_\alpha} : T_{\widehat{p}_\alpha} \widehat{U}_\alpha
\rightarrow C_p|\Or|$.
The unit space ${\mathcal G}^{(0)}$ is the
disjoint union of the $\widehat{U}_\alpha$'s. 
And ${\mathcal G}^{(1)}$ consists of the triples $(\widehat{p}_\alpha,
\widehat{p}_\beta, B_{\widehat{p}_\alpha,\widehat{p}_\beta})$ where
\begin{enumerate}
\item $\widehat{p}_\alpha \in \widehat{U}_\alpha$ and
$\widehat{p}_\beta \in \widehat{U}_\beta$,
\item $\widehat{p}_\alpha$ and $\widehat{p}_\beta$ map to the same point
$p \in |\Or|$ and
\item $B_{\widehat{p}_\alpha,\widehat{p}_\beta} : 
T_{\widehat{p}_\alpha} \widehat{U}_\alpha \rightarrow 
T_{\widehat{p}_\beta} \widehat{U}_\beta$ is an invertible linear map
so that $A_{\widehat{p}_\alpha} = A_{\widehat{p}_\beta} \circ
B_{\widehat{p}_\alpha,\widehat{p}_\beta}$.
\end{enumerate}
There is an obvious way to compose triples
$(\widehat{p}_\alpha,
\widehat{p}_\beta, B_{\widehat{p}_\alpha,\widehat{p}_\beta})$ and
$(\widehat{p}_\beta,
\widehat{p}_\gamma, B_{\widehat{p}_\beta,\widehat{p}_\gamma})$.
One can show that this gives rise to a smooth effective proper \'etale
groupoid.

Conversely, given a smooth effective proper \'etale groupoid
${\mathcal G}$, for any $\widehat{p} \in {\mathcal G}^{(0)}$
the isotropy group ${\mathcal G}_{\widehat{p}}^{\widehat{p}}$ is a
finite group. To get an orbifold, one can take local models of the form
$(\widehat{U}, {\mathcal G}_{\widehat{p}}^{\widehat{p}})$ where
$\widehat{U}$ is a ${\mathcal G}_{\widehat{p}}^{\widehat{p}}$-invariant
neighborhood of $\widehat{p}$.

Speaking hereafter just of smooth effective proper \'etale groupoids,
Morita-equivalent groupoids give equivalent orbifolds.

A groupoid morphism gives rise to an orbifold map. Taking
into account Morita equivalence, from the
groupoid viewpoint the right notion of an orbifold map would
be a Hilsum-Skandalis map between groupoids.
These turn out
to correspond to {\em good maps} between orbifolds, as later
defined by Chen-Ruan
\cite{Adem-Leida-Ruan (2007)}. This is a more restricted class of maps
between orbifolds than what we consider. The distinction is that
one can pull back orbivector bundles under good maps, but
not always under smooth maps in our sense.  Orbifold
diffeomorphisms in our sense are automatically good maps.
For some purposes it would be preferable to only deal with
good maps, but for simplicity we will stick with our
orbifold definitions. 

A Lie groupoid ${\mathcal G}$ has a classifying space $B{\mathcal G}$.
In the orbifold case, if ${\mathcal G}$ is the \'etale groupoid 
associated to an orbifold $\Or$ then
$\pi_1(\Or) \cong \pi_1(B{\mathcal G})$. The definition of the latter
can be made explicit in terms of paths and homotopies; see
\cite{Bridson-Haefliger (1999),Haefliger (1990)}.
In the case of effective orbifolds, the definition is equivalent to
the one of the present paper.

More information is
in \cite{Adem-Leida-Ruan (2007),Moerdijk} and references therein.
\end{remark}

\subsection{Low-dimensional orbifolds} \label{subsect2.3}

We list the connected compact boundaryless orbifolds of low dimension.
We mostly restrict here to the orientable case. (The nonorientable ones also 
arise; even if the total space of an orbifiber bundle
is orientable, the base may fail to be orientable.)

\subsubsection{Zero dimensions}

The only possibility is a point.

\subsubsection{One dimension}

There are two possibilities : $S^1$ and $S^1//\Z_2$. For the latter,
the nonzero element of $\Z_2$ acts by complex conjugation on
$S^1$, and $|S^1//\Z_2|$ is an interval. Note that
$S^1//\Z_2$ is not orientable.

\subsubsection{Two dimensions}

For notation, if $S$ is a connected oriented surface then
$S(k_1, \ldots, k_r)$ denotes the oriented orbifold $\Or$ with
$|\Or| = S$, having singular points of order $k_1, \ldots, k_r > 1$.
Any connected oriented $2$-orbifold can be written in this way.
An orbifold of the form $S^2(p,q,r)$ is called a {\em turnover}.

The {\em bad} orientable $2$-orbifolds are $S^2(k)$ and $S^2(k,k^\prime)$, 
$k \neq k^\prime$. The latter is simply-connected if and only if
$\gcd(k,k^\prime) = 1$.

The {\em spherical} $2$-orbifolds are of the form $S^2//\Gamma$, where
$\Gamma$ is a finite subgroup of $\Isom^+(S^2)$. The 
orientable ones are
$S^2$, $S^2(k,k)$, $S^2(2,2,k)$, $S^2(2,3,3)$, $S^2(2,3,4)$,
$S^2(2,3,5)$. (If $S^2(1,1)$ arises in this paper then it means $S^2$.)

The {\em Euclidean} $2$-orbifolds are of the form $T^2//\Gamma$, where
$\Gamma$ is a finite subgroup of $\Isom^+(T^2)$. The orientable ones are
$T^2$, $S^2(2,3,6)$, $S^2(2,4,4)$, $S^2(3,3,3)$, $S^2(2,2,2,2)$.
The latter is called a {\em pillowcase} and can be identified with the quotient
of $T^2 = \C/\Z^2$ by $\Z_2$, where the action of the 
nontrivial element of $\Z_2$ comes from the map $z \rightarrow -z$ on $\C$.

The other closed orientable $2$-orbifolds are hyperbolic.

We will also need some $2$-orbifolds with boundary, namely
\begin{itemize}
\item The {\em discal} $2$-orbifolds
$D^2(k) = D^2//\Z_k$.
\item The {\em half-pillowcase} $D^2(2,2) = I \times_{\Z_2} S^1$.
Here the nontrivial element of $\Z_2$ acts by involution on $I$
and by complex conjugation on $S^1$.
We can also write $D^2(2,2)$ as the quotient 
$\{z \in \C \: : \: \frac12 \le |z| \le 2\}//\Z_2$, where
the nontrivial element of $\Z_2$ sends $z$ to $z^{-1}$.
\item $D^2//\Z_2$, where $\Z_2$ acts by complex conjugation on
$D^2$. Then 
$\partial|D^2//\Z_2|$
is a circle with one
orbifold boundary component and one reflector component.
See Figure \ref{fig-1}, where the dark line indicates the
reflector component.

\begin{figure}[htb] 

\begin{center}  
\includegraphics[scale=1]{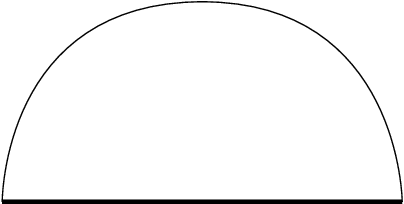} 
\caption{\label{fig-1}}
\end{center}

\end{figure}
\begin{figure}[h] 

\begin{center}  
\includegraphics[scale=.8]{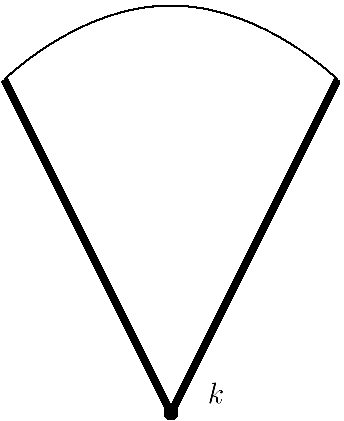} 
\caption{\label{fig-2}}
\end{center}

\end{figure}

\begin{figure}[h] 

\begin{center}  
\includegraphics[scale=1]{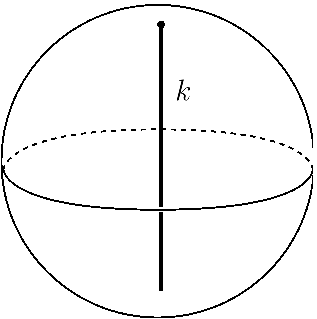} 
\caption{\label{fig-3}}
\end{center}
\end{figure}
\item $D^2//D_k = D^2(k)//\Z_2$, 
for $k > 1$, where $D_k$ is the dihedral group
and $\Z_2$ acts by complex conjugation on
$D^2(k)$. Then 
$\partial |D^2//D_k|$
is a circle with one
orbifold boundary component, one corner reflector point
of order $k$ and two reflector components. 
See Figure \ref{fig-2}.

\end{itemize}

\subsubsection{Three dimensions}

If $\Or$ is an orientable three-dimensional orbifold then
$|\Or|$ is an orientable topological $3$-manifold. 
If $\Or$ is boundaryless then $|\Or|$ is boundaryless.
Each component of the singular locus in $|\Or|$ is either 

\begin{enumerate}
\item a knot or arc (with endpoints on $\partial |\Or|$), 
labelled by an integer greater than one, or 
\item 
a trivalent graph with each edge labelled by an integer
greater than one, under the constraint that if edges with labels
$p,q,r$ meet at a vertex then $\frac{1}{p} + \frac{1}{q} + \frac{1}{r}
> 1$. That is, there is a neighborhood of the vertex which is a cone over an
orientable spherical $2$-orbifold.
\end{enumerate}

Specifying such a topological $3$-manifold and such a labelled 
graph is equivalent to specifying an orientable three-dimensional orbifold.

We write $D^3//\Gamma$ for a {\em discal} $3$-orbifold whose boundary is
$S^2//\Gamma$. 
They are
\begin{itemize}
\item $D^3$. There is no singular locus.
\item $D^3(k,k)$. The singular locus is a line segment through $D^3$.
See Figure \ref{fig-3}.

\item $D^3(2,2,k)$, $D^3(2,3,3)$, $D^3(2,3,4)$ and
$D^3(2,3,5)$. The singular locus is a tripod in $D^3$.
See Figure \ref{fig-4}.
\end{itemize}
\begin{figure}[h] 

\begin{center}  
\includegraphics[scale=1]{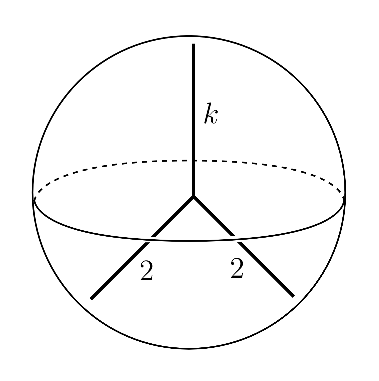} 
\caption{\label{fig-4}}
\end{center}
\end{figure}

\begin{figure}[h] 

\begin{center}  
\includegraphics[scale=1]{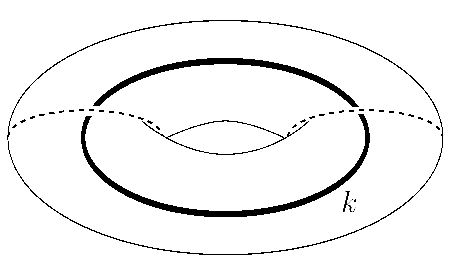} 
\caption{\label{fig-5}}
\end{center}
\end{figure}

\begin{figure}[h] 

\begin{center}  
\includegraphics[scale=1]{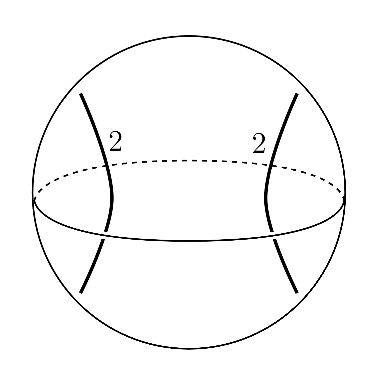} 
\caption{\label{fig-6}}
\end{center}
\end{figure}

The {\em solid-toric} $3$-orbifolds are 
\begin{itemize}
\item $S^1 \times D^2$. There is no singular locus.
\item $S^1 \times D^2(k)$. The singular locus is a core curve in a
solid torus.
See Figure \ref{fig-5}
\item
$S^1 \times_{Z_2} D^2$. The singular locus consists of two arcs in 
a $3$-disk, each labelled by $2$. The boundary is $S^2(2,2,2,2)$.
See Figure \ref{fig-6}.
\item $S^1 \times_{Z_2} D^2(k)$. The singular locus consists of
two arcs in a $3$-disk, each labelled by $2$, joined in their
middles by an arc
labelled by $k$. The boundary is $S^2(2,2,2,2)$.
See Figure \ref{fig-7}.
\end{itemize}

\begin{figure}[h] 

\begin{center}  
\includegraphics[scale=1]{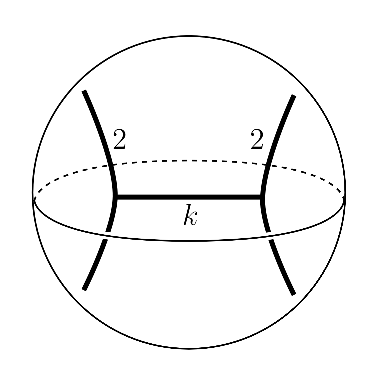} 
\caption{\label{fig-7}}
\end{center}
\end{figure}

\begin{figure}[h] 

\begin{center}  
\includegraphics[scale=1]{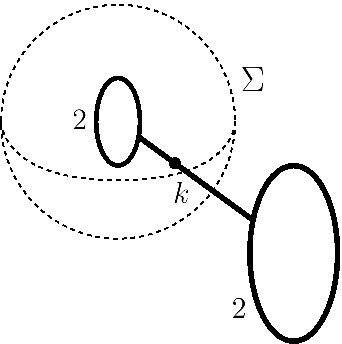} 
\caption{\label{fig-8}}
\end{center}
\end{figure}

\begin{figure}[h] 

\begin{center}  
\includegraphics[scale=1]{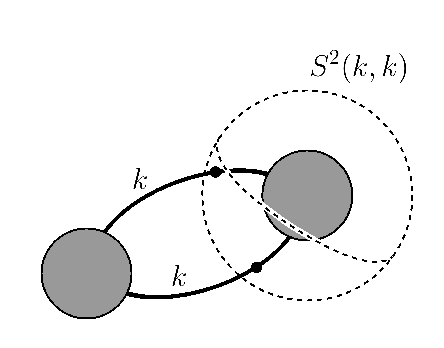} 
\caption{An essential spherical suborbifold\label{fig-9}}
\end{center}
\end{figure}

Given $\Gamma \in \Isom^+(S^2)$, we can consider the quotient
$S^3//\Gamma$ where $\Gamma$ acts on $S^3$ by the suspension of
its action on $S^2$. 
That is, we are identifying $\Isom^+(S^2)$ with $\SO(3)$ and using
the embedding $\SO(3) \rightarrow \SO(4)$ to let $\Gamma$ act on
$S^3$.

An orientable three-dimensional orbifold $\Or$ is {\em irreducible} if
it contains no embedded bad $2$-dimensional suborbifolds, and
any embedded orientable spherical $2$-orbifold $S^2//\Gamma$ bounds
a discal $3$-orbifold $D^3//\Gamma$ in $\Or$. 
Figure \ref{fig-8} shows an embedded bad $2$-dimensional suborbifold
$\Sigma$.
Figure \ref{fig-9} shows an embedded spherical
$2$-suborbifold $S^2(k,k)$ that does not bound a discal $3$-orbifold;
the shaded regions are meant to indicate some complicated
orbifold regions.

If $S$ is an orientable embedded $2$-orbifold in $\Or$ then $S$ is
{\em compressible} if there is an embedded discal 2-orbifold $D \subset \Or$ 
so that $\partial D$ lies in $S$, but $\partial D$ does not bound a discal
$2$-orbifold in $S$. (We call $D$ a {\em compressing discal orbifold}.) 
Otherwise, $S$ is {\em incompressible}. Note that any
embedded copy of a turnover $S^2(p,q,r)$ is automatically incompressible,
since any embedded circle in $S^2(p,q,r)$ bounds a discal $2$-orbifold
in $S^2(p,q,r)$.

If $\Or$ is a compact orientable $3$-orbifold then there is a
compact orientable irreducible $3$-orbifold $\Or^\prime$ so that
$\Or$ is the result of performing $0$-surgeries on $\Or^\prime$;
see \cite[Chapter 3]{BMP}.
The orbifold $\Or^\prime$ can be obtained by taking an appropriate
{\em spherical system} on $\Or$, cutting along the spherical
$2$-orbifolds and adding discal $3$-orbifolds to the ensuing
boundary components. 
If we take a minimal such spherical system then $\Or^\prime$ is
canonical.

Note that if $\Or = S^1 \times S^2$ then $\Or^\prime = S^3$.
This shows that if $\Or$ is a $3$-manifold then $\Or^\prime$ is not
just the disjoint components in the prime decomposition.
That is,
we are not
dealing with a direct generalization of the Kneser-Milnor prime decomposition 
from $3$-manifold theory. Because the
notion of connected sum is more involved for orbifolds than for
manifolds, the notion of a prime decomposition is also more
involved; see \cite{Hog-Angeloni-Matveev,Petronio}. 
It is not needed for the present paper.

We assume now that $\Or$ is irreducible. 
The {\em geometrization conjecture} says that if 
$\partial \Or = \emptyset$ and $\Or$ does not have
any embedded bad $2$-dimensional suborbifolds then there is a 
finite collection $\{S_i\}$ of incompressible orientable Euclidean 
$2$-dimensional suborbifolds of $\Or$ so that each connected component
of $\Or^\prime - \bigcup_i S_i$ is diffeomorphic to a 
quotient of one of the
eight Thurston geometries. Taking a minimal such collection of
Euclidean $2$-dimensional suborbifolds, the ensuing geometric pieces
are canonical.
References for the statement of the orbifold
geometrization conjecture are
\cite[Chapter 3.7]{BMP},\cite[Chapter 2.13]{CHK}.

Our statement of the orbifold geometrization conjecture is a
generalization of the manifold geometrization conjecture, as
stated in \cite[Section 6]{Scott} and \cite[Conjecture 1.1]{Thurston3}.
The cutting of the orientable three-manifold is along two-spheres and
two-tori.  An alternative version of the geometrization
conjecture requires the pieces to have finite volume
\cite[Conjecture 2.2.1]{Morgan}. In this version one must
also allow cutting along one-sided Klein bottles.
A relevant example to illustrate this point 
is when the three-manifold is the result
of gluing $I \times_{\Z_2} T^2$ to a cuspidal truncation of
a one-cusped complete noncompact finite-volume hyperbolic $3$-manifold.

\subsection{Seifert $3$-orbifolds} \label{subsect2.4}

A Seifert orbifold is the orbifold version of the total space of
a circle bundle.
We refer to \cite[Chapters 2.4 and 2.5]{BMP} for
information about Seifert $3$-orbifolds.  We just recall a few
relevant facts.

A Seifert $3$-orbifold fibers $\pi : \Or \rightarrow {\mathcal B}$
over a $2$-dimensional orbifold
${\mathcal B}$, with circle fiber. If $(\widehat{U}, G_p)$ is a local
model around $p \in |{\mathcal B}|$ then there is
a neighborhood $V$ of $|\pi|^{-1}(p) \subset |\Or|$ 
so that $\Or \Big|_{V}$ is diffeomorphic to 
$(S^1 \times \widehat{U})//G_p$, where $G_p$ acts on $S^1$ via a
representation $G_p \rightarrow O(2)$.
We will only consider orientable Seifert $3$-orbifolds.
so the elements of $G_p$ that preserve orientation
on $\widehat{U}$ will act on $S^1$ via $\SO(2)$, while the elements of
$G_p$ that reverse orientation on $\widehat{U}$ will act on $S^1$ via
$O(2) - \SO(2)$. In particular, if $p \in |{\mathcal B}|_{reg}$
then $|f|^{-1}(p)$ is a circle, while if $p \notin |{\mathcal B}|_{reg}$ 
then $|f|^{-1}(p)$ may be an interval.
We may loosely talk about the circle fibration of $\Or$.

As $\partial \Or$ is an orientable $2$-orbifold which fibers over a 
$1$-dimensional orbifold, with circle fibers, any connected component
of $\partial \Or$ must be $T^2$ or $S^2(2,2,2,2)$.
In the case of a boundary component
$S^2(2,2,2,2)$, the generic fiber is
a circle on $|S^2(2,2,2,2)|$ which separates it into two $2$-disks,
each containing two singular points. That is, the pillowcase is
divided into two half-pillowcases.

A solid-toric orbifold $S^1 \times D^2$ or $S^1 \times D^2(k)$ has an
obvious Seifert fibering over $D^2$ or $D^2(k)$.
Similarly, a solid-toric orbifold $S^1 \times_{\Z_2} D^2$ or 
$S^1 \times_{\Z_2} D^2(k)$ fibers over
$D^2//\Z_2$ or $D^2(k)//\Z_2$. 

\subsection{Riemannian geometry of orbifolds} \label{subsect2.5}

\begin{definition}
A {\em Riemannian metric} on an orbifold 
$\Or$ is given by 
an atlas for $\Or$ along with
a collection of Riemannian
metrics on the $\hU_\alpha$'s so that 
\begin{itemize}
\item $G_\alpha$ acts isometrically on $\hU_\alpha$ and 
\item The embeddings $(\hU_3, G_3) \rightarrow (\hU_1, G_1)$
and $(\hU_3, G_3) \rightarrow (\hU_2, G_2)$ from part 5 of
Definition \ref{defn2.1} are isometric.
\end{itemize}
\end{definition}

We say that the Riemannian orbifold $\Or$ has sectional curvature bounded
below by $K \in \R$ if the Riemannian metric on each $\hU_\alpha$ has
sectional curvature bounded below by $K$, and similarly for other
curvature bounds.

A Riemannian orbifold has an {\em orthonormal frame bundle} $F\Or$, a smooth
manifold with a
locally free (left) $O(n)$-action whose quotient space is
homeomorphic to $|\Or|$. Local charts for $F\Or$ are given by
$O(n) \times_G \hU$.
Fixing a bi-invariant Riemannian metric on $O(n)$, there is a
canonical $O(n)$-invariant Riemannian metric on $F\Or$.

Conversely, if $Y$ is a smooth connected
manifold with
a locally free $O(n)$-action then the slice theorem 
\cite[Corollary VI.2.4]{Bredon}
implies
that for each $y \in Y$, the $O(n)$-action near the orbit
$O(n) \cdot y$ is modeled by
the left $O(n)$-action on $O(n) \times_{G_y} \R^N$, where the
finite stabilizer
group $G_y \subset O(n)$ acts linearly on $\R^N$.  There is a corresponding
$N$-dimensional
orbifold $\Or$ with local models given by the pairs $(\R^N, G_y)$. 
If $Y_1$ and $Y_2$ are 
two such manifolds and $F \: : \: Y_1 \rightarrow Y_2$
is an $O(n)$-equivariant diffeomorphism then there is an induced
quotient
diffeomorphism $f \: : \: \Or_1 \rightarrow \Or_2$, as can be seen
by applying the slice theorem. 

If $Y$ has an $O(n)$-invariant
Riemannian metric then $\Or$ obtains a quotient Riemannian metric.

\begin{remark} \label{remark2.10}
Suppose that a discrete group $\Gamma$ acts properly discontinuously
on an orbifold $\Or$. Then there is a $\Gamma$-invariant Riemannian
metric on $\Or$. Furthermore, $\Gamma$ acts freely on $F\Or$, commuting
with the $O(n)$-action.  Hence there is a locally free $O(n)$-action
on the manifold $F\Or/\Gamma$ and a corresponding orbifold
$\Or//\Gamma$.
\end{remark}

There is a horizontal distribution $T^H F\Or$ on $F\Or$ coming from the
Levi-Civita connection on $\hU$. 
If $\gamma$ is a loop at $p \in |\Or|$ then
a horizontal lift of $\gamma$ allows one to define the {\em holonomy}
$H_\gamma$,
a linear map from $T_p\Or$ to itself.

If $\gamma \: : \: [a,b] \rightarrow \Or$ is a smooth map to a 
Riemannian orbifold then its
{\em length} is $L(\gamma) \: = \: \int_a^b |\gamma^\prime(t)| \: dt$,
where $|\gamma^\prime(t)|$ can be defined by a local lifting of $\gamma$
to a local model. This induces a length structure on $|\Or|$.
The {\em diameter} of $\Or$ is the diameter of $|\Or|$.
We say that $\Or$ is {\em complete} if $|\Or|$ is a complete metric space.
If $\Or$ has sectional curvature bounded below by $K \in \R$ then
$|\Or|$ has Alexandrov curvature bounded below by $K$,
as can be seen from the fact that the Alexandrov condition is
preserved upon quotienting by a finite group acting isometrically
\cite[Proposition 10.2.4]{Burago-Burago-Ivanov}.

It is useful to think of $\Or$ as consisting of an Alexandrov space 
equipped with an
additional structure that allows one to make sense of smooth functions.

We write $\dvol$ for the $n$-dimensional Hausdorff measure on $|\Or|$.
Using the above-mentioned relationship between the sectional curvature
of $\Or$ and the Alexandrov curvature of $|\Or|$, we can use
\cite[Chapter 10.6.2]{Burago-Burago-Ivanov} to extend
the Bishop-Gromov inequality from Riemannian manifolds with a
lower sectional curvature bound, to 
Riemannian orbifolds with a lower sectional curvature bound.
We remark that a Bishop-Gromov inequality for an orbifold with a
lower Ricci curvature bound appears in \cite{Borzellino2}.

A {\em geodesic} is a smooth curve $\gamma$ which, in local charts,
satisfies the geodesic equation.  Any length-minimizing curve 
$\gamma$ between
two points is a geodesic, as can be seen by looking in a local model
around $\gamma(t)$.

\begin{lemma}
If $\Or$ is a complete Riemannian orbifold then for any
$p \in |\Or|$ and any 
$v \in C_p|\Or|$,
there is a unique
geodesic $\gamma \: : \: \R \rightarrow \Or$ such that
$|\gamma|(0) = p$
and $\gamma^\prime(0) = v$.
\end{lemma}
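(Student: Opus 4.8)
The plan is to reduce the statement to the existence and uniqueness of geodesics in a local model, and then to use completeness to propagate the local solution to all of $\R$. First, fix $p \in |\Or|$ and a local model $(\hU, G_p)$ around $p$, with a lift $\widehat{p} \in \hU$ of $p$; the tangent space $T_p\Or$ is the orbivector space $(T_{\widehat{p}}\hU, G_p)$, so a vector $v \in T_p\Or$ is represented by a vector $\widehat{v} \in T_{\widehat{p}}\hU$, well-defined up to the $G_p$-action. Since $G_p$ acts isometrically on $\hU$, the geodesic equation on $\hU$ is $G_p$-equivariant, so the standard ODE existence-and-uniqueness theorem for geodesics on the Riemannian manifold $\hU$ gives a geodesic $\widehat{\gamma} : (-\eps, \eps) \to \hU$ with $\widehat{\gamma}(0) = \widehat{p}$ and $\widehat{\gamma}^\prime(0) = \widehat{v}$; different choices of the lift $\widehat{v}$ produce geodesics differing by the $G_p$-action, hence projecting to the same curve $\gamma : (-\eps, \eps) \to \Or$. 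This defines a geodesic through $p$ with the prescribed initial velocity on a short interval, and local uniqueness holds because any other geodesic through $p$ with initial velocity $v$ lifts (on a possibly smaller interval) to a geodesic on $\hU$ with initial conditions $(\widehat{p}, \widehat{v})$ for some lift $\widehat{v}$, and these agree by uniqueness on manifolds.

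Next I would prove that the maximal interval of existence is all of $\R$, using completeness. Let $(a, b)$ be the maximal open interval on which $\gamma$ is defined, and suppose $b < \infty$. Since $\gamma$ is a geodesic its speed $|\gamma^\prime(t)| \equiv |v|$ is constant, so $\gamma$ is Lipschitz with respect to the length metric on $|\Or|$; therefore $|\gamma(t)|$ is a Cauchy sequence as $t \to b^-$, and by completeness of $|\Or|$ it converges to a point $q \in |\Or|$. Now choose a local model $(\widehat{W}, G_q)$ around $q$: for $t$ slightly less than $b$, $|\gamma(t)|$ lies in the chart, and the restriction of $\gamma$ to a small interval near such a $t$ lifts to a geodesic segment in $\widehat{W}$ whose endpoint data $(\widehat{\gamma}(t), \widehat{\gamma}^\prime(t))$ lie in the (relatively compact part of the) tangent bundle of $\widehat{W}$. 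The manifold-level geodesic through that data extends a definite amount of time $\delta > 0$ (depending only on the chart, since the initial point stays in a compact subset and the speed is fixed), so $\gamma$ extends past $b$, contradicting maximality. Hence $b = \infty$, and symmetrically $a = -\infty$.

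Finally, global uniqueness follows from local uniqueness by a standard connectedness argument: if $\gamma_1, \gamma_2 : \R \to \Or$ are two geodesics with the same initial data, the set $\{t : \gamma_1(t) = \gamma_2(t) \text{ and } \gamma_1^\prime(t) = \gamma_2^\prime(t)\}$ is nonempty, closed by continuity, and open by the local uniqueness statement applied at any point in it (lift both curves to a common local model near $\gamma_1(t) = \gamma_2(t)$ and invoke uniqueness on the manifold $\hU$); hence it is all of $\R$.

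The routine ODE theory on manifolds is standard; the one point requiring care — and the main obstacle — is the extension step across the limit point $q$, specifically the claim that a geodesic entering a local model can be continued by a uniform amount of time. This needs the observation that the speed is constant along a geodesic (so the relevant initial data stays in a compact subset of $T\widehat{W}$) together with the fact that the transition between overlapping local models near $q$ is by isometric embeddings, so lifted geodesic segments glue consistently; once these are in hand the continuation argument is the same as in the manifold case.
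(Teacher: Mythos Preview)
Your proposal is correct and follows essentially the same approach as the paper: the paper's proof consists of the single sentence that the argument is similar to the corresponding part of the Hopf--Rinow theorem in Kobayashi--Nomizu, and what you have written is precisely a careful orbifold-adapted version of that standard argument (local existence/uniqueness via lifts to a local model, extension via completeness and a Cauchy-sequence/compactness argument in a chart about the limit point, and global uniqueness by the usual open-and-closed argument). There is nothing to correct; you have simply supplied the details the paper leaves to the reference.
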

\begin{proof}
The proof is similar to the proof of the corresponding part of the
Hopf-Rinow theorem, as in
\cite[Theorem 4.1]{Kobayashi-Nomizu (1963)}.
\end{proof}

The {\em exponential map} 
of a complete orbifold $\Or$
is defined as follows. Given $p \in |\Or|$ and $v \in C_p|\Or|$,
let $\gamma : [0,1] \rightarrow \Or$ be the unique geodesic with
$|\gamma|(0) = p$ and $|\gamma^\prime|(0) = v$. Put
$|\exp|(p,v) = (p, |\gamma|(1)) \in |\Or| \times |\Or|$.
This has the local lifting property to define a smooth orbifold
map $\exp \: : \: T\Or \rightarrow \Or \times \Or$.

Given $p \in |\Or|$, the restriction of $\exp$ to $T_p \Or$
gives an orbifold map $\exp_p : T_p \Or \rightarrow \Or$ so that
$|\exp|(p,v) = (p, |\exp_p|(v))$.   

Similarly, if $\Or^\prime$ is a suborbifold of $\Or$ then
there is a {\em normal exponential map}
$\exp : N\Or^\prime \rightarrow \Or$. If
$\Or^\prime$ is compact then for small $\epsilon > 0$,
the restriction of $\exp$ to the open $\epsilon$-disk bundle in $N\Or^\prime$
is a diffeomorphism to $\Or \Big|_{N_\epsilon(|\Or^\prime|)}$.

\begin{remark} \label{remark2.10.5}
To prove Lemma \ref{lemma2.4}, we can give the proper surjective
submersion $f : \Or_1 \rightarrow \Or_2$ a Riemannian submersion
metric in the orbifold sense.  Given $p \in |\Or_2|$,
let $U$ be a small $\epsilon$-ball around $p$ and
let $(\widehat{U}, G_p)$ be a local model with $\widehat{U}/G_p = U$.
Pulling back $f \Big|_{f^{-1}(U)} : f^{-1}(U) \rightarrow U$
to $\widehat{U}$, we obtain 
a $G_p$-equivariant Riemannian submersion $\widehat{f}$ to
$\widehat{U}$. If $\widehat{p} \in \widehat{U}$ covers $p$ then
$\widehat{f}^{-1}(\widehat{p})$ is a compact orbifold on which
$G_p$ acts. 
Using the submersion structure,
its normal bundle $N \widehat{f}^{-1}(\widehat{p})$ is $G_p$-diffeomorphic to
$\widehat{f}^{-1}(\widehat{p}) \times T_{\widehat{p}} \widehat{U}$.
If $\epsilon$ is sufficiently small then the normal exponential
map on the $\epsilon$-disk bundle in $N \widehat{f}^{-1}(\widehat{p})$
provides a $G_p$-equivariant product neighborhood
$\widehat{f}^{-1}(\widehat{p}) \times \widehat{U}$ of
$\widehat{f}^{-1}(\widehat{p})$;
cf. \cite[Pf. of Theorem 9.42]{Besse}.
This passes to a diffeomorphism between $f^{-1}(U)$ and
$(\widehat{f}^{-1}(\widehat{p}) \times \widehat{U})//G_p$.
\end{remark}

If $f \: : \: \Or_1 \rightarrow \Or_2$ is a local diffeomorphism 
and $g_2$ is a Riemannian metric on $\Or_2$ then there is a pullback
Riemannian metric $f^* g_2$ on $\Or_1$, which makes $f$ into a local
isometry.

We now give a useful criterion for a local isometry to be a covering map.

\begin{lemma} \label{lemma2.11}
If $f \: : \: \Or_1 \rightarrow \Or_2$ is a local isometry,
$\Or_1$ is complete and $\Or_2$ is connected then $f$ is a covering map.
\end{lemma}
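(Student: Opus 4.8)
The plan is to mimic the classical Riemannian covering-space criterion (as in do Carmo or Kobayashi–Nomizu), adapting it to the orbifold setting using the tools already developed. First I would fix a basepoint $q \in |\Or_2|$ and show that the fibers of $|f|$ are discrete and that $|f|$ is surjective. Discreteness is local: around any $p_1 \in |\Or_1|$ with $|f|(p_1) = q$, choose local models $(\hU_1, G_{p_1})$ and $(\hU_2, G_q)$ so that $\hf \colon \hU_1 \to \hU_2$ is a local Riemannian isometry between manifolds; since local isometries between Riemannian manifolds are local diffeomorphisms, each fiber is isolated in its model, hence discrete in $|\Or_1|$. For surjectivity and for the main geodesic-lifting argument I would pass to the orthonormal frame bundles $F\Or_1$ and $F\Or_2$, which by Subsection~\ref{subsect2.5} are smooth manifolds carrying canonical $O(n)$-invariant Riemannian metrics, with a locally free $O(n)$-action and quotient spaces $|\Or_1|$, $|\Or_2|$. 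A local isometry $f$ induces an $O(n)$-equivariant local isometry $Ff \colon F\Or_1 \to F\Or_2$; completeness of $\Or_1$ translates into metric completeness of $F\Or_1$ (the canonical metric on the frame bundle is complete iff the base orbifold is complete, by the same Hopf–Rinow argument already invoked for the Lemma before Remark~\ref{remark2.10.5}).

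The heart of the argument is then the classical fact that a local isometry from a complete Riemannian manifold is a covering map — applied to $Ff \colon F\Or_1 \to F\Or_2$. Concretely: given a point $\hat q \in F\Or_2$ and a point $\hat p_1$ in the fiber, any geodesic in $F\Or_2$ emanating from $\hat q$ lifts uniquely to a geodesic in $F\Or_1$ emanating from $\hat p_1$ (local isometries send geodesics to geodesics, and completeness of $F\Or_1$ guarantees the lift extends for all time). Using the exponential maps of $F\Or_1$ and $F\Or_2$ and a fixed normal ball around $\hat q$, one builds, over that ball, a diffeomorphism from $\exp_{\hat q}^{-1}$ of the ball crossed with the fiber onto $(Ff)^{-1}$ of the ball; this exhibits $Ff$ as an $O(n)$-equivariant covering map, and in particular shows $Ff$ (hence $|f|$) is surjective since $F\Or_2$ is connected and the image is open and closed.

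To descend from $Ff$ to $f$, I would use the $O(n)$-equivariance: a normal ball $V \subset F\Or_2$ around $\hat q$ can be chosen $G_q$-invariant (shrink it), and then $(Ff)^{-1}(O(n)\cdot V)$ is an $O(n)$-invariant set on which $Ff$ is a trivial $O(n)$-equivariant covering; quotienting the $O(n)$-actions and recalling that local charts for $F\Or$ have the form $O(n) \times_G \hU$, this produces, around $q \in |\Or_2|$, a local model $(\hU, G_q)$ and a decomposition of $f^{-1}$ of a neighborhood of $q$ into pieces each of the form $\hU/\!/G'$ with $G' \subseteq G_q$, mapping by the obvious projection — exactly the local form of a covering map of orbifolds described after Lemma~\ref{lemma2.4}. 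Invoking that earlier discussion (a proper surjective local diffeomorphism to a connected orbifold, or directly the orbifiber-bundle-with-zero-dimensional-fiber definition) finishes the proof.

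The step I expect to be the main obstacle is the bookkeeping in descending from the frame-bundle statement to the orbifold statement: one must check that the trivialization of $Ff$ over a normal ball can be made $O(n) \times G_q$-equivariant in a compatible way, so that it passes to the quotient and genuinely yields the local model structure of an orbifold covering rather than merely a continuous covering of underlying spaces. Everything else — geodesic lifting, completeness of the frame bundle, the manifold covering criterion — is standard once transported to $F\Or_1 \to F\Or_2$.
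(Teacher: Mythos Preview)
Your argument is sound but takes a different route from the paper. The paper simply points to Kobayashi--Nomizu IV.4.6 and implicitly means: run the classical geodesic-lifting argument directly on the orbifolds, using the exponential map $\exp_{p_1}\colon T_{p_1}\Or_1 \to \Or_1$ already constructed in Subsection~\ref{subsect2.5}. Given $p_2$ with a normal local model $(\hU_2, G_{p_2})$ and $p_1 \in |f|^{-1}(p_2)$, the injective homomorphism $G_{p_1} \hookrightarrow G_{p_2}$ together with the relation $f \circ \exp_{p_1} = \exp_{p_2} \circ df_{p_1}$ and completeness of $\Or_1$ produces the required local model $(\hU_2, G_{p_1})$ around $p_1$ in one step, with no descent from an auxiliary manifold needed. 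Your frame-bundle approach trades that direct computation for the descent bookkeeping you correctly flag; it works, but note one slip: $F\Or_2$ need not be connected when $\Or_2$ is (for an orientable orbifold it has two components). Surjectivity of $Ff$ still follows because its image is $O(n)$-invariant, open, and closed, and the left $O(n)$-action is transitive on each fiber of $F\Or_2 \to |\Or_2|$, so the image is fiber-saturated and projects onto the connected space $|\Or_2|$. The paper's route is shorter here since the orbifold exponential map is already in hand; yours is a clean general reduction principle for transporting manifold statements to orbifolds.
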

\begin{proof}
The proof is along the lines of the corresponding manifold statement,
as in \cite[Theorem 4.6]{Kobayashi-Nomizu (1963)}.
\end{proof}

There is an orbifold version of the de Rham decomposition theorem.

\begin{lemma} \label{lemma2.12}
Let $\Or$ be connected, simply-connected and complete.
Given $p \in |\Or|_{reg}$, suppose that there is an orthogonal splitting
$T_p\Or = E_1 \oplus E_2$ which is invariant under holonomy around loops
based at $p$. Then there is an isometric splitting $\Or = \Or_1 \times \Or_2$
so that if we write $p = (p_1, p_2)$ then $T_{p_1} \Or_1 = E_1$ and
$T_{p_2} \Or_2 = E_2$.
\end{lemma}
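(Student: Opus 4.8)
The plan is to reduce to the classical de Rham theorem by passing to the orthonormal frame bundle $F\Or$, which is a genuine smooth manifold carrying a natural Riemannian metric and a locally free $O(n)$-action. Since $\Or$ is complete, $F\Or$ is a complete Riemannian manifold; since $\Or$ is connected and simply-connected, so is $F\Or$ (an $O(n)$-orbibundle over a simply-connected orbifold has connected and simply-connected total space, because $\pi_1$ of the fiber $O(n)$ maps onto $\pi_1(F\Or)$ via the fibration long exact sequence and the base contributes nothing — one checks the relevant lifting statement using special curves as in Subsection \ref{subsect2.2}). First I would show that the holonomy-invariant splitting $T_p\Or = E_1\oplus E_2$ at the regular point $p$ propagates: using parallel transport along all paths in $|\Or|_{reg}$ (and the fact that $|\Or|_{reg}$ is open, dense, and connected), the splitting extends to a parallel splitting $T\Or|_{|\Or|_{reg}} = \mathcal{E}_1\oplus\mathcal{E}_2$ of orthogonal, involutive (by parallelism) distributions. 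By parallelism again this extends across the singular locus: in a local model $(\hU,G)$, the pulled-back distributions are $G$-invariant parallel distributions on $\hU$, invariant under the orbifold holonomy, hence define smooth orbifold subbundles $\mathcal{E}_1,\mathcal{E}_2\subset T\Or$.

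Next I would invoke the orbifold Frobenius theorem, Lemma \ref{lemma2.5}, to integrate $\mathcal{E}_1$ and $\mathcal{E}_2$ to maximal suborbifolds through $p$; call them $\Or_1$ and $\Or_2$, and give each the induced (complete, by a standard argument using completeness of $\Or$ and the fact that the leaves are totally geodesic since $\mathcal{E}_i$ is parallel) Riemannian metric. Define a map $F:\Or_1\times\Or_2\to\Or$ by $F(q_1,q_2) = \exp_{q_1}$ followed by the "second-factor" geodesic flow — more precisely, using the normal exponential maps of the leaves and the parallelism of the splitting, one builds $F$ locally as the product chart and checks it is globally well-defined. The parallel, orthogonal, involutive splitting makes $F$ a local isometry at $p$; parallelism guarantees it stays a local isometry everywhere (the argument is identical to the manifold case, e.g. as in Kobayashi–Nomizu, carried out in local models). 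Since $\Or_1\times\Or_2$ is complete and $\Or$ is connected, Lemma \ref{lemma2.11} shows $F$ is a covering map; since $\Or$ is simply-connected (and $\Or_1\times\Or_2$ is connected), $F$ is a diffeomorphism, hence an isometry. The identifications $T_{p_1}\Or_1 = E_1$, $T_{p_2}\Or_2 = E_2$ hold by construction.

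The main obstacle I expect is the behavior near the singular locus: one must verify that the distributions $\mathcal{E}_i$, defined a priori only on the manifold part $|\Or|_{reg}$ via ordinary parallel transport, actually extend to smooth orbifold subbundles of $T\Or$ and that the local group $G_p$ at each singular point preserves the splitting $T_p\Or = E_1\oplus E_2$ of the orbivector space. This is where the hypothesis of holonomy-invariance is essential, but "holonomy" for an orbifold includes the action of the local groups $G_p$ (which appear as holonomy around small loops encircling the singular strata in $|\Or|_{reg}$, via the horizontal distribution on $F\Or$); so a loop in $|\Or|_{reg}$ that is contractible in $|\Or|$ but encircles a singular point contributes a holonomy element lying in (the image of) $G_p$, and invariance of the splitting under such loops forces $G_p$ to respect it. Making this precise — i.e., relating the orbifold holonomy group at $p$ to the local groups and to the loop holonomies at the regular base point — is the technical heart of the argument, but it is exactly the orbifold-holonomy picture set up via $T^HF\Or$ earlier in Subsection \ref{subsect2.5}, so the pieces are all in place. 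Once the splitting is known to descend to honest orbifold subbundles, the rest follows the manifold proof verbatim, working in local models.
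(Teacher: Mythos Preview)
Your core strategy matches the paper's: extend the splitting to parallel involutive distributions on $\Or$, integrate via Lemma~\ref{lemma2.5} to get leaves $\Or_1,\Or_2$ through $p$, build a local isometry, and apply Lemma~\ref{lemma2.11}. Your discussion of why the local groups must preserve the splitting (via holonomy of loops in $|\Or|_{reg}$ encircling singular strata) is more explicit than the paper's terse treatment of the same point.

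The gap is in the construction of your map. You define $F:\Or_1\times\Or_2\to\Or$ by patching local product charts, but you do not explain why these charts glue coherently over $\Or_1\times\Or_2$, a space whose fundamental group you have not controlled. Concretely: identifying the $\mathcal{E}_2$-leaf through $q_1\in\Or_1$ with $\Or_2$ requires parallel transport along a path in $\Or_1$ from $p$ to $q_1$, and holonomy invariance only says such transport preserves $E_2$ as a \emph{subspace}, not that it acts trivially on it, so different paths may yield different identifications. The paper (following Kobayashi--Nomizu, which you cite) goes in the opposite direction: given $q\in|\Or|$, develop a curve $\gamma$ from $p$ to $q$ into $T_p\Or$, project to $E_1$ and $E_2$, and undevelop into $\Or_1$ and $\Or_2$; path-independence (their Lemma~IV.6.6) uses only the simple-connectedness of $\Or$, which is a hypothesis. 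The paper then separately checks that $\Or_1,\Or_2$ are simply-connected before concluding that the covering $f:\Or\to\Or_1\times\Or_2$ is a diffeomorphism. Your endgame would be slicker if $F$ existed --- a connected cover of the simply-connected $\Or$ is automatically trivial, so you would bypass that check --- but the well-definedness you skip is doing essentially the same work.

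Minor: your opening claim that $F\Or$ is simply-connected is unjustified --- the fibration long exact sequence only shows $\pi_1(F\Or)$ is a quotient of $\pi_1(O(n))\cong\Z_2$, not that it vanishes. Since you abandon the frame-bundle reduction immediately, this is harmless; just drop that paragraph.
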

\begin{proof}
The parallel transport of $E_1$ and $E_2$ defines involutive distributions
$D_1$ and $D_2$, respectively,
on $\Or$. Let $\Or_1$ and $\Or_2$ be maximal integrable
suborbifolds through $p$ for $D_1$ and $D_2$, respectively. 

Given a smooth curve $\gamma : [a,b] \rightarrow \Or$ starting at $p$,
there is a development $C : [a,b] \rightarrow T_p\Or$ of $\gamma$, as
in \cite[Section III.4]{Kobayashi-Nomizu (1963)}. Let $C_1 : [a,b] \rightarrow E_1$ and
$C_2 : [a,b] \rightarrow E_2$ be the orthogonal projections of $C$.
Then there are undevelopments $\gamma_1 : [a,b] \rightarrow \Or_1$ and
$\gamma_2 : [a,b] \rightarrow \Or_2$ of $C_1$ and $C_2$, respectively.

As in \cite[Lemma IV.6.6]{Kobayashi-Nomizu (1963)}, 
one shows that 
$(|\gamma_1|(b), |\gamma_2|(b))$
only depends on $|\gamma|(b)$.
In this way, one defines a map
$f : \Or \rightarrow \Or_1 \times \Or_2$. As in 
\cite[p. 192]{Kobayashi-Nomizu (1963)}, one shows that $f$ is a local isometry.
As in \cite[p. 188]{Kobayashi-Nomizu (1963)}, one shows that 
$\Or_1$ and $\Or_2$
are simply-connected. The lemma now follows from Lemma \ref{lemma2.11}.
\end{proof}

The regular part $|\Or|_{reg}$ inherits a Riemannian metric.
The corresponding volume form equals the $n$-dimensional Hausdorff 
measure on $|\Or|_{reg}$. We
define $\vol(\Or)$, or $\vol(|\Or|)$, 
to be the volume of the Riemannian manifold
$|\Or|_{reg}$, which equals the $n$-dimensional Hausdorff mass
of the metric space $|\Or|$. 

If $f \: : \: \Or_1 \rightarrow \Or_2$ is a diffeomorphism between
Riemannian orbifolds 
$(\Or_1, g_1)$ and $(\Or_2, g_2)$
then we can
define the $C^K$-distance between $g_1$ and $f^* g_2$,
using local models for $\Or_1$.

A {\em pointed orbifold} $(\Or, p)$ consists of an orbifold $\Or$ and
a basepoint $p \in |\Or|$. Given $r > 0$, we can consider the
pointed suborbifold $\check{B}(p, r) = \Or \Big|_{B(p,r)}$. 

\begin{definition}
Let $(\Or_1, p_1)$ and $(\Or_2, p_2)$ be pointed connected 
orbifolds with complete
Riemannian metrics $g_1$ and $g_2$ that are
$C^K$-smooth.  (That is, the orbifold transition maps are $C^{K+1}$
and the metric tensor in a local model is $C^K$.) 
Given $\epsilon > 0$, 
we say
that the $C^K$-distance between $(\Or_1, p_1)$ and $(\Or_2, p_2)$ is bounded
above by $\epsilon$ if
there is a $C^{K+1}$-smooth map $f \: : \: 
\check{B}(p_1, \epsilon^{-1}) \rightarrow \Or_2$ 
that is a diffeomorphism onto its image, such that
\begin{itemize}
\item The $C^K$-distance between $g_1$ and
$f^* g_2$ on $B(p_1, \epsilon^{-1})$ is at most $\epsilon$, and
\item $d_{|\Or_2|}(|f|({p}_1), {p}_2) \: \le \: \epsilon$.
\end{itemize}

Taking the infimum of all such possible $\epsilon$'s defines 
the $C^K$-distance between $(\Or_1, p_1)$ and $(\Or_2, p_2)$.
\end{definition}

\begin{remark} \label{remark2.14}
It may seem more natural to
require $|f|$ to be basepoint-preserving.  However, this would cause problems.
For example, given $k \: \ge \: 2$,
take $\Or = \R^2//\Z_k$.
Let $\pi \: : \: \R^2 \rightarrow |\Or|$ be the quotient map.
We would like to say that if $i$ is large then
the pointed orbifold $(\Or, \pi(i^{-1}, 0))$ is close to
$(\Or, \pi(0, 0))$. However, there is no {\em basepoint-preserving} map
$f \: : \: \check{B}(\pi(i^{-1}, 0), 1) \rightarrow 
(\Or, \pi(0,0))$ which is a diffeomorphism onto its image, due to the
difference between the local groups at the two basepoints.
\end{remark}

\subsection{Critical point theory for distance functions} \label{subsect2.6}

Let $\Or$ be a complete Riemannian orbifold and let $Y$ be a closed
subset of $|\Or|$. A point $p \in |\Or| - Y$ is {\em noncritical} if
there is a nonzero $G_p$-invariant
vector $v \in T_p \Or \cong T_{\widehat{p}} \hU$ making an angle strictly
larger than $\frac{\pi}{2}$ with any lift to $T_{\widehat{p}} \hU$
of the initial velocity of any
minimizing geodesic segment from $p$ to $Y$. 

In the next lemma we give an equivalent formulation in terms
of noncriticality on $|\Or|$.

\begin{lemma} \label{lemma2.15}
A point $p \in |\Or| - Y$ is noncritical if and only if
there is some $w \in C_p|\Or| \cong
T_{\widehat{p}}\hU/G_p$ 
so that the comparison angle between $w$ and
any minimizing geodesic from $p$ to $Y$ is strictly greater than
$\frac{\pi}{2}$.
\end{lemma}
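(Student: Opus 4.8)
The plan is to transport both conditions into the local model at a lift of $p$, where they become statements about vectors in a Euclidean space carrying an orthogonal action of a finite group, and then to pass from a general vector to a $G_p$-invariant one by averaging.

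First I would fix a local model $(\hU, G_p)$ around $p$ with a lift $\widehat{p} \in \hU$ of $p$, so that $T_p\Or = (T_{\widehat{p}}\hU, G_p)$ and $C_p|\Or| = T_{\widehat{p}}\hU/G_p$, the group $G_p$ acting orthogonally on $T_{\widehat{p}}\hU \cong \R^n$. A minimizing geodesic segment in $|\Or|$ from $p$ to $Y$ is, near $p$, a geodesic in the orbifold sense, hence lifts to a geodesic in $\hU$ issuing from $\widehat{p}$, the lift being unique up to the $G_p$-action. Let $\widehat{S} \subset T_{\widehat{p}}\hU$ be the set of all unit initial velocities of all such lifts, taken over all minimizing geodesics from $p$ to $Y$; then $\widehat{S}$ is $G_p$-invariant, and its image in the unit sphere of $T_{\widehat{p}}\hU$ modulo $G_p$ is precisely the set of initial directions at $p$ of minimizing geodesics from $p$ to $Y$. (If $Y = \emptyset$ there are no such geodesics and the distance function is undefined, so I would assume $Y \ne \emptyset$, whence $\widehat{S} \ne \emptyset$.)

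With this dictionary in place I would restate the two conditions. By definition, $p$ is noncritical iff there is a nonzero $G_p$-invariant $v \in T_{\widehat{p}}\hU$ with $\langle v, s\rangle < 0$ for every $s \in \widehat{S}$. On the other hand, for a nonzero $u \in T_{\widehat{p}}\hU$ the angle in $C_p|\Or| = T_{\widehat{p}}\hU/G_p$ between the class of $u$ and the direction of a minimizing geodesic with lifted initial velocity $s \in \widehat{S}$ equals $\min_{g \in G_p}\angle(gu, s)$, because $C_p|\Or|$ carries the quotient metric of the flat metric on $T_{\widehat{p}}\hU$. Since $\widehat{S}$ is $G_p$-invariant, requiring $\min_{g}\angle(gu, s) > \tfrac{\pi}{2}$ for all $s \in \widehat{S}$ is the same as requiring $\angle(u, s) > \tfrac{\pi}{2}$, i.e. $\langle u, s\rangle < 0$, for all $s \in \widehat{S}$. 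Thus the right-hand side of the lemma holds iff there exists \emph{some} nonzero $u \in T_{\widehat{p}}\hU$ — not necessarily $G_p$-invariant — with $\langle u, s\rangle < 0$ for all $s \in \widehat{S}$.

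It then remains to show that such a $u$ exists iff a $G_p$-invariant such $v$ exists. One implication is trivial. For the other, given $u$ with $\langle u, s\rangle < 0$ for all $s \in \widehat{S}$, I would set $v = \sum_{g \in G_p} gu$, which is $G_p$-invariant; for each $s \in \widehat{S}$ orthogonality of the action gives $\langle v, s\rangle = \sum_{g}\langle u, g^{-1}s\rangle < 0$, since every $g^{-1}s$ again lies in $\widehat{S}$. In particular $v \ne 0$ (using $\widehat{S} \ne \emptyset$), and $v$ satisfies the needed angle bound. I expect the only delicate part to be the first two paragraphs, namely pinning down that minimizing geodesics to $Y$ lift to the local model with initial directions forming exactly the $G_p$-orbits in $\widehat{S}$, and that the tangent cone $C_p|\Or|$ really carries the quotient metric so that its angles are $\min_{g \in G_p}\angle(gu, s)$; once that identification is granted, the equivalence is the elementary averaging argument above, requiring no further Alexandrov-geometry input.
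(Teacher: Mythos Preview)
Your proof is correct and follows essentially the same approach as the paper: lift to the local model, observe that the set $\widehat{S}$ of lifted initial velocities is $G_p$-invariant, and then average a (not necessarily invariant) preimage $u$ of $w$ over $G_p$ to obtain the required invariant vector $v$, the nonvanishing of $v$ coming from $\langle v,s\rangle<0$. Your write-up is in fact a bit more explicit than the paper's about why the angle in $C_p|\Or|$ is $\min_{g\in G_p}\angle(gu,s)$ and how the $G_p$-invariance of $\widehat{S}$ reduces the quotient-angle condition to the pointwise condition $\langle u,s\rangle<0$.
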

\begin{proof}
Suppose that $p$ is noncritical. Given $v$ as in the
definition of noncriticality, put $w = vG_p$.

Conversely, suppose that
$w \in C_p|\Or| \cong
T_{\widehat{p}}\hU/G_p$ 
is such that the comparison angle between $w$ and
any minimizing geodesic from $p$ to $Y$ is strictly greater than
$\frac{\pi}{2}$. Let $v_0$ be a preimage of $w$ in
$T_{\widehat{p}}\hU$. Then $v_0$ makes an angle greater than $\frac{\pi}{2}$
with 
any lift to $T_{\widehat{p}}\hU$ of
the initial velocity of any minimizing geodesic from $p$ to $Y$.
As the set of such initial velocities is $G_p$-invariant, 
for any $g \in G_p$ the vector
$v_0 g$ also makes an angle greater than $\frac{\pi}{2}$
with any lift to $T_{\widehat{p}}\hU$ of
the initial velocity of any minimizing geodesic from $p$ to $Y$.
As $\{v_0 g\}_{g \in G_p}$ lies in an open half-plane, we can take
$v$ to be the nonzero vector $\frac{1}{|G_p|} \sum_{g \in G_p} v_0 g$.
\end{proof}

We now prove the main topological implications of noncriticality.

\begin{lemma} \label{lemma2.16}
If $Y$ is compact and
there are no critical points in the set $d_Y^{-1}(a,b)$ then there is a
smooth vector field $\xi$ on $\Or \Big|_{d_Y^{-1}(a,b)}$ so that 
$d_Y$ has uniformly positive directional derivative in the $\xi$ direction.
\end{lemma}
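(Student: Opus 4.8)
The plan is to construct the vector field $\xi$ locally around each point $p \in d_Y^{-1}(a,b)$ using the noncriticality hypothesis, and then patch the local vector fields together with a partition of unity (Lemma \ref{lemma2.6}). First I would fix $p \in d_Y^{-1}(a,b)$. By hypothesis $p$ is noncritical, so by the definition of noncriticality there is a nonzero $G_p$-invariant vector $v_p \in T_p\Or \cong T_{\widehat p}\hU$ making an angle strictly greater than $\frac{\pi}{2}$ with any lift to $T_{\widehat p}\hU$ of the initial velocity of any minimizing geodesic from $p$ to $Y$. Since there are only finitely many minimizing geodesics issuing from a neighborhood (after shrinking, using that $Y$ is compact, together with the first variation formula), the angle condition is in fact a strict open condition: there is $\eps_p > 0$ and a neighborhood $U_p$ of $p$ such that a $G_{p'}$-invariant extension of $v_p$ (obtained by averaging a parallel transport of $v_p$ in the local model $(\widehat U_p, G_p)$ over $G_{p'}$ for $p' \in U_p$) makes an angle at least $\frac{\pi}{2} + \eps_p$ with the initial velocity of every minimizing geodesic from $p'$ to $Y$. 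By the first variation formula this $G$-invariant local vector field $\xi_p$ on $\Or|_{U_p}$ has $d_Y$ strictly increasing in the $\xi_p$-direction, with a uniform positive lower bound on $U_p$ (shrinking $U_p$ if necessary to have compact closure in $d_Y^{-1}(a,b)$).

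Next I would choose a locally finite cover $\{U_\al\}$ of $d_Y^{-1}(a,b)$ by such neighborhoods, with associated $G$-invariant local vector fields $\xi_\al$ on which $d_Y$ has directional derivative bounded below by some $c_\al > 0$, and a subordinate partition of unity $\{\rho_\al\}$ as in Lemma \ref{lemma2.6}. Set $\xi = \sum_\al \rho_\al \xi_\al$; this is a smooth ($G$-invariant in each local model, hence a genuine orbifold) vector field on $\Or|_{d_Y^{-1}(a,b)}$. The point of the construction is that although $d_Y$ need not be differentiable, it is locally Lipschitz, and for a fixed direction the one-sided directional derivative $\liminf_{t\to 0^+} \frac{1}{t}(d_Y(\exp_q(t\,\xi_\al(q))) - d_Y(q))$ is controlled from below by the first variation formula applied to each minimizing geodesic from $q$ to $Y$. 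Since a convex combination of vectors each making angle $> \frac{\pi}{2} + \eps$ with a fixed set of unit vectors still makes angle bounded away from $\frac{\pi}{2}$ with that set (the half-space $\{u : \langle u, w\rangle < 0\}$ is convex and the relevant inequalities are uniform on the compact set of initial velocities over $\supp \rho_\al \cap U_\al$), the combined field $\xi$ still has $d_Y$ with uniformly positive directional derivative on any compact subset; since every point of $d_Y^{-1}(a,b)$ has a neighborhood meeting only finitely many $U_\al$, uniform positivity holds locally and hence — after the standard argument — on all of $d_Y^{-1}(a,b)$.

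I would carry this out in the following order: (1) the local analysis at a single noncritical point, upgrading the definitional angle inequality to a uniform strict inequality on a neighborhood, using compactness of $Y$ and the first variation formula; (2) the construction of the $G$-invariant local vector field $\xi_\al$ and the verification, via first variation, that $d_Y$ increases along it with a uniform rate; (3) the patching via partition of unity and the convexity observation showing the rate survives the convex combination.

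The main obstacle is step (1): making precise the passage from the pointwise angle condition at $p$ to a uniform angle condition on a neighborhood. The set of minimizing geodesics from a nearby point $q$ to $Y$ can behave discontinuously (new minimizers can appear), so one must argue that their initial velocities at $q$ lie, for $q$ near $p$, in a small neighborhood of the set of initial velocities at $p$ — this follows from compactness of $Y$, completeness, and the limit-of-minimizing-geodesics-is-minimizing argument, but it must be run inside a local model $(\widehat U_p, G_p)$ and made equivariant. Once that is in hand, the first variation formula and the partition-of-unity patching are routine, exactly as in the manifold case (cf. the critical point theory of Grove-Shiohama), the only orbifold-specific points being the use of $G$-invariant vectors and $G$-invariant partitions of unity, both already available from the preceding subsections.
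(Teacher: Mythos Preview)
Your approach is correct and essentially identical to the paper's (which simply refers to \cite{Cheeger (1991)} and sketches the same local-construction-plus-partition-of-unity argument via Lemma~\ref{lemma2.6}). One small inaccuracy: the parenthetical about ``finitely many minimizing geodesics'' is not right in general---the correct statement, which you in fact invoke later, is that the set of initial velocities of minimizers to $Y$ is compact and varies upper-semicontinuously, and that is all you need for the openness of the angle condition.
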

\begin{proof}
The proof is similar to that of \cite[Lemma 1.4]{Cheeger (1991)}.
For any $p \in |\Or| - Y$, there are a precompact neighborhood 
$U_p$ of $p$ in $|\Or| - Y$ and a smooth vector field
$V_p$ on $U_p$ so that $d_Y$ has positive directional derivative
in the $V_p$ direction, on $U_p$. Let $\{U_{p_i}\}$ be a finite
collection that covers $d_Y^{-1}(a,b)$. From Lemma \ref{lemma2.6},
there is a subordinate partition of unity $\{\rho_i\}$.
Put $\xi = \sum_i \rho_i V_i$.
\end{proof}

\begin{lemma} \label{lemma2.17}
If $Y$ is compact and
there are no critical points in the set $d_Y^{-1}(a,b)$ then
$\Or \Big|_{d_Y^{-1}(a,b)}$ is diffeomorphic to a product orbifold
$\R \times \Or^\prime$.
\end{lemma}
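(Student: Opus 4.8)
The plan is to run the classical gradient-flow argument of critical point theory, the two orbifold-specific points being: ensuring the relevant ``slab'' $d_Y^{-1}(a,b)$ is relatively compact so that a gradient-like flow can be completed, and extracting the product \emph{orbifold} structure from that flow rather than from level sets of $d_Y$ (which is only Lipschitz, hence not a coordinate function).

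Since $Y$ is compact, $d_Y^{-1}[a,b]$ is closed and bounded in $|\Or|$; as the underlying space of a complete Riemannian orbifold is a complete, locally compact length space, Hopf--Rinow gives that $d_Y^{-1}[a,b]$ is compact, so $M := \Or|_{d_Y^{-1}(a,b)}$ is relatively compact. Apply Lemma \ref{lemma2.16} to get a smooth vector field $\xi$ on $M$ ($G$-invariant in local models) along which $d_Y$ has positive directional derivative, bounded below by a positive constant on each compact subset of $|M|$; in particular $\xi$ vanishes nowhere. Replace $\xi$ by $\xi' := \rho \cdot \xi/|\xi|$, where $\rho \colon |M| \to (0,\infty)$ is smooth with $\rho \le \dist(\cdot, \partial(d_Y^{-1}(a,b)))$. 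Then $\xi'$ is a positive multiple of $\xi$, so $d_Y$ still strictly increases along its integral curves; and since $|\xi'| \le \dist(\cdot, \partial(d_Y^{-1}(a,b)))$, no integral curve of $\xi'$ approaches the frontier of $M$ in finite time, so the orbifold flow $\Phi \colon \R \times M \to M$ of $\xi'$ is complete.

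The flow $\Phi$ is a free $\R$-action on $M$: it has no fixed points ($\xi' \ne 0$) and no periodic orbits ($d_Y$ is strictly monotone along orbits). Using that $d_Y$ is continuous, strictly monotone along orbits, and increases at a rate bounded below on each compact slab $d_Y^{-1}[a',b']$ (combining Lemma \ref{lemma2.16} with $\min \rho > 0$ there), and that an orbit arc joining two points of such a slab stays inside it, one checks that $\{\, t : \Phi_t(K)\cap K \ne \emptyset \,\}$ is compact for every compact $K \subset |M|$; hence the action is proper. (The same monotonicity gives $d_Y(\Phi_t(q)) \to a$ as $t \to -\infty$ and $\to b$ as $t \to +\infty$, so every orbit meets each level $d_Y^{-1}(c)$, $c \in (a,b)$, exactly once.) A free, proper, smooth $\R$-action on an orbifold yields a product $M \cong \R \times \Or'$, with $\Or' := M//\R$ an orbifold, by the frame-bundle argument familiar from the manifold case: the full linear frame bundle $\mathcal F M$ inherits a free proper $\R$-action commuting with the frame-group action, its quotient is a manifold with locally free frame-group action (defining $\Or'$), and the principal $\R$-bundle $\mathcal F M \to \mathcal F M/\R$ is trivial because $\R$ is contractible, equivariantly; dividing by the frame group gives the splitting. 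One may identify $\Or'$ with $\Or|_{d_Y^{-1}(c)}$, which is compact. The main obstacle is precisely the non-smoothness of $d_Y$: it prevents straightening $d_Y$ into a coordinate and makes its level sets not manifestly suborbifolds, which is why the product structure must be read off from the flow via the $\R$-action; the relative compactness of the slab is what allows the flow to be completed and the action to be shown proper.
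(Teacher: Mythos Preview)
Your approach is correct and takes a genuinely different route from the paper. The paper's proof also starts from the vector field $\xi$ of Lemma \ref{lemma2.16}, but then deals head-on with the non-smoothness issue you flag: it picks $c\in(a,b)$, observes that $\Or|_{d_Y^{-1}(c)}$ is a Lipschitz-regular suborbifold transverse to $\xi$, and smooths it stratum-by-stratum in local models (from lower-dimensional strata upward) to obtain a smooth suborbifold $\Or'$ still transverse to $\xi$; flowing along $\xi$ then gives the product. Your argument sidesteps this explicit smoothing by completing the flow and invoking the principle that a free proper smooth $\R$-action on an orbifold yields a product.

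One step deserves more care: the lift of the flow to a frame bundle. With the \emph{full linear} frame bundle (structure group $GL(n,\R)$) the flow does lift via the differential, but the assertion that the resulting principal $\R$-bundle is \emph{equivariantly} trivial is not automatic for a non-compact structure group---the averaging trick fails. With the \emph{orthonormal} frame bundle (structure group $O(n)$, as in Subsection \ref{subsect2.5}) the flow does not lift via the differential since it is not by isometries, but it \emph{does} lift via parallel transport (the flow of the horizontal lift of $\xi'$), and then $O(n)$-averaging of any section yields an equivariant trivialization. Alternatively one can bypass frame bundles altogether and build a smooth orbifold function $\tau\colon M\to\R$ with $\xi'(\tau)=1$ by patching local cross-sections with a partition of unity on $M//\R$; then $(\tau,\pi)\colon M\to\R\times\Or'$ is the desired diffeomorphism. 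Any of these fixes completes your argument, but note that the phrase ``trivial because $\R$ is contractible, equivariantly'' is concealing precisely a smooth cross-section construction---the very thing the paper carries out concretely on the level set.
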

\begin{proof}
Construct $\xi$ as in Lemma \ref{lemma2.16}.
Choose $c \in (a,b)$. Then $\Or \Big|_{d_Y^{-1}(c)}$ is a
Lipschitz-regular suborbifold of $\Or$ which is transversal to
$\xi$, as can be seen in local models.
Working in local models, inductively from lower-dimensional strata
of $|\Or|$ to higher-dimensional strata,
we can slightly smooth $\Or \Big|_{d_Y^{-1}(c)}$ to form a smooth
suborbifold $\Or^\prime$ of $\Or$ which is transverse to $\xi$.
Flowing  (which is defined using local
models) in the direction of $\xi$ gives an orbifold diffeomorphism between
$\Or \Big|_{d_Y^{-1}(a,b)}$ and $\R \times \Or^\prime$.
\end{proof}

\subsection{Smoothing functions} \label{subsect2.7}

Let $\Or$ be a Riemannian orbifold.
Let $F$ be a Lipschitz function on $|\Or|$. Given $p \in |\Or|$,
we define the generalized gradient $\nabla^{gen}_p F \subset
T_p\Or$ as follows. Let $(\hU, G)$ be a local model around $p$.
Let $\widehat{F}$ be the lift of $F$ to $\hU$.
Choose $\widehat{p} \in \hU$ covering $p$.
Let $\epsilon > 0$ be small enough so that $\exp_{\widehat{p}} : 
B(0, \epsilon) \rightarrow \hU$ is a diffeomorphism onto its image. If
$\widehat{x} \in B(\widehat{p}, \epsilon)$ is a point of differentiability
of $\widehat{F}$ then compute $\nabla_{\widehat{x}} \widehat{F}$ and
parallel transport it along the minimizing geodesic to $\widehat{p}$.
Take the closed convex hull of the vectors so obtained and then take
the intersection as $\epsilon \rightarrow 0$. 
This gives a closed
convex $G_p$-invariant subset of $T_{\widehat{p}} \hU$,
or equivalently a closed convex subset of $T_p\Or$; we denote this set by
$\nabla^{gen}_p F$. The union $\bigcup_{p \in |\Or|} 
\nabla^{gen}_p F \subset T\Or$ will be denoted $\nabla^{gen}F$.

\begin{lemma} \label{lemma2.18}
Let $\Or$ be a complete Riemannian orbifold and let
$|\pi| : |T\Or| \rightarrow |\Or|$ be the projection map. Suppose
that $U \subset |\Or|$ is an open set, $C \subset U$ is a compact
subset and $S$ is an open fiberwise-convex subset of 
$T\Or \Big|_{|\pi|^{-1}(U)}$. (That is, $S$ is an open subset of
$|\pi|^{-1}(U)$ and for each $p \in |\Or|$, the preimage of $(S \cap
|\pi|^{-1}(p)) \subset C_p|\Or|$ in $T_p \Or$ is convex.)

Then for any $\epsilon > 0$ and any Lipschitz function $F : |\Or| \rightarrow 
\R$ whose generalized gradient over $U$ lies in $S$, there is a Lipschitz
function $F^\prime : |\Or| \rightarrow \R$ such that :
\begin{enumerate}
\item There is an open subset of $|\Or|$ containing $C$ on which
$F^\prime$ is a smooth orbifold function.
\item The generalized gradient of $F^\prime$, over $U$, lies in $S$.
\item $|F^\prime - F|_\infty \le \epsilon$.
\item $F^\prime \Big|_{|\Or| - U} = F \Big|_{|\Or| - U}$.
\end{enumerate}
\end{lemma}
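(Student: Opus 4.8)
The plan is to adapt the standard partition-of-unity smoothing argument for Lipschitz functions on manifolds (as in Greene-Wu, or the constructions used in Cheeger-Gromoll theory) to the orbifold setting, using the local-model machinery already developed in Subsections \ref{subsect2.1} and \ref{subsect2.5}. The key point is that mollification must be done $G$-equivariantly in each local chart, and the local smoothings must be patched together so that conditions (2)--(4) survive.

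First I would fix a finite cover of $C$ by local models $(\hU_i, G_i)$, $i = 1, \dots, N$, with $\hU_i$ a relatively compact coordinate ball, chosen small enough that $\Or|_{U_i}$ is contained in $U$ (using that $C$ is compact and $C \subset U$). On each $\hU_i$, lift $F$ to a $G_i$-invariant Lipschitz function $\hF_i$. I would then mollify $\hF_i$ by convolution in geodesic normal coordinates with a $G_i$-invariant smooth kernel of small support $\delta_i$; averaging the kernel over $G_i$ guarantees the mollification $\hF_i^{\delta_i}$ is again $G_i$-invariant, hence descends to a smooth orbifold function on a neighborhood of the relevant part of $C$. The standard estimate gives $|\hF_i^{\delta_i} - \hF_i|_\infty$ as small as we like, and — crucially for (2) — the gradient $\nabla \hF_i^{\delta_i}$ at any point is a convex average of parallel-transported gradients $\nabla_{\widehat{x}} \hF_i$ over nearby points of differentiability, so by the very definition of $\nabla^{gen} F$ and the fiberwise-convexity and openness of $S$, for $\delta_i$ small enough $\nabla \hF_i^{\delta_i}$ stays inside $S$ over $U_i$. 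Then I would combine these with the global function using a partition of unity from Lemma \ref{lemma2.6}: choose a smooth $\chi \in C^\infty(\Or)$ with $\supp(\chi)$ a compact subset of $\bigcup_i U_i$, with $\chi \equiv 1$ on an open neighborhood of $C$, and $\supp(\chi) \subset U$; set $F^\prime = (1-\chi) F + \chi \widetilde{F}^\delta$, where $\widetilde{F}^\delta$ is the smooth function obtained by gluing the $\hF_i^{\delta_i}$ via a subordinate partition of unity.

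The estimates are then routine: (1) holds because $F^\prime$ agrees with the smooth $\widetilde{F}^\delta$ where $\chi \equiv 1$; (3) follows since $F^\prime - F = \chi(\widetilde{F}^\delta - F)$ and each $\hF_i^{\delta_i}$ is uniformly close to $\hF_i$; (4) holds since $\supp(\chi) \subset U$. For (2), over $U$ we have $\nabla F^\prime = \chi \nabla \widetilde{F}^\delta + (1-\chi)\nabla^{gen} F - (\widetilde{F}^\delta - F)\nabla \chi$ in the appropriate generalized sense; the first two terms are convex combinations of elements of the fiberwise-convex set $S$, and the error term $(\widetilde{F}^\delta - F)\nabla\chi$ can be made arbitrarily small in sup norm by shrinking the $\delta_i$, so since $S$ is open it still lies in $S$. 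One should phrase this last part carefully using the fact that $\nabla^{gen}$ of a sum is contained in the (closed convex hull of the) sum of the $\nabla^{gen}$'s.

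The main obstacle I expect is bookkeeping the interaction between the mollification scale and the chart overlaps: one must ensure that the kernel supports $\delta_i$ are small relative to \emph{all} the charts meeting $U_i$, and that the $G_i$-equivariant mollifications on overlapping charts are compatible enough that the glued function $\widetilde{F}^\delta$ is genuinely a smooth \emph{orbifold} function (not merely a smooth function on $|\Or|_{reg}$). This is handled by shrinking to a common refinement and, where two charts overlap, using that the embeddings $(\hU_3, G_3) \to (\hU_i, G_i)$ are isometric so that geodesic-normal-coordinate mollification commutes with the transition maps up to errors controlled by $\delta$; then the partition-of-unity glue preserves orbifold smoothness because each piece already does. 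With those uniformity choices made, all four conclusions follow by the estimates above.
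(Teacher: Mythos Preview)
Your proposal is correct and is essentially the same approach as the paper's, which is a one-sentence appeal to the mollification argument in \cite[Section 2]{Grove-Shiohama (1977)} together with the observation that the mollification is $G$-equivariant in each local model $(\hU,G)$; you have simply unpacked that citation in detail. One small simplification: since $G_\alpha$ acts isometrically on $\hU_\alpha$, Riemannian convolution (averaging over geodesic balls with a radial kernel) is automatically $G_\alpha$-equivariant, so there is no need to separately average the kernel over $G_\alpha$.
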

\begin{proof}
The proof proceeds by mollifying the Lipschitz function $F$ as in
\cite[Section 2]{Grove-Shiohama (1977)}. The mollification there is
clearly $G$-equivariant in a local model $(\hU, G)$.
\end{proof}

\begin{corollary} \label{corollary2.19}
For all $\epsilon > 0$ there is a $\theta > 0$ with the following property.

Let $\Or$ be a complete Riemannian orbifold, let $Y \subset |\Or|$ be a
closed subset and let $d_Y : |\Or| \rightarrow \R$ be the distance function
from $Y$. Given $p \in |\Or| - Y$, let $V_p \subset C_p|\Or|$ be the
set of initial velocities of minimizing geodesics from $p$ to $Y$.
Suppose that $U \subset |\Or| - Y$ is an open subset such that  for all
$p \in U$, one has $\diam(V_p) < \theta$. Let $C$ be a compact subset of
$U$. Then for every $\epsilon_1 > 0$, there is a Lipschitz function
$F^\prime : |\Or| \rightarrow \R$ such that
\begin{itemize}
\item $F^\prime$ is smooth on a neighborhood of $C$. 
\item $\parallel F^\prime - d_Y \parallel_\infty < \epsilon_1$.
\item $F^\prime \Big|_{M-U} = d_Y \Big|_{M-U}$
\item For every $p \in C$, the angle between $- \nabla_p F^\prime$ and
$V_p$ is at most $\epsilon$.
\item $F^\prime - d_Y$ is $\epsilon$-Lipschitz.
\end{itemize}
\end{corollary}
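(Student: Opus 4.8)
The plan is to deduce Corollary \ref{corollary2.19} from Lemma \ref{lemma2.18} by choosing the fiberwise-convex open set $S \subset T\Or$ appropriately, using the quantitative relationship between the diameter of $V_p$ and the geometry of the generalized gradient of $d_Y$. First I would recall the standard fact (from the manifold theory of distance functions, e.g. the first variation formula) that at a point $p$ of differentiability of $d_Y$, the gradient $\nabla_p d_Y$ is the negative of the initial velocity of the unique minimizing geodesic from $p$ to $Y$; and more generally, for a Lipschitz distance function, the generalized gradient $\nabla^{gen}_p d_Y$ is contained in the closed convex hull of $-\overline{V_p}$ (the closure being needed since $V_p$ can jump to nearby points, but these contribute nearly-parallel vectors in the limit). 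Thus if $\diam(V_p) < \theta$ for all $p \in U$, then over $U$ the set $\nabla^{gen} d_Y$ lies in a neighborhood of the ``radial'' unit vectors that is thin in the angular direction — specifically, for each $p$, $\nabla^{gen}_p d_Y$ lies in a convex cone-like region of $C_p|\Or|$ consisting of vectors of norm at most $1$ making angle less than $\theta/2 + o(\theta)$ with $-V_p$.

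Given $\epsilon > 0$, I would choose $\theta$ small enough that this region, enlarged slightly to an \emph{open} set $S$, still consists of vectors making angle less than $\epsilon/2$ with $-V_p$ and having norm less than $1 + \epsilon/4$. The set $S$ is defined fiberwise in terms of $V_p$: for $p \in U$, take $S \cap |\pi|^{-1}(p)$ to be the (preimage in $T_p\Or$ of the) open set of $w \in C_p|\Or|$ with $|w| < 1 + \epsilon/4$ and $\angle(w, -v) < \epsilon/2$ for all $v \in V_p$ — this is an intersection of fiberwise-convex conditions, hence fiberwise-convex, and one checks it is open in $|\pi|^{-1}(U)$ using that $V_p$ varies upper-semicontinuously and the diameter bound is uniform. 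Then Lemma \ref{lemma2.18} applied to $F = d_Y$ with this $S$ and the given $C$, $\epsilon_1$, produces $F^\prime$ satisfying items (1)--(4) of that lemma, which translate directly into the first four bullets of the corollary (smoothness near $C$, $\|F^\prime - d_Y\|_\infty < \epsilon_1$, agreement off $U$, and the angle bound $\angle(-\nabla_p F^\prime, V_p) \le \epsilon$ for $p \in C$ from the containment $\nabla^{gen}_p F^\prime \subset S$, noting $\nabla^{gen}_p F^\prime = \{\nabla_p F^\prime\}$ where $F^\prime$ is smooth).

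For the last bullet, that $F^\prime - d_Y$ is $\epsilon$-Lipschitz, I would observe that $d_Y$ has generalized gradient of norm exactly $1$ (it is $1$-Lipschitz with unit gradient a.e. away from $Y$), while $F^\prime$ has generalized gradient in $S$, hence of norm less than $1 + \epsilon/4$ and nearly parallel to $-V_p$; the difference $F^\prime - d_Y$ therefore has generalized gradient of norm bounded by a quantity that is $O(\epsilon)$ — concretely, $|\nabla_p(F^\prime - d_Y)| \le |\nabla_p F^\prime - \nabla_p d_Y|$, and since both vectors have norm close to $1$ and are close in direction (both within angle $\epsilon/2$ of $-V_p$, which has diameter $< \theta$), the difference has norm $\le \epsilon$ provided $\theta$ and the slack in $S$ were chosen small enough at the outset. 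Integrating along paths (using that $|\Or|$ is a length space and $F^\prime - d_Y$ is locally Lipschitz with the stated gradient bound) gives the $\epsilon$-Lipschitz estimate globally. The main obstacle I anticipate is the bookkeeping in the first step: making precise, on an orbifold, the claim that $\diam(V_p) < \theta$ forces $\nabla^{gen}_p d_Y$ into an explicitly-described thin convex set, including handling the closure/upper-semicontinuity subtlety in the definition of $\nabla^{gen}$ (nearby points of differentiability may have radial directions pointing to \emph{different} nearest points of $Y$, but compactness of $Y$ and a short argument show these directions still cluster within $O(\theta)$ of $V_p$). Once that is pinned down, verifying $S$ is open and fiberwise-convex and invoking Lemma \ref{lemma2.18} is routine, and the choice of $\theta$ depending only on $\epsilon$ (not on $\Or$) falls out because all the estimates are purely infinitesimal and metric.
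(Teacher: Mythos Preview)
Your overall strategy---apply Lemma \ref{lemma2.18} with a suitably chosen fiberwise-convex open set $S$---is exactly the intended one; the paper simply records Corollary \ref{corollary2.19} as a consequence of Lemma \ref{lemma2.18} and gives no further argument. But there is a concrete gap in your choice of $S$ that breaks the last bullet. Your $S_p = \{w : |w| < 1+\epsilon/4,\ \angle(w,-v) < \epsilon/2 \text{ for all } v \in V_p\}$ is a truncated cone containing vectors of arbitrarily small norm. Hence the conclusion $\nabla^{gen}_p F' \subset S_p$ from Lemma \ref{lemma2.18} gives no lower bound on $|\nabla_p F'|$, and your assertion that ``both vectors have norm close to $1$'' is unjustified for $\nabla_p F'$. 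Without it, $|\nabla_p F' - \nabla_p d_Y|$ could be close to $1$, and the $\epsilon$-Lipschitz bound on $F'-d_Y$ does not follow.

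The repair is to choose $S$ with small \emph{diameter}, not merely small angular width. For $r$ with $\theta < r < \epsilon/2$, take $S_p = \bigcap_{v\in V_p} B(-v,r)$ in $T_p\Or$. This is convex (an intersection of balls), has diameter at most $2r < \epsilon$, and contains $\operatorname{conv}(-V_p)\supset \nabla^{gen}_p d_Y$ since any convex combination $\sum\lambda_i(-v_i)$ satisfies $\bigl|\sum\lambda_i(-v_i)-(-v)\bigr|\le \max_i|v-v_i|<\theta<r$. Openness of $S$ in $T\Or\big|_U$ follows from upper-semicontinuity of $p\mapsto V_p$: every $v'\in V_{p'}$ for $p'$ near $p$ is close to some $v\in V_p$, so each constraint $|w'-(-v')|<r$ is stable under small perturbations of $(p,w)$. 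With this $S$, both $\nabla_p F'$ and $\nabla_p d_Y$ (a.e.) lie in a set of diameter $<\epsilon$, so $F'-d_Y$ is $\epsilon$-Lipschitz; and $|\nabla_p F'-(-v)|<r$ with $|-v|=1$ gives $\angle(-\nabla_p F',v)\le \arcsin r$, yielding the angle bound once $r$ is small enough.
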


\section{Noncompact nonnegatively curved orbifolds} \label{sect3}

In this section we extend the splitting theorem and the
soul theorem from Riemannian manifolds to Riemannian orbifolds.
We give an argument to rule out tight necks in a
noncompact nonnegatively curved orbifold.  We give the
topological description of noncompact nonnegatively curved orbifolds
of dimension two and three.

\begin{assumption}
In this section, $\Or$ will be a complete 
nonnegatively curved Riemannian orbifold.
\end{assumption}
We may emphasize in some places that $\Or$ is nonnegatively curved.

\subsection{Splitting theorem} \label{subsect3.1}

\begin{proposition} \label{prop3.1}
If $|\Or|$ contains a line then $\Or$ is an isometric product
$\R \times \Or^\prime$ for some complete Riemannian orbifold $\Or^\prime$.
\end{proposition}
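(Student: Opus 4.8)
The plan is to follow the classical Cheeger--Gromoll splitting argument, using Busemann functions, but carried out upstairs in the local models and assembled by the orbifold de Rham decomposition (Lemma \ref{lemma2.12}). First I would fix a line $\ell \subset |\Or|$ and form the two Busemann functions $b_+, b_-$ associated to the two rays of $\ell$. On each local model $(\hU_\al, G_\al)$ the lifts $\widehat{b}_\pm$ are $G_\al$-invariant Lipschitz functions, and the usual Laplacian comparison (valid in $\hU_\al$, where we have an honest nonnegatively curved Riemannian manifold) gives $\Delta \widehat{b}_\pm \ge 0$ in the barrier sense. Since $b_+ + b_- \ge 0$ with equality along $\ell$, the maximum principle (applied in a local model containing a point of $\ell$, then propagated) forces $b_+ + b_- \equiv 0$; hence $b \defeq b_+$ is both subharmonic and superharmonic, so harmonic, and by elliptic regularity $b$ is a smooth orbifold function. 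The Bochner formula on each $\hU_\al$ then shows $\Hess \widehat{b} = 0$, so $\nabla b$ is a parallel unit vector field on $\Or$, and its integral curves are lines realizing $b$ as a global coordinate.

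Next I would invoke the orbifold structure theory from Subsection \ref{subsect2.5}. The parallel splitting $T\Or = \R\nabla b \oplus (\nabla b)^\perp$ is holonomy-invariant at every regular point, so after passing to the universal cover $\widetilde{\Or}$ (which is again complete and nonnegatively curved, and still contains a line, or alternatively on which the pulled-back $b$ is still harmonic with parallel gradient) Lemma \ref{lemma2.12} yields an isometric splitting $\widetilde{\Or} = \R \times \widetilde{\Or}'$, where the $\R$-factor is the flow direction of $\nabla b$. The deck group $\pi_1(\Or)$ acts isometrically on $\widetilde{\Or}$ preserving the parallel field $\nabla b$ (it is canonically defined from the line, up to sign; orientability considerations or passing to an index-two subgroup handle the sign), hence acts diagonally, translating on the $\R$-factor and acting by isometries on $\widetilde{\Or}'$. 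Since $|\Or|$ contains a complete line on which $b$ is proper and unbounded, the image of $\pi_1(\Or)$ in $\Isom(\R)$ must be trivial (a nontrivial translation would make $b$ descend to a function on $|\Or|$ that is not proper along that line, contradicting that the line is embedded and bi-infinite); therefore the deck action preserves the $\R$-factor pointwise-up-to-nothing and descends to give $\Or = \R \times \Or'$ with $\Or' = \widetilde{\Or}'//\pi_1(\Or)$.

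The main obstacle I anticipate is the point-set/regularity bookkeeping around the singular locus: one must check that the barrier (support-function) inequalities for the Busemann functions, which are standard on manifolds, hold for the $G_\al$-invariant lifts $\widehat{b}_\pm$ in a way compatible with the group action, so that the maximum principle and the Bochner vanishing argument can be run $G_\al$-equivariantly and then patched across charts. Concretely, one needs that minimizing geodesics from a point $p \in |\Or|$ to the rays of $\ell$ lift to minimizing geodesics in the local model (true by the local description of length-minimizers noted after the exponential-map discussion), and that the resulting lower support functions for $\widehat{b}_\pm$ at $\widehat{p}$ can be averaged over $G_p$ to remain lower support functions — which works because averaging a finite family of smooth functions each lying below $\widehat{b}_\pm$ and agreeing with it to first order at $\widehat{p}$ produces another such function, using $G_p$-invariance of $\widehat{b}_\pm$. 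Once that is in place, everything reduces to the manifold arguments applied in charts together with Lemma \ref{lemma2.11} and Lemma \ref{lemma2.12}, so the remaining steps are routine.
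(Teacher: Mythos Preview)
Your approach is essentially correct but takes a considerably longer route than the paper. The paper's proof is two lines: since $\Or$ has nonnegative sectional curvature, $|\Or|$ is an Alexandrov space with curvature $\ge 0$, so the Alexandrov splitting theorem \cite[Chapter 10.5]{Burago-Burago-Ivanov} gives a metric splitting $|\Or| \cong \R \times Y$; then one simply observes that this metric splitting lifts to each local model, so $Y$ inherits an orbifold structure $\Or'$ and the splitting is smooth. No Busemann analysis, no maximum principle, no Bochner, no universal cover, no de~Rham. The paper even remarks afterward that a Ricci-curvature splitting theorem for orbifolds exists \cite{Borzellino3}, but that the Alexandrov argument is preferred here precisely because the sectional bound makes it more elementary. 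What your approach buys is generality (it would work under a Ricci lower bound), at the cost of the analytic bookkeeping you flagged.

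Two small cleanups in your argument. First, the detour through the universal cover and Lemma~\ref{lemma2.12} is unnecessary: once $\nabla b$ is a parallel unit vector field on $\Or$ itself, its flow $\Phi:\R\times b^{-1}(0)\to\Or$ is a local isometry between complete orbifolds, hence a covering map by Lemma~\ref{lemma2.11}; since $b\circ\Phi(t,x)=t+\const$ it is injective, so it is a global isometry. Second, your worry about the sign of $\nabla b$ under the deck action, and the proposed fix of ``passing to an index-two subgroup'', is both unfounded and insufficient. It is unfounded because $b$ is defined on $|\Or|$, so its pullback $\tilde b$ to $|\widetilde\Or|$ is $\pi_1(\Or)$-invariant by construction; hence $\nabla\tilde b$ (not just $\pm\nabla\tilde b$) is preserved, and the deck action on the $\R$-factor is automatically trivial. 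It would be insufficient because splitting only a double cover does not give the claimed splitting of $\Or$.
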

\begin{proof}
As $|\Or|$ contains a line,
the splitting theorem for nonnegatively curved Alexandrov spaces
\cite[Chapter 10.5]{Burago-Burago-Ivanov} implies that
$|\Or|$ is an isometric product $\R \times Y$ for some complete
nonnegatively curved Alexandrov space $Y$. The isometric splitting 
lifts to local models, showing that
$\Or \Big|_Y$ is an Riemannian orbifold $\Or^\prime$ and that
the isometry $|\Or| \rightarrow \R \times Y$ is a smooth
orbifold splitting
$\Or \rightarrow \R \times \Or^\prime$.
\end{proof}

\begin{corollary} \label{corollary3.2}
If $\Or$ has more than one end then it has two ends and
$\Or$ is an isometric product
$\R \times \Or^\prime$ for some compact Riemannian orbifold $\Or^\prime$.
\end{corollary}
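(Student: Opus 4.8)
The plan is to deduce Corollary~\ref{corollary3.2} from the splitting theorem (Proposition~\ref{prop3.1}) together with the nonnegative-curvature hypothesis, exactly as in the manifold case. First I would show that if $\Or$ has more than one end then $|\Or|$ contains a line. Pick two distinct ends $e_1, e_2$ of $\Or$, a basepoint $q \in |\Or|$, and sequences $x_i \to e_1$, $y_i \to e_2$ with $d(q,x_i), d(q,y_i) \to \infty$. Since $\Or$ is complete, by the orbifold Hopf-Rinow theorem minimizing geodesic segments $\sigma_i$ from $x_i$ to $y_i$ exist; because $x_i$ and $y_i$ escape to different ends, for $i$ large $\sigma_i$ must pass within a bounded distance of $q$, so after reparametrizing we may assume $\sigma_i$ passes through a fixed compact set. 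Extracting a convergent subsequence of the $\sigma_i$ (using that $|\Or|$ is a complete locally compact length space, i.e. invoking Arzel\`a-Ascoli for the lifted geodesics in a local model around a limit point), the limit is a line in $|\Or|$.

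Next I would apply Proposition~\ref{prop3.1}: since $|\Or|$ contains a line and $\Or$ is nonnegatively curved, $\Or$ splits isometrically as $\R \times \Or'$ for some complete Riemannian orbifold $\Or'$. It remains to argue that $\Or'$ is compact and that $\Or$ then has exactly two ends. If $\Or'$ were noncompact, then $|\Or'|$ would itself contain a ray, hence $|\Or| = \R \times |\Or'|$ would contain a line lying inside $\{0\}\times |\Or'|$ after a further splitting; iterating, one peels off another $\R$ factor, and one can continue as long as the remaining factor is noncompact. More directly: if $\Or'$ is noncompact then $\Or' \cong \R \times \Or''$ by the same line-existence argument applied to $\Or'$ (a noncompact complete nonnegatively curved orbifold has at least one ray, but to get a \emph{line} one notes that $\R \times \Or'$ has a line in the $\R$-direction through \emph{every} point, and combining with a ray in $\Or'$ produces, via the flat-strip / splitting argument, a line transverse to the first factor). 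The cleanest route is: $\Or = \R \times \Or'$ implies $|\Or'| = \R \times |\Or''| $ if $|\Or'|$ is noncompact; but a standard fact is that a complete nonnegatively curved space splits off \emph{all} its Euclidean de~Rham factors and the remaining factor is either compact or splits further — so by a dimension/finiteness argument (the orbifold is finite-dimensional) this terminates with a compact factor, and since a product $\R^k \times (\text{compact})$ with $k \ge 1$ has exactly two ends, and $k \ge 2$ would be a higher-dimensional Euclidean factor that we can absorb, we conclude $\Or = \R \times \Or'$ with $\Or'$ compact. Finally, $\R \times \Or'$ with $\Or'$ compact and nonempty manifestly has exactly two ends, so $\Or$ has two ends.

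The main obstacle I anticipate is the line-construction step: one must be careful that the orbifold $|\Or|$ is a proper (locally compact, complete) length space so that the limiting argument for geodesics goes through, and that ``more than one end'' genuinely forces the connecting segments to enter a fixed compact set. This is where completeness of $|\Or|$ as a metric space (already assumed) and the Hopf-Rinow-type lemma stated in the excerpt are used. A secondary, more bookkeeping-level point is ruling out extra $\R$ factors: one should phrase it as ``$\Or$ splits as $\R^k \times \Or_0$ with $\Or_0$ having no line in $|\Or_0|$,'' note $\Or_0$ is then compact (else it contains a ray and, combined with the $\R^k$ factor, a line transverse to $\Or_0$, a contradiction unless $k=0$ — and if $k=0$ and $\Or_0$ is noncompact it still must contain a ray but we assumed more than one end forces a line, so $\Or_0$ with $\ge 2$ ends contains a line, contradiction; hence $k \ge 1$ and $\Or_0$ compact), and conclude $k$ can be taken to be $1$ by absorbing $\R^{k-1}$ into a ``compact'' factor only in the degenerate sense — more simply, just observe $\R^k \times (\text{compact})$ has two ends for every $k \ge 1$, which is all the corollary claims, after renaming $\R^{k-1}\times\Or_0$ as the compact-cross-section... which fails for $k\ge 2$; so the correct statement to prove is that $k=1$, forced because if $k \ge 2$ then removing the compact part still leaves $\R^{k-1}$ noncompact with one end, no contradiction — rather, the point is simply that $\Or'=\R^{k-1}\times\Or_0$ need not be compact, so I must instead argue directly that after splitting off \emph{one} line, the complementary orbifold $\Or'$ has \emph{at most one end} (since $\Or$ has exactly two, coming from the two ends of the $\R$ factor), and a one-ended complete nonnegatively curved orbifold that still arose as a cross-section... is compact because otherwise $\Or$ would have had a line transverse to the first $\R$, giving $\R^2$ and hence a cross-section that is again one-ended noncompact — inducting down on dimension this must terminate, and it terminates precisely when the cross-section is compact. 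This inductive termination on dimension is the one genuinely non-formal ingredient, but it is short.
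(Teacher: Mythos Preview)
Your line-construction step and the application of Proposition~\ref{prop3.1} are fine; the genuine gap is in your argument that $\Or'$ must be compact. You repeatedly try to show that a noncompact $\Or'$ would itself contain a line and hence split off another $\R$-factor, but this is simply false: a noncompact complete nonnegatively curved orbifold need not contain any line (think of a paraboloid of revolution, which is one-ended and splits off nothing). So the ``inductive termination on dimension'' you propose never gets started, and the various attempts to manufacture a transverse line from a ray in $\Or'$ together with the $\R$-factor do not work either --- a ray plus an orthogonal line does not produce a new line in the cross-section.

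The fix is much shorter than anything you wrote and avoids further splitting altogether: it is an elementary fact about ends of products that if $Y$ is a connected, noncompact, locally compact length space then $\R \times Y$ has exactly \emph{one} end. (Given any compact $K \subset \R \times Y$, choose $N$ and a compact $K' \subset Y$ with $K \subset [-N,N]\times K'$; since $Y$ is noncompact there is a point $y_0 \notin K'$, and any two points in the complement of $[-N,N]\times K'$ can be joined by a path that detours through the slice $\R \times \{y_0\}$.) Applying this with $Y = |\Or'|$: if $\Or'$ were noncompact then $\Or = \R \times \Or'$ would be one-ended, contradicting the hypothesis that $\Or$ has more than one end. Hence $\Or'$ is compact, and then $\R \times \Or'$ visibly has exactly two ends. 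This is presumably the argument the paper has in mind, since the corollary is stated without proof.
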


\begin{remark}
A splitting theorem for orbifolds with nonnegative Ricci curvature
appears in \cite{Borzellino3}. As the present paper deals with lower sectional
curvature bounds, the more elementary Proposition \ref{prop3.1} is
sufficient for our purposes.
\end{remark}

\subsection{Cheeger-Gromoll-type theorem} \label{subsect3.2}

A subset $Z \subset |\Or|$ is {\em totally convex} if
any geodesic segment (possibly not minimizing) with endpoints in $Z$
lies entirely in $Z$.

\begin{lemma} \label{lemma3.3}
Let $Z \subset |\Or|$ be totally convex and let $(\hU, G)$ be a local model.
Put $U = \hU/G$ and let $q : \hU \rightarrow U$ be the quotient map.
If $\gamma$ is a geodesic segment in $\hU$ with endpoints in
$q^{-1}(U \cap Z)$ then $\gamma$ lies in $q^{-1}(U \cap Z)$.
\end{lemma}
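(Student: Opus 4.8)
The plan is to reduce the statement to the definition of total convexity applied to a suitable geodesic in $|\Or|$, and then use the fact that geodesics in $\hU$ project to geodesics in $U$. Let $\gamma \colon [0,1] \to \hU$ be a geodesic segment with $\gamma(0), \gamma(1) \in q^{-1}(U \cap Z)$. Its image $q \circ \gamma$ is a geodesic segment in $U$, hence (via the local chart identification) a geodesic segment in $|\Or|$ with both endpoints in $Z$. Since $Z$ is totally convex in $|\Or|$, the entire segment $q \circ \gamma$ lies in $Z$, so $q(\gamma(t)) \in U \cap Z$ for all $t$, which is exactly the assertion that $\gamma$ lies in $q^{-1}(U \cap Z)$.

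First I would note that for this argument to be complete one needs to know that a geodesic in the local model $\hU$, which a priori solves the geodesic equation only in $\hU$, descends to an honest (locally length-minimizing, or at least geodesic-equation-solving) curve in $|\Or|$; this is immediate from the way the Riemannian metric on $\Or$ is defined in Subsection \ref{subsect2.5}, since the quotient map $\hU \to U = \hU/G$ is a local isometry away from the singular set and the geodesic equation is a local condition in charts. One subtlety is that $\gamma$ might pass through a point of $\hU$ whose $G$-orbit has nontrivial stabilizer, i.e. $q(\gamma(t))$ lands in the singular locus: but total convexity in $|\Or|$ is stated for \emph{all} geodesic segments, including those meeting the singular part, so this causes no difficulty — one simply applies the hypothesis to the geodesic $q \circ \gamma$ in $|\Or|$ as given.

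The only genuine point requiring a word of care — and the place I expect is the main (minor) obstacle — is the direction of the implication regarding lifting versus projecting. The definition of total convexity lives downstairs on $|\Or|$, while the conclusion is a statement upstairs on $\hU$; so one must push $\gamma$ \emph{down}, invoke convexity there, and then observe that "$q(\gamma(t)) \in Z$ for all $t$" is literally the same as "$\gamma(t) \in q^{-1}(Z)$ for all $t$" by definition of preimage. There is no need to lift anything back up. Thus the proof is essentially a one-line unwinding of definitions once one records that $q$ carries geodesics to geodesics, and I would present it as such, perhaps with an explicit sentence pointing to the local-isometry property of $q$ on the regular part together with the fact that geodesics are determined by the geodesic equation in charts.
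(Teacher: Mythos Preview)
Your argument is correct and is essentially the same as the paper's proof: both push $\gamma$ down via $q$, observe that $q\circ\gamma$ is a geodesic in $\Or$ with endpoints in $Z$, and invoke total convexity of $Z$ to conclude. The paper phrases it as a one-line contradiction (``suppose $\gamma(t)\notin q^{-1}(U\cap Z)$ for some $t$, then $q(\gamma(t))\notin Z$, contradicting total convexity''), while you argue directly, but the content is identical.
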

\begin{proof}
Suppose that $\gamma(t) \notin q^{-1}(U \cap Z)$ for some $t$.
Then $q \circ \gamma$ is a geodesic in $\Or$ with endpoints in $Z$,
but $q(\gamma(t)) \notin Z$. This is a contradiction.
\end{proof}

\begin{lemma} \label{lemma3.4}
Let $Z \subset |\Or|$ be a closed totally convex set. Let $k$ be the
Hausdorff dimension of $Z$. Let ${\mathcal N}$ be the union of the 
$k$-dimensional
suborbifolds ${\mathcal S}$ of $\Or$ with $|{\mathcal S}| \subset Z$. 
Then ${\mathcal N}$ is a totally geodesic
$k$-dimensional suborbifold of $|\Or|$ and $Z = \overline{|{\mathcal N}|}$.
Furthermore, if $Y$ is a closed subset of $|{\mathcal N}|$ and
$p \in Z - |{\mathcal N}|$ then there is a
$v \in C_p|\Or|$
so that the initial velocity of 
any minimizing geodesic from $p$ to $Y$ makes an angle greater than 
$\frac{\pi}{2}$ with $v$.
\end{lemma}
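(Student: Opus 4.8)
The plan is to mimic the classical Cheeger–Gromoll argument, working in local models and invoking the Alexandrov-geometric structure of $|\Or|$ that follows from the nonnegative sectional curvature bound. The first step is to establish the structure of $\mathcal{N}$. Cover $Z$ by local models $(\hU_\al, G_\al)$ with $U_\al = \hU_\al/G_\al$ and quotient maps $q_\al$. By Lemma \ref{lemma3.3}, $q_\al^{-1}(U_\al \cap Z)$ is a closed totally convex subset of $\hU_\al$, so by the manifold Cheeger–Gromoll theory the set of $k$-dimensional totally geodesic submanifolds of $\hU_\al$ contained in $q_\al^{-1}(U_\al \cap Z)$ has union $\widehat{\mathcal N}_\al$ which is itself a $k$-dimensional totally geodesic submanifold of $\hU_\al$, with closure $q_\al^{-1}(U_\al \cap Z)$. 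Here $k = \dim_H Z$ is the common local dimension since Hausdorff dimension of a closed totally convex set is determined by its ``interior'' submanifold in each chart and these must agree on overlaps; one checks using the embeddings from part 5 of Definition \ref{defn2.1} that the $\widehat{\mathcal N}_\al$ are $G_\al$-invariant and glue to give a totally geodesic $k$-dimensional suborbifold $\mathcal{N}$ of $\Or$ with $\overline{|\mathcal N|} = Z$. That $|\mathcal{N}| \subset Z$ and that $\mathcal{N}$ contains every $k$-dimensional suborbifold $\mathcal{S}$ with $|\mathcal{S}| \subset Z$ both follow fiberwise from the corresponding facts in $\hU_\al$.

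The second step is the angle estimate. Fix $p \in Z - |\mathcal{N}|$, a closed subset $Y \subset |\mathcal{N}|$, and a local model $(\hU, G_p)$ around $p$ with lift $\widehat p$. Let $W \subset T_{\widehat p}\hU$ be the set of all lifts of initial velocities of minimizing geodesics from $p$ to $Y$; this set is $G_p$-invariant since $Y$ and the metric are. The convexity of $q^{-1}(U \cap Z)$ in $\hU$ together with $\widehat p \notin \widehat{\mathcal N}$ (the lift of $\mathcal{N}$) means that, in the manifold $\hU$, the point $\widehat p$ has positive distance to $\widehat{\mathcal N} \cap q^{-1}(U\cap Z)$ along $Z$-directions and the classical first-variation argument for totally convex sets shows that every element of $W$ makes an angle strictly greater than $\frac{\pi}{2}$ with the direction pointing ``into'' $\widehat{\mathcal N}$ from $\widehat p$ within $q^{-1}(U \cap Z)$. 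More precisely: the geodesic in $q^{-1}(U\cap Z)$ from $\widehat p$ realizing $\dist(\widehat p, Y$-lift$)$ stays in the convex set, and minimizing geodesics to $Y$ emanate ``away'' from the convex hull direction. One then averages: since $W$ lies in an open half-space of $T_{\widehat p}\hU$ (being the set of initial velocities of minimizing geodesics, all making angle $> \frac{\pi}{2}$ with a fixed direction), the vector $v$ obtained by suitably averaging (or by Lemma \ref{lemma2.15}, taking a $G_p$-invariant vector in the dual half-space) is $G_p$-invariant, nonzero, and makes angle $> \frac{\pi}{2}$ with every element of $W$, hence with the initial velocity of any minimizing geodesic from $p$ to $Y$.

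The main obstacle I anticipate is the gluing in the first step: showing the locally-defined totally geodesic submanifolds $\widehat{\mathcal N}_\al$ are compatible under the orbifold transition embeddings, and in particular that their common dimension is the global Hausdorff dimension $k$. The subtlety is that Hausdorff dimension is a global invariant while the Cheeger–Gromoll construction is local, so one must argue that the ``top-dimensional strata'' of $q_\al^{-1}(U_\al \cap Z)$ have locally constant dimension across charts — this uses that $Z$ is connected enough (or that the relevant stratum is), which in our application follows from $Z$ arising as a sublevel/superlevel set of a Busemann-type convex function. A secondary technical point is verifying that the first-variation/angle argument for totally convex sets transfers cleanly from the smooth manifold $\hU$ to give the stated conclusion about $v$; this is essentially \cite[p.~192]{Kobayashi-Nomizu (1963)}-style reasoning combined with Lemma \ref{lemma2.15}, and should be routine once the local picture is set up. I would expect the write-up to lean on the corresponding manifold statements in the Cheeger–Gromoll paper and simply flag the $G_p$-equivariance at each step.
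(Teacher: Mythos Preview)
Your proposal is correct and takes essentially the same approach as the paper: the paper's proof simply says ``Using Lemma~\ref{lemma3.3}, the proof is along the lines of that in \cite[Chapter 3.1]{Gromoll-Walschap (2009)},'' and you have spelled out exactly that strategy --- lift to local models via Lemma~\ref{lemma3.3}, invoke the manifold Cheeger--Gromoll structure theorem for closed totally convex sets, and obtain $G_p$-invariance of $v$ by the averaging trick from Lemma~\ref{lemma2.15}. Two minor remarks: your worry about constancy of the dimension $k$ across charts is unnecessary, since any closed totally convex $Z$ is automatically geodesically (hence path) connected, so you need not appeal to the Busemann origin of $Z$; and the citation to \cite[p.~192]{Kobayashi-Nomizu (1963)} is misplaced (that passage concerns the de~Rham decomposition) --- the relevant angle argument is the supporting-half-space/tangent-cone argument in \cite[Chapter 3.1]{Gromoll-Walschap (2009)}.
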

\begin{proof}
Using Lemma \ref{lemma3.3}, the proof is along the lines of that in 
\cite[Chapter 3.1]{Gromoll-Walschap (2009)}.
\end{proof}

We put $\partial Z = Z - |{\mathcal N}|$. Note that in the definition
of ${\mathcal N}$ we are dealing with orbifolds as opposed to manifolds.
For example, if
$\Or \Big|_Z$ is a boundaryless $k$-dimensional orbifold then
$\partial Z = \emptyset$.

A function $f : |\Or| \rightarrow \R$ is {\em concave} if for any
geodesic segment $\gamma : [a,b] \rightarrow \Or$,
for all $c \in [a,b]$ one has
\begin{equation} \label{3.5}
f(|\gamma|(c)) \ge \frac{b-c}{b-a} f(|\gamma|(a)) + 
\frac{c-a}{b-a} f(|\gamma|(b)).
\end{equation}

\begin{lemma} \label{lemma3.6}
It is equivalent to require (\ref{3.5}) for all geodesic segments or just for
minimizing geodesic segments.
\end{lemma}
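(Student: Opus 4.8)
\textbf{Proof proposal for Lemma \ref{lemma3.6}.}

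The plan is to prove the nontrivial implication: if (\ref{3.5}) holds for all minimizing geodesic segments, then it holds for all geodesic segments. The other direction is immediate since minimizing geodesics are geodesics. First I would reduce to a local statement by a standard chaining/subdivision argument: given an arbitrary geodesic segment $\gamma : [a,b] \rightarrow \Or$, the concavity inequality (\ref{3.5}) at an interior point $c$ is equivalent to the statement that the function $t \mapsto f(|\gamma|(t)) - \ell(t)$ is $\ge 0$ on $[a,b]$, where $\ell$ is the affine function agreeing with $f \circ |\gamma|$ at the endpoints; and concavity of $f\circ|\gamma|$ as a function on $[a,b]$ follows once we know $f\circ|\gamma|$ is concave on every sufficiently short subinterval, since a continuous function that is locally concave on an interval is concave there. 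So it suffices to show: for every $t_0 \in [a,b]$ there is $\delta > 0$ so that (\ref{3.5}) holds for $\gamma|_{[t_1,t_2]}$ whenever $t_0 - \delta \le t_1 < t_2 \le t_0 + \delta$.

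Next, to establish this local statement I would pass to a local model $(\hU, G)$ around $|\gamma|(t_0)$ and lift $\gamma|_{[t_0-\delta, t_0+\delta]}$ to a geodesic $\widehat\gamma$ in $\hU$; let $\widehat F$ be the ($G$-invariant) lift of $f$ to $\hU$. The inequality (\ref{3.5}) for $\gamma$ over $[t_1,t_2]$ is then exactly the corresponding inequality for $\widehat F$ along $\widehat\gamma$ in $\hU$. Choosing $\delta$ small enough that $\widehat\gamma|_{[t_0-\delta,t_0+\delta]}$ lies inside a totally normal (geodesically convex) ball $B$ in $\hU$ around $\widehat\gamma(t_0)$ — which exists since $\hU$ is a smooth Riemannian manifold — every geodesic segment of $\widehat\gamma$ inside $B$ between two of its points is the unique minimizing geodesic in $\hU$ between those points. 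Now the key transfer step: such a minimizing geodesic segment in $\hU$ between $\widehat\gamma(t_1)$ and $\widehat\gamma(t_2)$ projects down to a \emph{minimizing} geodesic segment in $\Or$ between $|\gamma|(t_1)$ and $|\gamma|(t_2)$, because the projection $q : \hU \to |\Or|$ is a local isometry that is nonexpanding on the metric level, and one can check (comparing with arbitrary competitor paths and lifting them locally) that the length realized equals the orbifold distance when the ball is small enough. Then by hypothesis (\ref{3.5}) holds for this minimizing segment in $\Or$, hence for $\widehat\gamma|_{[t_1,t_2]}$ in $\hU$, hence for $\gamma|_{[t_1,t_2]}$, completing the local claim.

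The step I expect to require the most care is verifying that short geodesic subarcs of $\widehat\gamma$ in the local model are genuinely minimizing \emph{in $\Or$}, i.e. that $d_{|\Or|}(|\gamma|(t_1), |\gamma|(t_2))$ equals the length of $\widehat\gamma|_{[t_1,t_2]}$ rather than something shorter coming from a path that exits the chart or wraps around the orbifold locus. The way to handle this cleanly is to take $\delta$ small relative to both the convexity radius of the point $\widehat\gamma(t_0) \in \hU$ and the "orbifold injectivity scale" at $|\gamma|(t_0)$ — concretely, small enough that $q$ restricted to $B$ realizes distances, which holds because for a point $\widehat x \in \hU$ near $\widehat\gamma(t_0)$ the distance in $|\Or|$ from $|\gamma|(t_0)$ to $q(\widehat x)$ is the minimum over $g \in G_{\widehat\gamma(t_0)}$ of $d_{\hU}(\widehat\gamma(t_0), \widehat x g)$, and for $\widehat x$ close enough to $\widehat\gamma(t_0)$ this minimum is attained at $g = e$ (the other branches being bounded below by a fixed positive constant). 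Once that is in place the rest is the routine real-analysis fact quoted above that a continuous locally concave function on an interval is concave.
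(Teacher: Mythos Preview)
Your proposal is correct and follows essentially the same approach as the paper: reduce to short subarcs where the geodesic is minimizing, apply the hypothesis there, and then pass from local concavity of $f\circ|\gamma|$ to global concavity. The paper's proof is two lines---it simply asserts that any geodesic is locally minimizing and that local concavity implies global concavity---whereas you unpack the ``locally minimizing in $\Or$'' step carefully via local models and the orbifold injectivity scale; this extra care is not wrong, just more than the paper chose to record.
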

\begin{proof}
Suppose that (\ref{3.5}) holds for all minimizing geodesic segments.
Let $\gamma : [a,b] \rightarrow \Or$ be a geodesic segment, maybe
not minimizing.  For any $t \in [a,b]$, we can find a neighborhood
$I_t$ of $t$ in $[a,b]$ so that the restriction of $\gamma$ to
$I_t$ is minimizing.  Then (\ref{3.5}) holds on $I_t$. It follows
that (\ref{3.5}) holds on $[a,b]$.
\end{proof}

Any superlevel set $f^{-1}[c, \infty)$ of a concave function
is closed and totally convex.

Let $f$ be a proper concave function on $|\Or|$ which is bounded above.
Then there is a maximal $c \in \R$ so that the superlevel set
$f^{-1}[c, \infty)$ is nonempty, and so
$f^{-1}[c, \infty) = f^{-1}\{c\}$ is a closed totally convex set.

Suppose for the rest of this subsection that $\Or$ is noncompact.

\begin{lemma} \label{lemma3.7}
Let $Z \subset |\Or|$ be a closed totally convex set with $\partial Z \neq
\emptyset$. Then $d_{\partial Z}$ is a concave function on $Z$. 
Furthermore, suppose that for a minimizing geodesic 
$\gamma : [a,b] \rightarrow Z$ in $Z$, the
restriction of 
$d_{\partial Z} \circ |\gamma|$
is a constant positive function
on $[a,b]$. Let 
$t \rightarrow exp_{\gamma(a)} tX(a)$
be a minimizing
unit-speed geodesic from 
$|\gamma|(a)$
to $\partial Z$, defined for
$t \in [0,d]$. Let $\{X(s)\}_{s \in [a,b]}$ be the
parallel transport of $X(a)$ along $\gamma$. Then for any
$s \in [a,b]$, the curve $t \rightarrow exp_{\gamma(s)} tX(s)$ is a
minimal geodesic from 
$|\gamma|(s)$
to $\partial Z$, of length $d$.
Also, the rectangle $V : [a,b] \times [0,d] \rightarrow Z$ given by
$V(s,t) = \exp_{\gamma(s)} tX(s)$ is flat and totally geodesic.
\end{lemma}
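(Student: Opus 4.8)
The plan is to reduce both assertions to their Riemannian-manifold analogues, working in local models and using Lemma~\ref{lemma3.3} to transfer total convexity, while exploiting that $|\Or|$ has Alexandrov curvature $\geq 0$. For the concavity of $d_{\partial Z}$ on $Z$: concavity is a local condition, so by (the argument of) Lemma~\ref{lemma3.6} it suffices to verify (\ref{3.5}) along a single minimizing geodesic $\gamma$. I would then run the classical argument underlying the soul theorem (see \cite[Chapter 3.1]{Gromoll-Walschap (2009)}), whose only ingredients are total convexity of $Z$, Toponogov's comparison theorem, and the first variation formula. All three are available: total convexity of $Z$ directly (or, when one passes to a local model $(\hU,G)$ to perform a computation, via Lemma~\ref{lemma3.3} applied to $\hZ := q^{-1}(U\cap Z)$, on which $d_{\partial Z}\circ q$ agrees with $d_{\hU}(\cdot,\partial\hZ)$ at the scale of the model); and Toponogov comparison because $|\Or|$, and each $\hU$, has sectional (resp.\ Alexandrov) curvature $\geq 0$.

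For the rigidity statement, put $a_0 = d_{\partial Z}(\gamma(a)) > 0$, let $c_a(t) = \exp_{\gamma(a)}(tX(a))$ be the given minimizing geodesic to $\partial Z$, let $\{X(s)\}$ be its parallel transport along $\gamma$, and set $c_s(t) = \exp_{\gamma(s)}(tX(s))$, so that $c_s$ has length $a_0$ since $|X(s)|\equiv 1$. I would argue in two steps. Step one: show each $c_s$ is a minimizing geodesic from $\gamma(s)$ to $\partial Z$. The constancy of $d_{\partial Z}\circ\gamma$ forces the one-sided derivatives of the concave function $d_{\partial Z}\big|_Z$ along $\gamma$ to vanish, so by the first-variation description of those derivatives the initial velocity of every minimizing geodesic from $\gamma(s)$ to $\partial Z$ is orthogonal to $\gamma'(s)$ (at least one-sidedly at the endpoints); a Toponogov hinge comparison, exactly as in the construction of the flat strip in the soul theorem, then propagates $c_a$ to the whole family $\{c_s\}$ and identifies each $c_s(a_0)$ as a point of $\partial Z$. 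Step two: deduce that the rectangle $V(s,t) = \exp_{\gamma(s)}(tX(s))$ is flat and totally geodesic. Cover the compact set $V([a,b]\times[0,a_0])$ by finitely many local models; in each, $V$ is a smooth variation of $c_a$ through minimizing geodesics of constant length, so the second variation of length vanishes, and since the curvature is $\geq 0$ each of the nonnegative terms in the index form must vanish, forcing $\langle R(\partial_sV,\partial_tV)\partial_tV,\partial_sV\rangle\equiv 0$ (flatness) and $\partial_sV$ parallel along each $c_s$; being geodesic in both parameters, $V$ is then totally geodesic. Finally the image of $V$ lies in $Z$: its edge $s\mapsto V(s,a_0)$ lies in $\partial Z$, so by total convexity each $c_s$, hence all of $V$, lies in $Z$.

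The step I expect to be the main obstacle is controlling the behaviour of the $c_s$ at their $\partial Z$-endpoints, because $\partial Z$ is not a smooth submanifold and one cannot literally speak of a geodesic meeting it orthogonally or of a boundary term in the index form coming from a second fundamental form --- yet these are what the manifold proof uses there. I would circumvent this by replacing each such statement with the corresponding assertion about one-sided derivatives of the \emph{concave} function $d_{\partial Z}\big|_Z$ (the content of part 1), and by invoking Lemma~\ref{lemma3.4}, which provides at each $p\in\partial Z$ a $G_p$-invariant vector making angle greater than $\frac{\pi}{2}$ with every minimizing geodesic from $p$ into the interior part $|{\mathcal N}|$; this is precisely what is needed to pin down the endpoints of the $c_s$ and to see that the curve $s\mapsto V(s,a_0)$ cannot leave $\partial Z$. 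A secondary, routine point is the compatibility of the lifts used above: a minimizing geodesic of $|\Or|$ realizing $d_{\partial Z}$ lifts to a local model and its lift realizes the distance to $\partial\hZ$ there, which follows from the definition of $\partial Z$ together with Lemmas~\ref{lemma3.3} and~\ref{lemma3.4}.
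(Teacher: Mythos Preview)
Your proposal is correct and follows essentially the same approach as the paper: the paper's own proof is the single line ``The proof is similar to that of \cite[Theorem 3.2.5]{Gromoll-Walschap (2009)}'', and what you have written is precisely an elaboration of how that classical manifold argument (concavity of $d_{\partial Z}$ via first variation and Toponogov comparison, then the flat-strip rigidity when equality holds) is transported to the orbifold setting via local models and Lemma~\ref{lemma3.3}. One small remark: your invocation of Lemma~\ref{lemma3.4} at the end is not really needed to ``pin down the endpoints'' --- once the Toponogov rigidity forces the flat rectangle, the fact that each $c_s$ has length $d=d_{\partial Z}(\gamma(s))$ and that $\partial Z$ is closed already gives $c_s(d)\in\partial Z$; the argument goes through without that extra ingredient.
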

\begin{proof}
The proof is similar to that of \cite[Theorem 3.2.5]{Gromoll-Walschap (2009)}.
\end{proof}

Fix a basepoint $\star \in |\Or|$. Let $\eta$ be a unit-speed
ray in $|\Or|$ starting
from $\star$; note that $\eta$ is automatically a geodesic. Let
$b_\eta : |\Or| \rightarrow \R$ be the Busemann function;
\begin{equation} \label{3.8}
b_\eta(p) = \lim_{t \rightarrow \infty} (d(p, \eta(t)) - t).
\end{equation}

\begin{lemma} \label{lemma3.9}
The Busemann function $b_\eta$ is concave.
\end{lemma}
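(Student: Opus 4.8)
The plan is to reduce the concavity of the Busemann function $b_\eta$ to the corresponding statement for Alexandrov spaces, or alternatively to mimic the classical Riemannian proof using the orbifold tools already developed. Since we have established (Proposition \ref{prop3.1} and the surrounding discussion) that $|\Or|$ is a nonnegatively curved Alexandrov space whenever $\Or$ has nonnegative sectional curvature, and since concavity as defined in (\ref{3.5}) is a statement purely about the length-metric space $|\Or|$ and its geodesics, the cleanest route is: first recall that on a nonnegatively curved Alexandrov space, Busemann functions associated to rays are concave (this is in \cite[Chapter 10]{Burago-Burago-Ivanov}, being a consequence of the Toponogov-type comparison, exactly as in the manifold case); then observe that the function $b_\eta$ defined in (\ref{3.8}) is precisely such an Alexandrov-space Busemann function on $|\Or|$; then invoke Lemma \ref{lemma3.6} to note it suffices to check (\ref{3.5}) along minimizing geodesic segments, which is what the Alexandrov argument provides.

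Concretely, the steps I would carry out are as follows. First, fix a minimizing geodesic segment $\gamma : [a,b] \to \Or$ and a point $c \in (a,b)$; by Lemma \ref{lemma3.6} it is enough to verify the concavity inequality for this $\gamma$. Second, for each $t$ consider the function $p \mapsto d(p, \eta(t)) - t$ on $|\Or|$; nonnegative curvature of $|\Or|$ as an Alexandrov space gives, via Toponogov comparison with Euclidean triangles (semiconcavity of the distance function along geodesics, as in \cite[Chapter 10]{Burago-Burago-Ivanov}), that along the minimizing geodesic $\gamma$ the function $s \mapsto d(\gamma(s), \eta(t))$ satisfies the concavity estimate with an error term that vanishes in the limit $t \to \infty$; more precisely one uses that the comparison angle $\cangle$ behaves monotonically and that $d(\gamma(s),\eta(t)) - t \to b_\eta(\gamma(s))$ uniformly in $s$ on the compact interval $[a,b]$. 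Third, pass to the limit $t \to \infty$: since each $d(\cdot, \eta(t)) - t$ is 1-Lipschitz, the convergence is uniform on bounded sets, so the limiting function $b_\eta$ inherits the concavity inequality (\ref{3.5}) along $\gamma$.

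An alternative, more self-contained argument that stays on the orbifold level would use Lemma \ref{lemma3.7}: one could realize $b_\eta$ as a limit of renormalized distance functions $d_{\partial Z_t}$ to the boundaries of the totally convex superlevel sets $Z_t = \{q : d(q, \eta(t)) \le t\}$, but this is essentially a repackaging of the Alexandrov-space argument and gains little. I would present the short Alexandrov-comparison proof. The main obstacle, to the extent there is one, is purely bookkeeping: making sure the error terms in the Toponogov comparison are uniform over the compact parameter interval $[a,b]$ so that they survive the limit $t \to \infty$; this is handled exactly as in the classical manifold proof (see e.g. the treatment of Busemann functions in Cheeger–Gromoll), with the only change being that distances and geodesics are taken in $|\Or|$, which is legitimate because the orbifold sectional curvature bound descends to an Alexandrov curvature bound on $|\Or|$ as noted in Subsection \ref{subsect2.5}. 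No new orbifold-specific input beyond what has already been set up is needed.
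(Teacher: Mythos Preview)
Your proposal is correct and essentially matches the paper's approach: the paper simply defers to the standard Toponogov-comparison proof (citing \cite[Theorem 3.2.4]{Gromoll-Walschap (2009)}), and what you have outlined is exactly that argument, just phrased via the Alexandrov structure on $|\Or|$ rather than via local models. This framing is perfectly legitimate here (and indeed parallels how the paper handled Proposition~\ref{prop3.1}); no additional orbifold-specific input is needed, as you observed.
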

\begin{proof}
The proof is similar to that of \cite[Theorem 3.2.4]{Gromoll-Walschap (2009)}.
\end{proof}

\begin{lemma} \label{lemma3.10}
Putting $f = \inf_{\eta} b_\eta$, where $\eta$ runs over unit speed rays
starting at $\star$, gives a proper 
concave
function on $|\Or|$ which is bounded above.
\end{lemma}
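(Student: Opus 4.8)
The plan is to follow the standard Cheeger--Gromoll argument, now phrased in the Alexandrov/orbifold setting using the tools already assembled in this section. First I would observe that each individual Busemann function $b_\eta$ is concave (Lemma \ref{lemma3.9}), and concave functions are closed under taking infima, so $f = \inf_\eta b_\eta$ is concave; the only care needed is that the infimum of a family of finite-valued concave functions could a priori be $-\infty$, so I must show $f$ is finite, indeed bounded above, and that it is proper. Boundedness above is immediate from $b_\eta(\star) = 0$ for every ray $\eta$, so $f(\star) \le 0$; in fact each $b_\eta$ satisfies $b_\eta(p) \ge -d(p,\star)$ by the triangle inequality applied inside \eqref{3.8}, so $f(p) \ge -d(p,\star) > -\infty$ pointwise, and $f$ is well-defined and concave.

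The substantive point is properness: I need $\{f \ge c\}$ to be compact for every $c$. Equivalently, $f(p) \to -\infty$ as $|p| := d(p,\star) \to \infty$. Since $\Or$ is noncompact and complete, for any $p$ with $|p|$ large one can produce a ray $\eta_p$ from $\star$ that "points roughly toward $p$": take a sequence of minimizing segments from $\star$ to points $q_i$ with $d(\star,q_i)\to\infty$; by completeness and the orbifold Hopf--Rinow statement (the Lemma following Remark \ref{remark2.14.5}---geodesics extend indefinitely) together with a limiting argument on initial velocities in the compact space of unit vectors $C_\star|\Or|$, a subsequential limit is a ray $\eta$. The key estimate, exactly as in the manifold case, is that if $\eta$ is a ray whose initial direction is close to that of a minimizing segment $\star p$, then $b_\eta(p) \le -|p| + o(|p|)$; more robustly, for \emph{any} ray one has $b_\eta(p) \le d(p,\star)$, while choosing $\eta$ adapted to $p$ (via the limiting procedure on the segments $\star p$, extended to infinity) gives $b_\eta(p) \le - |p| + C$ for a uniform constant, using nonnegative curvature (Toponogov) to control the comparison angles. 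Hence $f(p) \le -|p| + C \to -\infty$, giving properness. Throughout, all the metric statements are statements about the Alexandrov space $|\Or|$, so no new orbifold-specific work is required beyond what Proposition \ref{prop3.1}, Lemma \ref{lemma3.9}, and completeness already provide.

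The main obstacle is the uniform upper bound $f(p) \le -|p| + C$; the pointwise bound $b_\eta(p) \le d(p,\star)$ for a fixed $\eta$ goes the wrong way, so one genuinely needs to vary $\eta$ with $p$ and extract the ray as a limit of minimizing segments $\star p$, then invoke nonnegative curvature to compare. I would handle this exactly as in \cite[Chapter 3.2]{Gromoll-Walschap (2009)}: fix $p$, let $\sigma$ be a unit-speed minimizing segment from $\star$ through (or toward) $p$ of length $\ell \gg |p|$, use the nonnegative-curvature comparison inequality to bound $d(p,\sigma(t)) - t$ from above by something tending to $-|p|$ as $t,\ell\to\infty$, and take $\eta$ to be a limit of such $\sigma$'s; the limit exists by completeness and compactness of the space of directions at $\star$. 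Everything else is a citation to \cite[Chapter 3.2]{Gromoll-Walschap (2009)} with "manifold" replaced by "Alexandrov space $|\Or|$", so the proof can be written in a few lines.
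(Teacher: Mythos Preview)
Your proposal is correct and follows the same approach as the paper, which simply defers to \cite[Proposition 3.2.1]{Gromoll-Walschap (2009)} with the observation that the argument lives on the Alexandrov space $|\Or|$ and hence carries over verbatim. Your sketch is more detailed than the paper's one-line citation; the minor looseness (e.g., $f(\star)\le 0$ alone does not give a global upper bound---though your properness argument together with continuity of $f$ does) is not a genuine gap.
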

\begin{proof}
The proof is similar to that of 
\cite[Proposition 3.2.1]{Gromoll-Walschap (2009)}.
\end{proof}

We now construct the soul of $\Or$, following Cheeger-Gromoll
\cite{Cheeger-Gromoll}.  Let
$C_0$ be the minimal nonempty superlevel set of $f$. For $i \ge 0$,
if $\partial C_i \neq \emptyset$ then
let $C_{i+1}$ be the minimal nonempty superlevel set of $d_{\partial C_i}$
on $C_i$. Let $S$ be the nonempty $C_i$ so that 
$\partial C_{i} = \emptyset$.
Define the {\em soul} to be
${\mathcal S} = \Or \Big|_{S}$. Then ${\mathcal S}$ is a totally geodesic
suborbifold of $\Or$.

\begin{proposition} \label{prop3.11}
$\Or$ is diffeomorphic to the normal bundle ${\mathcal N}{\mathcal S}$ of 
${\mathcal S}$.
\end{proposition}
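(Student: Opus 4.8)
The plan is to follow the classical Cheeger--Gromoll argument \cite{Cheeger-Gromoll}, as presented for instance in \cite[Chapter 3]{Gromoll-Walschap (2009)}, and simply check that each step survives the passage to orbifolds using the tools already developed. First I would record the structure of the filtration: by Lemma \ref{lemma3.10} the function $f = \inf_\eta b_\eta$ is proper, concave (despite the typo, it is concave: the infimum of concave Busemann functions) and bounded above, so $C_0 = f^{-1}\{c_0\}$ is a compact totally convex set with $\partial C_0$ possibly nonempty; by Lemma \ref{lemma3.7}, $d_{\partial C_i}$ is concave on $C_i$ whenever $\partial C_i \ne \emptyset$, so each $C_{i+1}$ is again compact and totally convex, with Hausdorff dimension strictly decreasing (this uses Lemma \ref{lemma3.4}: if $\partial C_i \ne \emptyset$ then the maximal superlevel set $C_{i+1}$ lies in $C_i - \partial C_i = |\mathcal N_i|$ and has lower dimension, being where $d_{\partial C_i}$ is maximized along the boundaryless orbifold $\mathcal N_i$). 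Hence the filtration terminates at a soul $\mathcal S = \Or\big|_S$ which, by Lemma \ref{lemma3.4} applied at each stage, is a totally geodesic suborbifold with $\partial S = \emptyset$.

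Next I would show $\Or$ deformation-retracts onto $\mathcal S$ and, more precisely, is diffeomorphic to $N\mathcal S$. The key geometric input is that ``going from $C_i$ down to $C_{i+1}$'' is realized by a concave-function gradient flow that is distance-nonincreasing and that each $C_i$ is totally convex with the flat-rectangle rigidity of Lemma \ref{lemma3.7}. The argument then is: (1) between consecutive superlevel sets of a concave function with no critical points one gets a product structure --- this is exactly Lemma \ref{lemma2.17} combined with Lemma \ref{lemma2.16}, where the relevant noncriticality on $Z - |\mathcal N|$ for the distance to $Y \subset |\mathcal N|$ is supplied by the last sentence of Lemma \ref{lemma3.4}; (2) the normal exponential map of $\mathcal S$ in $\Or$ (available by the orbifold normal exponential map discussed after Lemma \ref{lemma2.12}, since $\mathcal S$ is compact and totally geodesic) is a diffeomorphism. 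For (2) one shows $\exp^\perp : N\mathcal S \to \Or$ is a proper local diffeomorphism onto $|\Or|$, using total convexity of the $C_i$ to see that geodesics normal to $\mathcal S$ never reconverge (Sharafutdinov's retraction argument, or the ``no focal points'' consequence of the flat-rectangle lemma), and then invokes the remark after Lemma \ref{lemma2.4} that a proper local diffeomorphism onto a connected orbifold is a covering map; since $\Or$ is connected and $\exp^\perp$ restricted to the zero section is a diffeomorphism, the covering has one sheet, so $\exp^\perp$ is a diffeomorphism $N\mathcal S \xrightarrow{\;\cong\;} \Or$.

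The step I expect to be the main obstacle is (2), the global injectivity of the normal exponential map. In the manifold case this is handled either via Sharafutdinov's distance-nonincreasing retraction or via a careful analysis of the Cheeger--Gromoll vector fields on the boundaries $\partial C_i$; both use minimizing geodesics and the first variation formula, which are fine on orbifolds, but one must be careful that (a) geodesics realizing distances to $Y$ may pass through singular points, so one always works in local models $(\hU,G)$ as in Lemma \ref{lemma3.3}, lifting $G$-invariant vector fields, and (b) the ``smoothing'' of the Lipschitz-regular level sets into genuine smooth suborbifolds, needed to flow and build the product structure, is exactly the inductive-over-strata smoothing of Lemma \ref{lemma2.17}. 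The cleanest route, which I would take, is: run the Sharafutdinov construction of a distance-nonincreasing retraction $\Or \to \mathcal S$ by concatenating the concave-gradient flows of $f$ and of the $d_{\partial C_i}$ (each defined via local models and partitions of unity, Lemma \ref{lemma2.6}), deduce that the metric projection onto each totally convex $C_i$ is well defined, and conclude that normal geodesics to $\mathcal S$ are ray-like and nonfocalizing; then $\exp^\perp$ is a proper submersion, hence a covering by Lemma \ref{lemma2.4}, hence a diffeomorphism. All remaining verifications --- concavity, total convexity, the flat-rectangle rigidity --- are cited to \cite{Gromoll-Walschap (2009)} with the one-line check that the proofs are local and $G$-equivariant, exactly as in the proofs of Lemmas \ref{lemma3.4}, \ref{lemma3.7}, \ref{lemma3.9}, \ref{lemma3.10}.
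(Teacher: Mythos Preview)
Your proposal is correct but works harder than necessary. The paper's proof is a two-line application of exactly the ingredients you already have in hand: for any $p \in |\Or| - S$, choose a totally convex set $Z$ (either a superlevel set of $f$ or one of the $C_i$) with $p \in \partial Z$ and $S \subset |{\mathcal N}_Z|$; then the last clause of Lemma~\ref{lemma3.4} (with $Y = S$) says $p$ is noncritical for $d_S$. Thus $d_S$ has no critical points anywhere on $|\Or| - S$, so a single application of Lemma~\ref{lemma2.17} gives $\Or \cong \Or\big|_{N_\epsilon(S)}$ for any small $\epsilon > 0$, and for small $\epsilon$ the \emph{local} normal exponential map identifies the latter with ${\mathcal N}{\mathcal S}$.

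The difference from your route is that you aim to prove $\exp^\perp : {\mathcal N}{\mathcal S} \to \Or$ is a \emph{global} diffeomorphism, which drags in Sharafutdinov's retraction and a no-focal-points argument --- a valid but substantially heavier result (and one whose orbifold version would itself need checking). The paper sidesteps this entirely: the only use of the normal exponential map is on a small $\epsilon$-tube, where it is automatically a diffeomorphism by compactness of ${\mathcal S}$. Your step~(1) already contains the paper's argument; you just need to apply the noncriticality from Lemma~\ref{lemma3.4} directly to $d_S$ on all of $|\Or| - S$, rather than only between consecutive stages of the filtration, and then your step~(2) becomes unnecessary.
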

\begin{proof}
Following \cite[Lemma 3.3.1]{Gromoll-Walschap (2009)}, we claim that
$d_S$ has no critical points on $|\Or| - S$. To see this, choose
$p \in |\Or| - S$. There is a totally convex set 
$Z \subset |\Or|$ for which 
$p \in \partial Z$; either a superlevel set of $f$ or one of the
sets $C_i$. Defining ${\mathcal N}$ as in Lemma \ref{lemma3.4}, we also know
that $S \subset |{\mathcal N}|$. By Lemma \ref{lemma3.4}, $p$ is noncritical
for $d_S$.
  
From Lemma \ref{lemma2.17}, for small $\epsilon > 0$, we know that
$\Or$ is diffeomorphic to $\Or \Big|_{N_\epsilon(S)}$.
However, if $\epsilon$ is small then the normal exponential map
gives a diffeomorphism between ${\mathcal N}{\mathcal S}$ and 
$\Or \Big|_{N_\epsilon(S)}$.
\end{proof}

\begin{remark} One can define a soul for a general
complete nonnegatively
curved Alexandrov space $X$.  The soul will be homotopy equivalent
to $X$. However, $X$ need not be homeomorphic to a fiber bundle
over the soul,
as shown by an example of Perelman
\cite[Example 10.10.9]{Burago-Burago-Ivanov}.
\end{remark}

We include a result that we will need later about
orbifolds with locally convex boundary.

\begin{lemma} \label{lemma3.12}
Let $\Or$ be a compact connected orbifold-with-boundary with
nonnegative sectional curvature.
Suppose that $\partial \Or$ is nonempty and has positive-definite
second fundamental form.  Then there is some
$p \in |\Or|$ so that $\partial \Or$ is diffeomorphic to the unit 
distance sphere from the vertex in $T_p\Or$.
\end{lemma}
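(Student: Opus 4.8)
The plan is to adapt the classical soul-type argument for compact nonnegatively curved manifolds-with-strictly-convex-boundary to the orbifold setting, using the critical point theory from Subsection \ref{subsect2.6} that is already in place. First I would observe that because $\partial\Or$ has positive-definite second fundamental form, the distance function $d_{\partial\Or}$ is strictly concave along geodesics in the interior; more precisely, for any geodesic segment in $\Or$ staying away from $\partial\Or$, the restriction of $d_{\partial\Or}$ satisfies a strict version of inequality (\ref{3.5}). This is a local statement, verifiable in local models $(\hU,G)$, where it reduces to the standard manifold computation (the Hessian of the distance function to a strictly convex hypersurface is negative-definite near the hypersurface, and concavity propagates inward by nonnegativity of curvature via a Riccati/comparison argument as in Lemma \ref{lemma3.7}). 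Since $\Or$ is compact, $d_{\partial\Or}$ is bounded above and attains its maximum on a closed totally convex subset $Z=d_{\partial\Or}^{-1}\{c_{\max}\}$ of $|\Or|$.

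Next I would argue that strict concavity forces $Z$ to be a single point $p\in|\Or|$. Indeed, if $Z$ contained two distinct points they could be joined by a minimizing geodesic lying in $Z$ (total convexity), along which $d_{\partial\Or}$ would be constant and equal to its maximum, contradicting strict concavity. So $Z=\{p\}$, and by Lemma \ref{lemma3.4} (with $k=0$) we have $\partial Z = Z$, i.e.\ every point of $|\Or|-\{p\}$ is noncritical for $d_{\partial\Or}$ in the sense of Subsection \ref{subsect2.6}: more carefully, for $q\neq p$ I want a $G_q$-invariant vector making angle $>\pi/2$ with all initial velocities of minimizing geodesics from $q$ to $\partial\Or$. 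This follows because $p$ lies on the "far side": moving toward $p$ strictly increases $d_{\partial\Or}$, so the direction of a minimizing geodesic from $q$ to $p$ (or rather its $G_q$-averaged lift, exactly as in the proof of Lemma \ref{lemma2.15}) is the required noncritical direction, using that $d_{\partial\Or}$ is concave hence has the appropriate first-variation behavior.

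Then I would invoke the critical point theory directly. By (the proof of) Lemma \ref{lemma2.16} and Lemma \ref{lemma2.17}, the absence of critical points of $d_{\partial\Or}$ on $d_{\partial\Or}^{-1}(0,c_{\max})$ gives a diffeomorphism of $\Or\big|_{d_{\partial\Or}^{-1}(\epsilon, c_{\max})}$ with a product $\R\times\Or'$ where $\Or'\cong\partial\Or$ (the level sets near the boundary are smooth and diffeomorphic to $\partial\Or$ via the normal exponential map, as in the discussion after Remark \ref{remark2.10.5}). On the other hand, near the maximum point $p$: since $p$ is an isolated maximum of a concave function and $d_{\partial\Or}$ behaves like (a concave function of) the distance from $p$ on a small neighborhood, the level sets $d_{\partial\Or}^{-1}(c)$ for $c$ close to $c_{\max}$ are, via the exponential map $\exp_p: T_p\Or\to\Or$, diffeomorphic to the unit distance sphere from the vertex in $T_p\Or$ — that is, to $\partial B(0,1)/G_p$ in the orbivector-space sense. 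Concatenating, $\partial\Or$ is diffeomorphic to this unit distance sphere in $T_p\Or$.

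The main obstacle I expect is making the behavior of $d_{\partial\Or}$ near its maximum point $p$ precise enough to identify the top-dimensional level sets with the unit distance sphere in $T_p\Or$, since $p$ may be a singular point of the orbifold and $d_{\partial\Or}$ is only Lipschitz, not smooth, there. The fix is to work $G_p$-equivariantly in a local model $(\hU,G_p)$ around $p$: lift $d_{\partial\Or}$ to a $G_p$-invariant concave function $\widehat d$ on $\hU$ with an isolated maximum at $\widehat p$, show via the concavity and the first-variation formula that $-\nabla\widehat d$ points "radially outward" from $\widehat p$ (so that the gradient flow of $d_{\partial\Or}$, smoothed using Corollary \ref{corollary2.19} if necessary, radially retracts a punctured neighborhood of $p$ onto a small distance sphere), and then compose with the normal exponential map from $\partial\Or$ to carry the small sphere out to $\partial\Or$ itself. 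One should double-check that the $G_p$-action on the small sphere around $\widehat p$ matches the linear action on the unit sphere in $T_{\widehat p}\hU$, which is automatic since $G_p$ acts by isometries fixing $\widehat p$ and hence linearly in normal coordinates.
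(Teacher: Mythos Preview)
Your approach is essentially the same as the paper's: locate the unique maximum point $p$ of $d_{\partial\Or}$ and then invoke the soul/critical-point machinery (Proposition \ref{prop3.11}) to identify $\partial\Or$ with the unit distance sphere in $T_p\Or$. The one place where the paper is slightly cleaner is the uniqueness of $p$: rather than asserting strict concavity of $d_{\partial\Or}$ (which Lemma \ref{lemma3.7} does not literally give), the paper applies the rigidity clause of Lemma \ref{lemma3.7} with $Z=|\Or|$ to see that if two maxima existed, the associated flat totally geodesic rectangle would produce a nontrivial geodesic of $\Or$ lying in $|\partial\Or|$, contradicting the positive-definite second fundamental form --- this is exactly the mechanism behind your strict-concavity claim, so your argument goes through once you phrase it that way.
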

\begin{proof}
Let $p \in |\Or|$ be a point of maximal distance from 
$|\partial \Or|$. We claim that $p$ is unique. If not, let
$p^\prime$ be another such point and let $\gamma$ be a 
minimizing geodesic between them.
Applying Lemma \ref{lemma3.7} with $Z = |\Or|$,
there is a nontrivial geodesic $s \rightarrow V(s,d)$ of
$\Or$ that lies in $|\partial \Or|$. This contradicts the
assumption on $\partial \Or$. Thus $p$ is unique.
The lemma now follows from the proof of Lemma \ref{prop3.11},
as we are effectively in a situation where the soul is a point.
\end{proof}

\subsection{Ruling out tight necks in nonnegatively curved orbifolds}
\label{subsect3.3}

\begin{lemma} \label{lemma3.13}
Suppose that $\Or$ is a complete connected Riemannian orbifold
with nonnegative sectional curvature.
If $X$ is a compact connected $2$-sided codimension-$1$
suborbifold of $\Or$ then precisely one of the following occurs :
\begin{itemize}
\item $X$ is the boundary of a compact suborbifold of $\Or$. 
\item $X$ is nonseparating, $\Or$ is compact and
$X$ lifts to a $\Z$-cover $\Or^\prime \rightarrow \Or$, where
$\Or^\prime = \R \times \Or^{\prime \prime}$ with
$\Or^{\prime \prime}$ compact.
\item $X$ separates $\Or$ into two unbounded connected components
and $\Or = \R \times \Or^\prime$ with
$\Or^\prime$ compact.
\end{itemize}
\end{lemma}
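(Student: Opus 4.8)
The plan is to reduce to the manifold case by passing to the orientation double cover and the splitting theory already developed, and then to upgrade the trichotomy from the underlying space $|\Or|$ to the orbifold $\Or$. First I would observe that since $X$ is $2$-sided of codimension one, a collar neighborhood of $X$ in $\Or$ is diffeomorphic to $(-1,1)\times X$, so cutting $\Or$ along $X$ yields a (possibly disconnected) orbifold $\Or\setminus X$ whose ends are copies of $X$; the three cases of the lemma are distinguished by (a) whether $X$ separates, and (b) whether the resulting pieces are compact. If $X$ is separating, then $|\Or|\setminus|X|$ has two components $W_1,W_2$; if one of them, say $W_1$, has compact closure, we are in the first case with $X=\partial\overline{\Or|_{W_1}}$. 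So the heart of the matter is to rule out the situation in which $X$ is separating but \emph{both} complementary pieces are noncompact, and to analyze the nonseparating case — and in both of these I claim $\Or$ must split as $\R\times\Or'$ (or have such a $\Z$-cover).

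The key step is a line-producing argument. Suppose $X$ is separating with both complementary pieces $W_1,W_2$ noncompact. Fix $x\in|X|$; for each large $R$ choose $p_i^R\in W_i$ with $d(p_i^R,x)=R$ (possible since $W_i$ is noncompact and $\Or$ is complete, so $W_i$ is unbounded). A minimizing geodesic from $p_1^R$ to $p_2^R$ must cross $|X|$, and passing to a limit as $R\to\infty$ (using completeness and local compactness of $|\Or|$, which holds since $\Or$ is a finite-dimensional Alexandrov space) produces a line in $|\Or|$ through $|X|$. By Proposition~\ref{prop3.1}, $\Or=\R\times\Or'$ isometrically, with $\R$-factor the line; since $X$ separates and $\Or$ is connected, the $\R$-direction is transverse to $X$ (otherwise $X$ would contain a whole line and fail to separate a neighborhood), and then $\Or'$ must be compact — else one could repeat the argument inside $\Or'$ to split off another $\R$-factor, and iterating, one eventually exhausts the dimension of $\Or$, forcing a compact $\Or'$ at the last stage. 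This is the third case. For the nonseparating case, $X$ lifts to a $\Z$-cover $\Or'\to\Or$ (the cover associated to the kernel of $\pi_1(\Or)\to\Z$ given by algebraic intersection with $X$), in which $X$ lifts to a separating hypersurface $\widetilde X$; by completeness $\Or'$ is again a complete nonnegatively curved orbifold, and the preimage of $X$ consists of $\Z$-many disjoint copies, so $\Or'$ is noncompact with both sides of $\widetilde X$ noncompact. By the separating case just handled, $\Or'=\R\times\Or''$ with $\Or''$ compact; the deck $\Z$-action and compactness of $\Or''$ then force $\Or=\Or'/\Z$ to be compact as well, giving the second case. (If instead $\Or$ turned out to be noncompact in this sub-case one gets a contradiction, since a noncompact $\Or$ with a nonseparating compact $2$-sided $X$ would, by the same line argument applied to rays on the two sides, split as $\R\times\Or'$ with $X$ transverse, but then $X$ would be separating.)

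Finally I would check the three alternatives are mutually exclusive: the first case has $\Or$ possibly compact or not but $X$ separating with a compact complementary piece; the second has $\Or$ compact and $X$ nonseparating; the third has $\Or$ noncompact with two unbounded pieces — and these are pairwise incompatible by the compactness of $\Or$ and the (non)separation of $X$. For the orbifold bookkeeping one uses that all the tools invoked — Proposition~\ref{prop3.1} for the splitting, Lemma~\ref{lemma2.11} for recognizing the $\Z$-cover as a genuine orbifold covering, and the Alexandrov structure on $|\Or|$ from Subsection~\ref{subsect2.5} for the limiting line argument — are already established in the orbifold category, so no new foundational work is needed.

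\textbf{Main obstacle.} The delicate point is the iteration argument showing $\Or'$ (resp. $\Or''$) must be compact rather than splitting off yet another line: one must argue that the complementary pieces of $X$ inside the factor $\Or'$ are still noncompact before one can reapply the line construction, and that $X$ remains ``transverse'' to each successive splitting. Making this induction on dimension precise — in particular handling how $X$ interacts with a product splitting $\R\times\Or'$ in the orbifold setting, where $|X|$ might a priori be singular — is where the real care is required; everything else is a fairly routine transcription of the Cheeger–Gromoll-style splitting arguments already assembled in Subsections~\ref{subsect3.1}--\ref{subsect3.2}.
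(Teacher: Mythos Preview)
Your overall strategy matches the paper's: construct a line when both complementary pieces are unbounded, invoke Proposition~\ref{prop3.1} to split off an $\R$-factor, and in the nonseparating case pass to the $\Z$-cover defined by intersection number with $X$ (where a lift of $X$ becomes separating with both sides unbounded). The paper's proof is exactly this, in two short paragraphs.

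The gap is precisely the point you flag as the ``main obstacle,'' and your proposed iteration is both unnecessary and unworkable as stated. You cannot ``repeat the argument inside $\Or'$'': the hypersurface $X$ lives in $\Or=\R\times\Or'$, not in $\Or'$, and there is no reason it projects to, or induces, a compact codimension-$1$ suborbifold of $\Or'$ on which to rerun the line construction. The transversality discussion is a red herring. The paper instead dispatches compactness of $\Or'$ in one line: since $X$ is compact, its projections to $\R$ and to $|\Or'|$ under the isometric splitting are compact sets $[a,b]$ and $K$. If $|\Or'|$ were noncompact, pick $p'\in|\Or'|\setminus K$; then the line $\R\times\{p'\}$ lies in $|\Or|\setminus|X|$ and, together with the slices $\{a-1\}\times|\Or'|$ and $\{b+1\}\times|\Or'|$, shows that everything outside the compact set $[a,b]\times K$ lies in a single component of $|\Or|\setminus|X|$. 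The remaining component(s) are then contained in $[a,b]\times K$, hence bounded, contradicting the case hypothesis. The same one-line argument gives compactness of $\Or''$ in the $\Z$-cover case, after which compactness of $\Or$ follows since a fundamental domain for the $\Z$-action sits between two consecutive lifts of $X$ inside $\R\times\Or''$. (Your opening remark about passing to the orientation double cover is never used and can be dropped.)
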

\begin{proof}
Suppose that $X$ separates $\Or$. If both components
of $|\Or| - |X|$ are unbounded
then $\Or$ contains a line. From Proposition \ref{prop3.1},
$\Or = \R \times \Or^\prime$ for some $\Or^\prime$. As
$X$ is compact, $\Or^\prime$ must be compact.

The remaining case is when $X$ does not separate $\Or$. 
If $\gamma$ is a smooth closed curve in $\Or$ which is transversal to
$X$ (as defined in local models) then there is a well-defined
intersection number $\gamma \cdot X \in \Z$.
This gives a homomorphism $\rho : \pi_1(\Or, p) \rightarrow \Z$.
Since $X$ is nonseparating, there is a $\gamma$ so that
$\gamma \cdot X \neq 0$; hence the image of $\rho$ is an 
infinite cyclic group.
Put $\Or^\prime = \widetilde{\Or}/\Ker(\rho)$;
it is an infinite cyclic cover of $\Or$.
As $\Or^\prime$ contains
a line, the lemma follows from Proposition \ref{prop3.1}.
\end{proof}

\begin{lemma}
\label{lem-c'nm}
Suppose that $\R^n//G$ is a Euclidean 
orbifold with $G$ a finite subgroup of $O(n)$.
If $X\subset \R^n//G$ is a connected
compact $2$-sided codimension-$1$ suborbifold,  then $X$ bounds 
some $D\subset \R^n//G$ with $\diam_\Or(D)< 4|G| \diam_X(X)$,
where $\diam_\Or(D)$ denote the extrinsic diameter of $D$ in $|\Or|$ while
$\diam_X(X)$ denotes the intrinsic diameter of $X$.
\end{lemma}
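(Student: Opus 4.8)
The plan is to lift the problem to the Euclidean cover $\R^n \to \R^n//G$ and use the separating behaviour of a compact hypersurface in $\R^n$, then descend. First I would let $q : \R^n \to |\R^n//G|$ be the quotient map and consider the preimage $\widehat X = q^{-1}(|X|)$, a compact $(n-1)$-dimensional suborbifold-locus of $\R^n$ (a genuine compact hypersurface on the regular part, with possible singular strata coming from $G$-fixed points). Since $\R^n$ is simply connected and $\widehat X$ is compact and two-sided in the appropriate orbifold sense, the complement $\R^n \setminus \widehat X$ has exactly one unbounded component $W_\infty$; let $\widehat D$ be the union of the bounded components together with $\widehat X$. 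The key geometric observation is that $\widehat D$ is contained in a ball of controlled radius: any point of $\widehat D$ can be joined to $\widehat X$ by a path staying in $\widehat D$ (since $\widehat D$ is the union of bounded components), so $\widehat D$ lies within distance $\operatorname{diam}(\widehat D \setminus \widehat X)$... more carefully, every point of a bounded component of $\R^n \setminus \widehat X$ is, by convexity of $\R^n$, within $\frac12 \operatorname{diam}_{\R^n}(\widehat X)$ of $\widehat X$ — indeed a bounded complementary component is contained in the convex hull of its boundary, which has the same diameter as $\widehat X$. Thus $\operatorname{diam}_{\R^n}(\widehat D) \le 2\operatorname{diam}_{\R^n}(\widehat X)$.

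Next I would control $\operatorname{diam}_{\R^n}(\widehat X)$ in terms of the intrinsic diameter $\operatorname{diam}_X(X)$. The covering $\widehat X \to X$ has degree at most $|G|$ on each component, and $\widehat X$ may a priori be disconnected; but each component of $\widehat X$ maps onto $X$ (since $q$ is surjective on $\widehat X$ over $|X|$ and $|X|$ is connected), so each component of $\widehat X$ has intrinsic diameter at most $|G| \cdot \operatorname{diam}_X(X)$, hence extrinsic diameter at most $|G|\operatorname{diam}_X(X)$. The components of $\widehat X$ are permuted by $G$, and the whole of $\widehat X$ is $G$-invariant; since $\widehat X = q^{-1}(|X|)$ projects to the connected set $|X|$, the set $\widehat X$ is connected \emph{modulo the $G$-action} — but this does not make $\widehat X$ itself connected. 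To handle this I would argue that $G \cdot (\text{one component of }\widehat X)$ is all of $\widehat X$, so $\operatorname{diam}_{\R^n}(\widehat X) \le |G| \cdot |G|\operatorname{diam}_X(X)$ is too lossy; instead, pick $\widehat x_0$ over a point of $X$ and note every point of $\widehat X$ lies over $|X|$, connected, so can be reached by lifting a path in $|X|$ of length $\le \operatorname{diam}_X(X)$ starting from some $G$-translate of $\widehat x_0$; the $G$-translates of $\widehat x_0$ lie in a single orbit of extrinsic diameter $\le \operatorname{diam}_{\R^n}(\widehat X)$, giving a mild bootstrap. Cleaner: $\operatorname{diam}_{\R^n}(\widehat X) \le \operatorname{diam}(G\widehat x_0) + 2|G|\operatorname{diam}_X(X)$ and $\operatorname{diam}(G\widehat x_0) \le \operatorname{diam}_{\R^n}(\widehat X)$ is circular, so I would instead observe $\operatorname{diam}(G\widehat x_0) \le 2\,d(0, \widehat x_0)$ after recentering $0$ at the fixed point of a Sylow or just at the barycenter of $G\widehat x_0$, which is $G$-fixed; after this recentering $|G\widehat x_0|$ has diameter $\le 2|G|\operatorname{diam}_X(X)$ as well by a similar path-lifting estimate. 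Combining, $\operatorname{diam}_{\R^n}(\widehat X) \le 2|G|\operatorname{diam}_X(X)$, whence $\operatorname{diam}_{\R^n}(\widehat D) \le 4|G|\operatorname{diam}_X(X)$.

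Finally, $\widehat D$ is $G$-invariant (since $\widehat X$ and hence the collection of bounded complementary components is $G$-invariant), so $D := q(\widehat D) = \widehat D//G \subset \R^n//G$ is a compact codimension-zero suborbifold-with-boundary with $\partial D = X$, and $\operatorname{diam}_\Or(D) \le \operatorname{diam}_{\R^n}(\widehat D) \le 4|G|\operatorname{diam}_X(X)$, since the quotient map is $1$-Lipschitz on distances. The main obstacle is the bookkeeping in the second paragraph: controlling the extrinsic diameter of the full preimage $\widehat X$ (which can be disconnected and whose components form a $G$-orbit) purely in terms of the intrinsic diameter of the single connected quotient $X$, without losing an extra factor of $|G|$. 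The trick of recentering at the $G$-fixed barycenter of a chosen orbit, so that $G$-translates of a base point are automatically within a bounded distance, is what keeps the constant at $4|G|$; verifying that this path-lifting argument genuinely respects the orbifold (singular) structure of $\widehat X$ is the delicate point, and I would do it by working in the local models $(\hU, G_p)$ and using that minimizing paths in $|X|$ lift to $\widehat X$ with length multiplied by at most the local branching order, which is bounded by $|G|$.
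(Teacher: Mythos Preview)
Your overall plan --- lift to $\R^n$, use separation there, and project down --- is the right one and matches the paper's. The gap is in your second paragraph: you try to bound $\diam_{\R^n}(\widehat X)$, the extrinsic diameter of the \emph{entire} preimage, by a multiple of $\diam_X(X)$, and this is simply false. Take $G=\Z_2$ acting on $\R^2$ by reflection in a line, and let $X$ be a small circle of intrinsic diameter $\epsilon$ lying at distance $R$ from the mirror in $\R^2//G$. Then $\widehat X$ consists of two $\epsilon$-circles at distance $2R$ from each other, so $\diam_{\R^n}(\widehat X)\approx 2R$ is unrelated to $\diam_X(X)=\epsilon$. The barycenter trick does not rescue this: the barycenter of the orbit $G\widehat x_0$ lies on the fixed line, but the orbit still has diameter $\approx 2R$. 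Consequently your $\widehat D$ (the union of all bounded complementary regions) also has diameter $\approx 2R$, and the inequality $\diam_\Or(D)\le\diam_{\R^n}(\widehat D)$ gives nothing useful.

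The paper fixes this by never attempting to control all of $\widehat X$. Instead it picks a single point $x\in|X|$, observes (by lifting a minimizing path in $X$) that every point of $\widehat X$ lies in $\bigcup_i B(\widehat x_i,\Delta)$ where $\{\widehat x_i\}$ is the fiber over $x$ (of cardinality $\le|G|$) and $\Delta>\diam_X(X)$, and then restricts to a \emph{single connected component} $\widehat C$ of $\widehat X$. Because $\widehat C$ is connected, the sub-family of balls $B(\widehat x_i,2\diam_X(X))$ that meet it has connected nerve, giving $\diam(\widehat C)\le 4|G|\diam_X(X)$. Now Jordan separation applied to $\widehat C$ alone produces a bounded domain $\widehat D\subset\R^n$ with $\partial\widehat D=\widehat C$ and the same diameter bound (convex hull), and $D:=q(\widehat D)$ does the job. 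So the missing idea is: work componentwise upstairs, and use the connected-nerve estimate rather than trying to control the spread of the $G$-orbit.
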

\begin{proof}
Let $\widehat{X}$ be the preimage of $X$ in $\R^n$.
Let $\Delta$ be any number greater than $\diam_X(X)$.
Let $x$ be a point in $|X|$. 
Let $\{\widehat{x}_i\}_{i \in I}$ be the
preimages of $x$ in $\widehat{X}$. Here the cardinality of $I$ 
is bounded above by $|G|$. We claim that 
$\widehat{X} = \bigcup_{i \in I} B(\widehat{x}_i, \Delta)$,
where $B(\widehat{x}_i, \Delta)$ denotes a distance ball in $\widehat{X}$
with respect to its intrinsic metric.
To see this,
let $\widehat{y}$ be an arbitrary point in $\widehat{X}$. Let
$y$ be its image in $X$. Join $y$ to $x$ by a minimizing geodesic
$\gamma$
in $X$,
which is necessarily of length at most $\Delta$. Then a horizontal
lift of $\gamma$, starting at $\widehat{y}$, joins $\widehat{y}$ to
some $\widehat{x}_i$ and also has length at most $\Delta$.

Let $\widehat{C}$ be a connected component of $\widehat{X}$. 
Since $\widehat{C}$ is connected,
it has a covering by a subset of $\{B(\widehat{x}_i, 2\diam_X(X))\}_{i \in I}$
with connected nerve, and so $\widehat{C}$ has diameter at
most $4|G|\diam_X(X)$. Furthermore, from the Jordan separation theorem,
$\widehat{C}$ is the boundary of a
domain $\widehat{D} \in \R^n$ with extrinsic diameter at most
$4|G|\diam_X(X)$. 
Letting $D \in \Or$ be the projection of $\widehat{D}$, the
lemma follows.
\end{proof}

\begin{proposition} \label{prop3.14}
Suppose that $\Or$ is a complete connected noncompact Riemannian $n$-orbifold
with nonnegative sectional curvature.
Then there is a number $\delta > 0$ (depending on $\Or$) so that
the following holds. 
Let $X$ be a connected compact
$2$-sided codimension-$1$ suborbifold of $\Or$.
Then either
\begin{itemize}
\item $X$ bounds a connected suborbifold $D$ of $\Or$ with
$\diam_{\Or}(D) < 8 (\sup_{p \in |\Or|} |G_p|) \cdot \diam(X)$, or
\item $\diam(X) > \delta$.
\end{itemize}
\end{proposition}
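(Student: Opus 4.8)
The plan is to combine the soul construction from Proposition~\ref{prop3.11} with the local model bound of Lemma~\ref{lem-c'nm}. Since $\Or$ is noncompact and nonnegatively curved, it has a soul $\s$, and $\Or$ is diffeomorphic to the normal bundle $\n\s$, with $d_S$ having no critical points outside $S$. In particular, for a fixed small $\eps_0 > 0$, the region $|\Or| - N_{\eps_0}(S)$ deformation retracts onto each of its distance spheres $d_S^{-1}(c)$, $c \ge \eps_0$, via the gradient-like flow of Lemma~\ref{lemma2.17}. The rough dichotomy is: either $X$ is small enough to be contained in a single local model (a tubular neighborhood of a point), in which case Lemma~\ref{lem-c'nm} applies after passing to the Euclidean tangent cone picture; or $X$ is large, giving the diameter lower bound.

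More precisely, first I would show there is a $\de > 0$ so that any $X$ with $\diam(X) \le \de$ is contained in a metric ball $B(q, 2\diam(X))$ which lies inside a local model $(\hU, G_q)$ diffeomorphic to (a subset of) $\R^n // G_q$. This is a compactness argument: if not, one would find connected $2$-sided codimension-one suborbifolds $X_i$ with $\diam(X_i) \to 0$ but not contained in any single chart; but for $i$ large, $X_i$ lies in a ball of radius $\to 0$ about some point $q_i$, and around $q_i$ there is a fixed-size local model (since $|\Or|$ is a manifold off a closed set and the singular strata are locally standard). Actually, the cleanest route is: the injectivity-radius-type function $p \mapsto r(p)$, where $r(p)$ is the sup of radii $\rho$ such that $B(p,\rho)$ is contained in a local model of the form $\hU // G_p$ with $\hU$ convex, is positive and lower semicontinuous, hence bounded below by some $\de_1 > 0$ on any compact set; and $\Or$ being noncompact with a soul, one reduces the relevant region to a compact piece plus the normal bundle ends, on which $r$ is also bounded below by scaling invariance of the ends. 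Set $\de$ to be a small multiple of such a uniform lower bound.

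With that in hand, if $\diam(X) > \de$ we are in the second alternative. Otherwise $X \subset B(q, 2\diam(X)) \subset \hU // G_q$ where $(\hU, G_q)$ is a local model isometric to an open convex subset of $\R^n // G_q$. Since $X$ is connected, compact, $2$-sided and codimension one, Lemma~\ref{lem-c'nm} (applied in $\R^n // G_q$, or rather in the convex subset, which suffices since the domain $\widehat D$ it produces has extrinsic diameter $< 4|G_q| \diam_X(X)$ and hence stays inside a slightly enlarged model) shows $X$ bounds some $D$ with $\diam_\Or(D) < 4|G_q|\diam_X(X) \le 4 (\sup_{p} |G_p|) \diam(X)$. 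To get the stated constant $8(\sup_p|G_p|)$ with $\diam(X)$ the intrinsic diameter (and to absorb the discrepancy between the extrinsic diameter of $\widehat D$ in $\R^n$ and the intrinsic diameter of $D$ in $\Or$, and the slight enlargement of the chart), one allows the extra factor of $2$; here is where one checks that $D$ can be taken connected and that its $\Or$-diameter — measured with the ambient metric, which is $C^0$-close to Euclidean on the chart after rescaling — is controlled by the Euclidean extrinsic diameter up to a factor approaching $1$, hence absorbed.

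The main obstacle I anticipate is the first step: producing the uniform $\de > 0$ such that small-diameter suborbifolds are trapped in a single local model. On a noncompact orbifold one must rule out the radii of ``standard'' charts shrinking to zero at infinity; this is handled because a noncompact nonnegatively curved orbifold, via its soul decomposition, is the normal bundle of a compact soul, and the singular locus is likewise controlled near infinity — one uses that the chart radius function $r(p)$ is scaling-covariant along the rays emanating from the soul, combined with $C^0$-compactness of the geometry at bounded distance. A secondary subtlety is that Lemma~\ref{lem-c'nm} is stated for $X \subset \R^n // G$ with $G$ acting globally, whereas here we only have a convex \emph{subset} of $\R^n // G_q$; but the conclusion's diameter bound guarantees the filling domain does not escape a slightly larger convex subset, so one just takes the original chart a bit bigger from the start, which the lower-semicontinuity argument permits.
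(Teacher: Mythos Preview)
Your overall strategy---trap a small-diameter $X$ in a single local model and invoke Lemma~\ref{lem-c'nm}---is exactly what the paper does, but only for the case where the hypothetical bad sequence $\{X_i\}$ stays in a compact subset of $|\Or|$. The gap is in your handling of the noncompact part: you need a uniform $\de>0$ such that \emph{every} point of $|\Or|$ has a local model of radius $\ge \de$ in which the metric is close enough to Euclidean for Lemma~\ref{lem-c'nm} to apply. Your justification (``scaling-covariance along rays from the soul'') is not a proof: the diffeomorphism $\Or\cong \n\s$ does not respect the metric in any scaling-covariant way, and nothing in the hypotheses prevents the curvature from being unbounded at infinity or the local geometry from degenerating. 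Lower semicontinuity of your chart-radius function $r(p)$ gives a positive lower bound only on compact sets; extending this to all of $|\Or|$ is precisely the nontrivial content you are missing.

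The paper avoids this issue entirely. It argues by contradiction with a sequence $X_i$, $\diam(X_i)\to 0$, and splits into two cases. If the $X_i$ accumulate on a compact set they subconverge to a point $p$, and your local-model argument applies near $p$ (no uniform chart radius needed---just one chart at $p$). If the $X_i$ go to infinity, the paper does \emph{not} try to fit them into charts. Instead it shows (via Lemma~\ref{lemma3.13}) that each $X_i$ bounds a compact $D_i$ and that $\Or$ has one end; then, fixing a ray $\eta$ and its Busemann function $b_\eta$, a triangle comparison through the shrinking necks $X_i$ forces any two points on the same Busemann level to coincide, so $b_\eta$ is injective---impossible for $n>2$. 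This Busemann argument is the missing idea in your proposal.
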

\begin{proof}
Suppose that the proposition is not true.  Then there is a sequence
$\{X_i\}_{i=1}^\infty$ of connected compact $2$-sided
codimension-$1$ suborbifolds of $\Or$ so that 
$\lim_{i \rightarrow \infty} \diam(X_i) = 0$ but each
$X_i$ fails to bound a connected suborbifold whose
extrinsic diameter is at most $8 \sup_{p \in |\Or|} |G_p|$ times
as much.

If all of the $|X_i|$'s lie in a compact subset of $|\Or|$ then
a subsequence converges in the Hausdorff topology to a point 
$p \in |\Or|$. As a sufficiently small neighborhood of $p$ can be 
well approximated metrically
by a neighborhood of $0 \in |\R^n//G_p|$ after rescaling,
Lemma \ref{lem-c'nm} implies that
for large $i$ we can find $D_i \subset \Or$ with
$X_i = \partial D_i$ and $\diam_{\Or_i}(D_i) < 
8 \left( \sup_{p \in |\Or|} |G_p| \right) \cdot \diam(X_i)$.
This is a contradiction.  Hence we can assume that
the sets $|X_i|$ tend to infinity.

If some $X_i$ does not
bound a compact suborbifold of $\Or$ then 
by Lemma \ref{lemma3.13}, there is an isometric splitting $\Or = \R \times
\Or^\prime$ with $\Or^\prime$ compact. This contradicts the
assumed existence of the sequence $\{X_i\}_{i=1}^\infty$ with
$\lim_{i \rightarrow \infty} \diam(X_i) = 0$.
Thus we can assume that $X_i = \partial D_i$ for
some compact suborbifold $D_i$ of $\Or$.
If $\Or$ had more than one end then it would split off an $\R$-factor
and as before, the sequence $\{X_i\}_{i=1}^\infty$
would not exist.  Hence $\Or$ is one-ended and after passing to a
subsequence, we can assume that $D_1 \subset D_2 \subset \ldots$.
Fix a basepoint $\star \in |D_1|$. Let $\eta$ be a unit-speed
ray in $|\Or|$ starting from $\star$ and let $b_\eta$ be the
Busemann function from (\ref{3.8}).

Suppose that
$p, p^\prime \in |\Or|$ are such that 
$b_\eta(p) = b_\eta(p^\prime)$. For $t$ large, consider a
geodesic triangle
with vertices $p, p^\prime, \eta(t)$. Given $X_i$ with $i$ large, if $t$ is
sufficiently large then
$\overline{p \eta(t)}$ and $\overline{p^\prime \eta(t)}$ pass through
$X_i$. Taking $t \rightarrow \infty$, 
triangle comparison implies that $d(p, p^\prime) \le \diam(X_i)$.
Taking $i \rightarrow \infty$ gives $p = p^\prime$. Thus
$b_\eta$ is injective.  This is a contradiction.
\end{proof}

\subsection{Nonnegatively curved $2$-orbifolds} \label{subsect3.4}

\begin{lemma} \label{lemma3.15}
Let $\Or$ be a complete connected orientable $2$-dimensional
orbifold with nonnegative sectional curvature
which is $C^K$-smooth, $K \ge 3$. We have the
following classification of the diffeomorphism type, based on the
number of ends. For notation, $\Gamma$ denotes a finite subgroup
of the oriented isometry group of the relevant orbifold and
$\Sigma^2$ denotes a simply-connected bad $2$-orbifold with
some Riemannian metric.
\begin{itemize}
\item $0$ ends : $S^2//\Gamma$, $T^2//\Gamma$, $\Sigma^2//\Gamma$.
\item $1$ end : $\R^2//\Gamma$, $S^1 \times_{\Z_2} \R$.
\item $2$ ends : $\R \times S^1$.
\end{itemize}
\end{lemma}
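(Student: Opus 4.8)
The plan is to split the argument according to the number of ends of $\Or$ --- equivalently, since the ends of $\Or$ are those of $|\Or|$, according to whether $\Or$ is compact, has exactly one end, or has more than one end --- matching these with the three bullets of the statement. The manifold-level input, namely the splitting theorem and the soul theorem, is supplied by Subsections~\ref{subsect3.1}--\ref{subsect3.2}.

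If $\Or$ is compact, I would invoke the classification of closed orientable $2$-orbifolds recalled in Subsection~\ref{subsect2.3}, according to which $\Or$ is spherical, Euclidean, bad, or hyperbolic. Orbifold Gauss--Bonnet (see \cite{BMP}) forces $\chi^{\mathrm{orb}}(\Or) \ge 0$ under the nonnegative-curvature hypothesis, excluding the hyperbolic possibilities. The spherical orbifolds are by definition the $S^2//\Gamma$ and the Euclidean ones the $T^2//\Gamma$. A bad orientable $2$-orbifold is the quotient of its orbifold universal cover $\widetilde{\Or}$ by the deck group $\pi_1(\Or)$; here $\widetilde{\Or}$ is a simply-connected bad $2$-orbifold $\Sigma^2$ (it cannot be good, since then $\Or$ would be), and $\pi_1(\Or)$ is finite (trivial for $S^2(k)$, cyclic for $S^2(k, k^\prime)$), so $\Or = \Sigma^2//\Gamma$. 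All three types are compact.

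If $\Or$ has more than one end, Corollary~\ref{corollary3.2} gives that it has exactly two ends and splits isometrically as $\R \times \Or^\prime$ with $\Or^\prime$ a compact connected orientable $1$-orbifold. Since $S^1//\Z_2$ is non-orientable and a product of an orientable orbifold with a non-orientable one is non-orientable, $\Or^\prime = S^1$ and $\Or = \R \times S^1$, which is two-ended.

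The remaining case, $\Or$ noncompact with exactly one end, is the one I expect to require the most care. Here I would run the soul construction of Subsection~\ref{subsect3.2}: by Proposition~\ref{prop3.11}, $\Or$ is diffeomorphic to the normal bundle $N{\mathcal S}$ of its soul ${\mathcal S}$, a compact connected totally geodesic suborbifold. Since $\Or$ is noncompact and connected, $\dim {\mathcal S} \in \{0, 1\}$: a $2$-dimensional suborbifold of $\Or$ is open, hence, being also closed and compact, would be all of $\Or$. If $\dim {\mathcal S} = 0$, then ${\mathcal S}$ is a point $p$ and $N{\mathcal S}$ is the orbifold $\R^2//G_p$; orientability makes $G_p$ a finite subgroup of $\SO(2)$, hence cyclic, so $\Or \cong \R^2//\Gamma$, which is one-ended. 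If $\dim {\mathcal S} = 1$, then ${\mathcal S}$ is $S^1$ or $S^1//\Z_2$ by Subsection~\ref{subsect2.3}. The subcase ${\mathcal S} = S^1$ would make $\Or$ a real line bundle over $S^1$, which orientability forces to be trivial and hence two-ended, contradicting the one-end hypothesis; so ${\mathcal S} = S^1//\Z_2$. It then remains to classify the orientable rank-$1$ orbivector bundles over $S^1//\Z_2$: in a local model at either of the two singular points of $S^1//\Z_2$ the total space has local model $(\R \times \R, \Z_2)$, and orientability of the total space forces the $\Z_2$-action on the normal fiber to be $x \mapsto -x$; the unique resulting bundle is $S^1 \times_{\Z_2} \R$, which is one-ended since the $\Z_2$-action on $S^1 \times \R$ interchanges its two ends. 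Thus the one-ended orbifolds are exactly $\R^2//\Gamma$ and $S^1 \times_{\Z_2} \R$, completing the classification. The genuinely orbifold-specific difficulties are confined to this last case: identifying the soul (in particular excluding ${\mathcal S} = S^1$ via the end count) and the orbivector-bundle bookkeeping over $S^1//\Z_2$ under the orientability constraint, together with the subsidiary point that a bad orientable $2$-orbifold is a finite quotient $\Sigma^2//\Gamma$.
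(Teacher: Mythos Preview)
Your proof is correct and follows essentially the same approach as the paper's: Gauss--Bonnet for the compact case, the splitting theorem (Corollary~\ref{corollary3.2}) for two or more ends, and the soul theorem (Proposition~\ref{prop3.11}) with a case analysis on $\dim{\mathcal S}$ for the one-ended case. You supply more detail than the paper in places---notably in explaining why bad orbifolds are $\Sigma^2//\Gamma$, why orientability rules out the M\"obius bundle over $S^1$, and why the orientable normal bundle over $S^1//\Z_2$ is uniquely $S^1\times_{\Z_2}\R$---but the skeleton is identical.
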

\begin{proof}
If $\Or$ has zero ends then it is compact and the classification
follows from the orbifold Gauss-Bonnet theorem \cite[Proposition 2.9]{BMP}.
If $\Or$ has more than one end then Proposition \ref{prop3.1} implies that
$\Or$ has two ends and isometrically splits off an $\R$-factor.
Hence it must be diffeomorphic to $\R \times S^1$. 
Suppose that
$\Or$ has one end. The soul ${\mathcal S}$ has dimension $0$ or $1$.
If ${\mathcal S}$ has dimension zero then ${\mathcal S}$ is a point
and $\Or$ is diffeomorphic to the normal bundle of ${\mathcal S}$,
which is $\R^2//\Gamma$. If ${\mathcal S}$ has dimension one then
it is $S^1$ or $S^1//\Z_2$ and $\Or$ is diffeomorphic to the normal bundle of
${\mathcal S}$. As $S^1 \times \R$ has two ends, the only possibility
is $S^1 \times_{\Z_2} \R$.
\end{proof}

\subsection{Noncompact nonnegatively curved $3$-orbifolds}
\label{subsect3.5}

\begin{lemma} \label{lemma3.16}
Let $\Or$ be a complete connected noncompact
orientable $3$-dimensional
orbifold with nonnegative sectional curvature
which is $C^K$-smooth, $K \ge 3$. We have the
following classification of the diffeomorphism type, based on the
number of ends. For notation, $\Gamma$ denotes a finite subgroup
of the oriented isometry group of the relevant orbifold and
$\Sigma^2$ denotes a simply-connected bad $2$-orbifold with
some Riemannian metric.
\begin{itemize}
\item $1$ end : $\R^3//\Gamma$, $S^1 \times \R^2$,
$S^1 \times \R^2(k)$, $S^1 \times_{\Z_2} \R^2$,
$S^1 \times_{\Z_2} \R^2(k)$, 
$\R \times_{\Z_2} (S^2//\Gamma)$,
$\R \times_{\Z_2} (T^2//\Gamma)$ or
$\R \times_{\Z_2} (\Sigma^2//\Gamma)$.
\item $2$ ends : $\R \times (S^2//\Gamma)$, $\R \times (T^2//\Gamma)$ or
$\R \times (\Sigma^2//\Gamma)$.
\end{itemize}
\end{lemma}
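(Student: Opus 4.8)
The plan is to argue by a dichotomy on the number of ends, using Proposition \ref{prop3.1}, Corollary \ref{corollary3.2}, the soul theorem (Proposition \ref{prop3.11}), and the two-dimensional classification of Lemma \ref{lemma3.15}. First suppose $\Or$ has more than one end. By Corollary \ref{corollary3.2}, $\Or$ has exactly two ends and splits isometrically as $\R \times \Or^\prime$ with $\Or^\prime$ compact; moreover $\Or^\prime$ is connected, is orientable (since $\R \times \Or^\prime$ is), has nonnegative curvature, and is $C^K$-smooth with $K \ge 3$. The zero-ended case of Lemma \ref{lemma3.15} then identifies $\Or^\prime$ with one of $S^2//\Gamma$, $T^2//\Gamma$, $\Sigma^2//\Gamma$, which is exactly the two-ended list.

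Now suppose $\Or$ has one end. By Proposition \ref{prop3.11}, $\Or$ is diffeomorphic to the normal bundle $N\s$ of its soul $\s$, a compact totally geodesic suborbifold; since $\Or$ is noncompact, $\dim \s \in \{0,1,2\}$. If $\dim \s = 0$ then $\s$ is a point $p$ and $\Or \cong T_p\Or = \R^3//G_p$, with $G_p \subset \SO(3)$ because orientability of $\Or$ forbids orientation-reversing elements in any local group; this gives $\R^3//\Gamma$. If $\dim \s = 2$, then $\s$ must be non-orientable, for otherwise its normal line bundle would be orientable, hence trivial, so $\Or \cong \s \times \R$ would have two ends. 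Hence $\s$ is a compact connected non-orientable $2$-orbifold carrying the induced nonnegative curvature, and since $\Or \cong N\s$ is orientable while $\s$ is not, $N\s$ must be the nontrivial real line bundle whose orientation character equals that of $\s$; its total space is then $\R \times_{\Z_2} \widetilde{\s}$, where $\widetilde{\s}$ is the orientation double cover of $\s$ and the generator of $\Z_2$ acts by the deck transformation and by $-1$ on $\R$. As a Riemannian cover of $\s$, the orbifold $\widetilde{\s}$ is compact, connected, orientable, nonnegatively curved and $C^K$-smooth, so Lemma \ref{lemma3.15} gives $\widetilde{\s} \in \{S^2//\Gamma, T^2//\Gamma, \Sigma^2//\Gamma\}$, and hence the three one-ended possibilities $\R \times_{\Z_2}(S^2//\Gamma)$, $\R \times_{\Z_2}(T^2//\Gamma)$, $\R \times_{\Z_2}(\Sigma^2//\Gamma)$.

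The remaining, and most delicate, case is $\dim \s = 1$, where $\s$ is $S^1$ or $S^1//\Z_2$ and $\Or \cong N\s$ is a rank-$2$ normal bundle whose fibers are orbivector planes and whose total space is orientable. Orientability excludes any reflection in the fiber holonomy, so each fiber is $\R^2$ or $\R^2//\Z_k$ for a single fixed $k$. When $\s = S^1$, such a bundle is $(\R \times (\R^2//\Z_k))/\Z$, with the generator of $\Z$ acting by a unit translation on $\R$ and by some $A \in \SO(2)$ on $\R^2$; since $\SO(2)$ is connected one may untwist the gluing, so $\Or$ is $S^1 \times \R^2$ (when $k = 1$) or $S^1 \times \R^2(k)$. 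When $\s = S^1//\Z_2$, one pulls the bundle back along the orientation double cover $S^1 \to S^1//\Z_2$, reducing to the previous case, so that $\Or$ is the quotient of $S^1 \times \R^2$ or $S^1 \times \R^2(k)$ by the induced free involution lying over the conjugation of $S^1$; orientability of $\Or$ forces this involution to act on the fibers by a reflection, and one checks that up to diffeomorphism the result is $S^1 \times_{\Z_2} \R^2$ or $S^1 \times_{\Z_2} \R^2(k)$ in the conventions of Subsection \ref{subsect2.3}. This exhausts the one-ended list.

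The main obstacle is exactly this last step: one must enumerate the rank-$2$ normal bundles over $S^1$ and $S^1//\Z_2$, use orientability of the total space together with one-endedness to discard the configurations that would either produce a second end (a trivial line or plane bundle over an orientable soul) or a non-orientable total space, and then match the survivors with the normal forms of Subsection \ref{subsect2.3}; keeping careful track of the $\Z_2$-quotient structure in the $S^1//\Z_2$ subcase is where the bookkeeping is most delicate. Everything else is a direct invocation of Proposition \ref{prop3.1}, Corollary \ref{corollary3.2}, Proposition \ref{prop3.11} and Lemma \ref{lemma3.15}.
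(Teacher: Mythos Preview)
Your proof is correct and follows essentially the same approach as the paper: split into the two-ended case (Corollary \ref{corollary3.2} plus Lemma \ref{lemma3.15}) and the one-ended case (Proposition \ref{prop3.11}, then a case split on $\dim\s\in\{0,1,2\}$). You supply considerably more detail than the paper does---in particular the orientability/one-endedness reasoning in the $\dim\s=2$ case and the enumeration of rank-$2$ normal orbibundles over $S^1$ and $S^1//\Z_2$ in the $\dim\s=1$ case---but the structure and the key ingredients are the same.
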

\begin{proof}
Because $\Or$ is noncompact, it has at least one end.
If it has more than one end then Proposition \ref{prop3.1} implies that
$\Or$ has two ends and isometrically splits off an $\R$-factor. This gives rise
to the possibilities listed for two ends.

Suppose that 
$\Or$ has one end. The soul ${\mathcal S}$ has dimension $0$, $1$ or $2$.
If ${\mathcal S}$ has dimension zero then ${\mathcal S}$ is a point
and $\Or$ is diffeomorphic to the normal bundle of ${\mathcal S}$,
which is $\R^3//\Gamma$. If ${\mathcal S}$ has dimension one then
it is $S^1$ or $S^1//\Z_2$ and $\Or$ is diffeomorphic to the normal bundle of
${\mathcal S}$, which is $S^1 \times \R^2$,
$S^1 \times \R^2(k)$, $S^1 \times_{\Z_2} \R^2$ or
$S^1 \times_{\Z_2} \R^2(k)$. 
If ${\mathcal S}$ has dimension two then since it has nonnegative
curvature,
it is diffeomorphic to a quotient of $S^2$, $T^2$ or $\Sigma^2$. 
Then $\Or$ is diffeomorphic to the normal bundle of
${\mathcal S}$, which is 
$\R \times_{\Z_2} (S^2//\Gamma)$,
$\R \times_{\Z_2} (T^2//\Gamma)$ or
$\R \times_{\Z_2} (\Sigma^2//\Gamma)$, since
$\Or$ has one end.
\end{proof}

\subsection{$2$-dimensional nonnegatively curved orbifolds 
that are pointed Gromov-Hausdorff close
to an interval} \label{subsect3.6}

We include a result that we will need later about
$2$-dimensional nonnegatively curved orbifolds that are pointed 
Gromov-Hausdorff close to an interval.

\begin{lemma} \label{lemma3.17}
There is some ${\beta} > 0$ so that the following holds.
Suppose that $\Or$ is a pointed nonnegatively curved complete
orientable Riemannian $2$-orbifold which is $C^K$-smooth for some
$K \ge 3$. Let $\star \in |\Or|$ be a basepoint
and suppose that the pointed ball $(B(\star,10), \star) \subset |\Or|$
has pointed Gromov-Hausdorff distance at most $\beta$ from the
pointed interval $([0,10], 0)$. Then for every $r \in [1,9]$,
the orbifold $\Or \Big|_{\overline{B(\star, r)}}$ is a discal
$2$-orbifold or is diffeomorphic to $D^2(2,2)$.
\end{lemma}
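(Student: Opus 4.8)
The plan is to use the critical point theory of Subsection~\ref{subsect2.6} to reduce $\Or\big|_{\overline{B(\star,r)}}$, up to diffeomorphism, to a fixed small ``cap'' around $\star$ with a product collar attached, and then to pin that cap down with the orbifold Gauss--Bonnet theorem. (We may assume $\Or$ is boundaryless, as in the intended application.) Write $d_\star=d(\star,\cdot)$; for $\beta$ small we picture $(B(\star,10),\star)$ as a thin tube over $[0,10]$ with $\star$ sitting over the endpoint $0$.

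The first point is that, for $\beta$ small, $d_\star$ has no critical points (in the sense of Subsection~\ref{subsect2.6}) on $d_\star^{-1}\big((\tfrac14,\tfrac{19}{2})\big)$. For a regular point $p$ in that set the Gromov--Hausdorff approximation produces a point $q$ with $d_\star(q)$ near $10$ and $d(\star,q)\approx d_\star(p)+d(p,q)$, so by Toponogov the angle at $p$ between a minimizing segment to $\star$ and one to $q$ is near $\pi$; since all minimizing segments from $p$ to $\star$ point in nearly one direction (the set $V_p$ of Corollary~\ref{corollary2.19} has small diameter), a forward vector exhibits $p$ as noncritical. A hypothetical cone point $p$ of order $k\ge 2$ in that range has a small ball $B(p,\delta)=B(\widehat p,\delta)/\Z_k$ with $\Z_k$ acting by rotation fixing $\widehat p$; thinness makes $B(\widehat p,\delta)$ Gromov--Hausdorff close to an interval, whence $B(p,\delta)$ is close to an interval \emph{with $p$ at an endpoint} --- contradicting that $p$ maps to an interior point of $[0,10]$. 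Hence all cone points of $\Or$ in $\overline{B(\star,9)}$ lie in $\overline{B(\star,\tfrac14)}$. By Lemma~\ref{lemma2.17}, $\Or\big|_{d_\star^{-1}((\frac14,\frac{19}{2}))}$ is diffeomorphic to a product $\R\times\Or'$ with $\Or'$ a compact $1$-orbifold, $d_\star$ monotone along the $\R$-factor; orientability of $\Or$ forbids $S^1//\Z_2$ in $\Or'$, and a short-geodesic argument using the thinness (two points on two distinct boundary circles of $d_\star^{-1}(r)$ would be $O(\beta)$-close, hence joined by a short geodesic inside the product region, impossible with disconnected fibre) shows $\Or'\cong S^1$. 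Choosing a generic $\rho_0\in(\tfrac14,\tfrac12)$ so that $d_\star^{-1}(\rho_0)$ is an embedded smooth circle missing the cone points and flowing along a gradient-like field, we get that for every $r\in[1,9]$ the orbifold $\Or\big|_{\overline{B(\star,r)}}$ is diffeomorphic to $\Or\big|_{\overline{B(\star,\rho_0)}}$ with a collar $S^1\times[\rho_0,r]$ attached.

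It remains to identify $C:=\Or\big|_{\overline{B(\star,r)}}$ for $r\in[1,9]$. From the above, $|C|$ is a compact connected surface with one boundary circle $d_\star^{-1}(r)$, which carries no cone point, and the cone points of $C$ --- say of orders $k_1,\dots,k_m$, all lying in $\overline{B(\star,\tfrac14)}$ --- are its only orbifold points; let $g$ be the genus of $|C|$. Orbifold Gauss--Bonnet gives $\int_C K\,\dvol+\int_{\partial C}\kappa_g\,ds=2\pi\big(1-2g-\sum_i(1-\tfrac1{k_i})\big)$. Now $K\ge 0$, and $r\mapsto\operatorname{Len}(d_\star^{-1}(r))$ is concave on $(\tfrac14,\tfrac{19}{2})$ (from $K\ge 0$ and Gauss--Bonnet on the cone-point-free annuli, cut-locus corners only decreasing its derivative); being non-negative with $\int_0^{10}\operatorname{Len}(d_\star^{-1}(s))\,ds=\vol_2(B(\star,10))=O(\beta)$ (collapse plus the lower curvature bound), it is $O(\beta)$ on all of $[0,10]$, so its derivative $\int_{\partial C}\kappa_g\,ds$ at $r\in[1,9]$ is $O(\beta)$. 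Hence $1-2g-\sum_i(1-\tfrac1{k_i})\ge -O(\beta)$; since $g\ge 0$ and each $1-\tfrac1{k_i}\ge\tfrac12$, this forces $g=0$ and $m\le 2$, with $k_1=k_2=2$ when $m=2$. Thus $|C|=D^2$ and $C$ has no cone point, one cone point of some order $k$, or two cone points of order $2$; that is, $C$ is diffeomorphic to $D^2$, to $D^2(k)$, or to $D^2(2,2)$ --- which is the assertion, with $\beta$ chosen to validate all the above estimates.

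I expect the first part to be the main obstacle: translating the Gromov--Hausdorff hypothesis into honest statements about the local orbifold models --- in particular that a cone point forced into the interior of the collapsed interval would make a neighbourhood look like an interval \emph{with an endpoint there} --- together with the standard but delicate handling of the non-smoothness of $d_\star$ along its cut locus, which enters both the concavity of $\operatorname{Len}(d_\star^{-1}(\cdot))$ and the treatment of $\partial C$ as a piecewise-smooth curve in Gauss--Bonnet. The rest is a fairly routine application of the critical-point machinery and orbifold Gauss--Bonnet.
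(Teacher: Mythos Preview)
Your approach is sound and takes a genuinely different route from the paper's. The paper works pointwise at the singular points: after citing \cite{Kleiner-Lott2} for the circle fibration on $A(\star,1,9)$ and using the classification of nonnegatively curved orientable $2$-orbifolds (Lemma~\ref{lemma3.15}) to conclude that $\overline{B(\star,1)}$ is a topological disk, it picks two singular points $p_1,p_2$ and a reference point $q$ at distance $2$, observes that the comparison angle $\cangle_{p_i}(p_j,q)$ is bounded by the diameter $\pi/|G_{p_i}|$ of the space of directions at $p_i$, and uses that thinness forces $\cangle_{p_1}+\cangle_{p_2}\approx\pi$ to pin down $|G_{p_i}|=2$; a further triangle argument rules out three singular points. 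Your route via Gauss--Bonnet and the concavity of $r\mapsto\operatorname{Len}(d_\star^{-1}(r))$ is a global integral argument that bypasses the classification lemma entirely, at the price of the cut-locus bookkeeping you correctly flag. Both yield the same constraint $1-2g-\sum(1-1/k_i)>-\epsilon$, just packaged differently.

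One step needs repair. Your argument excluding cone points from $d_\star^{-1}\big((\tfrac14,\tfrac{19}{2})\big)$ asserts that ``thinness makes $B(\widehat p,\delta)$ Gromov--Hausdorff close to an interval,'' but the GH hypothesis concerns $|\Or|$, not the local manifold cover, and closeness does not pass upward through quotients. The fix is already present in your treatment of regular points: the Toponogov angle at $p$ between minimizing segments to $\star$ and to a far point $q$, measured in $C_p|\Or|$, is at least $\pi-\epsilon$; but at a cone point of order $k\ge2$ the space of directions is a circle of circumference $2\pi/k$, so any two directions are at distance at most $\pi/k\le\pi/2$. This contradiction both excludes cone points from the annulus and (since the only $G_p$-invariant vector at such a point is zero) confirms that cone points are automatically critical for $d_\star$, which is why you must exclude them before invoking Lemma~\ref{lemma2.17}. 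With this correction your argument goes through; note also that ``$\vol_2(B(\star,10))=O(\beta)$'' should really be ``$o(1)$ as $\beta\to0$'' via volume convergence under a lower curvature bound, which is all you need.
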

\begin{proof}
As in \cite[Pf. of Lemma 3.12]{Kleiner-Lott2}, the distance function
$d_\star : A(\star, 1,9) \rightarrow
[1,9]$ defines a fibration with a circle fiber.
 
The possible diffeomorphism types of 
$\Or$ are listed in Lemma \ref{lemma3.15}.
Looking at them, if $\overline{B(\star, 1)}$ is not a topological disk then 
$\Or$ must be $T^2$ 
and we obtain a contradiction as in \cite[Pf. of Lemma 3.12]{Kleiner-Lott2}.
Hence $\overline{B(\star, 1)}$ is a topological disk.
If $\Or \Big|_{\overline{B(\star, 1)}}$ is not a discal
$2$-orbifold then it has at least two singular points,
say $p_1, p_2 \in |\Or|$. Choose $q \in |\Or|$ with 
$d(\star,q) = 2$. By triangle comparison, the comparison angles satisfy
$\cangle_{p_1}(p_2, q) \le \frac{2\pi}{|G_{p_1}|}$ and 
$\cangle_{p_2}(p_1, q) \le \frac{2\pi}{|G_{p_2}|}$.
If $\beta$ is 
small then $\cangle_{p_1}(p_2, q) + \cangle_{p_2}(p_1, q)$ is close to
$\pi$. It follows that $|G_{p_1}| = |G_{p_2}| =  2$.

Suppose that there are three distinct singular points
$p_1, p_2, p_3 \in |\Or|$. We know that they lie in
$\overline{B(\star, 1)}$.
Let $\overline{p_iq}$ and $\overline{p_k p_j}$ denote minimal geodesics.
If $\beta$ is small then the angle at $p_1$ between
$\overline{p_1q}$ and $\overline{p_1p_2}$ is close to $\frac{\pi}{2}$,
and similarly for the angle at $p_1$ between 
$\overline{p_1q}$ and $\overline{p_1p_3}$. As $\dim(\Or) = 2$, and
$p_1$ has total cone angle $\pi$,
it follows that if
$\beta$ is small then the angle at $p_1$ between
$\overline{p_1p_2}$ and $\overline{p_1p_2}$ is small.
The same reasoning applies at $p_2$ and $p_3$, so we have a geodesic
triangle in $|\Or|$ with small total interior angle, which violates the
fact that $|\Or|$ has nonnegative Alexandrov curvature.

Thus $\Or \Big|_{\overline{B(\star, 1)}}$
is diffeomorphic to $D^2(2,2)$.
\end{proof}

\section{Riemannian compactness theorem for orbifolds} \label{sect4}

In this section we prove a compactness result for Riemannian
orbifolds. 

The statement of the compactness result is slightly different
from the usual statement for Riemannian manifolds, which involves
a lower injectivity radius bound. The 
standard notion of 
injectivity radius is not
a useful notion for orbifolds.  For example, if $\Or$ is an
orientable $2$-orbifold with a singular point $p$ then a
geodesic from a regular point $q$ in $|\Or|$ to $p$ cannot minimize
beyond $p$. As $q$ could be arbitrarily close to $p$, we conclude
that the injectivity radius of $\Or$ would vanish. 
(We note, however, that there is a modified version of
the injectivity radius that does makes sense 
for constant-curvature cone manifolds 
\cite[Section 9.2.3]{BMP},\cite[Section 6.4]{CHK}.)

Instead, our compactness result is phrased in terms of local
volumes.  This fits well with Perelman's work on Ricci flow,
where local volume estimates arise naturally.

If one tried to prove a compactness result for Riemannian orbifolds
directly, following the proofs in the case of Riemannian manifolds,
then one would have to show that orbifold singularities do not
coalesce when taking limits. We avoid this issue by passing to
orbifold frame bundles, which are manifolds, and using equivariant
compactness results there.

Compactness theorems for Riemannian metrics and Ricci flows for
orbifolds with isolated singularities were proved in \cite{Lu}.
Compactness results for general orbifolds were stated in 
\cite[Chapter 3.3]{CCGGIIKLLN} with a short sketch of a proof.

\begin{proposition} \label{prop4.1}
Fix $K \in \Z^+ \cup \{\infty\}$.
Let $\{(\Or_i, p_i)\}_{i=1}^\infty$ be a sequence of pointed complete 
connected $C^{K+3}$-smooth Riemannian
$n$-dimensional
orbifolds.
Suppose that for each $j \in \Z^{\ge 0}$ with $j \le K$,
there is a function $A_j : (0, \infty) \rightarrow \infty$ so that
for all $i$,
$|\nabla^j \Rm| \: \le A_j(r)$ on $B(p_i, r) \subset |\Or_i|$. 
Suppose that for some $r_0 > 0$, there
is a $v_0 > 0$ so that for all $i$,
$\vol(B(p_i, r_0)) \: \ge \: v_0$. Then there is a subsequence of
$\{(\Or_i, p_i)\}_{i=1}^\infty$ that converges in the pointed 
$C^{K-1}$-topology 
to a pointed complete connected Riemannian $n$-dimensional orbifold
$(\Or_\infty, p_\infty)$.
\end{proposition}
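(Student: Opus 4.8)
The plan is to pass to the orthonormal frame bundles, which are genuine manifolds, and to apply an equivariant version of the Cheeger--Gromov compactness theorem there. For each $i$, let $F\Or_i$ denote the orthonormal frame bundle, a smooth manifold carrying a locally free $O(n)$-action with $F\Or_i/O(n) \cong |\Or_i|$, equipped with the canonical $O(n)$-invariant Riemannian metric $\widehat{g}_i$ built from $g_i$, the Levi-Civita connection of $g_i$, and a fixed bi-invariant metric on $O(n)$, as in Subsection \ref{subsect2.5}. Choose $\widehat{p}_i \in F\Or_i$ lying over $p_i$. The goal is to show that a subsequence of $(F\Or_i, \widehat{p}_i)$ converges, $O(n)$-equivariantly and without collapse, to a pointed manifold $(Y_\infty, \widehat{p}_\infty)$ with a locally free isometric $O(n)$-action; then the construction of Subsection \ref{subsect2.5} (via the slice theorem) produces the limiting orbifold $(\Or_\infty, p_\infty)$, and one checks that the equivariant convergence of frame bundles descends to pointed $C^{K-1}$-convergence of the orbifolds.

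First I would transfer the hypotheses to the frame bundles. The $O(n)$-orbit directions are rigid and the horizontal distribution carries a Riemannian submersion structure, so O'Neill-type formulas express $\Rm_{\widehat{g}_i}$ and its covariant derivatives on $|\pi|^{-1}(B(p_i,r))$ algebraically in terms of $\nabla^j \Rm_{g_i}$ (for $j$ up to the needed order) and the fixed geometry of $O(n)$; this turns $|\nabla^j \Rm| \le A_j(r)$ into uniform bounds $|\nabla^j \Rm_{\widehat{g}_i}| \le \widehat{A}_j(r)$ there. Next, since all $O(n)$-orbits in $F\Or_i$ are quotients of $O(n)$ by finite groups and hence have volume bounded below, the bound $\vol(B(p_i,r_0)) \ge v_0$ on the orbifold gives $\vol_{\widehat{g}_i}(B(\widehat{p}_i, r_0)) \ge \widehat{v}_0$. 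Combining this with the two-sided curvature bound coming from $A_0$, a Cheeger--Gromov--Taylor injectivity radius estimate, and propagation of the volume lower bound to larger balls via the Bishop--Gromov inequality (with the Ricci bound from $A_0$), I would conclude: for every $R > 0$ there are $i_0(R)$ and $\iota(R) > 0$ with $\injrad_{\widehat{g}_i} \ge \iota(R)$ on $B(\widehat{p}_i, R)$ for $i \ge i_0(R)$. Thus $(F\Or_i, \widehat{p}_i)$ is non-collapsing with locally bounded geometry.

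With these bounds, the equivariant Cheeger--Gromov compactness theorem for manifolds yields a subsequence of $(F\Or_i, \widehat{p}_i)$, together with the isometric $O(n)$-actions, converging in the appropriate pointed equivariant $C^l$-topology to a complete pointed Riemannian manifold $(Y_\infty, \widehat{g}_\infty, \widehat{p}_\infty)$ with an isometric $O(n)$-action. The crucial point --- and the step I expect to be the main obstacle --- is that the limiting action is again \emph{locally free}, i.e. that orbifold singularities neither degenerate nor coalesce in the limit; this is precisely where the absence of collapse enters. The uniform lower injectivity radius bound forces $\dim Y_\infty = \dim F\Or_i = n + \dim O(n)$, so the $O(n)$-orbit through $\widehat{p}_\infty$, being the equivariant limit of the $\dim O(n)$-dimensional orbits through $\widehat{p}_i$, is still $\dim O(n)$-dimensional; hence the stabilizer of $\widehat{p}_\infty$ is finite, and the same argument applies at every point of $Y_\infty$. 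Given local freeness of the limit action, the slice-theorem construction of Subsection \ref{subsect2.5} endows $Y_\infty$ with the structure of a complete connected Riemannian $n$-orbifold $\Or_\infty$ with $|\Or_\infty| = Y_\infty/O(n)$ and basepoint $p_\infty$ the image of $\widehat{p}_\infty$. Finally, unwinding the definitions, pointed equivariant convergence of the frame bundles descends to pointed $C^{K-1}$-convergence $(\Or_i, p_i) \to (\Or_\infty, p_\infty)$ of the underlying orbifolds, the loss of derivatives being accounted for by the construction of $\widehat{g}_i$ from the Levi-Civita connection and by the descent of the equivariant approximating diffeomorphisms; this completes the proof.
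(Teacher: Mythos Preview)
Your approach is the paper's approach: pass to the orthonormal frame bundles $F\Or_i$, extract an $O(n)$-equivariant sublimit, verify that the limiting $O(n)$-action is locally free, and descend to the quotient orbifold. The implementation of the non-collapsing step differs. Rather than transferring the curvature and volume bounds to $F\Or_i$, invoking a Cheeger--Gromov--Taylor injectivity-radius estimate there, and then applying manifold Cheeger--Gromov compactness, the paper appeals directly to Fukaya's equivariant pointed Gromov--Hausdorff compactness together with the Cheeger--Fukaya--Gromov structure theorem; this realizes the convergence by $O(n)$-equivariant fibrations $\widehat{K}_i \to K$ with nilmanifold fibers of shrinking diameter. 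Non-collapsing is then checked on the \emph{quotient} side: the lower volume bound on $B(p_i,r_0)$ together with the Alexandrov curvature bound on $|\Or_i|$ forces the pointed Gromov--Hausdorff limit of $(|\Or_i|,p_i)$ to be $n$-dimensional, so the nilmanifold fibers are points, the fibrations are diffeomorphisms, and the $O(n)$-action on the limit is locally free. Your route is more elementary and avoids the CFG machinery; the paper's route avoids the O'Neill-formula bookkeeping and the injectivity-radius estimate. Two places in your write-up deserve tightening: first, ``all $O(n)$-orbits have volume bounded below'' is not literally what you use (and would require an isotropy bound you have not yet derived) --- the volume transfer actually follows because generic fibers are all of $O(n)$ and $\pi^{-1}(B(p_i,r)) \subset B(\widehat{p}_i, r+\diam O(n))$; second, your dimension-count for local freeness is cleaner if phrased infinitesimally: in the canonical metric $\widehat{g}_i$ the fundamental vector fields of the $O(n)$-action are pointwise orthonormal, and this persists under $C^1$ convergence, so the limit action has injective infinitesimal generator and is therefore locally free.
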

\begin{proof}
Let $F\Or_i$ be the orthonormal frame bundle of $\Or_i$. Pick a basepoint
$\widehat{p}_i \in F\Or_i$ that projects to $p_i \in |\Or_i|$.
As in \cite[Section 6]{Fukaya}, after taking a subsequence we may assume that
the frame bundles $\{(F\Or_i, \widehat{p}_i)\}_{i=1}^\infty$ converge in
the pointed $O(n)$-equivariant Gromov-Hausdorff topology
to a $C^{K-1}$-smooth Riemannian manifold $X$ with an isometric 
$O(n)$-action and a basepoint $\widehat{p}_\infty$.
(We lose one derivative because we are working on the frame bundle.)
Furthermore, we may assume that the
convergence is realized as follows : Given 
any $O(n)$-invariant compact codimension-zero submanifold-with-boundary
$K \subset X$, for large $i$ there is an
$O(n)$-invariant compact codimension-zero submanifold-with-boundary 
$\widehat{K}_i \subset F\Or_i$
and a smooth $O(n)$-equivariant fiber bundle
$\widehat{K}_i \rightarrow K$ with nilmanifold fiber whose
diameter goes to zero as $i \rightarrow \infty$
\cite[Section 3]{Cheeger-Fukaya-Gromov}, \cite[Section 9]{Fukaya}.

Quotienting by $O(n)$, the underlying spaces 
$\{(|\Or_i|, {p}_i)\}_{i=1}^\infty$ converge in the pointed Gromov-Hausdorff
topology to $(O(n) \backslash X, p_\infty)$. 
Because of the lower volume bound 
$\vol({B}(p_i, r_0)) \: \ge \: v_0$,
a pointed Gromov-Hausdorff limit of the Alexandrov spaces 
$\{(|\Or_i|, {p}_i)\}_{i=1}^\infty$ is
an $n$-dimensional Alexandrov space 
\cite[Corollary 10.10.11]{Burago-Burago-Ivanov}.
Thus there is no collapsing and so for large $i$ the submersion
$\widehat{K}_i  \rightarrow K$ is an $O(n)$-equivariant 
$C^{K-1}$-smooth diffeomorphism. 
In particular, the $O(n)$-action on $X$ is locally free. There is a
corresponding quotient orbifold $\Or_\infty$ with $|\Or_\infty| \: = \:
O(n) \backslash X$.
As the manifolds $\{(F\Or_i, \widehat{p}_i)\}_{i=1}^\infty$
converge in a $C^{K-1}$-smooth pointed equivariant sense to 
$(X, \widehat{p}_\infty)$
we can take $O(n)$-quotients to conclude that
the orbifolds $\{(\Or_i, p_i)\}_{i=1}^\infty$ converge in the pointed
$C^{K-1}$-smooth topology to $(\Or_\infty, p_\infty)$.
\end{proof}

\begin{remark} \label{remark4.1.5}
As a consequence of Proposition \ref{prop4.1},
if there is a number $N$ so $|G_{q_i}| \le N$ for all $q_i \in |\Or|_i$
and all $i$ then $|G_{q_\infty}| \le N$ for all $q_\infty \in |\Or|_\infty$.
That is, under the hypotheses of Proposition \ref{prop4.1},
the orders of the isotropy groups cannot increase in the limit.
\end{remark}

\begin{remark} \label{remark4.2}
In the proof of Proposition \ref{prop4.1}, 
the submersions $\widehat{K}_i  \rightarrow K$ may not
be basepoint-preserving. This is where one has to leave the world of
basepoint-preserving maps.
\end{remark}

\section{Ricci flow on orbifolds} \label{sect5}

In this section we first make some preliminary remarks about
Ricci flow on orbifolds and we give the orbifold version of
Hamilton's compactness theorem. We then give the topological
classification of compact nonnegatively curved $3$-orbifolds.
Finally, we extend Perelman's no local collapsing theorem
to orbifolds.

\subsection{Function spaces on orbifolds} \label{subsect5.1}

Let $\rho \: : \: O(n) \rightarrow \R^N$ be a representation. Given
a local model $(\hU_\alpha, G_\alpha)$ and a $G_\alpha$-invariant
Riemannian metric on 
$\hU_\alpha$, let
$\widehat{V}_\alpha \: = \: \R^N \times_{O(n)} F\hU_\alpha$ be the associated
vector  bundle. If $\Or$ is a $n$-dimensional
Riemannian orbifold then there is an
associated orbivector bundle $V$ with local models
$(\widehat{V}_\alpha, G_\alpha)$. Its underlying space is
$|V| \: = \: \R^N \times_{O(n)} F\Or$. By construction, $V$ has an
inner product coming from the standard inner product on $\R^N$.
A section $s$ of $V$ is given by an $O(n)$-equivariant map
$s \: : \: F\Or \rightarrow \R^N$. In terms of local models, $s$ is described
by $G_\alpha$-invariant sections $s_\alpha$ of $\widehat{V}_\alpha$ 
that satisfy compatibility
conditions with respect to part 5 of Definition \ref{defn2.1}.

The $C^K$-norm of $s$ is defined to be the supremum of the $C^K$-norms of the
$s_\alpha$'s. Similarly, the square of the $H^K$-norm of $s$ is
defined to be
the integral over $|\Or|_{reg}$ of the local square $H^K$-norm, the latter
being defined using local models.
(Note that $|\Or|_{reg}$ has full Hausdorff $n$-measure in $|\Or|$.)
Then $H^{-K}$ can be defined by
duality. One has the rough Laplacian mapping
$H^K$-sections of $V$ to $H^{K-2}$-sections of $V$.

One can define differential operators and pseudodifferential operators
acting on $H^K$-sections of $V$.
Standard elliptic and parabolic regularity theory extends to
the orbifold setting, as can be seen by working equivariantly in
local models.

\subsection{Short-time existence for Ricci flow on orbifolds}

Suppose that $\{g(t)\}_{t \in [A, B]}$ is a smooth 
$1$-parameter family of Riemannian metrics on $\Or$. We will call $g$
a {\em flow of metrics} on $\Or$. 
The Ricci flow equation
$\frac{\partial g}{\partial t} \: = \: - \: 2 \Ric$ makes
sense in terms of local models.
Using the deTurck trick \cite{deTurck}, 
which is based on local differential analysis,
one can reduce the short-time existence problem for the Ricci flow to the
short-time existence problem for a parabolic PDE. Then any short-time
existence proof for parabolic PDEs on compact manifolds,
such as that of \cite[Proposition 15.8.2]{Taylor (1997)}, 
will extend from the manifold setting
to the orbifold setting. 

\begin{remark} \label{remark5.1}
Even in the manifold case, one needs a slight additional argument to reduce the
short-time existence of the
Ricci-de Turck equation to that of a standard quasilinear parabolic PDE.
In local coordinates the Ricci-de Turck equation takes the form
\begin{equation} \label{5.2}
\frac{\partial g_{ij}}{\partial t} = \sum_{kl} g^{kl} \partial_k \partial_l
g_{ij} + \ldots
\end{equation}
There is a slight issue since 
 (\ref{5.2}) is
not uniformly parabolic,
in that $g^{kl}$ could degenerate
with respect to, say, the initial metric $g_0$.
This issue does not seem to have been addressed in the literature.
However, it is easily circumvented.  
Let ${\mathcal M}$ be the space of smooth Riemannian metrics on a
compact manifold $M$.
Let $F : {\mathcal M} \rightarrow {\mathcal M}$ be a smooth map so that
for some $\epsilon > 0$,
we have $F(g) = g$ if $\parallel g - g_0 \parallel_{g_0} < \epsilon$,
and in addition $\epsilon g_0 \le F(g) \le \epsilon^{-1} g_0$ for all $g$.
(Such a map $F$ is easily constructed using the fact that the inner
products on $T_pM$, relative to $g_0(p)$, can be identified with
$\GL(n,\R)/O(n)$, along with the fact that $\GL(n,\R)/O(n)$ deformation
retracts onto a small ball around its basepoint.)
By \cite[Proposition 15.8.2]{Taylor (1997)}, there is a short-time
solution to
\begin{equation}
\frac{\partial g_{ij}}{\partial t} = \sum_{kl} F(g)^{kl} \partial_k \partial_l
g_{ij} + \ldots
\end{equation}
with $g(0) = g_0$. Given this solution, there is some $\delta > 0$ so 
that $\parallel g(t) - g_0 \parallel_{g_0} < \epsilon$ whenever 
$t \in [0, \delta]$. Then $\{g(t)\}_{t \in [0,\delta]}$ also solves
the Ricci-de Turck equation (\ref{5.2}).
\end{remark}

We remark that any Ricci flow results based on the maximum principle
will have evident extensions from manifolds to orbifolds. Such
results include
\begin{itemize}
\item The lower bound on scalar curvature
\item The Hamilton-Ivey pinching results for three-dimensional scalar
curvature
\item Hamilton's differential Harnack inequality for Ricci flow
solutions with nonnegative curvature operator
\item Perelman's differential Harnack inequality.
\end{itemize}

\subsection{Ricci flow compactness theorem for orbifolds} \label{subsect5.2}

Let $\Or_1$ and $\Or_2$ be two connected pointed $n$-dimensional
orbifolds, with flows of metrics $g_1$ and $g_2$.
If $f \: : \: \Or_1 \rightarrow \Or_2$ is a (time-independent) 
diffeomorphism then we can construct the pullback flow $f^* g_2$
and define
the $C^K$-distance between $g_1$ and $f^* g_2$, using local models for
$\Or_1$.

\begin{definition} \label{closenessdef}
Let $\Or_1$ and $\Or_2$ be
connected pointed $n$-dimensional orbifolds.  Given
numbers $A, B$ with $-\infty \: \le \: A \: < \: 0 \: \le \: B \: \le \: 
\infty$, suppose that $g_i$ is a flow
of metrics on $\Or_i$ that exists for the time interval $[A, B]$.
Suppose that $g_i(t)$ is complete for each $t$.
Given $\epsilon > 0$, suppose
that $f \: : \: 
\check{B}(p_1, \epsilon^{-1}) \rightarrow \Or_2$ is a smooth
map from the time-zero ball that is a diffeomorphism onto its image. Let
$|f| \: : \: B(p_1, \epsilon^{-1}) 
\rightarrow |\Or_2|$ be the underlying map. We say
that the $C^K$-distance between the flows
$(\Or_1, p_1, g_1)$ and $(\Or_2, p_2, g_2)$ is bounded
above by $\epsilon$ if \\
1. The $C^K$-distance between $g_1$ and
$f^* g_2$ on $([A, B] \cap (- \epsilon^{-1}, \epsilon^{-1})) \times
\check{B}(p_1, \epsilon^{-1})$ is at most $\epsilon$ and \\
2. The time-zero distance $d_{|\Or_2|}(|f|({p}_1), {p}_2)$ is at most
$\epsilon$.

Taking the infimum of all such possible $\epsilon$'s defines 
the $C^K$-distance between the flows $(\Or_1, p_1, g_1)$ and 
$(\Or_2, p_2, g_2)$.
\end{definition}

Note that time derivatives appear in the definition of the
$C^K$-distance between $g_1$ and $f^* g_2$.

\begin{proposition} \label{prop5.5}
Let $\{g_i \}_{i=1}^\infty$ be a sequence of Ricci flow solutions
on pointed connected $n$-dimensional orbifolds
$\{(\Or_i, p_i)\}_{i=1}^\infty$, defined for 
$t \in (A, B)$
and complete for each $t$, with 
$-\infty \: \le \: A \: < \: 0 \: \le \: B \: \le \: \infty$.
Suppose that the following two conditions are satisfied :\\
1. For every compact interval $I \subset (A,B)$, there is some
$K_I < \infty$ so that for all $i$, we have $\sup_{|\Or_i| \times I}
|\Rm_{g_i}(p,t))| \: \le \: K_I$, and \\
2. For some $r_0, v_0 > 0$ and all $i$, the time-zero volume
$\vol(B(p_i, r_0))$
is bounded below by $v_0$.

Then a subsequence of the solutions converges
in the sense of Definition \ref{closenessdef}
to a Ricci flow solution $g_\infty(t)$ on a pointed
connected $n$-dimensional orbifold $(\Or_\infty, p_\infty)$, defined for all 
$t \in (A, B)$.
\end{proposition}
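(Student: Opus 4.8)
The plan is to reduce Proposition~\ref{prop5.5} to the static compactness result, Proposition~\ref{prop4.1}, by the same passage-to-the-frame-bundle trick used there, combined with the standard Ricci-flow compactness machinery (Hamilton's theorem) applied equivariantly on the orthonormal frame bundles. The point is that a Ricci flow on $\Or_i$ induces, via the canonical connection metric, a one-parameter family of $O(n)$-invariant metrics on $F\Or_i$; the local curvature bound in hypothesis~1 passes to a curvature bound on the frame bundle (losing one derivative, as in the proof of Proposition~\ref{prop4.1}), and the lower volume bound in hypothesis~2 rules out collapsing. So the frame-bundle flows fall under an $O(n)$-equivariant version of Hamilton's compactness theorem.

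\medskip\noindent
First I would note that, since the Ricci flow equation is a geometric PDE expressed in local models, it lifts to $F\Or_i$: on each frame bundle there is a naturally associated flow of $O(n)$-invariant metrics $\widehat{g}_i(t)$ built from $g_i(t)$ together with a fixed bi-invariant metric on the $O(n)$-fibers. The bound $\sup_{\check B(p_i,r)}|\Rm(g_i(t))| \le N(r,t)$, together with the Shi-type derivative estimates for Ricci flow (which hold on orbifolds, being maximum-principle/local-parabolic consequences as remarked after Remark~\ref{remark5.1}), gives uniform bounds on $|\nabla^j\Rm(g_i(t))|$ on balls, and hence on $|\nabla^j\Rm(\widehat{g}_i(t))|$ on the frame bundles (with the usual one-derivative loss and a shift inward in the radius and in the time interval). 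Second, the lower volume bound $\vol(\check B(p_i,r_0))\ge v_0$ at time zero, exactly as in the proof of Proposition~\ref{prop4.1}, forbids collapse of $F\Or_i$ near the basepoint, so in the equivariant Gromov-Hausdorff limit the $O(n)$-action stays locally free.

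\medskip\noindent
Third I would invoke the $O(n)$-equivariant version of Hamilton's Ricci flow compactness theorem (Hamilton~\cite{Hamilton (1995)}, in the version with bounded geometry on balls rather than globally, cf. \cite[App.~E]{Kleiner-Lott}): after passing to a subsequence, the pointed equivariant flows $(F\Or_i,\widehat{p}_i,\widehat{g}_i(t))$ converge in $C^{K-1}$ on compact sets of space-time to a pointed $O(n)$-invariant Ricci-type flow $(X,\widehat{p}_\infty,\widehat{g}_\infty(t))$ on a manifold $X$ with a locally free isometric $O(n)$-action, for $t\in[A,B]$, with the convergence realized by $O(n)$-equivariant diffeomorphisms of precompact pieces that are time-independent (one first produces the diffeomorphisms at time zero and then checks the flows match on them, exactly as in the manifold argument). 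Taking the $O(n)$-quotient then yields the limiting orbifold $\Or_\infty$ with $|\Or_\infty| = O(n)\backslash X$ and the limiting flow $g_\infty(t)$; that $g_\infty$ solves the Ricci flow follows because the equation passes to the quotient. Since the time-zero metrics converge in $C^{K-1}$ as in Proposition~\ref{prop4.1} and no collapsing occurs, the same conclusion about the basepoints holds.

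\medskip\noindent
The main obstacle I expect is the book-keeping for the non-basepoint-preserving diffeomorphisms and the simultaneous control in space and time: one must extract a single subsequence along which the frame-bundle flows converge on an exhaustion of space-time, and one must verify that the equivariant diffeomorphisms furnished at time zero by the static argument are compatible with the flow (i.e. that the pulled-back flows $f^*\widehat{g}_i(t)$ converge, not merely the time-zero metrics). This is handled as in the manifold case by using the uniform higher-derivative (Shi) estimates to bound all time-derivatives of $\widehat{g}_i$ on compact space-time regions, applying Arzel\`a-Ascoli, and using the Ricci flow equation to upgrade the time-zero convergence to space-time convergence; the orbifold aspect adds nothing beyond working $G_\alpha$-equivariantly in local models, which is automatic once one is on the frame bundle. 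No genuinely new phenomenon arises that was not already dealt with in Proposition~\ref{prop4.1} and in the orbifold Ricci flow preliminaries.
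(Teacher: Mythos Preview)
Your proposal is correct and follows essentially the same approach as the paper: the paper's proof simply invokes Proposition~\ref{prop4.1} (the static compactness via frame bundles) and then says the remainder is as in Hamilton~\cite{Hamiltonnn} and Lu~\cite{Lu}, which is precisely the unpacking you have written out (Shi estimates for derivative control, time-zero limit, Arzel\`a--Ascoli extension in time, passage to the Ricci flow equation in the limit). One minor point of wording: the induced flows $\widehat g_i(t)$ on $F\Or_i$ are not themselves Ricci flows, so it is cleaner to say you are applying the general Cheeger--Gromov/Arzel\`a--Ascoli compactness (equivariantly) to those one-parameter families using the derivative bounds inherited from Shi on the base, rather than ``the equivariant Hamilton Ricci flow compactness theorem''; but you already handle this correctly when you verify the Ricci flow equation only after quotienting back to $\Or_\infty$.
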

\begin{proof}
Using Proposition \ref{prop4.1}, the proof is essentially
the same as that in
\cite[p. 548-551]{Hamiltonnn} and
\cite[p. 1116-1117]{Lu}. 
\end{proof}

\begin{remark}
There are variants of Proposition \ref{prop5.5} that hold, for example,
if one just assumes a uniform curvature bound on $r$-balls, for each $r>0$. 
These variants are orbifold versions of the results in 
\cite[Appendix E]{Kleiner-Lott}, to which we refer for details.
The proofs of these orbifold extensions use, among other things,
the orbifold version of the Shi estimates; the proof of the latter
goes through to the orbifold setting with no real change.
\end{remark}

\subsection{Compact nonnegatively curved $3$-orbifolds} \label{subsect5.3}

\begin{proposition} \label{prop5.6}
Any compact nonnegatively curved $3$-orbifold $\Or$ is diffeomorphic to one of
\begin{enumerate}
\item $S^3//\Gamma$ for some finite group $\Gamma \subset \Isom^+(S^3)$.
\item $T^3//\Gamma$ for some finite group $\Gamma \subset \Isom^+(T^3)$.
\item $S^1 \times (S^2//\Gamma)$ or $S^1 \times_{\Z_2} (S^2//\Gamma)$
for some finite group 
$\Gamma \subset \Isom(S^2)$.
\item $S^1 \times (\Sigma^2//\Gamma)$ or
$S^1 \times_{\Z_2} (\Sigma^2//\Gamma)$ for some finite group 
$\Gamma \subset \Isom(\Sigma^2)$, where $\Sigma^2$ is a
simply-connected bad $2$-orbifold equipped with its
unique 
(up to diffeomorphism)
Ricci soliton metric \cite[Theorem 4.1]{Wu}.
\end{enumerate}
\end{proposition}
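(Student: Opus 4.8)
The plan is to run the Ricci flow on $\Or$ and combine Hamilton's maximum-principle analysis of the curvature operator with the orbifold machinery developed earlier in the paper. Since $\dim\Or=3$, the hypothesis that $\Or$ be nonnegatively curved is equivalent to nonnegativity of the curvature operator $\mathcal{R}$, and this condition is preserved by the Ricci flow, since the relevant argument of Hamilton is a pointwise maximum-principle computation that applies verbatim in local models. So first I would invoke orbifold short-time existence to produce a flow $g(t)$, $t\in[0,T)$, with $g(0)$ the given metric and $\mathcal{R}(g(t))\ge 0$ for all $t$.

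Next I would apply Hamilton's strong maximum principle to the evolving curvature operator, working $G$-equivariantly in a local model: for small $t>0$ the image of $\mathcal{R}(g(t))$ is a time-independent parallel subbundle $E\subset\Lambda^2 T\Or$ whose fibre is a Lie subalgebra of $\mathfrak{so}(3)$. As $\mathfrak{so}(3)$ has no $2$-dimensional subalgebra, $\rk E\in\{0,1,3\}$, and the restricted holonomy of $g(t)$ is correspondingly trivial, contained in $\SO(2)$, or all of $\SO(3)$. Passing to the universal cover $\widetilde\Or$ (Subsection~\ref{subsect2.2}) and invoking the orbifold de Rham theorem (Lemma~\ref{lemma2.12}), this puts us in one of three cases: (a) $\mathcal{R}(g(t))>0$ everywhere for small $t>0$; (b) $g(t)$ is flat; (c) $\widetilde\Or$ splits isometrically as $\R\times N^2$ with $N^2$ a complete simply-connected $2$-orbifold of nonnegative curvature. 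In case (c), since we are not in case (b) we have $N^2\neq\R^2$, hence $N^2$ is compact; moreover $N^2$ is orientable (any compact simply-connected orbifold is), so the orbifold Gauss--Bonnet theorem together with the list in Subsection~\ref{subsect2.3} forces $N^2\cong S^2$ or $N^2\cong\Sigma^2$, a simply-connected bad $2$-orbifold.

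It then remains to identify $\Or$ in each case. In case (a), I would restart the flow at a time where $\mathcal{R}>0$ and invoke the orbifold analogue of Hamilton's convergence theorem --- whose proof uses only the maximum principle and the orbifold compactness result Proposition~\ref{prop5.5} --- to see that the normalized flow converges to a metric of constant positive curvature; then the developing map realizes $\widetilde\Or$ as a complete simply-connected spherical space form, which by Lemma~\ref{lemma2.11} must be $S^3$, so $\Or\cong S^3//\Gamma$ with $\Gamma\subset\Isom^+(S^3)$ finite, which is case~(1). In case (b), $\widetilde\Or=\R^3$ and $\pi_1(\Or)$ is a $3$-dimensional crystallographic group; by Bieberbach's theorem it contains a finite-index translation lattice, so $\Or$ is finitely covered by $T^3$ and $\Or\cong T^3//\Gamma$ for a finite $\Gamma\subset\Isom^+(T^3)$, which is case~(2). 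In case (c), $\pi_1(\Or)$ acts freely, properly discontinuously and cocompactly by isometries on $\R\times N^2$, and since $\Isom(\R\times N^2)=\Isom(\R)\times\Isom(N^2)$, a finite-index subgroup acts by a nontrivial translation of the $\R$-factor together with a finite group of isometries of $N^2$; this forces $\Or$ to be $S^1\times(N^2//\Gamma)$ or $S^1\times_{\Z_2}(N^2//\Gamma)$, giving case~(3) when $N^2\cong S^2$ and case~(4) when $N^2\cong\Sigma^2$, where in the last case one normalizes the $\Sigma^2$-metric to be the unique Ricci soliton metric \cite[Theorem 4.1]{Wu}.

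The hard part is the ingredient used in case (a): one needs the orbifold version of Hamilton's theorem that a compact $3$-orbifold with positive curvature operator is diffeomorphic to a spherical space form. Its proof should be a routine transcription --- the pinching estimates come from the maximum principle and the compactness step is Proposition~\ref{prop5.5} --- but it is the only genuinely nonformal input here. Two secondary points will also need attention: checking that simply-connected bad $2$-orbifolds really do carry nonnegatively curved orbifold metrics (for example realizing the teardrops $S^2(k)$ and footballs $S^2(k,k^\prime)$ with $\gcd(k,k^\prime)=1$ as positively curved surfaces of revolution), and the bookkeeping of exactly which quotients $(\R\times N^2)//\Gamma$ occur, so as to match the list in the statement.
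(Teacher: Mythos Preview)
Your proposal is correct and follows essentially the same route as the paper: both arguments use Hamilton's strong maximum principle for the curvature operator under Ricci flow to produce the trichotomy (positive curvature operator / flat / holonomy-invariant splitting), invoke the orbifold de~Rham decomposition (Lemma~\ref{lemma2.12}) on the universal cover, appeal to the orbifold version of Hamilton's positive-curvature theorem for case~(1), and analyze the $\pi_1(\Or)$-action on $\R\times N^2$ for cases~(3)--(4). The only organizational difference is that the paper first extracts the maximal Euclidean factor $\R^k$ of $\widetilde\Or$ (via the Cheeger--Gromoll line argument and Proposition~\ref{prop3.1}) and then applies the strong maximum principle only in the residual $k=0$ case, whereas you lead with the strong maximum principle; one small slip is that the $\pi_1(\Or)$-action on $\widetilde\Or$ is properly discontinuous but not free in general, though this does not affect your conclusion.
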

\begin{proof}
Let $k$ be the largest number
so that the universal cover $\widetilde{\Or}$ isometrically splits off an 
$\R^k$-factor. Write $\widetilde{\Or} = \R^k \times \Or^\prime$.

If $\Or^\prime$ is noncompact then by the Cheeger-Gromoll
argument \cite[Pf. of Theorem 3]{Cheeger-Gromoll2},  
$|\Or^\prime|$ contains a line. Proposition \ref{prop3.1} implies that
$\Or^\prime$ splits off an $\R$-factor, which is a contradiction.
Thus $\Or^\prime$ is simply-connected and compact with
nonnegative sectional curvature.

If $k = 3$ then $\widetilde{\Or} = \R^3$ and $\Or$ is a quotient
of $T^3$.

If $k = 2$ then there is a contradiction, 
as there is no simply-connected compact
$1$-orbifold.

If $k = 1$ then $\Or^\prime$ is diffeomorphic to $S^2$ or $\Sigma^2$.
The Ricci flow on $\widetilde{\Or} = \R \times \Or^\prime$ splits
isometrically.  After rescaling, the Ricci flow on $\Or^\prime$
converges to a constant curvature metric on $S^2$ or to
the unique Ricci
soliton metric on $\Sigma^2$
\cite{Wu}. Hence $\pi_1(\Or)$ is a subgroup
of $\Isom(\R \times S^2)$ or $\Isom(\R \times \Sigma^2)$,
where the isometry groups are in terms of standard metrics. As
$\pi_1(\Or)$ acts properly discontinuously and cocompactly on 
$\widetilde{\Or}$,
there is a short exact sequence
\begin{equation}
1 \longrightarrow \Gamma_1 \longrightarrow \pi_1(\Or) \longrightarrow
\Gamma_2 \longrightarrow 1,
\end{equation}
where $\Gamma_1 \subset \Isom(\Or^\prime)$ and
$\Gamma_2$ is an infinite cyclic group or an infinite dihedral group.
It follows that
$\Or$ is finitely covered by $S^1 \times S^2$ or $S^1 \times \Sigma^2$.

Suppose that $k = 0$.
If $\Or$ is positively curved then any proof of Hamilton's
theorem about $3$-manifolds with positive Ricci curvature
\cite{Hamiltonnnnn}
extends to the orbifold case, to show that
$\Or$ admits a metric of constant positive curvature;
c.f. \cite{Hamilton (2003)}.
Hence we can reduce to the case when
$\Or$ does not have positive curvature and the Ricci flow
does not immediately give it positive curvature. From the
strong maximum principle as in
\cite[Section 8]{Hamiltonnnn}, for any
$p \in |\Or|_{reg}$ there is a nontrivial orthogonal splitting $T_p \Or = E_1
\oplus E_2$ which is invariant under holonomy around loops based
at $p$. The same will be true on $\widetilde{\Or}$. 
Lemma \ref{lemma2.12} implies that $\widetilde{\Or}$ splits off an 
$\R$-factor, which is a contradiction.
\end{proof}

\subsection{${\mathcal L}$-geodesics and noncollapsing} \label{subsect5.4}

Let $\Or$ be an
$n$-dimensional
orbifold and let $\{g(t)\}_{t \in [0, T)}$ be
a Ricci flow solution on $\Or$ so that 
\begin{itemize}
\item The time slices $(\Or, g(t))$ are complete. 
\item There is bounded curvature on compact subintervals of
$[0, T)$.
\end{itemize}

Given $t_0 \in [0,T)$ and $p \in |\Or|$, put $\tau = t_0 - t$. Let
$\gamma : [0, \overline{\tau}] \rightarrow \Or$ be a piecewise smooth
curve with $|\gamma|(0) = p$ and $\overline{\tau} \le t_0$. Put
\begin{equation}
{\mathcal L}(\gamma) =
\int_0^{\overline{\tau}} \sqrt{\tau} \left(
R(\gamma(\tau)) +  |\dot{\gamma}(\tau)|^2 \right)
\: d\tau,
\end{equation}
where the scalar curvature $R$ and the norm $|\dot{\gamma}(\tau)|$
are evaluated using the metric at time $t_0 - \tau$.
With $X = \frac{d\gamma}{d\tau}$, the 
{\em ${\mathcal L}$-geodesic equation} is
\begin{equation}
\nabla_X X - \frac12 \nabla R + \frac{1}{2\tau} X +
2 \Ric(X, \cdot) = 0.
\end{equation}
Given an ${\mathcal L}$-geodesic $\gamma$, 
its {\em initial velocity} is defined to be
$v = \lim_{\tau \rightarrow 0} \sqrt{\tau} \frac{d\gamma}{d\tau}
\in C_p |\Or|$.

Given $q \in |\Or|$, put
\begin{equation}
L(q, \overline{\tau}) = \inf \{ {\mathcal L}(\gamma) \: : \:
|\gamma|(\overline{\tau}) = q \},
\end{equation}
where the infimum runs over piecewise smooth curves $\gamma$ with
$|\gamma|(0) = p$ and $|\gamma|(\overline{\tau}) = q$.
Then any piecewise smooth curve $\gamma$ which is a minimizer for $L$ is a
smooth ${\mathcal L}$-geodesic.

\begin{lemma} \label{lemma5.11}
There is a minimizer $\gamma$ for $L$.
\end{lemma}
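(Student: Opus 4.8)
The plan is to mimic the standard manifold argument (as in \cite[Section 7]{Kleiner-Lott} or Perelman's original treatment), passing to orbifold frame bundles wherever a direct orbifold formulation is awkward. First I would fix $\overline\tau \in (0, t_0]$ and work with the time-reparametrized variable $s = \sqrt\tau$, which makes the $\mathcal{L}$-functional a genuine energy-type functional: a curve $\gamma$ on $[0, \overline\tau]$ in the $\tau$ variable becomes a curve on $[0, \sqrt{\overline\tau}]$ in the $s$ variable, and $\mathcal{L}(\gamma) = \int_0^{\sqrt{\overline\tau}} \bigl( 2s^2 R + \tfrac12 |\gamma'(s)|^2 \bigr)\, ds$ up to a harmless factor, so the $\mathcal{L}$-length dominates (a constant times) the $H^1$-energy once we have a lower curvature bound on the relevant region. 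The bounded-curvature-on-compact-time-subintervals hypothesis gives a scalar curvature lower bound $R \ge -C$ on the slab $[0, \overline\tau]$, hence $\mathcal{L}(\gamma) \ge \tfrac12 \int |\gamma'(s)|^2\, ds - C'$, so a minimizing sequence $\{\gamma_k\}$ for $L(q, \overline\tau)$ has uniformly bounded energy in the $s$-parametrization.

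Next I would extract a limit. Lift everything to the orthonormal frame bundle $F\Or$ with its canonical Riemannian metric (Subsection \ref{subsect2.5}): a curve in $\Or$ from $p$ to $q$ lifts to a horizontal curve in $F\Or$ between the corresponding $O(n)$-orbits, and the $\mathcal{L}$-length of the projection is computed from the lift. The $H^1$-energy bound, together with completeness of the time slices (so the curves stay in a bounded region, using the lower curvature bound and a distance-distortion estimate along the Ricci flow), gives via Arzel\`a--Ascoli a uniformly convergent subsequence $\gamma_k \to \gamma_\infty$ with $\gamma_\infty \in H^1$, by lower semicontinuity of the energy under weak $H^1$-convergence (applied to the lifts in $F\Or$, which is a manifold, so this is classical). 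The limit curve $\gamma_\infty$ satisfies $|\gamma_\infty|(0) = p$, $|\gamma_\infty|(\overline\tau) = q$, and $\mathcal{L}(\gamma_\infty) \le \liminf \mathcal{L}(\gamma_k) = L(q, \overline\tau)$, so it is a minimizer. (One should be slightly careful near $\tau = 0$, where the $s$-parametrization is the right one and the factor $\sqrt\tau$ tames the potential blow-up of $|\dot\gamma|$; this is where one checks the initial velocity $v = \lim_{\tau \to 0} \sqrt\tau\, \dot\gamma(\tau) \in C_p|\Or|$ exists, exactly as in the manifold case.)

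Finally, the paragraph preceding the statement already records that any piecewise smooth minimizer of $L$ is a smooth $\mathcal{L}$-geodesic; this follows by a first-variation computation carried out in local models (equivalently, on $F\Or$), since the Euler--Lagrange equation is the $\mathcal{L}$-geodesic equation displayed above, and parabolic/elliptic regularity for ODEs is a local matter unaffected by the orbifold quotient. I expect the main obstacle to be purely technical: ensuring the minimizing sequence does not escape to infinity, which requires combining the lower scalar curvature bound with a Ricci-flow distance-distortion estimate (controlling $\operatorname{dist}_{g(t)}$ in terms of $\operatorname{dist}_{g(t_0)}$ on the slab $[0,\overline\tau]$), and handling the mild singularity of the integrand at $\tau = 0$. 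Neither of these is genuinely harder in the orbifold setting than in the manifold setting, since both reduce to statements on $F\Or$ or in local models; the orbifold points contribute nothing new because a length-minimizing curve in $\Or$ is a geodesic away from finitely many parameter values (as noted in Subsection \ref{subsect2.5}) and the $\mathcal{L}$-minimizer inherits the same local structure.
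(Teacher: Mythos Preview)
Your proposal is correct but takes a genuinely different route from the paper. The paper argues by a broken-geodesic / finite-dimensional reduction: it first shows that curves with $\mathcal{L}(\gamma) < L(q,\overline\tau) + \epsilon$ are uniformly continuous and stay in a fixed ball $B(p,R)$; then it finds a uniform radius $\rho$ so that in each $\rho$-ball (working in its local model $(\widehat U, G_x)$) short $\mathcal L$-minimizers between any two prescribed endpoints exist, are unique, and stay in the ball; finally it covers $\overline{B(p,R)}$ by finitely many such balls, subdivides $[0,\overline\tau]$ finely enough that each segment stays in one ball, and reduces the problem to minimizing a continuous function on the compact set of tuples $(p_0,\ldots,p_A)\in \overline{B(p,R)}^{A+1}$ with $p_0=p$, $p_A=q$. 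Your approach is the direct method: reparametrize by $s=\sqrt\tau$, use the lower scalar-curvature bound to dominate the $H^1$-energy, pass to a weak limit, and invoke lower semicontinuity. The paper's method is more elementary (no Sobolev or weak-convergence machinery) and is in keeping with its overall strategy of showing explicitly how each manifold step localizes to orbifold charts; your method is more functional-analytic and packages the orbifold issues into the frame-bundle lift. One small remark on your argument: the weak $H^1$ limit $\tilde\gamma_\infty$ of the horizontal lifts $\tilde\gamma_k$ need not itself be horizontal, but this is harmless since projection to $|\Or|$ does not increase speed, so $\int|\gamma_\infty'|^2 \le \int|\tilde\gamma_\infty'|^2 \le \liminf \int|\tilde\gamma_k'|^2 = \liminf \int|\gamma_k'|^2$, which is all you need.
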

\begin{proof}
The proof is similar to that in
\cite[p. 2631]{Kleiner-Lott}.
We outline the steps.
Given $p$ and $q$, one considers 
piecewise smooth curves $\gamma$ as above. 
Fixing $\epsilon > 0$, one shows that the curves $\gamma$ with 
${\mathcal L}(\gamma) < L(q, \overline{\tau}) + \epsilon$
are uniformly continuous. In particular, there is an $R < \infty$
so that any such $\gamma$ lies in $B(p,R)$. Next, one shows that
there is some $\rho \in (0,R)$ so that for any $x \in B(p, R)$,
there is a local model $(\widehat{U}, G_x)$
with $\widehat{U}/G_x = B(x, \rho)$ such that
for any $p^\prime, q^\prime \in B(x, \rho)$ and any subinterval
$[\overline{\tau}_1, \overline{\tau}_2]  \subset [0, \overline{\tau}]$,
\begin{itemize}
\item There is a unique minimizer 
for the functional
$\int_{\overline{\tau}_1}^{\overline{\tau}_2} \sqrt{\tau} \left(
R(\gamma(\tau)) +  |\dot{\gamma}(\tau)|^2 \right)
\: d\tau$ among piecewise smooth curves 
${\gamma} : 
[\overline{\tau}_1, \overline{\tau}_2] \rightarrow \Or$ with
$|\gamma|(\overline{\tau}_1) = p^\prime$ and
$|\gamma|(\overline{\tau}_2) = q^\prime$.
\item The minimizing ${\gamma}$ is smooth and 
the image of $|\gamma|$
lies in $B(x,\rho)$.
\end{itemize}
This is shown by working in the local models.  Now cover
$\overline{B(p, R)}$ by a finite number of $\rho$-balls
$\{ B(x_i, \rho) \}_{i=1}^N$. Using the uniform continuity, 
let $A \in \Z^+$ be such that for any 
$\gamma : [0, \overline{\tau}] \rightarrow \Or$ with 
$|\gamma|(0) = p$, 
$|\gamma|(\overline{\tau}) = q$
and ${\mathcal L}(\gamma) < L(q, \overline{\tau}) + \epsilon$,
and any $[\overline{\tau}_1, \overline{\tau}_2]  \subset [0, \overline{\tau}]$
of length at most
$\frac{\overline{\tau}}{A}$, the distance between
$|\gamma|(\overline{\tau}_1)$ and $|\gamma|(\overline{\tau}_2)$
is less than the Lebesgue number of
the covering.  We can effectively reduce the problem of
finding a minimizer for $L$ to the problem of minimizing a
continuous function defined on tuples
$(p_0, \ldots, p_A) \in \overline{B(p, R)}^{A+1}$ with
$p_0 = p$ and $p_A = q$. This shows that the minimizer exists.
\end{proof}

Define the {\em ${\mathcal L}$-exponential map} 
$ \: : \: T_p \Or \rightarrow \Or$ by
saying that for $v \in C_p|\Or|$, we put
${\mathcal L}\exp_{\overline{\tau}}(v) = 
|\gamma|(\overline{\tau})$,
where $\gamma$ is the unique
${\mathcal L}$-geodesic from $p$ whose initial velocity is 
$v$. Then ${\mathcal L}\exp_{\overline{\tau}}$ is a smooth
orbifold map.

Let ${\mathcal B}_{\overline{\tau}} \subset |\Or|$ 
be the set of points $q$ which are
either endpoints of more than one minimizing ${\mathcal L}$-geodesic
$\gamma : [0, \overline{\tau}] \rightarrow \Or$, or 
are the endpoint of a minimizing geodesic
$\gamma_v : [0, \overline{\tau}] \rightarrow \Or$ where $v \in C_p|\Or|$
is a critical point of ${\mathcal L}\exp_{\overline{\tau}}$.
We call ${\mathcal B}_{\overline{\tau}}$ the {\em time-$\overline{\tau}$ 
${\mathcal L}$-cut locus} of $p$. It is a closed subset of $|\Or|$.
Let ${\mathcal G}_{\overline{\tau}} \subset |\Or|$ be the complement of 
${\mathcal B}_{\overline{\tau}}$
and let $\Omega_{\overline{\tau}} \subset C_p|\Or|$ be the
corresponding set of initial conditions for minimizing
${\mathcal L}$-geodesics. Then $\Omega_{\overline{\tau}}$ is an
open set, and the restriction of
${\mathcal L}\exp_{\overline{\tau}}$ to 
$T_p \Or \Big|_{\Omega_{\overline{\tau}}}$ is
an orbifold diffeomorphism to $\Or \Big|_{{\mathcal G}_{\overline{\tau}}}$.

\begin{lemma} \label{lemma5.12}
${\mathcal B}_{\overline{\tau}}$ has measure zero in $|\Or|$.
\end{lemma}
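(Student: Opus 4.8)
The plan is to carry over the manifold argument for the $\mathcal{L}$-cut locus (as in \cite{Kleiner-Lott}): one writes $\mathcal{B}_{\overline{\tau}} = B_1 \cup B_2$, where $B_2$ is the set of endpoints of minimizing $\mathcal{L}$-geodesics $\gamma_v$ whose initial velocity $v$ is a critical point of $\mathcal{L}\exp_{\overline{\tau}}$, and $B_1$ is the set of endpoints of two or more minimizing $\mathcal{L}$-geodesics from $p$; then $B_2$ is handled by an orbifold Sard theorem and $B_1$ by Rademacher's theorem applied to $L(\cdot,\overline{\tau})$. Two orbifold-specific preliminaries are needed. First, a smooth orbifold map $f:\Or_1\to\Or_2$ has critical values of $\dvol$-measure zero in $|\Or_2|$: cover $|\Or_1|$ and $|\Or_2|$ by countably many local-model charts, represent $f$ in each by a smooth lift $\widehat{f}$ between Euclidean domains (whose rank at a point does not depend on the choices), apply the ordinary Sard theorem to each $\widehat{f}$, and push the resulting null sets down via the quotient maps $\widehat{U}_\beta\to U_\beta\subseteq|\Or_2|$, which are $1$-Lipschitz for the quotient metric and hence do not increase $n$-dimensional Hausdorff measure. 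Second, the singular locus $\Sigma=|\Or|\setminus|\Or|_{reg}$ is $\dvol$-null: in a local model $(\widehat{U},G)$ it is the image of $\bigcup_{g\neq e}\operatorname{Fix}(g)$, and by effectiveness each component of $\operatorname{Fix}(g)$ is a totally geodesic submanifold of positive codimension (a full-dimensional fixed component would force $g$ to be the identity on the connected $\widehat{U}$), hence of vanishing $n$-dimensional Hausdorff measure; we push down as before over countably many charts.

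Given these, $\mathcal{L}\exp_{\overline{\tau}}:T_p\Or\to\Or$ is a smooth orbifold map, so $B_2$ lies in its critical-value set and $\dvol(B_2)=0$; by the second preliminary it remains to show $\dvol(B_1\cap|\Or|_{reg})=0$, where we are on an honest Riemannian manifold. The function $q\mapsto L(q,\overline{\tau})$ is locally Lipschitz on $|\Or|$ --- by the familiar estimate obtained from concatenating a minimizer with a short curve at its endpoint and applying the first variation of $\mathcal{L}$, an argument that is purely local and identical to the manifold case --- so by Rademacher's theorem it is differentiable off a $\dvol$-null set $\mathcal{N}\subseteq|\Or|_{reg}$. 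The claim is that $B_1\cap|\Or|_{reg}\subseteq\mathcal{N}\cup B_2$. Indeed, if $q\in|\Or|_{reg}\setminus(\mathcal{N}\cup B_2)$ were the endpoint of two minimizing $\mathcal{L}$-geodesics $\gamma_1\neq\gamma_2$ from $p$, then the final velocities $X_i=\dot\gamma_i(\overline{\tau})\in T_q\Or$ are well defined ($q$ being regular) and unequal (equality would force $\gamma_1=\gamma_2$ by uniqueness of solutions of the $\mathcal{L}$-geodesic ODE with prescribed data at $\tau=\overline{\tau}$); but the first-variation inequality $\langle\nabla L(q,\overline{\tau}),w\rangle\le 2\sqrt{\overline{\tau}}\,\langle X_i,w\rangle$, valid for all $w\in T_q\Or$ and both $i$ whenever $L$ is differentiable at $q$, forces (applying it to $w$ and $-w$) $\nabla L(q,\overline{\tau})=2\sqrt{\overline{\tau}}\,X_i$ for both $i$, hence $X_1=X_2$, a contradiction. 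Thus $q\notin B_1$, so $\dvol(B_1\cap|\Or|_{reg})\le\dvol(\mathcal{N})+\dvol(B_2)=0$, and with the second preliminary we conclude $\dvol(\mathcal{B}_{\overline{\tau}})=0$.

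The step I expect to require the most care is the orbifold Sard statement: one must check that the notion of critical point used in the definition of $\mathcal{B}_{\overline{\tau}}$ is chart-independent, and that the pushdown maps genuinely preserve Hausdorff-nullity (the $1$-Lipschitz property of the quotient maps is what makes this clean). The local-Lipschitz property of $L(\cdot,\overline{\tau})$ near singular points is the other place where it is preferable to quote the local manifold estimate rather than redo it.
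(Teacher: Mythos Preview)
Your argument is correct, and the orbifold-specific preliminaries (Sard and the nullity of the singular locus) are handled cleanly. One minor redundancy: in your inclusion $B_1\cap|\Or|_{reg}\subseteq\mathcal{N}\cup B_2$ the $B_2$ is not actually used, since your first-variation/backward-uniqueness argument already shows that differentiability of $L$ at a regular point $q$ forces a \emph{unique} minimizer to $q$, independently of whether $q$ is a regular value of $\mathcal{L}\exp_{\overline{\tau}}$.

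Your route differs from the paper's. After disposing of the critical-value set by Sard, the paper shows that the remaining set $\mathcal{B}'_{\overline{\tau}}$ of \emph{regular} values lying in the cut locus is contained in a countable union of codimension-$1$ suborbifolds: near such a $q$ the map $\mathcal{L}\exp_{\overline{\tau}}$ is a local diffeomorphism around each of the (finitely many) minimizing initial velocities, giving locally smooth branches $L_i$ of the $\mathcal{L}$-length, and since $\nabla L_i(q)=2\sqrt{\overline{\tau}}\,X_i$ with the $X_i$ distinct, each difference $L_i-L_j$ has nonvanishing gradient and its zero set is a smooth hypersurface. You instead invoke Rademacher's theorem on $|\Or|_{reg}$ and show directly that any point of differentiability of $L(\cdot,\overline{\tau})$ admits at most one minimizer. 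Both arguments rest on the same first-variation identity $\nabla L=2\sqrt{\overline{\tau}}\,X(\overline{\tau})$; the paper's version yields more structure (rectifiability of the cut locus), while yours is shorter and sidesteps the implicit function theorem in the orbifold category, which is a genuine simplification here.
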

\begin{proof}
The proof is similar to that in \cite[p. 2632]{Kleiner-Lott}.
By Sard's theorem, it suffices to show that the
subset ${\mathcal B}^\prime_{\overline{\tau}} \subset 
{\mathcal B}_{\overline{\tau}}$, consisting of regular values of
${\mathcal L}\exp_{\overline{\tau}}$, has measure zero in $|\Or|$.
One shows that ${\mathcal B}^\prime_{\overline{\tau}}$ is contained
in the underlying spaces of a countable union of codimension-$1$ 
suborbifolds of $\Or$, which implies the lemma.
\end{proof}

Therefore one may compute the integral of any integrable function on
$|\Or|$ by pulling it back to $\Omega_{\overline{\tau}} \subset
C_p |\Or|$ and using the change of variable formula.

For $q \in |\Or|$, put $l(q, \overline{\tau}) = \frac{L(q, \overline{\tau})
}{
2\sqrt{\overline{\tau}}
}$.
Define the {\em reduced volume} by
\begin{equation}
\widetilde{V}(\overline{\tau}) = \overline{\tau}^{- \: \frac{n}{2}}
\int_{|\Or| }
e^{-l(q, \overline{\tau})} \: \dvol(q).
\end{equation}

\begin{lemma} \label{lemma5.14}
The reduced volume is monotonically nonincreasing in
$\overline{\tau}$.
\end{lemma}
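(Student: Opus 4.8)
The plan is to imitate Perelman's monotonicity argument for manifolds, as presented in \cite{Kleiner-Lott}, organized so that every analytic estimate is a statement in a local model and hence transfers verbatim. First I would rewrite $\widetilde{V}$ as an integral over the open set $\Omega_{\overline{\tau}} \subset C_p|\Or|$. Since ${\mathcal L}\exp_{\overline{\tau}}$ restricts to an orbifold diffeomorphism from $T_p\Or \Big|_{\Omega_{\overline{\tau}}}$ onto $\Or \Big|_{{\mathcal G}_{\overline{\tau}}}$, and ${\mathcal B}_{\overline{\tau}} = |\Or| - {\mathcal G}_{\overline{\tau}}$ has measure zero by Lemma \ref{lemma5.12}, the change of variables formula gives
\begin{equation}
\widetilde{V}(\overline{\tau}) = \overline{\tau}^{- \frac{n}{2}} \int_{\Omega_{\overline{\tau}}} e^{- l({\mathcal L}\exp_{\overline{\tau}}(v), \overline{\tau})} \, {\mathcal J}(v, \overline{\tau}) \, dv,
\end{equation}
where ${\mathcal J}(v, \overline{\tau}) > 0$ is the Jacobian of ${\mathcal L}\exp_{\overline{\tau}}$ at $v$. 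The integrand, together with the measure $dv$ on $C_p|\Or| = T_{\widehat{p}} \hU / G_p$ and the Jacobian ${\mathcal J}$, is $G_p$-invariant on $T_{\widehat{p}} \hU$, so all three are computed by lifting to the local model around ${\mathcal L}\exp_{\overline{\tau}}(v)$.

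Next I would isolate the two ingredients. First, the sets $\Omega_{\overline{\tau}}$ are nested and nonincreasing in $\overline{\tau}$: if the ${\mathcal L}$-geodesic $\gamma_v$ fails to minimize on $[0, \overline{\tau}]$, or $v$ is a critical point of ${\mathcal L}\exp_{\overline{\tau}}$, then the same holds for all $\overline{\tau}' \ge \overline{\tau}$; this is the standard argument, and both the minimality comparison and the analysis of ${\mathcal L}$-Jacobi fields behind it are unaffected by passing to local models. Second, for each fixed $v$, the function $\overline{\tau} \mapsto \overline{\tau}^{- \frac{n}{2}} e^{- l({\mathcal L}\exp_{\overline{\tau}}(v), \overline{\tau})} \, {\mathcal J}(v, \overline{\tau})$ is nonincreasing on the $\overline{\tau}$-interval for which $v \in \Omega_{\overline{\tau}}$. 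This is the substantive point, but it is entirely local: in a local model $(\hU, G_p)$ around ${\mathcal L}\exp_{\overline{\tau}}(v)$, the lifted ${\mathcal L}$-geodesic, the first and second variation formulas for ${\mathcal L}$, the ${\mathcal L}$-Jacobi field estimates, and the resulting Riccati-type differential inequality for $\log {\mathcal J}$ combined with the evolution of $l$ along the geodesic are exactly the manifold computation of \cite{Kleiner-Lott}. Then, for $\overline{\tau}_1 < \overline{\tau}_2$, writing
$\Omega_{\overline{\tau}_1} = (\Omega_{\overline{\tau}_1} - \Omega_{\overline{\tau}_2}) \sqcup \Omega_{\overline{\tau}_2}$
and splitting the integral accordingly, positivity of the integrand on $\Omega_{\overline{\tau}_1}$ handles the first piece and the pointwise monotonicity handles the second, yielding $\widetilde{V}(\overline{\tau}_1) \ge \widetilde{V}(\overline{\tau}_2)$.

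The only thing beyond transcribing \cite{Kleiner-Lott} is to be sure the change of variables in the first step and the Jacobian differential inequality in the second really make sense on an orbifold, and both are already in hand: ${\mathcal L}\exp_{\overline{\tau}}$ is a smooth orbifold map restricting to an orbifold diffeomorphism over ${\mathcal G}_{\overline{\tau}}$ (so its Jacobian and the integral transform are well defined $G_p$-equivariantly), and Lemma \ref{lemma5.12} removes the cut locus. So I expect the main effort to be bookkeeping rather than a genuine new obstacle; all derivative estimates are read off in local models, where the existing proof applies unchanged.
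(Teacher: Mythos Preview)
Your proposal is correct and follows exactly the approach the paper indicates: pull back the integrand to $C_p|\Or|$ via the ${\mathcal L}$-exponential map (using Lemma \ref{lemma5.12} to discard the cut locus) and then invoke the manifold computation from \cite[Section 23]{Kleiner-Lott} in local models, combining the nesting of the $\Omega_{\overline{\tau}}$ with the pointwise monotonicity of the integrand. The paper's own proof is just a pointer to that reference together with the remark about pulling back to $C_p|\Or|$, so you have in fact written out more detail than the paper does.
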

\begin{proof}
The proof is similar to that in
\cite[Section 23]{Kleiner-Lott}. In the proof, one pulls back the integrand to
$C_p|\Or|$. 
\end{proof}

\begin{lemma} \label{lemma5.15}
For each $\overline{\tau} > 0$, there is some $q \in |\Or|$ so that
$l(q, \overline{\tau}) \le \frac{n}{2}$.
\end{lemma}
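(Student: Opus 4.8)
The plan is to mimic the classical argument for the manifold case (as in \cite[Section 23]{Kleiner-Lott}), using the monotonicity of the reduced volume established in Lemma \ref{lemma5.14} together with the behavior of $l(q,\overline{\tau})$ as $\overline{\tau}\to 0$. First I would observe that, since the flow has bounded curvature on compact time subintervals, for small $\overline{\tau}$ the reduced volume $\widetilde{V}(\overline{\tau})$ can be estimated by pulling the integrand back via the $\mathcal{L}$-exponential map to $C_p|\Or|$, exactly as in Lemma \ref{lemma5.14}. The key point is that in local models around $p$, an $\mathcal{L}$-geodesic with initial velocity $v$ behaves to leading order like an ordinary geodesic, so $l(\mathcal{L}\exp_{\overline{\tau}}(v),\overline{\tau}) \to |v|^2$ and the Jacobian of $\mathcal{L}\exp_{\overline{\tau}}$, after the $\overline{\tau}^{-n/2}$ rescaling, converges to the Euclidean density. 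Working $G_p$-equivariantly in the local model $(\hU,G_p)$ at $p$, one gets $\lim_{\overline{\tau}\to 0^+}\widetilde{V}(\overline{\tau}) = \frac{1}{|G_p|}\int_{\R^n} e^{-|v|^2}\,dv = \frac{\pi^{n/2}}{|G_p|} \le \pi^{n/2}$.

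Next I would use monotonicity: by Lemma \ref{lemma5.14}, $\widetilde{V}(\overline{\tau}) \le \lim_{\overline{\tau}'\to 0^+}\widetilde{V}(\overline{\tau}') \le \pi^{n/2}$ for every $\overline{\tau} > 0$. Now suppose, for contradiction, that for some fixed $\overline{\tau}$ we had $l(q,\overline{\tau}) > \frac{n}{2}$ for \emph{all} $q \in |\Or|$. Then
\begin{equation*}
\widetilde{V}(\overline{\tau}) = \overline{\tau}^{-n/2}\int_\Or e^{-l(q,\overline{\tau})}\,\dvol(q) > \overline{\tau}^{-n/2} e^{-n/2}\,\vol(B(p,\rho))
\end{equation*}
for any $\rho$; but this alone is not a contradiction, so the right move is instead to combine the pointwise bound $l(q,\overline{\tau}) > n/2$ with a lower bound on the reduced volume coming from a specific competitor curve. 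Concretely, one uses the constant path $\gamma \equiv p$: there exists $q$ (namely, points near $p$) for which $l(q,\overline{\tau})$ is controlled. The cleanest route, following \cite[Section 23]{Kleiner-Lott}, is: if $l(\cdot,\overline{\tau}) > n/2$ everywhere, then $\widetilde{V}(\overline{\tau}) < \overline{\tau}^{-n/2} e^{-n/2}\int_\Or e^{-(l(q,\overline{\tau}) - n/2)}\,\dvol$, and one shows $\widetilde{V}(\overline{\tau})$ would then be forced to be smaller than its $\overline{\tau}\to 0$ limit in a way that is only consistent if the inequality fails somewhere; the precise statement is that the function $q\mapsto l(q,\overline{\tau})$ attains its minimum (using properness of $L$, which follows from Lemma \ref{lemma5.11} and curvature bounds), and evaluating the standard ODE estimate along a minimizing $\mathcal{L}$-geodesic to that minimum point gives $\min_q l(q,\overline{\tau}) \le \frac{n}{2}$ directly.

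So the structure is: (i) show $\min_q l(q,\overline{\tau})$ is attained, using that $L(\cdot,\overline{\tau})$ is proper — which in the orbifold setting follows from the existence of minimizers (Lemma \ref{lemma5.11}) together with the bounded-curvature hypothesis, working in local models exactly as in the proof of Lemma \ref{lemma5.11}; (ii) at a minimizing $q_0$, pull back to $\Omega_{\overline{\tau}} \subset C_p|\Or|$ (legitimate since $\mathcal{B}_{\overline{\tau}}$ has measure zero by Lemma \ref{lemma5.12}, and one can perturb $q_0$ off the cut locus if needed) and apply the differential inequality for $l$ along the geodesic, whose derivation is purely local and hence transfers verbatim from \cite[Section 24]{Kleiner-Lott}; (iii) conclude $l(q_0,\overline{\tau}) \le n/2$. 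The main obstacle I anticipate is establishing properness of $L(\cdot,\overline{\tau})$ in the orbifold setting: one must rule out the escape of near-minimizing curves to infinity, which requires the same uniform-continuity and local-minimizer analysis as in Lemma \ref{lemma5.11}, now combined with the curvature bounds on compact time intervals; the orbifold singularities themselves cause no trouble since $\mathcal{L}$-geodesics behave well in local models, but the argument should be written carefully to confirm that the minimum of $l(\cdot,\overline{\tau})$ is genuinely attained at some $q_0 \in |\Or|$.
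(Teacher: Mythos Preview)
Your proposal contains a genuine gap. The first two paragraphs, attempting to reach the conclusion via reduced-volume monotonicity, do not lead anywhere: as you yourself concede, the bound $\widetilde{V}(\overline{\tau})\le \pi^{n/2}/|G_p|$ together with the hypothesis $l(\cdot,\overline{\tau})>n/2$ gives no contradiction, because there is no independent \emph{lower} bound on $\widetilde{V}(\overline{\tau})$ available at this stage. Reduced volume is simply not the tool for this lemma.

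The paper's proof, following \cite[Section 24]{Kleiner-Lott}, uses the \emph{maximum principle}. One sets $\bar L(q,\tau)=2\sqrt{\tau}\,L(q,\tau)=4\tau\,l(q,\tau)$ and shows, by a second-variation computation along minimizing $\mathcal{L}$-geodesics, that in the barrier sense
\[
\partial_\tau \bar L + \Delta \bar L \le 2n,
\]
with $\bar L(\cdot,0)=0$. Applying the (weak) maximum principle --- which, as the paper notes, is valid on orbifolds since it is a local statement that holds in local models --- one obtains $\min_q \bar L(q,\overline{\tau})\le 2n\overline{\tau}$, i.e.\ $\min_q l(q,\overline{\tau})\le n/2$. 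Your step (ii), ``apply the differential inequality for $l$ along the geodesic,'' misdescribes this: the differential inequalities are \emph{derived} from estimates along $\mathcal{L}$-geodesics, but the conclusion comes from a global parabolic maximum-principle argument on $|\Or|$, not from an ODE along a single curve. Correspondingly, the properness issue you flag as the ``main obstacle'' is peripheral: the maximum principle argument works with the infimum and barrier functions and does not require that the minimum be attained at a smooth point off the cut locus.
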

\begin{proof}
The proof is similar to that in 
\cite[Section 24]{Kleiner-Lott}.
It uses the maximum principle, which is valid for orbifolds.
\end{proof}

\begin{definition}
Given $\kappa, \rho > 0$,
a Ricci flow solution $g(\cdot)$ defined on a time interval
$[0, T)$ is {\em $\kappa$-noncollapsed on the scale $\rho$} if
for each $r < \rho$ and all $(x_0, t_0) \in |\Or| \times [0, T)$ 
with $t_0 \ge r^2$, whenever it is true that
$|\Rm(x,t)| \le r^{-2}$ for every $x \in B_{t_0}(x_0, r)$ and
$t \in [t_0 - r^2, t_0]$, then we also have
$\vol(B_{t_0}(x_0, r)) \ge \kappa r^n$.
\end{definition}

\begin{lemma} \label{lemma5.16.5}
If a Ricci flow solution is $\kappa$-noncollapsed on some scale
then there is a uniform upper bound $|G_p| \le N(n,\kappa)$ on the orders
of the isotropy groups at points $p \in |\Or|$.
\end{lemma}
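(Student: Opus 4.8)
The plan is to show directly that any point $p$ whose isotropy group $G_p$ is large has a small-volume ball around it, in violation of the noncollapsing hypothesis; this yields the explicit bound $|G_p| \le \omega_n/\kappa$, where $\omega_n = \vol(B^n)$. So suppose $g(\cdot)$ is $\kappa$-noncollapsed on the scale $\rho$, and fix $p \in |\Or|$ with $m = |G_p|$. The estimate is carried out at a fixed positive time while the scale is sent to zero.

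First I would pin down a curvature bound near $p$. Fix $t_0 \in (0,T)$. Since the Ricci flow solution is smooth with bounded curvature on compact time subintervals, there are $r_1 \in (0, \sqrt{t_0}]$ and $C_0 < \infty$ with $|\Rm(x,t)| \le C_0$ for all $x \in B_{t_0}(p, r_1)$ and all $t \in [t_0 - r_1^2, t_0]$. Then for every $r \le \min\{r_1,\, C_0^{-1/2},\, \rho\}$ the hypotheses of $\kappa$-noncollapsing hold at $(x_0,t_0) = (p, t_0)$ on the scale $r$: one has $t_0 \ge r^2$, and $|\Rm(x,t)| \le C_0 \le r^{-2}$ for $x \in B_{t_0}(p, r)$ and $t \in [t_0 - r^2, t_0]$. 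Hence $\vol(B_{t_0}(p, r)) \ge \kappa r^n$.

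Next I would bound $\vol(B_{t_0}(p,r))$ from above in a local model. Let $(\hU, G_p)$ be a local model around $p$ carrying the time-$t_0$ metric, with quotient map $q : \hU \to \hU/G_p$ and $\widehat{p} \in \hU$ the point over $p$ fixed by $G_p$. For $r$ small, $q^{-1}(B_{t_0}(p,r))$ is exactly the metric ball $B_{\hU}(\widehat{p}, r)$ (using that $G_p$ fixes $\widehat{p}$ and acts isometrically), which is $G_p$-invariant, and $q$ restricted to it is $m$-to-one off a measure-zero set; therefore $\vol(B_{t_0}(p,r)) = \tfrac{1}{m}\vol_{\hU}(B_{\hU}(\widehat{p}, r))$. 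Since $|\Rm| \le C_0$ on $B_{\hU}(\widehat{p}, r_1)$, the Bishop--Gromov inequality on the manifold $\hU$ gives $\vol_{\hU}(B_{\hU}(\widehat{p}, r)) \le \omega_n r^n\,(1 + O(C_0 r^2))$ as $r \to 0$. (Equivalently, one may invoke that a rescaled small neighborhood of $p$ converges metrically to the corresponding neighborhood of the vertex in $|\R^n//G_p|$, as in the proof of Proposition \ref{prop3.14}.)

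Combining the two estimates gives $\kappa r^n \le \tfrac{1}{m}\omega_n r^n(1 + O(C_0 r^2))$, and letting $r \to 0$ yields $m \le \omega_n/\kappa$; so the conclusion holds with $N(n,\kappa) = \omega_n/\kappa$. The argument is essentially routine, and I do not expect a genuine obstacle; the only points requiring attention are that the noncollapsing hypothesis be genuinely applicable — which is why one freezes a positive time $t_0$ and a curvature scale $C_0$ before shrinking $r$ — and the bookkeeping identifying the orbifold Hausdorff volume of $B_{t_0}(p,r)$ with $1/m$ times the Riemannian volume of the lifted ball $B_{\hU}(\widehat p,r)$.
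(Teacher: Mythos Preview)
Your proof is correct and follows essentially the same idea as the paper's: combine the $\kappa$-noncollapsing lower bound $\vol(B_{t_0}(p,r)) \ge \kappa r^n$ with a Bishop--Gromov upper bound that picks up the factor $1/|G_p|$. The only difference is packaging: the paper applies the orbifold Bishop--Gromov inequality directly at a single fixed scale $r$ (the $1/|G_p|$ factor being built into the comparison density at the center), whereas you lift to a local model, apply the manifold Bishop--Gromov there, and send $r \to 0$; your route is slightly more hands-on and yields the marginally sharper constant $N(n,\kappa) = \omega_n/\kappa$.
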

\begin{proof}
Given $p \in |\Or|$, let $B_{t_0}(p,r)$ be a ball such that
$|\Rm(x,t_0)| \le r^{-2}$ for all $x \in B_{t_0}(p,r)$.
By assumption $r^{-n} \: \vol(B_{t_0}(x_0, r)) \ge \kappa$. 
Let $c_n$ denote the area of the unit $(n-1)$-sphere in $\R^n$.
Applying the Bishop-Gromov inequality to $B_{t_0}(p,r)$ gives
\begin{equation}
\frac{1}{|G_p|} \: \ge \:
\frac{r^{-n} \: \vol(B_{t_0}(x_0, r))}{c_n \int_0^1 \sinh^{n-1}(s) \: ds} \: 
\ge \:
\frac{\kappa}{c_n \int_0^1 \sinh^{n-1}(s) \: ds}.
\end{equation}
The lemma follows.
\end{proof}

\begin{proposition} \label{prop5.17}
Given numbers $n \in \Z^+$, $T < \infty$ and $\rho, K, c > 0$,
there is a number $\kappa = \kappa(n,K,c,\rho,T) > 0$ with the
following property.  Let $(\Or^n, g(\cdot))$ be a Ricci flow
solution defined on the time interval $[0,T)$,
with complete time slices,
such that the curvature $|\Rm|$ is bounded on every compact
subinterval $[0, T^\prime] \subset [0, T)$. Suppose that
$(\Or, g(0))$ has $|\Rm| \le K$ and $\vol(B(p, 1)) \ge c > 0$ for
every $p \in |\Or|$. Then the Ricci flow solution is 
$\kappa$-noncollapsed on the scale $\rho$.
\end{proposition}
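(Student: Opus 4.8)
The plan is to follow Perelman's no local collapsing argument for manifolds \cite{Perelman}, carried out in the orbifold $\mathcal{L}$-geometry of Subsection~\ref{subsect5.4} (whose validity uses precisely the hypothesis that $|\Rm|$ is bounded on compact subintervals of $[0,T)$). Suppose $(x_0,t_0)$ with $t_0\ge r^2$ and $r<\rho$ satisfies $|\Rm(x,t)|\le r^{-2}$ on $B_{t_0}(x_0,r)\times[t_0-r^2,t_0]$; writing $\epsilon=r^{-n}\vol(B_{t_0}(x_0,r))$, one wants to bound $\epsilon$ below in terms of $n,K,c,\rho,T$. Base the reduced length $l(\cdot,\overline\tau)$ and reduced volume $\widetilde V(\overline\tau)$ at $(x_0,t_0)$, with $\overline\tau$ the backward time; by Lemma~\ref{lemma5.14} the reduced volume is nonincreasing in $\overline\tau$, and by Lemmas~\ref{lemma5.11} and \ref{lemma5.12} the relevant integrals are computed by pulling back to $C_{x_0}|\Or|$. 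The idea is to play a positive lower bound for $\widetilde V$ at $\overline\tau=t_0$ against an upper bound at a small $\overline\tau$ that degenerates as $\epsilon\to 0$.

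\emph{Lower bound at $\overline\tau=t_0$.} Lemma~\ref{lemma5.15} furnishes $q\in|\Or|$ with $l(q,t_0)\le n/2$. Since $|\Rm|\le K$ at time zero, hence $|\Rm|\le 2K$ on a short interval $[0,\delta_0(K)]$ by the doubling-time estimate, and the scalar curvature is bounded below for all time, the usual gradient and curve-modification estimates for $l$ yield $l(\cdot,t_0)\le n$ on a ball $B_{g(0)}(q,\xi_0)$ with $\xi_0=\xi_0(n,K,t_0)\in(0,1]$. The orbifold Bishop-Gromov inequality from Subsection~\ref{subsect2.5}, applied at time zero with $\Ric\ge-(n-1)K\,g$ and $\vol(B(q,1))\ge c$, gives $\vol(B_{g(0)}(q,\xi_0))\ge c'(n,K,c)\,\xi_0^n$, and therefore
\[
\widetilde V(t_0)\ \ge\ t_0^{-n/2}\,e^{-n}\,\vol\bigl(B_{g(0)}(q,\xi_0)\bigr)\ \ge\ t_0^{-n/2}\,e^{-n}\,c'(n,K,c)\,\xi_0^n\ \ge\ \beta(n,K,c,T)\ >\ 0,
\]
since the $t_0$-dependence of $\xi_0$ balances the factor $t_0^{-n/2}$ (and $t_0<T$). (Combining this with the asymptotics $\widetilde V(\overline\tau)\to(4\pi)^{n/2}/|G_{x_0}|$ as $\overline\tau\to 0$ and the monotonicity incidentally recovers an a priori bound on $|G_{x_0}|$, in the spirit of Lemma~\ref{lemma5.16.5}; it is the initial volume bound that makes a positive $\kappa$ possible at all.)

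\emph{Upper bound when $\epsilon$ is small.} Parabolically rescale so that $r=1$, so that $|\Rm|\le 1$ on $B_{t_0}(x_0,1)\times[t_0-1,t_0]$ and $\vol(B_{t_0}(x_0,1))=\epsilon$. Split the domain $\Omega_{\overline\tau}\subset C_{x_0}|\Or|$ of initial vectors of minimizing $\mathcal{L}$-geodesics of parameter $\overline\tau$: for $\overline\tau$ small, those with $\sqrt{\overline\tau}\,|v|\le\tfrac1{10}$ stay in $B_{t_0-\overline\tau}(x_0,\tfrac12)$ by the curvature bound on the parabolic ball, so their contribution to $\widetilde V(\overline\tau)$ is at most $C(n)\,\overline\tau^{-n/2}\vol(B_{t_0-\overline\tau}(x_0,\tfrac12))\le C'(n)\,\overline\tau^{-n/2}\epsilon$ (volumes of concentric balls being comparable over a short interval of bounded curvature); the remaining $\mathcal{L}$-geodesics are controlled as in the manifold case, their contribution tending to $0$ as $\overline\tau\to 0$ by the pointwise monotonicity of $(4\pi\overline\tau)^{-n/2}e^{-l}\mathcal{J}$ (with $\mathcal{J}$ the Jacobian of $\mathcal{L}\exp_{\overline\tau}$) along each $\mathcal{L}$-geodesic, together with the Gaussian decay of the integrand near $x_0$. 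Choosing $\overline\tau=\widehat\tau(\epsilon)\in(0,1]$ with $\widehat\tau\to 0$ and $\epsilon\,\widehat\tau^{-n/2}\to 0$ as $\epsilon\to 0$ (for instance $\widehat\tau=\epsilon^{1/n}$) gives $\widetilde V(\widehat\tau)\le\widetilde\epsilon(\epsilon)$ with $\widetilde\epsilon(\epsilon)\to 0$; undoing the rescaling, $\widetilde V(r^2\widehat\tau)\le\widetilde\epsilon(\epsilon)$ and $r^2\widehat\tau\le r^2\le t_0$. Monotonicity and the previous step then give
\[
\beta(n,K,c,T)\ \le\ \widetilde V(t_0)\ \le\ \widetilde V(r^2\widehat\tau)\ \le\ \widetilde\epsilon(\epsilon),
\]
which forces $\epsilon\ge\epsilon_0(n,K,c,T)>0$; hence $\vol(B_{t_0}(x_0,r))\ge\epsilon_0\,r^n$, and one takes $\kappa=\epsilon_0$.

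\emph{Where the difficulty lies.} No individual new estimate is hard: the $\mathcal{L}$-geometry invoked (Lemmas~\ref{lemma5.11}, \ref{lemma5.12}, \ref{lemma5.14}, \ref{lemma5.15}) has already been set up for orbifolds in Subsection~\ref{subsect5.4}, and the rest transcribes the manifold proof. The one point requiring care is the behavior of the reduced volume near a basepoint $x_0$ that is a singular point: the pointwise monotonicity of $(4\pi\overline\tau)^{-n/2}e^{-l}\mathcal{J}$ and the limit $\widetilde V(\overline\tau)\to(4\pi)^{n/2}/|G_{x_0}|$ as $\overline\tau\to 0$ should be obtained by working $G_{x_0}$-equivariantly in a local model $(\widehat U,G_{x_0})$ around $x_0$, where $\mathcal{L}$-geodesics from $x_0$ lift to $G_{x_0}$-orbits of $\mathcal{L}$-geodesics on $\widehat U$ and the manifold computations apply upstairs, the quotient by $G_{x_0}$ contributing the harmless factor $|G_{x_0}|^{-1}\le 1$ that only weakens the universal upper bound on $\widetilde V$.
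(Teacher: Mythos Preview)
Your proposal is correct and follows essentially the same approach as the paper: Perelman's reduced-volume proof of no local collapsing, carried out using the orbifold $\mathcal{L}$-geometry already established in Subsection~\ref{subsect5.4}. The paper's own proof simply refers to \cite[Section 26]{Kleiner-Lott}, singling out only that the initial data give uniformly bounded geometry on a short interval $[0,\overline t/2]$ (there deduced from Proposition~\ref{prop5.5} plus derivative estimates, whereas you invoke the doubling-time estimate; either suffices). Your discussion of the singular basepoint---working $G_{x_0}$-equivariantly so that the $\overline\tau\to 0$ limit of $\widetilde V$ is $(4\pi)^{n/2}/|G_{x_0}|$---is exactly the orbifold wrinkle the paper exploits elsewhere (cf.\ the proof of Proposition~\ref{prop6.2}).
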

\begin{proof}
The proof is similar to that in \cite[Section 26]{Kleiner-Lott}.
As in the proof there, we use the fact that
the initial conditions give uniformly bounded geometry in
a small time interval $[0, \overline{t}/2]$, as follows from
Proposition \ref{prop5.5} and derivative estimates.
\end{proof}

\begin{proposition} \label{prop5.18}
For any $A \in (0, \infty)$, there is some $\kappa = \kappa(A) > 0$
with the following property.  Let $(\Or, g(\cdot))$ be 
an $n$-dimensional Ricci flow solution defined for $t \in [0, r_0^2]$
having complete time slices and uniformly bounded curvature.  Suppose
that $\vol(B_0(p_0, r_0)) \ge A^{-1} r_0^n$ and that
$|\Rm|(q,t)| \le \frac{1}{nr_0^2}$ for all $(q,t) \in B_0(p_0,r_0)
\times [0,r_0^2]$. Then the solution cannot be $\kappa$-collapsed
on a scale less than $r_0$ at any point $(q, r_0^2)$ with
$q \in B_{r_0^2}(p_0, Ar_0)$. 
\end{proposition}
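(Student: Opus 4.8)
The plan is to adapt the proof of Perelman's second no-local-collapsing theorem from the manifold case \cite[Section 8]{Perelman} (cf.\ \cite[Section 28]{Kleiner-Lott}), using the orbifold ${\mathcal L}$-geometry set up in Subsection \ref{subsect5.4}. By rescaling we may assume $r_0 = 1$. Arguing by contradiction, suppose that for some $r < 1$ the solution is $\kappa$-collapsed at a point $(q, 1)$ with $q \in B_1(p_0, A)$: thus $|\Rm| \le r^{-2}$ on $B_1(q, r)$ for $t \in [1 - r^2, 1]$, while $\vol(B_1(q, r)) < \kappa r^n$. Base the ${\mathcal L}$-geometry at $(q, 1)$, write $\tau = 1 - t$, and let $l(\cdot, \overline{\tau})$, $\widetilde{V}(\overline{\tau})$ and $\overline{L}(\cdot, \overline{\tau}) = 2\sqrt{\overline{\tau}}\,L(\cdot, \overline{\tau})$ be the associated reduced length, reduced volume and $\overline{L}$-function. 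Exactly as in the proof of Proposition \ref{prop5.17} (cf.\ \cite[Section 26]{Kleiner-Lott}), the curvature control near $(q,1)$ provided by the collapsing hypothesis, combined with the orbifold ${\mathcal L}$-exponential map and the change of variables justified by Lemma \ref{lemma5.12}, gives $\widetilde{V}(r^2) \le \Psi(\kappa)$ for some function $\Psi$ with $\lim_{\kappa \to 0} \Psi(\kappa) = 0$ depending only on $n$. By the monotonicity of Lemma \ref{lemma5.14}, $\widetilde{V}(\overline{\tau}) \le \Psi(\kappa)$ for all $\overline{\tau} \in [r^2, 1]$.

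The main point is to establish a lower bound $\widetilde{V}(\overline{\tau}) \ge c(A, n) > 0$ for some $\overline{\tau} \in [r^2, 1]$ that is bounded away from $0$, using only the controlled geometry on $B_0(p_0, 1) \times [0, 1]$. For this one runs Perelman's barrier argument. The function $\overline{L}$ satisfies the evolution inequality $(\partial_t - \Delta)\overline{L} \ge -2n$ in the barrier sense; this is a local statement, so it holds on $\Or$ by working in local models. One then considers $h(y, t) = \phi(y, t)\bigl(\overline{L}(y, 1-t) + 2n + 1\bigr)$, where $\phi$ is a moving spatial cutoff centered at $p_0$ of the type used by Perelman. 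Using the maximum principle for orbifolds (valid as recalled in Subsection \ref{subsect5.4}) together with Perelman's estimate for $\partial_t d_t(p_0, \cdot)$ under Ricci flow --- which requires a curvature bound only in a neighborhood of $p_0$, precisely what our hypothesis supplies --- one controls $\min_y h(\cdot, t)$ along the flow and deduces that there exist $\overline{t}$ and $\overline{y}$ with $d_{\overline{t}}(p_0, \overline{y})$ bounded in terms of $A$ and $l(\overline{y}, 1 - \overline{t}) \le C_1(A, n)$. Since $(\overline{y}, \overline{t})$ lies in the region of controlled geometry, concatenating the minimal ${\mathcal L}$-geodesic from $(q, 1)$ to $(\overline{y}, \overline{t})$ with short nearby curves --- estimated in local models using $|\Rm| \le \frac{1}{n}$ --- gives $l(\cdot, 1 - \overline{t}) \le C_2(A, n)$ on a metric ball $B_{\overline{t}}(\overline{y}, \rho_*)$ of definite radius $\rho_* = \rho_*(A, n)$. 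The orbifold Bishop--Gromov inequality of Subsection \ref{subsect2.5}, applied at $\overline{y}$ and at $p_0$, together with $\vol(B_0(p_0, 1)) \ge A^{-1}$ and the bounded distortion of distances and volumes over $[0, \overline{t}]$ inside $B_0(p_0, 1)$, shows that this ball has volume at least $v(A, n) > 0$. Writing $\overline{\tau} = 1 - \overline{t}$ we obtain
\begin{equation}
\widetilde{V}(\overline{\tau}) \; \ge \; \overline{\tau}^{-n/2} \int_{B_{\overline{t}}(\overline{y}, \rho_*)} e^{-l(\cdot, \overline{\tau})} \, \dvol \; \ge \; \overline{\tau}^{-n/2}\, e^{-C_2(A,n)}\, v(A, n) .
\end{equation}
Since $\overline{\tau}$ is bounded away from $0$, the right side is a positive constant $c(A, n)$.

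Comparing the two estimates gives a contradiction as soon as $\kappa = \kappa(A, n)$ is small enough that $\Psi(\kappa) < c(A, n)$; since $n$ is fixed this is the asserted $\kappa = \kappa(A)$. All the orbifold inputs are available: the ${\mathcal L}$-geodesic calculus (Lemmas \ref{lemma5.11}--\ref{lemma5.15}), minimal geodesics, the heat operator on functions and the maximum principle all lift to $G$-invariant objects in local models, and the volume comparisons are the orbifold Bishop--Gromov inequality recalled in Subsection \ref{subsect2.5}. I expect the barrier step to be the main obstacle, because it is there that one must estimate quantities --- the time-derivative of the distance from $p_0$, and the values of $\overline{L}$ --- along geodesics that leave the region of controlled curvature; in the manifold case this is handled by Perelman's device of bounding $\partial_t d_t(p_0, \cdot)$ in terms of a curvature bound near $p_0$ alone, and since that argument is a local second-variation computation it transfers without change to the orbifold setting.
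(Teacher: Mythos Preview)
Your proposal is correct and follows the same approach as the paper, which simply defers to \cite[Section 28]{Kleiner-Lott}: a contradiction argument via the monotonicity of the reduced volume, with the lower bound obtained from Perelman's moving-cutoff barrier argument for $\overline{L}$, and the orbifold ingredients (the ${\mathcal L}$-exponential map and cut locus from Lemmas \ref{lemma5.11}--\ref{lemma5.12}, the maximum principle, and the orbifold Bishop--Gromov inequality) supplied exactly as you indicate.
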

\begin{proof}
The proof is similar to that in \cite[Section 28]{Kleiner-Lott}.
\end{proof}

\section{$\kappa$-solutions} \label{sect6}

In this section we extend results about $\kappa$-solutions
from manifolds to orbifolds.

\begin{definition}
Given $\kappa > 0$, a {\em $\kappa$-solution} is a Ricci flow solution
$(\Or, g(t))$ that is defined on a time interval of the form
$(-\infty, C)$ (or $(-\infty, C]$) such that :
\begin{enumerate}
\item The curvature $|\Rm|$ is bounded on each compact time interval
$[t_1,t_2] \subset (-\infty, C)$ (or $(-\infty, C]$), and each
time slice $(\Or, g(t))$ is complete.
\item The curvature operator is nonnegative and the scalar curvature
is everywhere positive.
\item The Ricci flow is $\kappa$-noncollapsed at all scales.
\end{enumerate}
\end{definition}

Lemma \ref{lemma5.16.5} gives an upper bound on the orders of the
isotropy groups. In the rest of this section we will use this
upper bound without explicitly restating it.

\subsection{Asymptotic solitons} \label{subsect6.1}

Let $(p, t_0)$ be a point in a $\kappa$-solution
$(\Or, g(\cdot))$ so that $G_p$ has maximal order. Define
the reduced volume $\widetilde{V}(\overline{\tau})$ and
the reduced length $l(q, \overline{\tau})$ as in Subsection 
\ref{subsect5.4},
by means of curves starting from $(p, t_0)$, with $\tau = t_0 - t$.
From Lemma \ref{lemma5.15}, for each $\overline{\tau} > 0$ there is some
$q(\overline{\tau}) \in |\Or|$ such that $l(q(\overline{\tau}),
\overline{\tau}) \le \frac{n}{2}$. (Note that $l \ge 0$ from the
curvature assumption.)

\begin{proposition} \label{prop6.2}
There is a sequence $\overline{\tau}_i \rightarrow \infty$ so that
if we consider the solution $g(\cdot)$ on the time interval
$[t_0 - \overline{\tau}_i, t_0 - \frac12 \overline{\tau}_i]$ and
parabolically rescale it at the point $(q(\overline{\tau}_i),
t_0 - \overline{\tau}_i)$ by the factor $\overline{\tau}_i^{-1}$
then as $i \rightarrow \infty$, the rescaled solutions converge
to a nonflat gradient shrinking soliton (restricted to $[-1, - \frac12]$).
\end{proposition}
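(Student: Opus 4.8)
The plan is to adapt Perelman's construction of the asymptotic shrinking soliton \cite{Perelman}, following the manifold presentation in \cite{Kleiner-Lott}, using the orbifold tools assembled above: the ${\mathcal L}$-geometry of Subsection \ref{subsect5.4} (existence of minimizing ${\mathcal L}$-geodesics, smoothness of the ${\mathcal L}$-exponential map, the measure-zero ${\mathcal L}$-cut locus of Lemma \ref{lemma5.12}, and the ensuing change-of-variables formula), the reduced-volume monotonicity of Lemma \ref{lemma5.14}, and the Ricci flow compactness theorem for orbifolds, Proposition \ref{prop5.5}. Fix a sequence $\overline{\tau}_i \to \infty$, to be refined below. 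For each $i$, parabolically rescale $g(\cdot)$ at $(q(\overline{\tau}_i), t_0 - \overline{\tau}_i)$ by the factor $\overline{\tau}_i^{-1}$, translating time so that $t_0 - \overline{\tau}_i$ becomes $-1$ (so that $t_0$ becomes $0$); let $g_i(\cdot)$ denote the resulting Ricci flow on $\Or$, restricted to $[-1, -\frac12]$, with basepoint $q_i := q(\overline{\tau}_i)$ sitting at time $-1$. Since ${\mathcal L}$-length, reduced length and reduced volume are scale invariant, the reduced volume of $g_i$ --- based, as throughout, at the point $p$ at time $0$ --- at rescaled backward-time $\sigma \in [\frac12, 1]$ equals $\widetilde{V}(\overline{\tau}_i \sigma)$ of the original flow.

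The analytic heart of the argument --- and the part demanding the most care, though it is a faithful transcription of the manifold proof once the orbifold compactness theorem is in hand --- is to pass to a convergent subsequence of the $g_i$'s. From $l(q_i, \overline{\tau}_i) \le \frac{n}{2}$ and Perelman's differential inequalities for the reduced length --- consequences of the maximum principle and of pointwise identities computed in local models, hence valid for orbifolds verbatim --- one obtains, for each $r$, a uniform upper bound for $l$ on $\check{B}(q_i, r)$ over the time slab $[-1, -\frac12]$ in the rescaled metric. For a $\kappa$-solution the nonnegativity of the curvature operator yields the trace differential Harnack inequality, and the standard argument then converts such a bound on $l$ into a uniform bound $\sup_{\check{B}(q_i, r) \times [-1, -\frac12]} |\Rm(g_i)| \le N(r)$ for each $r$ --- again a local argument. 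Shi-type derivative estimates (valid on orbifolds) together with the $\kappa$-noncollapsing hypothesis, which gives $\vol(\check{B}(q_i, 1)) \ge \kappa$ in the rescaled metric, supply the remaining hypotheses of Proposition \ref{prop5.5} on the interval $[-1, -\frac12]$; after passing to a subsequence we obtain a limiting Ricci flow $(\Or_\infty, q_\infty, g_\infty(\cdot))$ on $[-1, -\frac12]$, with $q_\infty = \lim q_i$. By Remark \ref{remark4.1.5}, the orders of the isotropy groups of $\Or_\infty$ are bounded above by $\sup_{x \in |\Or|} |G_x| = |G_p|$. Carrying the ${\mathcal L}$-geometry along under the convergence --- in particular checking that the reduced-volume integrals pass to the limit, which uses the orbifold change-of-variables attached to Lemma \ref{lemma5.12} --- is routine given the manifold case.

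To identify $g_\infty$ as a gradient shrinking soliton: by Lemma \ref{lemma5.14} the function $\widetilde{V}(\overline{\tau})$ is nonincreasing, so $V_\infty := \lim_{\overline{\tau} \to \infty} \widetilde{V}(\overline{\tau})$ exists. As in the manifold case, the scale invariance of the reduced volume together with the convergence $\widetilde{V}(\overline{\tau}_i \sigma) \to V_\infty$ shows that the limiting flow $g_\infty$ realizes the equality case of the reduced-volume monotonicity, with constant reduced volume $V_\infty$. That equality case is proved, like the monotonicity itself, by exhibiting the monotone quantity as the integral of a pointwise nonnegative expression whose vanishing forces the gradient shrinking soliton equation; the equation holds on the dense regular part and extends over the singular locus by continuity in local models. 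Hence $g_\infty$ is a gradient shrinking soliton.

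It remains to rule out flatness, and this final step contains the one genuinely orbifold-specific twist. Localizing the reduced-volume integral near $p$ in a local model $(\widehat{U}, G_p)$ shows that $\widetilde{V}(\overline{\tau}) \to (4\pi)^{n/2}/|G_p|$ as $\overline{\tau} \to 0^+$, the integral concentrating at $p$ and converging to a Gaussian integral over the tangent cone $C_p|\Or| \cong T_{\widehat{p}}\widehat{U}/G_p$. A $\kappa$-solution has $R > 0$ everywhere, hence is not flat, so the equality case of Lemma \ref{lemma5.14} forces the strict inequality $\widetilde{V}(\overline{\tau}) < (4\pi)^{n/2}/|G_p|$ for all $\overline{\tau} > 0$, whence $V_\infty < (4\pi)^{n/2}/|G_p|$. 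If $g_\infty$ were flat then, being a complete gradient shrinking soliton with constant reduced volume, its potential function would be quadratic and $\Or_\infty$ would be a quotient $\R^n // G$ of the Gaussian soliton by a finite subgroup $G \subset O(n)$ fixing the origin, with reduced volume $(4\pi)^{n/2}/|G|$; since $|G| \le |G_p|$ by the isotropy bound above, this would give $V_\infty = (4\pi)^{n/2}/|G| \ge (4\pi)^{n/2}/|G_p|$, contradicting the strict inequality. Therefore $g_\infty$ is nonflat, which completes the plan.
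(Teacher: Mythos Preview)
Your proof is correct and follows essentially the same route as the paper: the construction of the limit and its identification as a gradient shrinker are reduced to \cite[Section 39]{Kleiner-Lott} via the orbifold compactness theorem, and the nonflatness is ruled out by comparing the limiting reduced volume $V_\infty < (4\pi)^{n/2}/|G_p|$ with the value $(4\pi)^{n/2}/|G_q|$ that a flat limit would force, using $|G_q|\le |G_p|$ from Remark~\ref{remark4.1.5} and the maximality of $|G_p|$. The paper spells out the flat case slightly more explicitly---via the constant positive-definite Hessian of $l_\infty$ and the gradient flow identifying $\Or_\infty$ with $T_q\Or_\infty$---but the substance is identical.
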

\begin{proof}
The proof is similar to that in \cite[Section 39]{Kleiner-Lott}.
Using estimates on the reduced length as defined with the
basepoint $(p, t_0)$,
one constructs a limit Ricci flow solution 
$(\Or_\infty, g_\infty(\cdot))$ defined for
$t \in [-1, - \frac12]$, which is a gradient shrinking soliton. The
only new issue is to show that it is nonflat.

As in \cite[Section 39]{Kleiner-Lott},
there is a limiting reduced length function 
$l_\infty(\cdot, \tau) \in C^\infty(\Or_\infty)$,
and a reduced volume which is a constant
$c$, strictly less than the $t \rightarrow t_0$ limit of the reduced
volume of $(\Or, g(\cdot))$. The latter equals
$\frac{(4\pi)^{\frac{n}{2}}}{|G_p|}$. If the limit solution were
flat then $l_\infty(\cdot, \tau)$ would have a constant 
positive-definite Hessian.
It would then have a unique critical point $q$. Using the gradient flow
of $l_\infty(\cdot, \tau)$, one deduces that $\Or_\infty$ is
diffeomorphic to $T_q \Or_\infty$.
As in \cite[Section 39]{Kleiner-Lott}, one concludes
that
\begin{equation}
c \: = \: 
\int_{C_q |\Or_\infty| \cong \R^n/G_q} 
\tau^{-\frac{n}{2}} e^{- \: \frac{|x|^2}{4\tau}} \dvol \: = \:
\frac{(4\pi)^{\frac{n}{2}}}{|G_q|}.
\end{equation}
As $|G_q| \le |G_p|$, we obtain a contradiction.
\end{proof}

\subsection{Two-dimensional $\kappa$-solutions} \label{subsect6.2}

\begin{lemma} \label{lemma6.4}
Any two-dimensional $\kappa$-solution $(\Or, g(\cdot))$
is an isometric quotient of the round shrinking $2$-sphere or
is a Ricci soliton metric on a bad $2$-orbifold.
\end{lemma}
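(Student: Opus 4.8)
The plan is to show that a two-dimensional $\kappa$-solution $(\Or, g(\cdot))$ has an asymptotic gradient shrinking soliton that is nonflat, and then to identify what that soliton can be. First I would invoke Proposition \ref{prop6.2}, which gives a sequence of parabolic rescalings converging to a nonflat gradient shrinking soliton $(\Or_\infty, g_\infty(\cdot))$ in dimension two. A priori $\Or_\infty$ could be a surface or a $2$-orbifold; in either case it is a complete gradient shrinking soliton with nonnegative (hence, in dimension two, bounded below by zero) scalar curvature, and it is $\kappa$-noncollapsed. The key point is that two-dimensional gradient shrinking solitons are completely classified: the only ones with bounded curvature are the round shrinking $S^2$, its isometric quotients $S^2//\Gamma$, the round shrinking $\R^2$ (flat Gaussian soliton), the cylinder and its quotients, and — in the orbifold category — the soliton metrics on the bad $2$-orbifolds $S^2(k)$ and $S^2(k,k')$ constructed by Wu \cite[Theorem 4.1]{Wu}. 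The flat and cylindrical possibilities are excluded because the soliton is nonflat and arises as a blow-down of a $\kappa$-solution with positive scalar curvature (a cylinder would have a line, forcing a splitting incompatible with the positivity and the structure of $\kappa$-solutions).

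Next I would transfer this classification of the asymptotic soliton back to $(\Or, g(\cdot))$ itself. The standard argument (as in the two-dimensional part of Perelman's analysis, cf. \cite[Section 39 ff.]{Kleiner-Lott}) is that in dimension two the asymptotic soliton actually determines the $\kappa$-solution: a two-dimensional ancient solution with nonnegative curvature and bounded curvature on compact time intervals, whose blow-down is the round shrinking sphere or a bad-orbifold soliton, must itself be that solution. Concretely, once the underlying space $|\Or|$ is pinned down — it is compact because the asymptotic soliton is compact and $\kappa$-noncollapsing prevents the topology from being more complicated — one uses Hamilton's classification of the Ricci flow on compact surfaces, together with its orbifold extension (the round $2$-sphere case, the quotient case, and Wu's bad-orbifold solitons), to conclude that the flow is exactly the round shrinking quotient of $S^2$ or the Ricci soliton on a bad $2$-orbifold. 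One should also note that $|\Or_\infty|$ cannot have nonempty boundary or be noncompact here: noncompactness would, via the soul theorem (Lemma \ref{lemma3.16} in dimension two, Lemma \ref{lemma3.15}) and the positivity of scalar curvature, force a cigar-type or cylindrical end that contradicts $\kappa$-noncollapsing at all scales.

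I expect the main obstacle to be the orbifold bookkeeping in the rigidity step: namely, showing that a nonflat two-dimensional gradient shrinking soliton in the orbifold category is \emph{exactly} the round $S^2//\Gamma$ or one of Wu's bad-orbifold solitons, with no extra exotic examples, and then that the ancient solution is rigidly determined by its blow-down. In the manifold case this is classical; here one needs the orbifold Gauss--Bonnet theorem \cite[Proposition 2.9]{BMP} to control the underlying surface $|\Or|$ and its cone points (the quantity $\chi^{orb}$ must be positive since a shrinking soliton has, on average, positive curvature), and then Wu's uniqueness statement \cite[Theorem 4.1]{Wu} to handle the bad cases. The remaining steps — excluding noncompactness, excluding boundary, upgrading from "asymptotic soliton is $X$" to "the solution is $X$" — are routine adaptations of the manifold arguments in \cite{Kleiner-Lott}, using the compactness theorem Proposition \ref{prop5.5} and the no-local-collapsing results of Section \ref{sect5} in their orbifold forms.
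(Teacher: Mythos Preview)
Your proposal follows the same overall strategy as the paper: pass to the asymptotic gradient shrinking soliton via Proposition~\ref{prop6.2}, argue that it is compact, and then invoke the convergence theory for two-dimensional compact Ricci flow (round $S^2//\Gamma$, or Wu's bad-orbifold solitons \cite{Wu}) to identify the original $\kappa$-solution.

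The one substantive difference is in how you establish compactness of the asymptotic soliton. You go through a classification of two-dimensional gradient shrinkers and exclude the noncompact ones; the paper instead invokes Perelman's argument \cite[Lemma 1.2]{Perelman2} that the scalar curvature of the soliton is strictly positive outside a compact set, and then uses Jacobi field estimates to force compactness. Both routes work, but your list of two-dimensional shrinkers needs cleaning up: the flat cylinder $\R\times S^1$ is \emph{not} a gradient shrinking soliton (the Gaussian potential does not descend to the quotient), and the ``cigar-type'' object you allude to is a steady soliton, not a shrinker, so it does not appear in this classification at all. In fact the complete two-dimensional gradient shrinkers with bounded nonnegative curvature are just the round $S^2$ and its quotients, the flat Gaussian on $\R^2//\Gamma$, and Wu's bad-orbifold solitons; once Proposition~\ref{prop6.2} gives nonflatness, compactness is immediate. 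The paper also records, in Remark~\ref{remark6.5}, an alternative proof that bypasses the asymptotic soliton entirely by passing to the universal cover and splitting into the good case (handled by the manifold result in \cite[Section 40]{Kleiner-Lott}) and the bad case (automatically compact, handled by \cite{Wu}); this is arguably the cleanest route and is worth knowing.
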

\begin{proof}
The proof is similar to that in \cite[Theorem 4.1]{Ye2}.
One considers the asymptotic soliton and shows that
it has strictly positive scalar curvature outside of a
compact region (as in \cite[Lemma 1.2]{Perelman2}).
Using standard Jacobi field estimates, the asymptotic soliton
must be compact.
The lemma then follows from convergence results for $2$-dimensional 
compact Ricci flow
(using \cite{Wu} in the case of bad $2$-orbifolds).
\end{proof}
\begin{remark} \label{remark6.5}
One can alternatively prove Lemma \ref{lemma6.4} 
using the fact that if $(\Or, g(\cdot))$ is a
$\kappa$-solution then so is the pullback solution
$(\widetilde{\Or}, \widetilde{g}(\cdot))$ on the universal cover.
If $\Or$ is a bad $2$-orbifold then $\Or$ is compact and the
result follows from \cite{Wu}. If $\Or$ is a good $2$-orbifold
then $(\widetilde{\Or}, \widetilde{g}(\cdot))$ is a round
shrinking $S^2$ from \cite[Section 40]{Kleiner-Lott}.
\end{remark}

\subsection{Asymptotic scalar curvature and asymptotic volume ratio}
\label{subsect6.3}

\begin{definition}
If $\Or$ is a complete connected Riemannian orbifold then its
{\em asymptotic scalar curvature ratio} is
${\mathcal R} = \limsup_{q \rightarrow \infty} R(q) d(x,p)^2$. It
is independent of the basepoint $p \in |\Or|$.
\end{definition}

\begin{lemma} \label{lemma6.7}
Let $(\Or, g(\cdot))$ be a noncompact $\kappa$-solution.  Then the
asymptotic scalar curvature ratio is infinite for each time slice.
\end{lemma}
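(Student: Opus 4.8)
The plan is to adapt the manifold proof (as in \cite[Section 41]{Kleiner-Lott}) to the orbifold setting, the point being that all the tools used there — Hamilton's differential Harnack inequality, the splitting theorem, the soul theorem, and derivative estimates for Ricci flow — are available for orbifolds by the results established earlier in this paper. First I would argue by contradiction: suppose that for some time $t_0$ the asymptotic scalar curvature ratio $\mathcal R$ of $(\Or, g(t_0))$ is finite. Fix a basepoint $p \in |\Or|$ so that $R(q)\, d(q,p)^2$ is bounded as $q \to \infty$; in particular scalar curvature decays quadratically and hence the curvature operator, being nonnegative with bounded trace, also decays quadratically along rays.

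The next step is to pick a sequence $q_i \to \infty$ realizing large distances, rescale the time slice (or the whole flow on a backward time interval, using Hamilton's Harnack inequality to control the time direction) at $q_i$ by the factor $d(q_i,p)^2$, and extract a pointed limit using the orbifold Ricci flow compactness theorem, Proposition \ref{prop5.5}. The $\kappa$-noncollapsing hypothesis gives the needed lower volume bound, and the quadratic curvature decay together with the curvature bounds on compact time intervals supplies the local curvature bounds; Lemma \ref{lemma5.16.5} keeps the orders of the isotropy groups uniformly bounded so no singularity coalescence occurs in the limit. The rescaled basepoints $q_i$ recede to infinity, so in the limit one produces a point at distance $1$ from a "base ray" with controlled geometry; the standard argument then shows the limiting orbifold $\Or_\infty$ splits off a line. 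By the orbifold splitting theorem, Proposition \ref{prop3.1}, $\Or_\infty = \R \times \Or^\prime$. One then iterates or uses the dimension-reduction argument to arrive at a contradiction with nonnegative curvature and $\kappa$-noncollapsing — typically by continuing to split off lines until one reaches a flat factor, contradicting the positivity of scalar curvature that persists in such limits, or by directly contradicting the no-local-collapsing bound.

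The main obstacle I expect is the same one that appears throughout this paper: carefully checking that the limiting object obtained from Proposition \ref{prop5.5} is genuinely an orbifold Ricci flow solution of the right type (a $\kappa$-solution, or at least a nonflat ancient solution with bounded nonnegative curvature operator), and that the geometric constructions — choosing the recession sequence, controlling the time slices via the Harnack inequality, and performing the splitting — go through \emph{equivariantly} in local models. Since the orders of isotropy groups are bounded and cannot jump up in the limit (Remark \ref{remark4.1.5}), the frame-bundle technique underlying Proposition \ref{prop4.1} handles this uniformly, so in practice the proof reduces to quoting the manifold argument of \cite[Section 41]{Kleiner-Lott} with the above substitutions and noting that each ingredient has been made available for orbifolds in the preceding sections.
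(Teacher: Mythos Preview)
Your outline has a genuine gap: the rescaling limit you propose is not complete, so the splitting theorem does not apply. Concretely, after rescaling the metric at $q_i$ so that $d(q_i,p)=1$, the curvature at the original basepoint $p$ becomes $R(p)\,d(q_i,p)^2 \to \infty$ (since $R(p)>0$ in a $\kappa$-solution). Hence the hypotheses of Proposition \ref{prop5.5} fail on any ball of radius larger than $1$ about $q_i$, and whatever limit you extract is only an annular piece of the Tits cone $C_T\Or$, with a missing vertex where $p$ collapses. Proposition \ref{prop3.1} needs a complete space containing a line, so the ``split off an $\R$-factor and iterate'' strategy does not get off the ground. (You may be conflating this lemma with Lemma \ref{lemma6.9}, where one \emph{does} produce a splitting limit --- but that argument already uses $\mathcal R = \infty$ as input.)

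The paper's proof instead follows \cite[Section 41]{Kleiner-Lott} and separates two cases. If $\mathcal R \in (0,\infty)$ the rescaling limit is a smooth annular region in the Tits cone $C_T\Or$, and one derives a contradiction from the curvature evolution equation on the regular part. If $\mathcal R = 0$ the Tits cone is flat away from the vertex; the unit sphere $S_\infty$ in $C_T\Or$ is approximated by convex hypersurfaces $S_k \subset \Or$, and the orbifold-specific ingredient is Lemma \ref{lemma3.12}, which forces each $S_k$ (hence $S_\infty$) to be diffeomorphic to the unit sphere in some $T_{p_k}\Or$, so $S_\infty \cong S^{n-1}//\Gamma$. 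Choosing $p$ with $G_p \cong \Gamma$, the asymptotic and infinitesimal volume ratios at $p$ coincide, giving equality in Bishop--Gromov and hence flatness --- a contradiction. None of this is visible in your proposal, and in particular Lemma \ref{lemma3.12} is the key orbifold adaptation you are missing.
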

\begin{proof}
The proof is similar to that in \cite[Section 41]{Kleiner-Lott}.
Choose a time $t_0$.
If ${\mathcal R} \in (0, \infty)$ then after rescaling
$(\Or, g(t_0))$, one
obtains convergence to a smooth annular region in the Tits cone
$C_T \Or$ at time $t_0$.
(Here $C_T \Or$ denotes a smooth orbifold structure on the complement
of the vertex in the Tits cone $C_T |\Or|$.)
Working on the regular part of the
annular region, one obtains a contradiction from the curvature
evolution equation.

If ${\mathcal R} = 0$ then the rescaling limit is a smooth flat
metric on $C_T \Or$, away from the vertex.  The unit sphere $S_\infty$
in $C_T \Or$ has principal curvatures one.  It can be approximated
by a sequence of codimension-one compact suborbifolds $S_k$ in $\Or$ with
rescaled principal curvatures approaching one, which bound
compact suborbifolds $\Or_k \subset \Or$.

Suppose first that $n \ge 3$. By Lemma \ref{lemma3.12}, for large $k$ there is
some $p_k \in |\Or|$ so that the suborbifold $S_k$ is diffeomorphic to the 
unit sphere in $T_{p_k} \Or$. As $S_k$ is diffeomorphic to $S_\infty$
for large $k$, we conclude that $S_\infty$ is isometric to $S^{n-1}//\Gamma$
for some finite group $\Gamma \subset \Isom^+(S^{n-1})$. Let
$p \in |\Or|$ be a point with $G_p \cong \Gamma$. As $C_T |\Or|$
is isometric to $\R^n/\Gamma$, 
$\lim_{r \rightarrow \infty} r^{-n} \: \vol(B(p,r))$ exists and equals
the $\frac{1}{|\Gamma|}$ times the volume of the unit ball in $\R^n$. 
On the other hand, this equals $\lim_{r \rightarrow 0} r^{-n} \: \vol(B(p,r))$.
As we have equality in the Bishop-Gromov inequality, we conclude
that $\Or$ is flat, which is a contradiction.

If $n=2$ then we can adapt the argument in \cite[Section 41]{Kleiner-Lott}
to the orbifold setting.
\end{proof}

\begin{definition}
If $\Or$ is a complete $n$-dimensional Riemannian orbifold with
nonnegative Ricci curvature then its {\em asymptotic volume ratio}
is ${\mathcal V} = \lim_{r \rightarrow \infty} r^{-n} \vol(B(p, r))$.
It is independent of the choice of basepoint $p \in |\Or|$.
\end{definition}

\begin{lemma} \label{lemma6.9}
Let $(\Or, g(\cdot))$ be a noncompact $\kappa$-solution. Then the
asymptotic volume ratio ${\mathcal V}$ vanishes for each time slice
$(\Or, g(t_0))$. Moreover, there is a sequence of points
$p_k \in |\Or|$ going to infinity such that the pointed sequence
$\{ (\Or, (p_k, t_0), g(\cdot) \}_{k=1}^\infty$ converges,
modulo rescaling by $R(p_k, t_0)$, to a $\kappa$-solution which
isometrically splits off an $\R$-factor.
\end{lemma}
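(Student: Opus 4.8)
The plan is to follow the manifold argument of Hamilton's dimension‑reduction, as presented in \cite[Section 41]{Kleiner-Lott}, replacing the ingredients that are sensitive to the orbifold structure by their orbifold counterparts: the orbifold compactness theorem (Proposition \ref{prop5.5}, resting on Proposition \ref{prop4.1}), the uniform bound on isotropy orders for $\kappa$‑solutions (Lemma \ref{lemma5.16.5}) together with the non‑increase of isotropy in limits (Remark \ref{remark4.1.5}), the orbifold splitting theorem (Proposition \ref{prop3.1}), and, for the cross‑section, the classification of two‑dimensional $\kappa$‑solutions (Lemma \ref{lemma6.4}). Fix $t_0$ and a basepoint $p\in|\Or|$ and work with $g(t_0)$. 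The one input already available is Lemma \ref{lemma6.7}: the asymptotic scalar curvature ratio $\mathcal R=\limsup_{q\to\infty}R(q)\,d(q,p)^2$ is infinite.

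The first step is the point selection. Using $\mathcal R=\infty$, choose $w_k\to\infty$ with $R(w_k)\,d(w_k,p)^2\to\infty$, let $\gamma_k$ be a minimizing geodesic in $|\Or|$ from $p$ to $w_k$ of length $L_k=d(w_k,p)$, and (as in Hamilton) pick $y_k=\gamma_k(s_k)$ essentially maximizing $R(\gamma_k(s))\,s\,(L_k-s)$; put $Q_k=R(y_k)$. Since the value of this product at the endpoint is already large, its maximum tends to infinity, which forces both $\sqrt{Q_k}\,s_k\to\infty$ and $\sqrt{Q_k}\,(L_k-s_k)\to\infty$; the maximizing property then yields $R\le 4Q_k$ on the piece of $\gamma_k$ within rescaled distance $\sim\min(\sqrt{Q_k}\,s_k,\sqrt{Q_k}\,(L_k-s_k))$ of $y_k$, and one more round of the standard point‑picking upgrades this to $R\le 4Q_k$ on a metric ball $B(y_k,\rho_k)$ with $\sqrt{Q_k}\,\rho_k\to\infty$. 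All of these selections take place in the Alexandrov space $|\Or|$ and carry over verbatim; Lemma \ref{lemma5.16.5} supplies the uniform bound on $|G_{y_k}|$ needed below.

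Next I would pass to a limit. The curvature bound on $B(y_k,\rho_k)$, combined with Hamilton's differential Harnack inequality (which holds on orbifolds) and nonnegativity of the curvature operator, bounds $|\Rm|$ on a backward space‑time neighborhood of $(y_k,t_0)$ of rescaled size tending to infinity; together with $\kappa$‑noncollapsing this lets Proposition \ref{prop5.5} apply, so a subsequence of the flows rescaled at $(y_k,t_0)$ by $Q_k$ converges to a pointed $\kappa$‑solution $(\Or_\infty,(y_\infty,t_0),g_\infty(\cdot))$, with $|G_{y_\infty}|$ still bounded by Remark \ref{remark4.1.5}. Because $y_k$ is interior to the minimizing segment $\gamma_k$ and both sub‑segments have rescaled length $\to\infty$, the limits of the two halves of $\gamma_k$ form a bi‑infinite minimizing line through $y_\infty$ in $(\Or_\infty,g_\infty(t_0))$; by Proposition \ref{prop3.1}, $\Or_\infty=\R\times N$ for a complete orbifold $N$, and the splitting is preserved by the flow. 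Taking $p_k:=y_k$ gives the ``moreover'' assertion. For $\mathcal V=0$: the cross‑section $N$ carries a Ricci flow that is again a $\kappa'$‑solution, of dimension $n-1$; in the cases relevant here ($n\le 3$, so $\dim N\le 2$) Lemma \ref{lemma6.4} shows $N$ is compact (a quotient of the round shrinking $S^2$, or a bad $2$‑orbifold with its soliton metric), whence $\mathcal V(\Or_\infty)=\lim_{r\to\infty}r^{-n}\cdot 2r\,\vol(N)=0$; since $\rho\mapsto\rho^{-n}\vol(B(\cdot,\rho))$ is nonincreasing by the orbifold Bishop–Gromov inequality, the convergence $(\Or,(y_k,t_0),Q_k g)\to(\Or_\infty,\dots)$ forces $\mathcal V(\Or)\le\mathcal V(\Or_\infty)=0$. (In higher dimensions one would instead obtain $\mathcal V=0$ by the direct volume‑comparison argument of \cite[Section 41]{Kleiner-Lott}, playing $\mathcal R=\infty$ against Bishop–Gromov.)

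The main obstacle I anticipate is the point‑selection step: one must verify that the rescaled segment lengths on \emph{both} sides of $y_k$ tend to infinity, so that the limit genuinely contains a line and not merely a ray, and one must check that the selection is not spoiled by points of large isotropy order where the compactness or the splitting could degenerate. It is precisely the uniform bound on isotropy orders (Lemma \ref{lemma5.16.5}) and its persistence under limits (Remark \ref{remark4.1.5}) that keep Proposition \ref{prop5.5}, Proposition \ref{prop3.1}, and hence the whole dimension‑reduction argument available in the orbifold setting.
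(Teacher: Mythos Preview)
Your proposal is correct and takes essentially the same approach as the paper, which simply defers to \cite[Section 41]{Kleiner-Lott}: use Lemma \ref{lemma6.7} to run Hamilton's point-selection along minimizing geodesics, apply orbifold compactness (Proposition \ref{prop5.5}) and the splitting theorem (Proposition \ref{prop3.1}) to obtain a limit $\R\times N$, and conclude $\mathcal V=0$ via Bishop--Gromov monotonicity together with the compactness of $N$ (Lemma \ref{lemma6.4} in the two-dimensional cross-section). You have correctly isolated the orbifold-specific inputs (Lemma \ref{lemma5.16.5}, Remark \ref{remark4.1.5}) that make the manifold argument go through unchanged.
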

\begin{proof}
The proof is similar to that in \cite[Section 41]{Kleiner-Lott}
\end{proof}

\subsection{In a $\kappa$-solution, the curvature and the normalized
volume control each other} \label{subsect6.4}

\begin{lemma} \label{lemma6.10}
Given $n \in \Z^+$, we consider $n$-dimensional 
$\kappa$-solutions.
\begin{enumerate}
\item If $B(p_0, r_0)$ is a ball in a time slice of a $\kappa$-solution
then the normalized volume $r^{-n} \vol(B(p_0, r_0))$ is controlled
(i.e. bounded away from zero) $\Longleftrightarrow$ the normalized
scalar curvature $r_0^2 R(p_0)$ is controlled (i.e. bounded above)
\item (Precompactness) If $\{(\Or_k, (p_k, t_k), g_k(\cdot)) \}_{k=1}^\infty$
is a sequence of pointed $\kappa$-solutions and for some $r > 0$, the
$r$-balls $B(p_k, r) \subset (\Or_k, g_k(t_k))$ have controlled
normalized volume, then a subsequence converges to an ancient solution
$(\Or_\infty, (p_\infty, 0), g_\infty(\cdot))$ which has nonnegative
curvature operator, and is $\kappa$-noncollapsed (though {\it a priori}
the curvature may be unbounded on a given time slice).
\item There is a constant $\eta = \eta(n,\kappa)$ so that for all
$p \in |\Or|$, we have $|\nabla R|(p,t) \le \eta R^{\frac32}(p,t)$
and $|R_t|(p,t) \le \eta R^2(p,t)$. More generally, there are scale
invariant bounds on all derivatives of the curvature tensor, that
only depend on $n$ and $\kappa$.
\item There is a function $\alpha : [0, \infty) \rightarrow [0, \infty)$
depending only on $n$ and $\kappa$ such that
$\lim_{s \rightarrow \infty} \alpha(s) = \infty$, and for every
$p, p^\prime \in |\Or|$, we have
$R(p^\prime) d^2(p,p^\prime) \le \alpha \left( 
R(p) d^2(p,p^\prime) \right)$.
\end{enumerate}
\end{lemma}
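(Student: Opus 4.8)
The plan is to follow Perelman's proof in the manifold case (\cite[Section 11.4]{Perelman}), as presented in \cite{Kleiner-Lott}, making the three modifications that the earlier sections have prepared. First, the Bishop-Gromov inequality is used in its orbifold form (Subsection \ref{subsect2.5}), in which the volume of a small metric ball about $p$ behaves like $|G_p|^{-1}\omega_n r^n$ rather than $\omega_n r^n$. Second, a uniform bound $|G_p| \le N(n,\kappa)$ on the orders of the isotropy groups is available from Lemma \ref{lemma5.16.5} and is invoked silently throughout, as is the fact (Remark \ref{remark4.1.5}) that isotropy orders do not increase under the convergence of Proposition \ref{prop5.5}; together these keep all volume comparisons uniform and prevent orbifold singularities from concentrating or worsening in limits. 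Third, rescaled sequences of flows are extracted using Proposition \ref{prop5.5} in place of Hamilton's compactness theorem, and the local derivative (Shi) estimates are used in their orbifold form, obtained by working equivariantly in local models (Subsection \ref{subsect5.1}).

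The four assertions are established by a single bootstrap. The heart of the matter is ``bounded curvature at bounded distance'': if $B(p_0,r_0)$ in a time slice of a $\kappa$-solution has $\vol(B(p_0,r_0)) \ge \epsilon\, r_0^n$, then $R \le C(n,\kappa,\epsilon)\, r_0^{-2}$ on $B(p_0, r_0/4)$. I would prove this by contradiction and point-picking exactly as in \cite{Kleiner-Lott}: choosing a point of controlled high curvature $Q_k \to \infty$ at which $R$ is bounded by $4Q_k$ on a ball of rescaled radius tending to infinity, rescaling by $Q_k$, and applying Proposition \ref{prop5.5} --- the needed uniform lower volume bound coming from $\kappa$-noncollapsing (which survives rescaling) together with the uniform isotropy bound --- to obtain a limiting ancient, $\kappa$-noncollapsed solution with nonnegative curvature operator and bounded curvature on its time-zero slice, hence a $\kappa$-solution. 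The volume hypothesis $\vol(B(p_0,r_0)) \ge \epsilon r_0^n$ rescales to a positive lower bound on the asymptotic volume ratio of this limit (the limit being noncompact, since in rescaled coordinates it contains balls of radius $Q_k^{1/2} r_0 \to \infty$), contradicting Lemma \ref{lemma6.9}.

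Granting this, assertion (2) is immediate: controlled normalized volume on $B(p_k, r)$ gives, via bounded curvature at bounded distance, uniform curvature bounds on every ball, so Proposition \ref{prop5.5} applies and the limit inherits nonnegative curvature operator and $\kappa$-noncollapsing by passage to the limit. Assertion (1) in the direction ``normalized volume controlled $\Rightarrow$ normalized curvature controlled'' is a restatement of bounded curvature at bounded distance; the converse uses $\kappa$-noncollapsing, since if $r_0^2 R(p_0)$ is bounded then Shi estimates bound $|\Rm|$ on a parabolic neighborhood of controlled size and $\kappa$-noncollapsing yields $\vol(B(p_0,r_0)) \ge c(n,\kappa)\, r_0^n$. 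Assertion (3) follows from (2) by the standard rescaling argument: if $|\nabla R|\, R^{-3/2}$, or any higher scale-invariant derivative ratio, were unbounded along a sequence of $\kappa$-solutions, rescale so that $R = 1$ at the marked point, note that the unit ball then has controlled normalized volume by (1), extract a smooth limit by (2), and read off infinite $|\nabla R|$ at a point --- absurd; $|R_t|$ is handled identically via the evolution equation. Assertion (4) is the usual packaging of (2): if it failed one would have $p_k, p_k'$ with $R(p_k) d^2(p_k,p_k')$ bounded but $R(p_k') d^2(p_k,p_k') \to \infty$, and rescaling at $p_k$ by $R(p_k)$ and applying (2) gives a smooth limit in which a point at bounded distance from the basepoint has infinite scalar curvature, which is impossible since by (3) curvature is locally bounded on the limit.

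The main obstacle is the first step. The manifold proof of bounded curvature at bounded distance rests on quantitative volume comparisons, and in the orbifold setting the Bishop-Gromov inequality degrades these by the factor $|G_p|$; the argument therefore goes through verbatim only because Lemma \ref{lemma5.16.5} makes $|G_p|$ uniformly bounded and Remark \ref{remark4.1.5} guarantees that this bound is preserved --- so that the limiting object produced by Proposition \ref{prop5.5} is again an orbifold with controlled isotropy on which Lemma \ref{lemma6.9} applies. Verifying that no collapse and no coalescing of orbifold singularities occurs in that limit extraction is the one place where the orbifold-specific input is genuinely used.
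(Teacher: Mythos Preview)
Your overall plan matches the paper's: follow \cite[Section 42]{Kleiner-Lott} with the orbifold Bishop--Gromov inequality, the uniform isotropy bound of Lemma~\ref{lemma5.16.5}, and Proposition~\ref{prop5.5}. But your handling of the implication $\Longleftarrow$ in part~(1) is wrong, and this is exactly where the paper locates the one orbifold-specific modification. You claim that if $r_0^2 R(p_0)$ is bounded then ``Shi estimates bound $|\Rm|$ on a parabolic neighborhood of controlled size,'' after which $\kappa$-noncollapsing applies. Shi estimates do no such thing: they bound \emph{derivatives} of curvature given a curvature bound on a ball, not the reverse. A pointwise bound $R(p_0) \le C r_0^{-2}$ does not yield $|\Rm| \le r_0^{-2}$ on $B(p_0, r_0)$, which is what $\kappa$-noncollapsing requires. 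Nor can you invoke (3) or (4) here, since in your scheme those are derived from (1) and (2).

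The actual argument (as in \cite[Section 42]{Kleiner-Lott}) is by contradiction: take a sequence with $r_k^2 R(p_k)$ bounded but $r_k^{-n}\vol(B(p_k, r_k)) \to 0$; by Bishop--Gromov the ratio $r^{-n}\vol(B(p_k,r))$ is nonincreasing in $r$ and tends to $c_n/|G_{p_k}|$ as $r \to 0$, so choose an intermediate radius $r_k'$ at which the ratio equals a fixed threshold, apply the already-proved $\Longrightarrow$ direction at scale $r_k'$, extract a limit, and contradict Lemma~\ref{lemma6.9}. Choosing that threshold is where the orbifold factor $|G_{p_k}|$ genuinely intrudes; the paper handles it by passing to a subsequence on which $|G_{p_k}|$ is a constant $C$ and replacing $c_n$ by $c_n/C$. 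Your uniform bound $|G_p| \le N(n,\kappa)$ would serve equally well (take, say, $c_n/(2N)$ as the threshold), but the modification belongs in $\Longleftarrow$, not in the $\Longrightarrow$ point-picking step where you placed the emphasis.
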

\begin{proof}
The proof is similar to that in \cite[Section 42]{Kleiner-Lott}.
In the proof by contradiction of the implication $\Longleftarrow$ of part (1),
after passing to a subsequence we can assume that $|G_{p_k}|$
is a constant $C$. Then we use the argument in 
\cite[Section 42]{Kleiner-Lott} with $c_n$ equal to $\frac{1}{C}$
times the volume of the unit Euclidean $n$-ball.
\end{proof}

\subsection{A volume bound} \label{subsect6.5}

\begin{lemma} \label{lemma6.11}
For every $\epsilon > 0$, there is an $A < \infty$ with the following
property.  Suppose that we have a sequence of (not necessarily complete)
Ricci flow solutions $g_k(\cdot)$ with nonnegative curvature operator,
defined on $\Or_k \times [t_k, 0]$, such that:
\begin{itemize}
\item For each $k$, the time-zero ball $B(p_k, r_k)$ has compact closure in
$|\Or_k|$.
\item For all $(p,t) \in B(p_k, r_k) \times [t_k,0]$, we have 
$\frac12 R(p,t) \le R(p_k,0) = Q_k$. 
\item $\lim_{k \rightarrow \infty} t_k Q_k = - \infty$. 
\item $\lim_{k \rightarrow \infty} r_k^2 Q_k = \infty$. 
\end{itemize}
Then for large $k$, we have $\vol(B(p_k, AQ_k^{-\frac12})) \le
\epsilon (AQ_k^{- \frac12})^n$ at time zero.
\end{lemma}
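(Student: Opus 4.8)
The plan is to argue by contradiction, reducing to Lemma~\ref{lemma6.9} via the orbifold Ricci flow compactness theorem; this parallels the manifold argument in \cite[Section 43]{Kleiner-Lott}.

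Suppose the lemma fails for some $\eps > 0$. Then, after passing to a subsequence and reindexing, there is a sequence $A_j \to \infty$ together with Ricci flow solutions $g_j(\cdot)$ on $\Or_j \times [t_j, 0]$ with nonnegative curvature operator, satisfying $\frac12 R(p,t) \le R(p_j, 0) =: Q_j$ for all $(p,t) \in B(p_j, r_j) \times [t_j, 0]$, with $t_j Q_j \to -\infty$ and $r_j^2 Q_j \to \infty$, and with $\vol(B(p_j, A_j Q_j^{-1/2})) > \eps (A_j Q_j^{-1/2})^n$ at time zero. Parabolically rescale $g_j$ by $Q_j$, obtaining flows $\widetilde g_j(\cdot)$ on $\Or_j \times [Q_j t_j, 0]$ with $\widetilde R(p_j, 0) = 1$ and $\widetilde R \le 2$ on $B_{\widetilde g_j(0)}(p_j, r_j \sqrt{Q_j}) \times [Q_j t_j, 0]$, where now $Q_j t_j \to -\infty$, $r_j \sqrt{Q_j} \to \infty$, and $\vol_{\widetilde g_j(0)}(B(p_j, A_j)) > \eps A_j^n$. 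Since the curvature operator is nonnegative, $|\widetilde{\Rm}| \le c_n \widetilde R \le 2 c_n$ on the same region, with $c_n$ a dimensional constant.

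Next I would extract an ancient limit. Applied in the time-zero slice, the Bishop--Gromov inequality for orbifolds with a lower Ricci bound (Subsection~\ref{subsect2.5}) shows that $\rho \mapsto \rho^{-n} \vol_{\widetilde g_j(0)}(B(p_j, \rho))$ is nonincreasing, so $\rho^{-n} \vol_{\widetilde g_j(0)}(B(p_j, \rho)) > \eps$ for all $\rho \le A_j$; in particular $\vol_{\widetilde g_j(0)}(B(p_j, 1)) > \eps$ once $A_j \ge 1$. For fixed $T, \rho > 0$ and $j$ large the parabolic region $B(p_j, \rho) \times [-T, 0]$ lies where $|\widetilde\Rm| \le 2 c_n$, and since also $r_j \sqrt{Q_j} \to \infty$, the (local form of the) compactness theorem of Proposition~\ref{prop5.5}, with $r_0 = 1$ and $v_0 = \eps$, applied on $[-T,0]$ and diagonalized over $T \to \infty$, produces a pointed complete ancient Ricci flow $(\Or_\infty, p_\infty, g_\infty(\cdot))$ on $(-\infty, 0]$ with nonnegative curvature operator, with $R_\infty \le 2$ everywhere (hence bounded curvature), and with $R_\infty(p_\infty, 0) = 1$. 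Passing the volume bounds to the limit gives $\vol_{g_\infty(0)}(B(p_\infty, \rho)) \ge \eps \rho^n$ for every $\rho > 0$, so the asymptotic volume ratio of $(\Or_\infty, g_\infty(0))$ is at least $\eps$; in particular $\Or_\infty$ is noncompact.

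To finish, I would identify $\Or_\infty$ as a noncompact $\kappa$-solution: it is a complete ancient flow with bounded nonnegative curvature operator, it is $\kappa$-noncollapsed at all scales for $\kappa = \eps$ (from the uniform volume lower bound and Bishop--Gromov), and by the strong maximum principle for the curvature operator evolution---valid on orbifolds, cf. Section~\ref{sect5}---either $R_\infty > 0$ everywhere or $\Or_\infty$ is flat, the latter being excluded by $R_\infty(p_\infty, 0) = 1$. Then Lemma~\ref{lemma6.9} forces the asymptotic volume ratio of $(\Or_\infty, g_\infty(0))$ to vanish, a contradiction. I expect the main obstacle to be the limit-extraction step together with the verification that the limit genuinely meets all three conditions in the definition of a $\kappa$-solution: one must ensure the rescaled curvatures stay bounded on arbitrarily large parabolic neighborhoods of the basepoints (which is exactly what the hypotheses $\frac12 R \le Q_j$, $t_j Q_j \to -\infty$, $r_j^2 Q_j \to \infty$ provide) and that the limiting scalar curvature is positive. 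The remaining details transcribe the manifold argument, with Proposition~\ref{prop5.5} replacing Hamilton's compactness theorem.
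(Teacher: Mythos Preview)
Your argument is correct and is essentially the same as the paper's, which simply defers to the manifold proof in \cite{Kleiner-Lott}; you have written out in detail the contradiction/rescaling/compactness/Lemma~\ref{lemma6.9} argument that the paper leaves implicit. The only cosmetic discrepancy is that the paper cites Section~44 of \cite{Kleiner-Lott} rather than Section~43.
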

\begin{proof}
The proof is similar to that in \cite[Section 44]{Kleiner-Lott}.
\end{proof}

\subsection{Curvature bounds for Ricci flow solutions with nonnegative
curvature operator, assuming a lower volume bound} \label{subsect6.6}
 
\begin{lemma} \label{lemma6.12}
For every $w > 0$, there are $B = B(w) < \infty$, $C = C(w) < \infty$
and $\tau_0 = \tau_0(w) > 0$ with the following properties.

(a) Take $t_0 \in [-r_0^2,0)$. Suppose that we have a (not necessarily
complete) Ricci flow solution $(\Or, g(\cdot))$, defined for
$t \in [t_0, 0]$, so that at time zero the metric ball $B(p_0, r_0)$
has compact closure.  Suppose that for each $t \in [t_0, 0]$,
$g(t)$ has nonnegative curvature operator and $\vol(B_t(p_0, r_0))
\ge w r_0^n$. Then
\begin{equation}
R(p,t) \le C r_0^{-2} + B (t - t_0)^{-1}
\end{equation}
whenever $\dist_t(p, p_0) \le \frac14 r_0$.

(b) Suppose that we have a (not necessarily
complete) Ricci flow solution $(\Or, g(\cdot))$, defined for
$t \in [-\tau_0 r_0^2, 0]$, so that at time zero the metric ball $B(p_0, r_0)$
has compact closure.  Suppose that for each $t \in [-\tau_0 r_0^2, 0]$,
$g(t)$ has nonnegative curvature operator. If we assume a time-zero volume
bound $\vol(B_0(p_0, r_0)) \ge w r_0^n$ then
\begin{equation}
R(p,t) \le C r_0^{-2} + B (t + \tau_0 r_0^2)^{-1}
\end{equation}
whenever $t \in [-\tau_0 r_0^2, 0]$ and $\dist_t(p, p_0) \le \frac14 r_0$.
\end{lemma}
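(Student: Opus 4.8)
The plan is to follow the manifold argument of \cite[Section 45]{Kleiner-Lott}, whose two inputs are Perelman's point-picking and the volume bound of Lemma \ref{lemma6.11}. Lemma \ref{lemma6.11} is already stated for orbifolds, and the point-picking takes place entirely on $|\Or|$, using only the distance function, the scalar curvature, and the Bishop--Gromov inequality, all of which behave for orbifolds exactly as in the manifold case. So the whole argument should carry over with only notational changes.

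First I would prove part (a). After rescaling we may assume $r_0 = 1$. Arguing by contradiction, if no pair $(B,C)$ works then there is a sequence of Ricci flow solutions $(\Or_i, g_i(\cdot))$ on $[t_{0,i}, 0]$ with $t_{0,i} \in [-1,0)$, with nonnegative curvature operator, with $\vol(B_t(p_{0,i}, 1)) \ge w$ for all $t$, and with points $(p_i, t_i)$ satisfying $\dist_{t_i}(p_i, p_{0,i}) \le \frac14$ at which $R(p_i, t_i) > C_i + B_i (t_i - t_{0,i})^{-1}$, where $B_i, C_i \to \infty$. Running Perelman's point-picking in $|\Or_i|$, with a cutoff built from the time-$t$ distance to $p_{0,i}$, I would extract points $(\overline{p}_i, \overline{t}_i)$ with $Q_i := R(\overline{p}_i, \overline{t}_i) \to \infty$, with $\dist_{\overline{t}_i}(\overline{p}_i, p_{0,i}) < \frac14$, and with a backward parabolic neighborhood $B_{\overline{t}_i}(\overline{p}_i, r_i) \times [\overline{t}_i - \beta_i Q_i^{-1}, \overline{t}_i]$ on which $\frac12 R \le Q_i$, with $r_i^2 Q_i \to \infty$ and $\beta_i \to \infty$. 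These are exactly the hypotheses of Lemma \ref{lemma6.11} applied to the solutions parabolically rescaled at $(\overline{p}_i, \overline{t}_i)$ by the factor $Q_i$ (with $\overline{t}_i$ as the new time zero); hence, for the $\epsilon > 0$ chosen below and the corresponding $A = A(\epsilon)$, it gives $\vol(B_{\overline{t}_i}(\overline{p}_i, A Q_i^{-\frac12})) \le \epsilon (A Q_i^{-\frac12})^n$ for all large $i$.

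On the other hand, nonnegative curvature operator implies nonnegative Ricci curvature, so by the orbifold Bishop--Gromov inequality (Subsection \ref{subsect2.5}, using \cite[Chapter 10.6.2]{Burago-Burago-Ivanov} or \cite{Borzellino2}) the ratio $r \mapsto r^{-n} \vol(B_{\overline{t}_i}(\overline{p}_i, r))$ is nonincreasing. Since $\dist_{\overline{t}_i}(\overline{p}_i, p_{0,i}) < \frac14$ we have $B_{\overline{t}_i}(p_{0,i}, 1) \subset B_{\overline{t}_i}(\overline{p}_i, 2)$, so once $A Q_i^{-\frac12} < 2$ (true for large $i$),
\begin{equation}
\frac{\vol(B_{\overline{t}_i}(\overline{p}_i, A Q_i^{-\frac12}))}{(A Q_i^{-\frac12})^n} \ge \frac{\vol(B_{\overline{t}_i}(\overline{p}_i, 2))}{2^n} \ge \frac{\vol(B_{\overline{t}_i}(p_{0,i}, 1))}{2^n} \ge \frac{w}{2^n}.
\end{equation}
Choosing $\epsilon = w 2^{-n-1}$ at the previous step then contradicts this, which proves part (a); the resulting $A = A(w)$ determines $B$ and $C$ through the point-picking estimates.

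For part (b) I would deduce the statement from (a) by a continuity argument propagating the time-zero volume lower bound backward in time: for $\tau_0 = \tau_0(w)$ small enough one shows that $\vol(B_t(p_0, r_0)) \ge \frac12 w r_0^n$ for all $t \in [-\tau_0 r_0^2, 0]$ --- using part (a) to bound the curvature, hence the volume distortion, on the subinterval where this bound is already known --- and then applies part (a) on $[-\tau_0 r_0^2, 0]$ with $w$ replaced by $w/2$; this step is identical to the manifold case. I do not expect a real obstacle: the only place requiring orbifold care is the bookkeeping in the point-picking of part (a), which is purely notational, since $\overline{p}_i$ is just a (possibly singular) point of $|\Or_i|$ and all the quantities involved live on $|\Or_i|$; the substantive content is already isolated in Lemma \ref{lemma6.11}.
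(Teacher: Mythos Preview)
Your proposal is correct and follows exactly the approach the paper indicates: the paper's proof of Lemma \ref{lemma6.12} consists of the single sentence ``The proof is similar to that in \cite[Section 45]{Kleiner-Lott},'' and you have spelled out precisely that argument, invoking the orbifold Lemma \ref{lemma6.11} and the orbifold Bishop--Gromov inequality at the appropriate places. One cosmetic point: in your Bishop--Gromov step you compare at radius $2$ about $\overline{p}_i$, but only $B_0(p_{0,i},1)$ is assumed to have compact closure, so you should run the comparison at a radius $\le 1 - \dist_{\overline{t}_i}(\overline{p}_i,p_{0,i})$ (e.g.\ $\tfrac12$) instead; this only changes the constant and does not affect the argument.
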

\begin{proof}
The proof is similar to that in \cite[Section 45]{Kleiner-Lott}.
\end{proof}

\begin{corollary} \label{corollary6.15}
For every $w > 0$, there are $B = B(w) < \infty$, $C = C(w) < \infty$
and $\tau_0 = \tau_0(w) > 0$ with the following properties.
Suppose that we have a (not necessarily
complete) Ricci flow solution $(\Or, g(\cdot))$, defined for
$t \in [-\tau_0 r_0^2, 0]$, so that at time zero the metric ball $B(p_0, r_0)$
has compact closure.  Suppose that for each $t \in [-\tau_0 r_0^2, 0]$,
the curvature operator in the time-$t$ ball $B(p_0, r_0)$ is
bounded below by $- r_0^{-2}$. If we assume a time-zero volume
bound $\vol(B_0(p_0, r_0)) \ge w r_0^n$ then
\begin{equation}
R(p,t) \le C r_0^{-2} + B (t + \tau_0 r_0^2)^{-1}
\end{equation}
whenever $t \in [-\tau_0 r_0^2, 0]$ and $\dist_t(p, p_0) \le \frac14 r_0$.
\end{corollary}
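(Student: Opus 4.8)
The plan is to re-run the argument that proves Lemma \ref{lemma6.12}(b), observing that the hypothesis of a \emph{nonnegative} curvature operator enters only in ways that survive the weakening to $\Rm \ge - r_0^{-2}$. First, by parabolic rescaling we may assume $r_0 = 1$, so that the hypothesis reads: $(\Or, g(\cdot))$ is a Ricci flow solution on $[-\tau_0, 0]$, the time-zero ball $B(p_0,1)$ has compact closure, the curvature operator on $B(p_0,1)$ satisfies $\Rm \ge -1$ for each $t \in [-\tau_0, 0]$, and $\vol(B_0(p_0,1)) \ge w$; and we must bound $R(p,t)$ for $\dist_t(p,p_0) \le \frac14$. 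I would argue by contradiction: if the assertion fails (for any admissible choice of $B, C, \tau_0$, in particular the ones produced by the proof below), there is a sequence $(\Or_k, g_k(\cdot))$ of such flows with points $(p_k, t_k)$, $\dist_{t_k}(p_k, p_{0,k}) \le \frac14$, at which the scalar curvature exceeds the asserted bound by an unbounded amount.

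The next step is Perelman's point-selection, carried out exactly as in \cite[Section 45]{Kleiner-Lott}: one extracts points $(\bar p_k, \bar t_k)$ with $Q_k := R_{g_k}(\bar p_k, \bar t_k) \to \infty$, with curvature controlled by a fixed multiple of $Q_k$ on a backward parabolic neighborhood of $(\bar p_k, \bar t_k)$ whose $Q_k$-rescaled size tends to infinity, and with $\bar t_k Q_k \to -\infty$. Here is the one point that differs from the nonnegatively curved case, and it is a harmless one: after parabolically rescaling by $Q_k$, the curvature-operator lower bound becomes $\Rm \ge - Q_k^{-1} \to 0$, while the Ricci lower bound becomes $\Ric \ge -(n-1)Q_k^{-1}$, so that Bishop–Gromov volume comparison for the rescaled balls holds with multiplicative error tending to $1$. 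Thus the lower curvature bound $-1$ is \emph{scale-subcritical}: every estimate in the proof of Lemma \ref{lemma6.12}(b) that used $\Rm \ge 0$ — namely the pointwise nonnegativity of the curvature operator in blow-up limits and the Bishop–Gromov volume monotonicity — remains valid in the limit.

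I would then extract a limit of the rescaled pointed flows using the orbifold Hamilton compactness theorem (Proposition \ref{prop5.5}), with Propositions \ref{prop5.17} and \ref{prop5.18} together with the time-zero volume bound ruling out collapsing, obtaining an ancient Ricci flow solution on an $n$-orbifold that is $\kappa$-noncollapsed and has bounded nonnegative curvature operator — a $\kappa$-solution in the sense of Section \ref{sect6} — but which, by construction, carries the degenerate feature forced by the blow-up (an incompatibly large normalized curvature at a definite scale, or infinite asymptotic scalar curvature ratio); this contradicts the results of Section \ref{sect6} exactly as in the manifold argument. The constants $B, C$ can be taken larger and $\tau_0$ smaller than in Lemma \ref{lemma6.12}(b) if needed, which causes no difficulty. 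The only genuinely new content beyond Lemma \ref{lemma6.12} is the bookkeeping in the previous paragraph showing that the $-r_0^{-2}$ lower curvature bound disappears under the blow-up; everything else is a transcription of \cite[Section 45]{Kleiner-Lott}, and so I expect no real obstacle once Lemma \ref{lemma6.12} is in hand.
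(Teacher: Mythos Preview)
Your approach is essentially that of the paper, which likewise refers to \cite[Section 45]{Kleiner-Lott}; the key point, which you correctly isolate, is that the bound $\Rm \ge -r_0^{-2}$ becomes $\Rm \ge -Q_k^{-1}r_0^{-2} \to 0$ after the blow-up rescaling, so any limit has nonnegative curvature operator and the contradiction argument for Lemma \ref{lemma6.12} carries over verbatim. One caveat: Propositions \ref{prop5.17} and \ref{prop5.18} both require complete time slices with globally bounded curvature, neither of which is assumed here, so they are not available; the noncollapsing input and the eventual contradiction come instead directly from the hypothesis $\vol(B_0(p_0,r_0)) \ge w r_0^n$ via Bishop--Gromov together with Lemma \ref{lemma6.11}, exactly as in the proof of Lemma \ref{lemma6.12}(b).
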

\begin{proof}
The proof is similar to that in \cite[Section 45]{Kleiner-Lott}.
\end{proof}

\subsection{Compactness of the space of three-dimensional
$\kappa$-solutions} \label{subsect6.7}

\begin{proposition} \label{prop6.17}
Given $\kappa > 0$, the set of oriented three-dimensional
$\kappa$-solutions $(\Or, g(\cdot))$ is compact modulo scaling.
\end{proposition}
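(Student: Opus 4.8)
The plan is to transcribe Perelman's compactness argument for $\kappa$-solutions \cite[Section 11.7]{Perelman}, in the form of \cite[Section 46]{Kleiner-Lott}, replacing each manifold tool by its orbifold counterpart. Let $\{(\Or_k, g_k(\cdot))\}_{k=1}^\infty$ be a sequence of oriented three-dimensional $\kappa$-solutions and let $x_k \in |\Or_k|$. After rescaling each flow by $R(x_k,0)$ we may assume $R(x_k,0) = 1$, and it then suffices to extract a subsequence converging, in the pointed $C^\infty$-sense of Proposition \ref{prop5.5} with time-zero basepoints $x_k$, to a $\kappa$-solution. As a preliminary remark, Lemma \ref{lemma5.16.5} gives a uniform bound $|G_p| \le N(\kappa)$ over all $p$ in all the $\Or_k$, so the orbifold structures cannot degenerate in the limit, and by Remark \ref{remark4.1.5} the limit again satisfies $|G_p| \le N(\kappa)$.

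First I would produce uniform local geometry bounds. Parts (3) and (4) of Lemma \ref{lemma6.10} give, for each $\rho > 0$, a constant $C(\rho) < \infty$ independent of $k$ with $R(\cdot,0) \le C(\rho)$ on the time-zero ball $B(x_k,\rho)$; since the curvature operator is nonnegative this also controls $|\Rm(g_k(0))|$ there. Hamilton's trace Harnack inequality --- which holds on orbifolds as a consequence of the maximum principle --- gives $\partial_t R \ge 0$, hence $R(\cdot,t) \le R(\cdot,0)$ for $t \le 0$; and $\Ric \ge 0$ makes the metrics nonincreasing in $t$, so the time-$t$ ball $B_t(x_k,\rho)$ lies inside the time-zero ball $B_0(x_k,\rho)$ for $t \le 0$. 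Hence $|\Rm(g_k(t))|$ is bounded on $B_0(x_k,\rho)$ uniformly in $k$ and in $t \le 0$. The orbifold Bishop--Gromov inequality of Subsection \ref{subsect2.5}, together with $\kappa$-noncollapsing, gives $\vol(B(x_k,1)) \ge \kappa$. Applying Proposition \ref{prop5.5} on each interval $[A,0]$ and diagonalizing as $A \to -\infty$, a subsequence converges to a Ricci flow $(\Or_\infty, g_\infty(\cdot))$ on $(-\infty,0]$ with complete time slices, with $R(x_\infty,0) = 1$, with nonnegative curvature operator, and $\kappa$-noncollapsed at all scales (the last two being closed conditions). Since $\partial_t R = \Delta R + 2|\Ric|^2 \ge \Delta R$, the orbifold strong maximum principle and $R(x_\infty,0) = 1 > 0$ force $R > 0$ on all of $\Or_\infty \times (-\infty,0]$.

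What remains --- and this is where essentially all the work lies --- is to show $g_\infty(\cdot)$ has bounded curvature on every compact time interval; since $\partial_t R \ge 0$ this reduces to $\sup_{|\Or_\infty|} R(\cdot,0) < \infty$. Suppose not. By bounded curvature at bounded distance, any $y_j$ with $R(y_j,0) \to \infty$ has $\dist_0(x_\infty, y_j) \to \infty$, and a Hamilton-type point-selection argument then yields $y_j'$ with $Q_j := R(y_j',0) \to \infty$ and $R(\cdot,0) \le 2 Q_j$ on $B_0(y_j', A_j Q_j^{-1/2})$ with $A_j \to \infty$. The rescaled pointed flows $(\Or_\infty, (y_j',0), Q_j g_\infty(Q_j^{-1}\cdot))$ are ancient, $\kappa$-noncollapsed, have nonnegative curvature operator and scalar curvature $1$ at the basepoint, and --- again using the Harnack inequality backward in time --- have bounded curvature on parabolic neighborhoods of radius tending to $\infty$; so Proposition \ref{prop5.5} produces a limit $(\Or', g'(\cdot))$ which is an ancient, $\kappa$-noncollapsed flow with nonnegative curvature operator, positive scalar curvature, and now globally bounded curvature, i.e. a $\kappa$-solution, noncompact since it is a blow-up of $\Or_\infty$ at spatial infinity. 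As in the manifold case, the existence of such a blow-up limit is incompatible with the vanishing asymptotic volume ratio of a noncompact $\kappa$-solution (Lemma \ref{lemma6.9}) combined with the volume estimate of Lemma \ref{lemma6.11}, yielding the desired contradiction. I expect this last step to be the main obstacle; the rest is a routine transcription of \cite[Section 46]{Kleiner-Lott} using Proposition \ref{prop5.5}, Lemma \ref{lemma5.16.5}, Remark \ref{remark4.1.5}, the orbifold Bishop--Gromov inequality, and the orbifold maximum principle.
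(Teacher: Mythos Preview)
Your outline tracks the paper's argument closely until the very end. The extraction of a sublimit $(\Or_\infty, g_\infty)$ via Lemma \ref{lemma6.10} (you use parts (3) and (4); the paper uses (1) and (2), which is immaterial) and Proposition \ref{prop5.5}, the reduction to bounded time-zero scalar curvature via the Harnack inequality, and the point-selection producing $y_j'$ with $Q_j \to \infty$ whose rescalings converge to a noncompact $\kappa$-solution $\Or'$ --- all of this is correct and essentially identical to the paper's proof.

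The gap is in your last step. You assert that the existence of $\Or'$ is ``incompatible with the vanishing asymptotic volume ratio of a noncompact $\kappa$-solution (Lemma \ref{lemma6.9}) combined with the volume estimate of Lemma \ref{lemma6.11})'', but you never say what the contradiction actually is, and at this level there is none. The mere existence of a noncompact $\kappa$-solution as a blow-up limit is not contradictory, and Lemmas \ref{lemma6.9} and \ref{lemma6.11} give volume \emph{upper} bounds at large scales which are entirely compatible with the $\kappa$-noncollapsing lower bound at scale $Q_j^{-1/2}$: Bishop--Gromov permits the normalized volume to drop as the radius increases. Note too that the manifold argument in \cite[Section 46]{Kleiner-Lott} does not close this way either; you are misremembering it.

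The paper supplies the missing mechanism, and it is not a volume argument. Since the basepoints $y_j'$ run off to infinity in $\Or_\infty$, the blow-up limit $\Or'$ contains a line and so, by the splitting theorem (Proposition \ref{prop3.1}), decomposes as $\R \times \Or''$; Lemma \ref{lemma6.4} identifies $\Or''$ as a shrinking $S^2//\Gamma$ or a bad-orbifold soliton $\Sigma^2//\Gamma$. This exhibits inside the time-zero slice $(\Or_\infty, g_\infty(0))$ a sequence of neck cross-sections $X_j$ with $\diam(X_j) \to 0$, each separating the basepoint $x_\infty$ from infinity. Proposition \ref{prop3.14} --- the orbifold ``no tight necks'' result of Section \ref{subsect3.3}, which uses the uniform bound $|G_p| \le N(\kappa)$ from Lemma \ref{lemma5.16.5} --- forces any sufficiently small such $X_j$ to bound a region of proportionally small diameter, which is impossible here. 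That is the contradiction you need; Lemmas \ref{lemma6.9} and \ref{lemma6.11} play no role in it.
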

\begin{proof}
If $\{(\Or_k, (p_k, 0), g_k(\cdot))\}_{k=1}^\infty$ is a sequence of
such $\kappa$-solutions with $R(p_k, 0) = 1$ then parts (1) and (2)
of Lemma \ref{lemma6.10} imply that there is a subsequence that converges
to an ancient solution $(\Or_\infty, (p_\infty, 0), g_\infty(\cdot))$
which has nonnegative curvature operator and is $\kappa$-noncollapsed.
The remaining issue is to show that it has bounded curvature.
Since $R_t \ge 0$, it is enough to show that $(\Or_\infty, g_\infty(0))$
has bounded scalar curvature.
 
If not then there is a sequence of points $q_i$ going to infinity
in $|\Or_\infty|$ such that $R(q_i, 0) \rightarrow \infty$ and
$R(q,0) \le 2 R(q_i,0)$ for $q \in B(q_i, A_i R(q_i,0)^{- \frac12})$,
where $A_i \rightarrow \infty$. Using the $\kappa$-noncollapsing, a
subsequence of the rescalings $(\Or_\infty, q_i, R(q_i, 0) g_\infty)$
will converge to a limit orbifold $N_\infty$ that isometrically splits
off an $\R$-factor. By Lemma \ref{lemma6.4}, $N_\infty$ must be a standard
solution on $\R \times (S^2//\Gamma)$ or $\R \times (\Sigma^2//\Gamma)$.
Thus $(\Or_\infty, g_\infty)$ contains a sequence $X_i$ of
neck regions, with their cross-sectional radii tending to zero as
$i \rightarrow \infty$. This contradicts Proposition \ref{prop3.14}. 
\end{proof}

\subsection{Necklike behavior at infinity of a three-dimensional
$\kappa$-solution} \label{subsect6.8}

\begin{definition}
Fix $\epsilon > 0$. Let $(\Or, g(\cdot))$ be an oriented 
three-dimensional $\kappa$-solution.  We say that a point
$p_0 \in |\Or|$ is the {\em center of an $\epsilon$-neck} if
the solution $g(\cdot)$ in the set 
$\{(p,t) : -(\epsilon Q)^{-1} < t \le 0, \dist_0(p, p_0)^2
< (\epsilon Q)^{-1}\}$, where $Q = R(p_0, 0)$, is, after
scaling with the factor $Q$, $\epsilon$-close in some fixed
smooth topology to the corresponding subset of 
a $\kappa$-solution $\R \times \Or^\prime$
that splits off an $\R$-factor.  That is, $\Or^\prime$ is
the standard
evolving $S^2//\Gamma$ or $\Sigma^2//\Gamma$
with extinction time $1$.
Here $\Sigma^2$ is a simply-connected
bad $2$-orbifold with a Ricci soliton metric.
\end{definition}

We let $|\Or|_\epsilon$ denote the points in $|\Or|$ which are not
centers of $\epsilon$-necks.

\begin{proposition} \label{prop6.19}
For all $\kappa > 0$, there exists an $\epsilon_0 > 0$ such that
for all $0 < \epsilon < \epsilon_0$ there exists an
$\alpha = \alpha(\epsilon,\kappa) $ with the property that
for any oriented three dimensional $\kappa$-solution
$(\Or, g(\cdot))$, and at any time $t$, precisely one of the
following holds :
\begin{itemize}
\item $(\Or, g(\cdot))$ splits off an $\R$-factor and so every
point at every time is the center of an $\epsilon$-neck for all
$\epsilon > 0$.
\item $\Or$ is noncompact, $|\Or|_\epsilon \neq \emptyset$, and for
all $x, y \in |\Or|_\epsilon$, we have $R(x) \: d^2(x,y) \: < \: \alpha$.
\item $\Or$ is compact, and there is a pair of points $x,y \in |\Or|_\epsilon$
such that $R(x) \: d^2(x,y) \: > \: \alpha$,\
\begin{equation}
|\Or|_\epsilon \subset B \left(x, \alpha R(x)^{-\frac12} \right) \cup
B \left(y, \alpha R(y)^{-\frac12} \right),
\end{equation}
and there is a minimizing geodesic $\overline{xy}$ such that every
$z \in |\Or|- |\Or|_\epsilon$ satisfies $R(z) \: d^2(z, \overline{xy}) <
\alpha$.
\item $\Or$ is compact and there exists a point $x \in |\Or|_\epsilon$
such that $R(x) \: d^2(x,z) \: < \: \alpha$ for all $z \in |\Or|$.
\end{itemize}
\end{proposition}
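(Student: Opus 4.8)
The plan is to follow \cite[Section 48]{Kleiner-Lott}, which in turn follows \cite[Section 11]{Perelman}, replacing the manifold ingredients by the orbifold counterparts developed above. The essential new features are that neck cross-sections are now the full list $S^2//\Gamma$ and $\Sigma^2//\Gamma$ permitted by Lemma \ref{lemma6.4}, and that the compactness arguments depend on the uniform bound on isotropy orders (Lemma \ref{lemma5.16.5}) and on the absence of arbitrarily thin necks (Proposition \ref{prop3.14}), both of which are already built into Proposition \ref{prop6.17}.

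First I would dispose of the splitting alternative. If $(\Or, g(\cdot))$ isometrically splits off an $\R$-factor then, by definition of an $\epsilon$-neck, every point at every time is the center of an $\epsilon$-neck for every $\epsilon > 0$, so $|\Or|_\epsilon = \emptyset$ and the first alternative holds; conversely, if some point fails to be the center of an $\epsilon$-neck for small $\epsilon$ then $(\Or, g(\cdot))$ does not split a line. Hence it suffices to treat $\kappa$-solutions that do not split off an $\R$-factor and, for these, to establish the remaining three alternatives. (Mutual exclusivity is immediate: compact versus noncompact, and in the compact case whether or not $|\Or|_\epsilon$ contains a pair of points at curvature-scaled distance exceeding $\alpha$.)

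The core of the argument is the following \emph{confinement} statement, which I would prove by contradiction using the compactness of oriented three-dimensional $\kappa$-solutions modulo scaling (Proposition \ref{prop6.17}): for each $\epsilon > 0$ there is $\alpha = \alpha(\epsilon,\kappa) > 0$ such that for any oriented three-dimensional $\kappa$-solution $(\Or, g(\cdot))$ not splitting a line, if $\Or$ is noncompact then $R(x)\,d^2(x,y) < \alpha$ for all $x,y \in |\Or|_\epsilon$, while if $\Or$ is compact then the existence of $x,y \in |\Or|_\epsilon$ with $R(x)\,d^2(x,y) > \alpha$ forces $|\Or|_\epsilon$ to lie in two curvature-scale balls about $x$ and $y$, with $|\Or| - |\Or|_\epsilon$ within curvature-scaled distance $\alpha$ of a minimizing geodesic $\overline{xy}$. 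Suppose this fails. Then there are oriented three-dimensional $\kappa$-solutions $(\Or_k, g_k(\cdot))$ and points $x_k, y_k \in |\Or_k|_\epsilon$ witnessing the failure; rescaling so that $R(x_k,0) = 1$, one has $d_0(x_k, y_k) \to \infty$. By Lemma \ref{lemma6.10}(4) the curvature is uniformly bounded on each fixed metric ball about $x_k$, so by Proposition \ref{prop6.17} a subsequence converges in the pointed smooth orbifold topology to a $\kappa$-solution $(\Or_\infty, (x_\infty, 0), g_\infty(\cdot))$ with $R(x_\infty, 0) = 1$. Since being the center of an $\epsilon$-neck is an open condition, $x_\infty$ is not the center of an $\epsilon'$-neck for $\epsilon'$ slightly smaller than $\epsilon$, so $\Or_\infty$ does not split a line; and one argues, using that the $y_k$ escape to infinity together with Lemma \ref{lemma6.10}, that $\Or_\infty$ is noncompact. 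Then Lemma \ref{lemma6.7} (infinite asymptotic scalar curvature ratio) and Lemma \ref{lemma6.9} (vanishing asymptotic volume ratio, and a sequence of points whose $R$-rescalings split off an $\R$-factor) apply to $\Or_\infty$; feeding these into the asymptotic-cone analysis as in \cite{Kleiner-Lott}, with Lemma \ref{lemma6.4} identifying the two-dimensional factors and Proposition \ref{prop3.14} ruling out a degenerating family of necks, one concludes that $\Or_\infty$ has a compact core outside of which every point is an $\epsilon'$-neck center. Chasing this structure back against the configuration $(x_k, y_k)$ yields the contradiction. The compact cases of the confinement statement are obtained by the same rescaling-and-limit argument applied at points of $|\Or_k|_\epsilon$, extracting a noncompact limit from the (possibly large-diameter) compact solutions.

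Granting confinement, the four alternatives follow: the noncompact case gives the second; in the compact case, if some pair of points of $|\Or|_\epsilon$ lies at curvature-scaled distance $> \alpha$ then the third holds, the minimizing geodesic $\overline{xy}$ and the control on $|\Or| - |\Or|_\epsilon$ coming from the neck structure along the tube joining the two caps (as in \cite[Section 48]{Kleiner-Lott}), while otherwise $|\Or|_\epsilon$ has curvature-scaled diameter $\le \alpha$ and one takes any $x \in |\Or|_\epsilon$ ($|\Or|_\epsilon$ being nonempty once $\epsilon < \epsilon_0$), giving the fourth. The main obstacle is the confinement statement, and within it the structure at infinity of noncompact $\kappa$-solutions; the orbifold-specific content there is precisely checking that the asymptotic-cone and compactness steps are not disrupted by the singular set, which is exactly what Lemmas \ref{lemma5.16.5} and \ref{lemma6.4} and Propositions \ref{prop3.14} and \ref{prop6.17} are set up to guarantee, the logical skeleton being unchanged from \cite{Kleiner-Lott}.
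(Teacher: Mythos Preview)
Your proposal is correct and takes essentially the same approach as the paper: the paper's proof is the single sentence ``The proof is similar to that in \cite[Section 48]{Kleiner-Lott},'' and you have unpacked exactly that, identifying the orbifold ingredients (Lemmas \ref{lemma5.16.5}, \ref{lemma6.4}, \ref{lemma6.7}, \ref{lemma6.9}, \ref{lemma6.10} and Propositions \ref{prop3.14}, \ref{prop6.17}) that stand in for their manifold counterparts in the compactness-and-contradiction scheme of \cite[Section 48]{Kleiner-Lott}.
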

\begin{proof}
The proof is similar to that in \cite[Section 48]{Kleiner-Lott}.
\end{proof}

\subsection{Three-dimensional gradient shrinking $\kappa$-solutions}
\label{subsect6.9}

\begin{lemma} \label{lemma6.20}
Any three-dimensional gradient shrinking 
$\kappa$-solution $\Or$ is one of the following:
\begin{itemize}
\item A finite isometric quotient of the round shrinking $S^3$.
\item $\R \times (S^2//\Gamma)$ or $\R \times_{\Z_2} (S^2//\Gamma)$ 
for some finite group $\Gamma
\subset \Isom(S^2)$.
\item $\R \times (\Sigma^2//\Gamma)$ or $\R \times_{\Z_2} (\Sigma^2//\Gamma)$ 
for some finite group $\Gamma \subset \Isom(\Sigma^2)$.
\end{itemize}
\end{lemma}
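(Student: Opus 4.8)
Here $\Or$ is a $\kappa$-solution which is additionally a gradient shrinking Ricci soliton. The plan is to pass to the universal cover $\widetilde\Or$ — again a three-dimensional gradient shrinking $\kappa$-soliton, since the soliton potential pulls back to it — and to take its maximal de Rham splitting $\widetilde\Or = \R^k \times \Or^\prime$ off a Euclidean factor, which exists by the orbifold de Rham theorem (Lemma~\ref{lemma2.12}); here $\Or^\prime$ carries no Euclidean de Rham factor and (as one checks from the block form of the soliton equation) inherits a gradient shrinking soliton structure that is $\kappa$-noncollapsed with bounded nonnegative curvature. Throughout we use the isotropy bound $|G_p| \le N(\kappa)$ from Lemma~\ref{lemma5.16.5} and the orbifold maximum-principle machinery of Section~\ref{sect5}. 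If $k = 3$ then $\widetilde\Or$ is flat, contradicting positivity of the scalar curvature. If $k = 2$ then $\Or^\prime$ is a simply connected complete boundaryless one-dimensional orbifold, hence $\R$, which is itself a Euclidean factor — contradicting the maximality of $k$. So $k \in \{0,1\}$.

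If $k = 1$, then $\Or^\prime$ is a simply connected two-dimensional gradient shrinking $\kappa$-soliton with positive scalar curvature (were $R = 0$ somewhere, $\Or^\prime$ and hence $\widetilde\Or$ would be flat), so by Lemma~\ref{lemma6.4} it is the round shrinking $S^2$ or the Ricci soliton on a simply connected bad $2$-orbifold $\Sigma^2$, and $\widetilde\Or = \R \times S^2$ or $\R \times \Sigma^2$. Now $\Or = \widetilde\Or//\pi_1(\Or)$ with $\pi_1(\Or)$ acting isometrically and properly discontinuously, hence inside $\Isom(\R) \times \Isom(S^2)$ (resp.\ $\Isom(\R)\times\Isom(\Sigma^2)$); since a deck transformation must preserve the soliton potential, whose $\R$-component is the nonconstant quadratic $t^2/(4\tau)$, its $\Isom(\R)$-part lies in $\{t \mapsto \pm t\}$. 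Letting $\Gamma$ be the finite subgroup acting trivially on the $\R$-factor, one gets $\Or \cong \R \times (S^2//\Gamma)$ or $\R \times_{\Z_2}(S^2//\Gamma)$, and likewise with $\Sigma^2$ — precisely the possibilities listed under the second and third bullets.

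If $k = 0$, then $\widetilde\Or$ has no Euclidean de Rham factor, and its curvature operator is strictly positive everywhere: otherwise its (nontrivial) kernel would be a parallel, hence holonomy-invariant, subbundle, and Lemma~\ref{lemma2.12} would split off a Euclidean factor — this is the same strong-maximum-principle step used in the proof of Proposition~\ref{prop5.6}. In dimension three this means $\Ric > 0$. Granting that $\widetilde\Or$ is compact, the orbifold form of Hamilton's theorem for compact $3$-orbifolds with positive Ricci curvature (any proof extends to orbifolds, as invoked in Proposition~\ref{prop5.6}) shows the normalized Ricci flow converges to a metric of constant positive curvature; since a shrinking soliton is self-similar (its normalized flow is constant modulo diffeomorphisms), the time-zero metric is itself of constant curvature, so $\widetilde\Or$ is a round $S^3$, $\pi_1(\Or)$ is finite, and $\Or = S^3//\Gamma$ with $\Gamma \subset \Isom^+(S^3)$ (orientability forcing $\Gamma \subset \Isom^+$).

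The hard part — the main obstacle — is proving $\widetilde\Or$ compact in the case $k = 0$, i.e.\ ruling out a noncompact three-dimensional gradient shrinking $\kappa$-soliton with strictly positive curvature operator. I would import the manifold argument of Perelman (cf.\ the treatment in \cite{Perelman2,Kleiner-Lott}): since $\widetilde\Or$ is a noncompact $\kappa$-solution, the potential $f$ is proper, $R$ is monotone nondecreasing along the integral curves of $\nabla f$ (because $\langle\nabla R,\nabla f\rangle = 2\,\Ric(\nabla f,\nabla f)\ge 0$) and hence bounded away from zero near infinity, and Proposition~\ref{prop6.19} forces $\widetilde\Or$ to consist of $\epsilon$-necks outside a compact core; analyzing the soliton equation along such a neck then forces an isometric $\R$-splitting, contradicting $k=0$. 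The orbifold content here is minor: one uses the isotropy bound (Lemma~\ref{lemma5.16.5}), Proposition~\ref{prop3.14} wherever a sequence of shrinking neck cross-sections has to be ruled out (exactly as in the proof of Proposition~\ref{prop6.17}), and Lemma~\ref{lemma6.4} to identify the neck cross-sections; so the bulk of the work is transcribing the Euclidean argument.
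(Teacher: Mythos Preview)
Your treatment of the case where the universal cover splits off an $\R$-factor ($k=1$) is essentially the paper's argument: identify the two-dimensional factor via Lemma~\ref{lemma6.4}, then use invariance of the soliton potential to pin down the $\pi_1(\Or)$-action on $\R$. The organisational difference (you do the de Rham splitting first and then case on $k$, whereas the paper first separates ``positive sectional curvature'' from ``not'') is cosmetic.

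The real divergence is in the positively curved case ($k=0$). The paper does not attempt to prove compactness and then invoke Hamilton; instead it appeals directly to the classification results of Ni--Wallach \cite{Ni-Wallach} and Petersen--Wylie \cite{Petersen-Wylie}, whose proofs are based on pointwise curvature identities and the maximum principle and therefore carry over verbatim to orbifolds. Remark~\ref{remark6.21} explains why the authors made this choice: Perelman's original argument for ruling out noncompact positively curved three-dimensional gradient shrinkers proceeds via Gauss--Bonnet on level sets of $f$, and it is not clear that this extends to the orbifold setting without the assumption that there are no bad $2$-suborbifolds --- an assumption which is only imposed starting in Section~\ref{sect7}, after Lemma~\ref{lemma6.20} has already been used.

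Your sketch for $k=0$ has a genuine gap at the sentence ``analyzing the soliton equation along such a neck then forces an isometric $\R$-splitting''. This is not a standard step, and you have not said how it goes; an approximate product structure at infinity does not by itself yield an exact Euclidean de Rham factor. If what you have in mind is really Perelman's level-set argument, you run into exactly the obstruction flagged in Remark~\ref{remark6.21}. If you have a different mechanism in mind (say, passing to a blow-down limit and arguing rigidity back to the original soliton), that is substantial additional work and would need to be written out. Either way, citing \cite{Ni-Wallach} or \cite{Petersen-Wylie} is both shorter and avoids the issue.
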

\begin{proof}
As $\Or$ is a $\kappa$-solution, we know that $\Or$ has nonnegative
sectional curvature.
If $\Or$ has positive sectional curvature then the proofs of
\cite[Theorem 3.1]{Ni-Wallach} or \cite[Theorem 1.2]{Petersen-Wylie}
show that $\Or$ is a finite isometric quotient of the round
shrinking $S^3$.

Suppose that $\Or$ does not have positive sectional curvature.
Let $f \in C^\infty(\Or)$ denote the soliton potential function.
Let $\widetilde{\Or}$ be the universal cover of $\Or$ and
let $\widetilde{f} \in C^\infty(\widetilde{\Or})$ be the pullback
of $f$ to $\widetilde{\Or}$.
The strong maximum principle, as in \cite[Section 8]{Hamiltonnnn},
implies that if $p \in |\Or|_{reg}$
then there is an orthogonal splitting $T_p \Or = E_1 \oplus E_2$
which is invariant under holonomy around loops based at $p$. 
The same will be true on $\widetilde{\Or}$. Lemma \ref{lemma2.12} implies
that $\widetilde{\Or} = \R \times \Or^\prime$ for some
two-dimensional simply-connected gradient shrinking 
$\kappa$-solution $\Or^\prime$. From Lemma \ref{lemma6.4},
$\Or^\prime$ is the round shrinking $2$-sphere or the Ricci soliton
metric on a bad $2$-orbifold $\Sigma^2$. Now
$\widetilde{f}$ must be $- \frac{s^2}{4} + f^\prime$, where
$s$ is a coordinate on the $\R$-factor and $f^\prime$ is the
soliton potential function on $\Or^\prime$. As $\pi_1(\Or)$
preserves $\widetilde{f}$, and acts properly discontinuously 
and isometrically on $\R \times \Or^\prime$, it follows that
$\pi_1(\Or)$ is a finite subgroup of $\Isom^+(\R \times \Or^\prime)$.
\end{proof}

\begin{remark} \label{remark6.21}
In the manifold case, the nonexistence of noncompact positively-curved
three-dimensional $\kappa$-noncollapsed gradient shrinkers was
first proved by Perelman \cite[Lemma 1.2]{Perelman2}. Perelman's argument
applied the Gauss-Bonnet theorem to level sets of the soliton
function. This argument could be extended to orbifolds if one
assumes that there are no bad $2$-suborbifolds, as in Theorem
\ref{theorem1.1}. However, it is not so clear how it would extend without
this assumption.  Instead we use the arguments of
\cite[Theorem 3.1]{Ni-Wallach} or \cite[Theorem 1.2]{Petersen-Wylie},
which do extend to the general orbifold setting.
\end{remark}

\subsection{Getting a uniform value of $\kappa$} \label{subsect6.10}

\begin{lemma} \label{lemma6.22}
Given $N \in \Z^+$, there is a $\kappa_0 = \kappa_0(N) > 0$ so that
if $(\Or, g(\cdot))$ is an oriented three-dimensional $\kappa$-solution
for some $\kappa > 0$,
with $|G_p| \le N$ for all $p \in |\Or|$, then it is a $\kappa_0$-solution
or it is a quotient of the round shrinking $S^3$.
\end{lemma}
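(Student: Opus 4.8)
The plan is to adapt Perelman's ``uniform $\kappa$'' argument \cite[Section 1.5]{Perelman2}, \cite[Section 49]{Kleiner-Lott} to the orbifold setting, the key point being that the asymptotic soliton of a three-dimensional $\kappa$-solution is very restricted. If $\Or$ is a quotient of the round shrinking $S^3$ we are in the second alternative, so assume it is not. Apply Proposition \ref{prop6.2} with a basepoint $(p,t_0)$ chosen so that $|G_p|$ is maximal, obtaining an asymptotic gradient shrinking soliton $\Or_\infty$. By Lemma \ref{lemma6.20}, $\Or_\infty$ is either a finite isometric quotient of the round shrinking $S^3$, or one of $\R\times(S^2//\Gamma)$, $\R\times_{\Z_2}(S^2//\Gamma)$, $\R\times(\Sigma^2//\Gamma)$, $\R\times_{\Z_2}(\Sigma^2//\Gamma)$.

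First I rule out the round $S^3$ quotient case: if $\Or_\infty$ is such a quotient then $\Or$ itself is a quotient of the round shrinking $S^3$, contradicting the standing assumption. Indeed, for the rescalings of Proposition \ref{prop6.2} with large index the corresponding time slices of $\Or$ are smoothly close to a round metric, hence have positive curvature with pinching close to constant; three-dimensional curvature pinching improves under Ricci flow (Hamilton--Ivey), and since $\Or$ is ancient this forces the pinching to be exactly round. The argument of \cite[Section 49]{Kleiner-Lott} then applies once one knows Hamilton's convergence theorem for three-orbifolds with positive Ricci curvature, which holds as used in the proof of Proposition \ref{prop5.6}. So $\Or_\infty$ is one of the four product solitons.

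Next I bound $|\Gamma|$, and hence pin down the geometry of $\Or_\infty$. Since $\Or_\infty$ is a pointed limit of rescalings of the fixed orbifold $\Or$, Remark \ref{remark4.1.5} gives $|G_q|\le N$ for all $q\in|\Or_\infty|$; a singular point of the factor $S^2//\Gamma$ (resp. $\Sigma^2//\Gamma$) has local group a point stabilizer in $\Gamma$, and an elementary inspection of the finite subgroups of $\SO(3)$ (resp. of the bad $2$-orbifolds $S^2(k)$, $S^2(k,k^\prime)$) shows $|\Gamma|\le c_0 N$ for a universal constant $c_0$. Hence, after rescaling, $\Or_\infty$ ranges over a finite list of orbifolds depending only on $N$ (using \cite[Theorem 4.1]{Wu} for the rigidity of the soliton metric in the $\Sigma^2$ cases), and each such $\R\times(S^2//\Gamma)$, etc., is $\kappa_0$-noncollapsed at all scales for an explicit $\kappa_0>0$: small scales are governed by the local model $\R^3//(\text{stabilizer})$ and large scales by the compact cross-section, whose volume is at least $4\pi/(c_0 N)$. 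Let $\kappa_0=\kappa_0(N)$ be the minimum of these finitely many constants.

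Finally I propagate the noncollapsing back to $\Or$. The reduced volume $\widetilde{V}(\overline\tau)$ of $\Or$ based at $(p,t_0)$ converges, as $\overline\tau\to\infty$ along the sequence of Proposition \ref{prop6.2}, to the constant reduced volume $\widetilde{V}_\infty$ of $\Or_\infty$, and a direct computation of $\int e^{-l}\,\dvol$ for the product soliton (Gaussian weight along the $\R$-factor, compact cross-section of volume $\ge 4\pi/(c_0 N)$) gives $\widetilde{V}_\infty\ge c(N)>0$. Running the same analysis from an arbitrary basepoint $(x_0,t_0^\prime)$ — where the asymptotic soliton is a gradient shrinker with nonnegative curvature operator, hence by the strong maximum principle for scalar curvature either flat and so of the form $\R^3//G$ with $|G|\le N$, or a $\kappa$-solution and so, by Lemma \ref{lemma6.20} and the rigidity step above, one of the product solitons with $|\Gamma|\le c_0 N$; in every allowed case its reduced volume is $\ge c(N)$ — and using monotonicity of reduced volume (Lemma \ref{lemma5.14}), the reduced volume of $\Or$ based at every point is $\ge c(N)$ for all $\overline\tau>0$. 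Perelman's no-local-collapsing estimate, in the form used for Proposition \ref{prop5.17}, then upgrades this uniform lower bound to $\kappa_0(N)$-noncollapsing at all scales; shrinking $\kappa_0(N)$ if necessary, $\Or$ is a $\kappa_0(N)$-solution. I expect the main obstacle to be the rigidity step, and in particular verifying that the manifold argument there (Hamilton--Ivey pinching together with Hamilton's convergence theorem) carries over to orbifolds — which it does, since those ingredients rest only on the maximum principle and on Proposition \ref{prop5.6}.
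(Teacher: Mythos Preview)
Your argument is correct and takes essentially the same route as the paper: classify the asymptotic soliton via Lemma \ref{lemma6.20}, use the isotropy bound $|G_p|\le N$ to reduce to finitely many noncompact cross-sections (hence a uniform reduced-volume lower bound), and propagate this via monotonicity to get $\kappa_0$-noncollapsing. The paper's proof is just a pointer to \cite[Section 50]{Kleiner-Lott} together with exactly this finiteness observation; you have filled in the details, including the handling of the asymptotic soliton from an arbitrary basepoint (allowing a flat limit $\R^3//G$ with $|G|\le N$), which is a genuine orbifold subtlety since Proposition \ref{prop6.2} as stated selects a basepoint of maximal isotropy.
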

\begin{proof}
The proof is similar to that in \cite[Section 50]{Kleiner-Lott}.
The bound on $|G_p|$ gives a finite number of possible noncompact
asymptotic solitons from Lemma \ref{lemma6.20},
since a given closed two-dimensional orbifold has a unique Ricci soliton
metric up to scaling, 
and the topological type of $S^2//\Gamma$ (or $\Sigma//\Gamma$)
is determined by the number of singular points (which is at most three) and
the isotropy groups of those points..
\end{proof}

\begin{lemma} \label{lemma6.23}
Given $N \in \Z^+$, there is a universal constant $\eta = \eta(N) > 0$
such that at each point of every 
three-dimensional ancient solution $(\Or, g(\cdot))$
that is a $\kappa$-solution for some $\kappa > 0$, and has
$|G_p| \le N$ for all $p \in |\Or|$, we have estimates
\begin{equation}
|\nabla R| < \eta R^{\frac32}, \: \: \: \: |R_t| \le \eta R^2.
\end{equation}
\end{lemma}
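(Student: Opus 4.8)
The plan is to reduce immediately to two cases using Lemma \ref{lemma6.22}. Given the uniform bound $|G_p| \le N$ on the orders of the isotropy groups, that lemma asserts that $(\Or, g(\cdot))$ is either a $\kappa_0$-solution with $\kappa_0 = \kappa_0(N) > 0$, or is a quotient of the round shrinking $S^3$. These two cases are then handled by, respectively, the uniform derivative estimates already available for $\kappa_0$-solutions and a direct computation.

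In the first case I would simply invoke part (3) of Lemma \ref{lemma6.10} with $n = 3$ and $\kappa = \kappa_0(N)$. This furnishes a constant $\eta_1 = \eta(3, \kappa_0(N))$ so that $|\nabla R| \le \eta_1 R^{\frac32}$ and $|R_t| \le \eta_1 R^2$ at every point of every such solution; since a $\kappa$-solution has everywhere positive scalar curvature, $|\nabla R| \le \eta_1 R^{\frac32} < 2 \eta_1 R^{\frac32}$. In the second case $(\Or, g(\cdot))$ is $S^3//\Gamma$ for some $\Gamma \subset \Isom^+(S^3)$, with the standard shrinking round metric. Here $R$ is constant on each time slice, so $\nabla R \equiv 0$; and writing $g(t) = \phi(t)\, g_{S^3}$ with $g_{S^3}$ the round metric of constant sectional curvature $1$, the Ricci flow equation reduces to $\phi'(t) = -4$, so that $R(t) = 6/\phi(t)$ and hence $R_t = \tfrac23 R^2$. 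Thus $|\nabla R| = 0 < R^{\frac32}$ and $|R_t| = \tfrac23 R^2$ for these solutions.

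Setting $\eta(N) = \max\{2\eta(3,\kappa_0(N)),\, 1\}$ then covers both cases and proves the lemma. I do not expect any genuine obstacle here; the only thing to keep in mind is that the quotients of the round $S^3$ really do require separate treatment rather than an appeal to Lemma \ref{lemma6.10}(3), because (as already reflected in the statement of Lemma \ref{lemma6.22}) they are $\kappa$-noncollapsed only with $\kappa$ degenerating as $|\Gamma| \to \infty$, so one cannot apply the $\kappa$-solution derivative bounds to them with a constant depending only on $N$; for them, however, the estimates are immediate from the explicit form of the solution.
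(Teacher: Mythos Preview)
Your proof is correct and follows essentially the same approach as the paper, which simply refers to \cite[Section 59]{Kleiner-Lott}: split into the round space-form case (handled by direct computation) and the case of a $\kappa_0(N)$-solution (handled by Lemma \ref{lemma6.10}(3)). Your closing remark that the bound $|G_p|\le N$ does not control $|\Gamma|$ for quotients $S^3//\Gamma$ (e.g.\ lens spaces are manifolds), and hence the round quotients genuinely require the explicit computation rather than Lemma \ref{lemma6.10}(3), is exactly the point.
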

\begin{proof}
The proof is similar to that in \cite[Section 59]{Kleiner-Lott}.
\end{proof}

\section{Ricci flow with surgery for orbifolds} \label{sect7}

In this section we construct the Ricci-flow-with-surgery for
three-dimensional orbifolds.

Starting in Subsection \ref{subsect7.2}, we will assume that there are
no bad $2$-dimensional
suborbifolds. Starting in Subsection \ref{subsect7.5}, we will
assume that the Ricci flows have normalized initial conditions,
as defined there.

\subsection{Canonical neighborhood theorem} \label{subsect7.1}

\begin{definition}
Let $\Phi \in C^\infty(\R)$ be a positive nondecreasing function such 
that for positive $s$, $\frac{\Phi(s)}{s}$ is a decreasing function
which tends to zero as $s \rightarrow \infty$. A Ricci flow solution
is said to have {\em $\Phi$-almost nonnegative curvature} if for all
$(p, t)$, we have
\begin{equation}
\Rm(p,t) \ge - \Phi(R(p,t)).
\end{equation}
\end{definition}

Our example of $\Phi$-almost nonnegative curvature comes from the
Hamilton-Ivey pinching condition \cite[Appendix B]{Kleiner-Lott},
which is valid for any three-dimensional orbifold Ricci flow solution
which has complete time slices, bounded curvature on compact
time intervals, and initial curvature operator bounded below by
$- I$.

\begin{proposition} \label{prop7.3}
Given $\epsilon, \kappa, \sigma > 0$ and a function $\Phi$
as above, one can find $r_0 > 0$ with the following property.  Let
$(\Or, g(\cdot))$ be a Ricci flow solution on a three-dimensional
orbifold $\Or$, defined for
$0 \le t \le T$ with $T \ge 1$. We suppose that for each $t$,
$g(t)$ is complete, and the sectional curvature in bounded on compact
time intervals.  Suppose that the Ricci flow has $\Phi$-almost
nonnegative curvature and is $\kappa$-noncollapsed on scales less than
$\sigma$. Then for any point $(p_0, t_0)$ with $t_0 \ge 1$ and
$Q = R(p_0,t_0) \ge r_0^{-2}$, the solution in
$\{ (p,t) : \dist_{t_0}^2(p,p_0) < (\epsilon Q)^{-1},
t_0 - (\epsilon Q)^{-1} \le t \le t_0\}$ is, after scaling by the
factor $Q$, $\epsilon$-close to the corresponding subset of a
$\kappa$-solution.
\end{proposition}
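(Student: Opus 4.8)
The plan is to follow the manifold argument of Perelman, as presented in \cite[Section 51]{Kleiner-Lott}, essentially verbatim, using the orbifold tools developed above to supply the three ingredients that go into it: the compactness of the space of three-dimensional $\kappa$-solutions (Proposition \ref{prop6.17}), the Ricci flow compactness theorem (Proposition \ref{prop5.5}), and the orbifold no-local-collapsing / bounded-geometry machinery. The argument is a proof by contradiction: if the statement fails, then for some fixed $\epsilon, \kappa, \sigma > 0$ and $\Phi$ there is a sequence of Ricci flow solutions $(\Or_i, g_i(\cdot))$ on $[0, T_i]$ with $T_i \ge 1$, satisfying the pinching and noncollapsing hypotheses, and points $(p_i, t_i)$ with $t_i \ge 1$ and $Q_i = R(p_i, t_i) \to \infty$ (we may normalize so that $r_{0,i} \to 0$ forces $Q_i r_{0,i}^2 \to \infty$), such that after rescaling by $Q_i$ the solution near $(p_i, t_i)$ is \emph{not} $\epsilon$-close to any $\kappa$-solution on the relevant parabolic neighborhood.

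First I would set up the point-picking argument: using $\Phi$-almost nonnegative curvature and the rescaling, one shows that in an increasingly large parabolic neighborhood of $(p_i, t_i)$ (after rescaling by $Q_i$) the curvature is bounded. This is the standard inductive point-selection: if curvature is not controlled at scale $A_i \to \infty$, replace $(p_i, t_i)$ by a nearby point of larger curvature, iterating finitely often by the local curvature bounds coming from the distance-distortion estimates and the pinching. The pinching condition $\Rm \ge -\Phi(R)$ ensures that as $Q_i \to \infty$, the rescaled curvature operators become asymptotically nonnegative, so any limit will have nonnegative curvature operator. Next, the $\kappa$-noncollapsing assumption (at scales $< \sigma$, hence at all rescaled scales since $Q_i \to \infty$) gives a lower volume bound on unit balls in the rescaled solutions. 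Applying the orbifold Ricci flow compactness theorem (Proposition \ref{prop5.5}) — whose hypotheses (local curvature bounds, lower volume bound) are exactly what we have arranged — we extract a subsequential limit $(\Or_\infty, (p_\infty, 0), g_\infty(\cdot))$, an ancient Ricci flow solution on an orbifold, with nonnegative curvature operator, positive scalar curvature (as $R(p_\infty, 0) = 1$ and $R_t \ge 0$ by the Harnack-type inequality, valid for orbifolds via the maximum principle), bounded curvature on compact time intervals, and $\kappa$-noncollapsed at all scales. By definition this limit is a three-dimensional $\kappa$-solution. But the rescaled solutions converge to it in the pointed $C^\infty$ topology, so for large $i$ the rescaled solution near $(p_i, t_i)$ \emph{is} $\epsilon$-close to the $\kappa$-solution $(\Or_\infty, g_\infty(\cdot))$, contradicting the choice of $(p_i, t_i)$.

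The main obstacle — and the place where the orbifold setting genuinely differs from \cite{Kleiner-Lott} — is guaranteeing that the limit object is an \emph{orbifold} and not something degenerate, i.e. that the orders of the isotropy groups $|G_{p_i}|$ do not blow up along the sequence. This is handled by Lemma \ref{lemma5.16.5}: $\kappa$-noncollapsing at a definite scale forces a uniform bound $|G_p| \le N(n, \kappa)$; combined with Remark \ref{remark4.1.5}, the isotropy orders cannot increase in the limit, so the compactness theorem applies and the limit is a bona fide three-dimensional orbifold Ricci flow. A secondary point, also already dealt with in the excerpt, is that all the ingredients of Perelman's proof that rely on the maximum principle (scalar curvature lower bound, Hamilton--Ivey pinching, the differential Harnack inequalities) carry over to orbifolds without change, since the maximum principle is local and valid in local models. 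With these observations in place, the remaining verification is routine and parallels \cite[Section 51]{Kleiner-Lott} line by line.

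\begin{proof}
The proof is by contradiction and parallels \cite[Section 51]{Kleiner-Lott}. Suppose the claim fails for some $\epsilon, \kappa, \sigma > 0$ and some $\Phi$ as in the hypothesis. Then there is a sequence of Ricci flow solutions $(\Or_i, g_i(\cdot))$ on three-dimensional orbifolds, defined for $0 \le t \le T_i$ with $T_i \ge 1$, with complete time slices and bounded curvature on compact time intervals, having $\Phi$-almost nonnegative curvature and $\kappa$-noncollapsed on scales less than $\sigma$, together with points $(p_i, t_i)$ with $t_i \ge 1$ and $Q_i := R(p_i, t_i) \to \infty$, such that the rescaled solutions $g_i(t_i + Q_i^{-1} t)$, pointed at $p_i$, are not $\epsilon$-close, on the parabolic ball $\{ \dist^2(p, p_i) < (\epsilon)^{-1}, -(\epsilon)^{-1} \le t \le 0 \}$ in the rescaled metric, to the corresponding subset of any three-dimensional $\kappa$-solution.

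By Lemma \ref{lemma5.16.5}, the $\kappa$-noncollapsing gives a uniform bound $|G_p| \le N = N(3, \kappa)$ on the orders of the isotropy groups of all the $\Or_i$. Consider the rescaled flows $\overline{g}_i(t) := Q_i\, g_i(t_i + Q_i^{-1} t)$ on $\Or_i$, pointed at $p_i$ at rescaled time $0$; these satisfy $R_{\overline{g}_i}(p_i, 0) = 1$, are $\kappa$-noncollapsed at all scales (since $Q_i \to \infty$ and the original solutions are noncollapsed below $\sigma$), and have curvature operator bounded below by $-Q_i^{-1}\Phi(Q_i R)/R \cdot R$, which, using the properties of $\Phi$, tends uniformly to a nonnegative bound on regions of bounded scalar curvature as $i \to \infty$.

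By a standard point-selection argument, using the distance-distortion estimates under Ricci flow together with the $\Phi$-pinching exactly as in \cite[Section 51]{Kleiner-Lott}, we may — after modifying $(p_i, t_i)$, keeping $Q_i \to \infty$ — arrange that for every $A < \infty$ and $i$ sufficiently large, the curvature of $\overline{g}_i$ is bounded on the parabolic ball of radius $A$ around $(p_i, 0)$, with a bound depending only on $A$. Combined with the $\kappa$-noncollapsing, which yields a uniform lower volume bound $\vol(B(p_i, 1)) \ge c(\kappa) > 0$ in the rescaled metrics, the hypotheses of Proposition \ref{prop5.5} are satisfied. Passing to a subsequence, $(\Or_i, (p_i, 0), \overline{g}_i(\cdot))$ converges in the pointed $C^\infty$ topology to an ancient Ricci flow solution $(\Or_\infty, (p_\infty, 0), g_\infty(\cdot))$ on a three-dimensional orbifold. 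By Remark \ref{remark4.1.5}, $|G_q| \le N$ for all $q \in |\Or_\infty|$.

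The limit $g_\infty(\cdot)$ has nonnegative curvature operator by the behavior of $\Phi$ noted above, is $\kappa$-noncollapsed at all scales, has complete time slices and bounded curvature on compact time intervals, and satisfies $R_{g_\infty}(p_\infty, 0) = 1 > 0$. Since $R_t \ge 0$ along the flow by Hamilton's Harnack inequality for solutions with nonnegative curvature operator — which holds for orbifolds via the maximum principle in local models — and since the scalar curvature is positive somewhere, the strong maximum principle shows $R_{g_\infty} > 0$ everywhere. Hence $(\Or_\infty, g_\infty(\cdot))$ is a three-dimensional $\kappa$-solution. But then for $i$ large, the rescaled solution $(\Or_i, p_i, \overline{g}_i(\cdot))$ is $\epsilon$-close, on the parabolic ball $\{ \dist^2(p, p_i) < \epsilon^{-1}, -\epsilon^{-1} \le t \le 0 \}$, to the corresponding subset of $(\Or_\infty, g_\infty(\cdot))$, contradicting the choice of $(p_i, t_i)$. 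This contradiction proves the proposition.
\end{proof}
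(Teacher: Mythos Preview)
Your overall strategy---contradiction, point-picking, compactness, extract a $\kappa$-solution limit---is the same as the paper's, which simply defers to \cite[Section 52]{Kleiner-Lott} (note: Section 52, not 51) with two orbifold-specific remarks. Your observation about Lemma \ref{lemma5.16.5} and Remark \ref{remark4.1.5} controlling isotropy orders is a useful addition that the paper leaves implicit.

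There is, however, a real gap in your sketch. You write that ``by a standard point-selection argument \ldots\ we may arrange that for every $A < \infty$ \ldots\ the curvature of $\overline{g}_i$ is bounded on the parabolic ball of radius $A$.'' Point-selection alone does not give this: it only produces a basepoint where curvature is nearly maximal on a \emph{fixed-scale} parabolic neighborhood. Extending the spatial curvature bound to arbitrary $A$ is precisely the ``bounded curvature at bounded distance'' step (Section 51 of \cite{Kleiner-Lott}), and it is the heart of the argument. That step works by assuming curvature blows up at bounded rescaled distance, extracting a secondary limit that splits off an $\R$-factor, and invoking the classification of two-dimensional $\kappa$-solutions on the cross-section. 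In the orbifold setting the cross-section can be $S^2//\Gamma$ or a bad $2$-orbifold $\Sigma^2//\Gamma$, and this is exactly the modification the paper flags (``We have to allow for the possibility of neck-like regions approximated by $\R \times (S^2//\Gamma)$ or $\R \times (\Sigma^2//\Gamma)$''). By burying this step inside ``standard point-selection,'' you miss the one place where the orbifold argument genuinely differs from the manifold one.

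A second, smaller point: the manifold proof in \cite{Kleiner-Lott} uses an injectivity-radius lower bound at one stage to invoke compactness; the paper notes that in the orbifold setting this must be replaced by a local volume bound (cf.\ Proposition \ref{prop4.1}). You already use the volume formulation via Proposition \ref{prop5.5}, so this is handled, but it is worth stating explicitly since it is the other modification the paper singles out.
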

\begin{proof}
The proof is similar to that in \cite[Section 52]{Kleiner-Lott}.
We have to allow for the possibility of neck-like
regions approximated by 
$\R \times (S^2//\Gamma)$ or $\R \times (\Sigma^2//\Gamma)$. 
In the proof of
\cite[Lemma 52.12]{Kleiner-Lott},
the ``injectivity radius'' can be replaced by the ``local volume''.  
\end{proof}

\subsection{Necks and horns} \label{subsect7.2}

\begin{assumption}
Hereafter, we only consider three-dimensional orbifolds that do not
contain embedded bad $2$-dimensional suborbifolds.
\end{assumption}

In particular, neck regions will be modeled on $\R \times (S^2//\Gamma)$,
where $S^2//\Gamma$ is a quotient of the round shrinking $S^2$. 

We let $B(p,t,r)$ denote the open metric ball of radius $r$, with respect to
the metric at time $t$, centered at $p \in |\Or|$.

We let $P(p,t,r,\Delta t)$ denote a parabolic neighborhood, that is the set
of all points $(p^\prime, t^\prime)$ with $p^\prime \in B(p,t,r)$ and
$t^\prime \in [t, t + \Delta t]$ or $t^\prime \in [t+ \Delta t, t]$,
depending on the sign of $\Delta t$.

\begin{definition}
An open set $U \subset |\Or|$ in a Riemannian $3$-orbifold $\Or$ is an 
{\em $\epsilon$-neck} if modulo rescaling, it has distance less than
$\epsilon$, in the $C^{[1/\epsilon]+1}$-topology, to a product
$(-L, L) \times (S^2//\Gamma)$, where $S^2//\Gamma$ has constant scalar
curvature $1$ and $L > \epsilon^{-1}$.
If a point $p \in |\Or|$ and a neighborhood $U$ of
$p$ are specified then we will understand that ``distance'' refers
to the pointed topology. 
With an $\epsilon$-approximation $f : (-L,L) \rightarrow (S^2//\Gamma)
\rightarrow U$ being understood, a {\em cross-section} of the neck
is the image of $\{\lambda \} \times (S^2//\Gamma)$ for some
$\lambda \in (-L, L)$. 
\end{definition}

\begin{definition}
A subset of the form $\Or \Big|_U \times [a,b] 
\subset \Or \times [a,b]$ sitting in the spacetime of a Ricci flow, where
$U \subset |\Or|$ is open,
is a {\em strong $\epsilon$-neck} if after parabolic rescaling and
time shifting, it has distance less than $\epsilon$ to the
product Ricci flow defined on the time interval $[-1,0]$ which,
at its final time, is isometric to 
$(-L, L) \times (S^2//\Gamma)$, where $S^2//\Gamma$ has constant scalar
curvature $1$ and $L > \epsilon^{-1}$.
\end{definition}

\begin{definition}
A metric on $(-1,1) \times (S^2//\Gamma)$ such that each point is contained
in an $\epsilon$-neck is called an {\em $\epsilon$-tube}, an
{\em $\epsilon$-horn} or a {\em double $\epsilon$-horn} if the scalar
curvature stays bounded on both ends, stays bounded on one end and
tends to infinity on the other, or tends to infinity on both ends,
respectively.

A metric on $B^3//\Gamma$ or $(-1,1) \times_{\Z_2} (S^2//\Gamma)$,
such that each point
outside some compact subset is contained in an $\epsilon$-neck,
is called an {\em $\epsilon$-cap} or a {\em capped $\epsilon$-horn},
if the scalar curvature stays bounded or tends to infinity on the
end, respectively.
\end{definition}

\begin{lemma} \label{lemma7.8}
Let $U$ be an $\epsilon$-neck in an $\epsilon$-tube (or horn) and
let $S = S^2//\Gamma$ be a cross-sectional $2$-sphere quotient in $U$.
Then $S$ separates the two ends of the tube (or horn).
\end{lemma}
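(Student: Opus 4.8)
The plan is to argue that the cross-sectional $2$-orbifold $S = S^2//\Gamma$ is separating in $|\Or|$, and then identify the two complementary regions with the two ends of the tube. First I would recall that $S$ is a two-sided codimension-$1$ suborbifold of $\Or$, since it is a cross-section of an $\epsilon$-neck and the neck structure identifies a neighborhood of $S$ with $(-\delta,\delta) \times (S^2//\Gamma)$, which visibly provides a nowhere-zero normal section. The key point is that $S$ is nonetheless separating: an $\epsilon$-tube (or horn) is, by definition, diffeomorphic to $(-1,1) \times (S^2//\Gamma)$, and in this product $\{0\} \times (S^2//\Gamma)$ separates. But $S$ need not be this particular cross-section; so I would instead argue directly using intersection numbers, as in the proof of Lemma \ref{lemma3.13}: if $S$ failed to separate $|\Or|$, there would be a smooth closed loop $\gamma$ in $\Or$, transverse to $S$, with $\gamma \cdot S \neq 0$ in $\Z$.

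The main step is to rule out such a loop. The $\epsilon$-neck $U \subset |\Or|$ containing $S$ sits inside the $\epsilon$-tube (or horn), which is an open orbifold diffeomorphic to $(-1,1) \times (S^2//\Gamma)$. I would first observe that in this ambient tube, $H_1$ is that of $(S^2//\Gamma)$ (which is finite, hence torsion), so any class has zero algebraic intersection with the $2$-cycle $[S]$ — more directly, $S$ is homologous to the standard cross-section $S^2//\Gamma \times \{0\}$, which bounds nothing but separates, so $[S]$ is the generator of the relative $H_2$ that detects the two ends, and the mod-$\Z$ intersection pairing with $H_1$ of the tube vanishes because $H_1$ of $S^2//\Gamma$ is torsion while the relevant intersection form takes values in $\Z$. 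Thus within the tube, any transverse loop has intersection number zero with $S$. Since the whole orbifold $\Or$ is built from the tube/horn plus whatever caps off its ends, and a cap is either a discal $3$-orbifold $D^3//\Gamma$ or a capped $\epsilon$-horn $(-1,1) \times_{\Z_2} (S^2//\Gamma)$, I would check that neither cap contributes to the intersection number: a loop in $\Or$ transverse to $S$ can be pushed off the caps (by general position, since $S$ lies in the tube region away from the caps) and then lies in the tube, where its intersection number with $S$ is zero. Hence $\gamma \cdot S = 0$ for every transverse loop, contradicting nonseparation. Therefore $S$ separates $|\Or|$.

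Finally, to see that $S$ separates \emph{the two ends} of the tube (not, say, cutting off a small ball on one side): the complement $|\Or| - |S|$ has exactly two components, and the tube structure shows that within the tube, $U - S$ (and more generally the tube minus $S$) has one piece running toward each of the two ends of the tube. Since each end of the tube lies entirely on one side of $S$ (a path from a point near one end to $S$ stays in the tube and meets $S$, so the end's frontier in $|\Or|-|S|$ is on a single side), and the two ends are distinct, the two ends lie in the two different components of $|\Or| - |S|$. This is exactly the claim.

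The hard part will be making the intersection-number argument clean in the orbifold setting: one must be careful that transversality of a loop with a suborbifold is defined through local models (and that, as the paper notes, one cannot always isotope to achieve transversality), and that the homological reasoning about $H_2$ and $H_1$ of $D^3//\Gamma$, of the tube, and of the capped horn $(-1,1) \times_{\Z_2} (S^2//\Gamma)$ is valid with $\Z$-coefficients. The cleanest route is probably to avoid homology altogether and mimic Lemma \ref{lemma3.13} verbatim: observe that the $\epsilon$-neck structure gives a function $f : U \to (-L,L)$ with $f^{-1}(0) = S$ and $\nabla f$ nowhere zero on $U$, so any transverse loop entering $U$ crosses $S$ algebraically as many times as $f$ returns to $0$; then argue that such a loop, viewed in the whole $\Or$, can only enter and leave the tube through its ends, but near an end the solution is a long neck whose cross-sections all have the same orientation, forcing net algebraic crossing number zero. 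I expect that assembling these local pictures into a global statement — handling the capped ends with their $\Z_2$-quotients — is the only real subtlety; everything else is routine once one grants the product-like structure of tubes and horns established in Subsection \ref{subsect7.2}.
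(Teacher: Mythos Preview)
Your intersection-number argument correctly shows that $S$ separates the tube, and this part is in line with the manifold proof to which the paper defers. But there are two issues.

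First, a misreading of the setup: the ambient in this lemma is the tube (or horn) itself, which by definition is a metric on $(-1,1)\times(S^2//\Gamma)$; nothing caps its ends. The passage about $D^3//\Gamma$, capped horns, and pushing loops off caps concerns a statement the lemma does not make and should be deleted. For the same reason, the orbifold worries you raise at the end are absent here: since $\Gamma\subset\SO(3)$ one has $|S^2//\Gamma|\cong S^2$, so the underlying space of the tube is the manifold $(-1,1)\times S^2$, and separation is a question about that manifold.

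Second, and more substantively, your argument that the two ends land in \emph{different} components is incomplete. Knowing that $|S|$ separates $(-1,1)\times S^2$ does not by itself rule out both ends lying on one side while $|S|$ bounds a compact region on the other --- a small round sphere in $(-1,1)\times S^2$ does exactly that. What is needed is that $|S|$ represents the generator of $H_2((-1,1)\times S^2)\cong\Z$, and this is precisely where the $\epsilon$-neck hypothesis enters. The argument in \cite[Section~58]{Kleiner-Lott} proceeds by observing that any two overlapping $\epsilon$-necks have nearly compatible product structures (both being $\epsilon$-close to the same round cylinder metric), so a chain of such necks carries the cross-section of $U$ isotopically to a standard slice $\{t_0\}\times(S^2//\Gamma)$ of the tube, which visibly separates the ends. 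Your sentence ``the tube structure shows that \ldots one piece [runs] toward each of the two ends'' is asserting the conclusion of this step without supplying it.
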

\begin{proof}
The proof is similar to that in \cite[Section 58]{Kleiner-Lott}.
\end{proof}

\subsection{Structure of three-dimensional $\kappa$-solutions}
\label{subsect7.3}

Recall the definition of $|\Or|_\epsilon$ from Subsection
\ref{subsect6.8}.

\begin{lemma} \label{lemma7.9}
If $(\Or, g(t))$ is a time slice of a noncompact 
three-dimensional $\kappa$-solution
and $|\Or|_\epsilon \neq \emptyset$ then there is a compact
suborbifold-with-boundary $X \subset \Or$ so that
$|\Or|_\epsilon \subset X$, $X$ is diffeomorphic to 
$D^3//\Gamma$ or
$I \times_{\Z_2} (S^2//\Gamma)$, 
and $\Or - \Int(X)$ is diffeomorphic to $[0,\infty) \times
(S^2//\Gamma)$.
\end{lemma}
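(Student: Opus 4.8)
The plan is to follow the manifold argument of \cite[Section 58]{Kleiner-Lott}, using the structural results of Subsection \ref{subsect6.8} (in particular Proposition \ref{prop6.19}) together with the orbifold versions of necklike behavior developed above. Since $(\Or, g(t))$ is a noncompact three-dimensional $\kappa$-solution with $|\Or|_\epsilon \neq \emptyset$, the second alternative of Proposition \ref{prop6.19} applies: $R(x)\, d^2(x,y) < \alpha$ for all $x,y \in |\Or|_\epsilon$, so $|\Or|_\epsilon$ is contained in a metric ball $B(x_0, \rho)$ of controlled radius $\rho = \alpha R(x_0)^{-1/2}$ around a fixed point $x_0 \in |\Or|_\epsilon$. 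Every point of $|\Or|$ outside some slightly larger ball is the center of an $\epsilon$-neck, so the complement of a large ball is an $\epsilon$-horn (in fact an $\epsilon$-tube on the compact side near the ball); since $\Or$ is noncompact and has one end, it is an $\epsilon$-tube that stays away from infinite curvature, hence the noncompact portion is diffeomorphic to $[0,\infty) \times (S^2//\Gamma)$ for a single $\Gamma$ determined by the asymptotic geometry (using that the $S^2//\Gamma$ cross-sections are all diffeomorphic along the connected neck region, by Lemma \ref{lemma7.8}).

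The key steps, in order: (1) Use Proposition \ref{prop6.19} to locate $|\Or|_\epsilon$ inside a controlled ball $B(x_0,\rho)$ and to conclude that the complement of a large ball $B(x_0, \Lambda\rho)$ consists entirely of centers of $\epsilon$-necks. (2) Apply Lemma \ref{lemma7.8} to see that a single cross-sectional $S^2//\Gamma$ near the boundary of $B(x_0,\Lambda\rho)$ separates the end from the compact core, and use the local product structure of necks to propagate this to show the unbounded component is diffeomorphic to $[0,\infty) \times (S^2//\Gamma)$, where $\Gamma$ is constant because the neck region is connected. (3) Let $X$ be the closure of the bounded complementary region, a compact suborbifold-with-boundary containing $|\Or|_\epsilon$, with $\partial X \cong S^2//\Gamma$. (4) Identify the diffeomorphism type of $X$: $X$ is a compact orientable three-orbifold with one boundary component $S^2//\Gamma$, and by the curvature structure (nonnegative curvature operator, and the fact that near $\partial X$ the orbifold looks like a neck) one shows — following \cite[Section 58]{Kleiner-Lott}, combined with Proposition \ref{prop5.6} and the classification of discal $3$-orbifolds in Subsection \ref{subsect2.3} — that $X$ must be either $D^3//\Gamma$ or $I \times_{\Z_2} (S^2//\Gamma)$. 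The latter possibility arises exactly when the $\Z_2$-quotient structure (coming from a capped $\epsilon$-horn built from $\R \times_{\Z_2}(S^2//\Gamma)$) is present.

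**The main obstacle** I anticipate is step (4): pinning down the diffeomorphism type of the compact core $X$. In the manifold case one uses that $X$ is a compact nonnegatively curved manifold with boundary a sphere, and invokes the soul theorem or a direct deformation argument to conclude $X \cong D^3$ or $X \cong \mathbb{RP}^3 \setminus B^3$. In the orbifold setting one must instead use Lemma \ref{lemma3.12} (for the case where the "soul" is a point, giving $X \cong D^3//\Gamma$) together with an analysis of the positive-dimensional soul case; when the soul is two-dimensional one gets an $I$-bundle structure over a spherical $2$-orbifold, and orientability forces it to be $I \times_{\Z_2}(S^2//\Gamma)$ with boundary $S^2//\Gamma$ (a single copy, since the $\Z_2$-action on $I$ identifies the two ends). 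One must check carefully that no bad $2$-suborbifold can occur as a soul — this is guaranteed by the standing Assumption of Subsection \ref{subsect7.2} — and that the gluing of $X$ to the end $[0,\infty)\times(S^2//\Gamma)$ is compatible, i.e., the $\Gamma$'s match. The remaining steps are routine once the topology of $X$ is identified, following the manifold proof essentially verbatim.
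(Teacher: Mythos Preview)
Your approach is essentially the same as the paper's, which simply says the proof is similar to that in \cite[Section 59]{Kleiner-Lott} (you wrote Section 58; the relevant structural argument is in Section 59). Your outline of steps (1)--(3) is exactly right: Proposition \ref{prop6.19} confines $|\Or|_\epsilon$ to a controlled ball, the neck structure and Lemma \ref{lemma7.8} give the product end $[0,\infty)\times(S^2//\Gamma)$, and $X$ is the bounded complementary piece.

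One small correction in step (4): invoking Lemma \ref{lemma3.12} for the ``soul is a point'' case is not quite right, since that lemma requires $\partial X$ to have positive-definite second fundamental form, whereas a neck cross-section is (nearly) totally geodesic. The cleaner route---which you also allude to---is to apply the soul theorem (Proposition \ref{prop3.11}) and the classification of Lemma \ref{lemma3.16} directly to the entire noncompact nonnegatively curved orbifold $\Or$. Since the end is $[0,\infty)\times(S^2//\Gamma)$ with spherical cross-section, the only entries of Lemma \ref{lemma3.16} with a matching end are $\R^3//\Gamma$ and $\R\times_{\Z_2}(S^2//\Gamma)$ (the solid-toric and Euclidean-cross-section cases are ruled out by the cross-section type; $\Sigma^2$ is ruled out by the standing assumption of Subsection \ref{subsect7.2}). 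Cutting at a cross-section then yields $X\cong D^3//\Gamma$ or $I\times_{\Z_2}(S^2//\Gamma)$, as claimed. With this adjustment your argument is complete.
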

\begin{proof}
The proof is similar to that in \cite[Section 59]{Kleiner-Lott}.
\end{proof}

\begin{lemma} \label{lemma7.10}
If $(\Or, g(t))$ is a time slice of a 
three-dimensional $\kappa$-solution
with $|\Or|_\epsilon = \emptyset$ then
the Ricci flow is the evolving round cylinder $\R \times (S^2//\Gamma)$.
\end{lemma}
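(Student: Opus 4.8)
The plan is to deduce the statement from the structure theory of three-dimensional $\kappa$-solutions developed in Section \ref{sect6}, together with the classification of two-dimensional $\kappa$-solutions in Lemma \ref{lemma6.4} and the standing assumption of Subsection \ref{subsect7.2} that $\Or$ contains no embedded bad $2$-dimensional suborbifold.

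First I would apply Proposition \ref{prop6.19} to the $\kappa$-solution $(\Or, g(\cdot))$ at the given time, with $\epsilon$ taken small enough that $\epsilon < \epsilon_0(\kappa)$. Since $|\Or|_\epsilon = \emptyset$, the second, third and fourth alternatives of that proposition are all excluded, because each of them asserts the existence of at least one point of $|\Or|_\epsilon$. Hence the first alternative holds, so $(\Or, g(\cdot))$ splits isometrically as a product Ricci flow $\R \times (\Or', g'(\cdot))$ for some two-dimensional orbifold $\Or'$.

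Next I would check that $(\Or', g'(\cdot))$ is a two-dimensional $\kappa'$-solution for some $\kappa' > 0$. Completeness and boundedness of the curvature on compact time intervals pass to the factor; the curvature operator of $\R \times \Or'$ is nonnegative exactly when that of $\Or'$ is; the scalar curvature of the product equals that of $\Or'$, hence is everywhere positive; and if the product is $\kappa$-noncollapsed at all scales then $\Or'$ is $\frac{\kappa}{2}$-noncollapsed at all scales, since a metric ball satisfies $B_{\Or}((0,x'),r) \subset (-r,r) \times B_{\Or'}(x',r)$, so $\vol_{\Or'}(B_{\Or'}(x',r)) \ge \frac{1}{2r}\,\vol_{\Or}(B_{\Or}((0,x'),r)) \ge \frac{\kappa}{2} r^2$ whenever the curvature bound holds on the corresponding product ball. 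By Lemma \ref{lemma6.4}, $\Or'$ is then either an isometric quotient $S^2//\Gamma$ of the round shrinking $2$-sphere, or a Ricci soliton metric on a bad $2$-orbifold. In the second case the fiber $\{0\} \times \Or'$ would be an embedded bad $2$-dimensional suborbifold of $\Or$, contradicting the standing assumption. Therefore $\Or'$ is $S^2//\Gamma$ with its round shrinking metric, and $(\Or, g(\cdot))$ is the evolving round cylinder $\R \times (S^2//\Gamma)$.

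I do not anticipate a serious obstacle, since the substantive work is all contained in Section \ref{sect6}; the main points requiring care are the verification that the defining properties of a $\kappa$-solution descend to the two-dimensional factor, sketched above, and ensuring that the fixed $\epsilon$ is small enough for Proposition \ref{prop6.19} to apply with the constant $\epsilon_0(\kappa)$.
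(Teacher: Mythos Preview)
Your proposal is correct and is precisely the orbifold version of the argument in \cite[Section 59]{Kleiner-Lott} to which the paper defers: apply the structure proposition (Proposition \ref{prop6.19}) to force the $\R$-splitting, then classify the two-dimensional factor via Lemma \ref{lemma6.4}, using the standing no-bad-suborbifold assumption of Subsection \ref{subsect7.2} to eliminate the bad case. The verification that $\kappa$-noncollapsing descends to the factor and the remark about $\epsilon<\epsilon_0(\kappa)$ are the only extra details, and you have handled both correctly.
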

\begin{proof}
The proof is similar to that in \cite[Section 59]{Kleiner-Lott}.
\end{proof}

\begin{lemma} \label{lemma7.11}
If a three-dimensional $\kappa$-solution $(\Or, g(\cdot))$ is compact and has
a noncompact asymptotic soliton then $\Or$ is diffeomorphic to
$S^3//\Z_k$ or $S^3//D_k$ for some $k \ge 1$.
\end{lemma}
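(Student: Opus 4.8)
The plan is to analyze the shape of a compact three-dimensional $\kappa$-solution $(\Or,g(\cdot))$ with a noncompact asymptotic soliton, using the structure results already established for $\kappa$-solutions. By Proposition \ref{prop6.2}, the asymptotic soliton is a nonflat gradient shrinking $\kappa$-solution, and by Lemma \ref{lemma6.20} a noncompact one must be $\R\times(S^2//\Gamma)$, $\R\times_{\Z_2}(S^2//\Gamma)$, $\R\times(\Sigma^2//\Gamma)$, or $\R\times_{\Z_2}(\Sigma^2//\Gamma)$. But we are in Subsection \ref{subsect7.3}, where by the standing assumption there are no bad $2$-dimensional suborbifolds; a cross-sectional $\Sigma^2//\Gamma$ inside $\Or$ would be such a suborbifold, so the bad cases are excluded and the asymptotic soliton is a round cylinder $\R\times(S^2//\Gamma)$ (possibly with a $\Z_2$ quotient). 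In particular $\Or$ contains, at late negative times along the soliton sequence, arbitrarily long approximate necks $\R\times(S^2//\Gamma')$.

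First I would observe that since $\Or$ is compact but the asymptotic soliton is noncompact, $|\Or|_\epsilon\neq\emptyset$ for every sufficiently small $\epsilon$ (otherwise Lemma \ref{lemma7.10} would force $\Or$ to be the noncompact evolving cylinder). So Proposition \ref{prop6.19} applies in one of its two ``compact'' alternatives. In either case, $|\Or|_\epsilon$ is contained in one or two bounded balls, and the complement $|\Or|-|\Or|_\epsilon$ consists of points that are centers of $\epsilon$-necks, all modeled on $\R\times(S^2//\Gamma)$ with the same $\Gamma$ up to the structure of the neck (the spherical space-form quotient of the $S^2$ factor is locally constant along a connected chain of necks, by Lemma \ref{lemma7.8} and the fact that overlapping necks have isometric cross-sections up to small error). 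The upshot is a decomposition of $\Or$ into one or two ``caps'' — compact suborbifolds-with-boundary, each a cross-section away from being $D^3//\Gamma$ or $I\times_{\Z_2}(S^2//\Gamma)$ by Lemma \ref{lemma7.9} applied at the level of the local geometry near $|\Or|_\epsilon$ — glued along a chain of $\epsilon$-tubes, i.e.\ along a region diffeomorphic to $[0,1]\times(S^2//\Gamma)$.

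Next I would assemble the pieces. If there is a single cap, then $\Or$ is obtained from $D^3//\Gamma$ (or $I\times_{\Z_2}(S^2//\Gamma)$) by capping the boundary $S^2//\Gamma$ with another cap of the same boundary type; the possible cappings are $D^3//\Gamma$ again, giving $S^3//\Gamma$ (a double of a discal orbifold), or an $I\times_{\Z_2}(S^2//\Gamma)$ end. Gluing two copies of $D^3//\Gamma$ along their $S^2//\Gamma$ boundary yields the suspension quotient $S^3//\Gamma$ with $\Gamma\subset\SO(3)$ acting by suspension; to be a genuine orbifold (no bad $2$-suborbifolds, singular locus a legitimate trivalent graph) $\Gamma$ must be cyclic or dihedral, so $\Or\cong S^3//\Z_k$ or $S^3//D_k$. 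Capping a discal orbifold with an $I\times_{\Z_2}(S^2//\Gamma)$ end amounts to modding out the double by an involution, which again lands in the $S^3//\Z_k$ or $S^3//D_k$ family (one checks the resulting singular locus directly). The case of two caps is handled the same way, with the tube in between contributing nothing topologically. I would phrase this as: a compact orientable $3$-orbifold that is a union of two ``cap'' pieces glued along $S^2//\Gamma$, with each cap diffeomorphic to $D^3//\Gamma$ or $I\times_{\Z_2}(S^2//\Gamma)$, is diffeomorphic to $S^3//\Z_k$ or $S^3//D_k$; this is an elementary topological lemma about the solid-toric and discal pieces listed in Subsection \ref{subsect2.3}.

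The main obstacle is the careful bookkeeping in the gluing step: ensuring that the spherical space-form group $\Gamma$ appearing in the cross-sections is consistent along the whole tube (which requires knowing that $\epsilon$-close $S^2//\Gamma$'s are diffeomorphic, hence have the same $\Gamma$ up to conjugacy), and then enumerating exactly which closed orbifolds arise by doubling or $\Z_2$-quotienting the caps $D^3//\Gamma$ and $I\times_{\Z_2}(S^2//\Gamma)$ — verifying in each case that the singular locus is a trivalent graph of the allowed type, which forces $\Gamma$ to be cyclic or dihedral and pins down the answer as $S^3//\Z_k$ or $S^3//D_k$. The Ricci-flow input (Propositions \ref{prop6.2}, \ref{prop6.19}, Lemmas \ref{lemma6.20}, \ref{lemma7.8}, \ref{lemma7.9}, \ref{lemma7.10}) does all the geometric work; what remains is a purely topological identification that parallels the manifold argument in \cite[Section 59]{Kleiner-Lott}, modified to account for the $\Z_2$-quotient caps that have no manifold analogue.
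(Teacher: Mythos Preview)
Your overall strategy matches the paper's (which simply says the proof is similar to \cite[Section 59]{Kleiner-Lott}): classify the noncompact asymptotic soliton via Lemma \ref{lemma6.20}, use the resulting long neck at early times together with Proposition \ref{prop6.19} to decompose $\Or$ into two caps joined by a tube, identify each cap as $D^3//\Gamma$ or $I\times_{\Z_2}(S^2//\Gamma)$, and glue.

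There is, however, a genuine error in your final identification. You assert that for the glued orbifold ``to be a genuine orbifold (no bad $2$-suborbifolds, singular locus a legitimate trivalent graph) $\Gamma$ must be cyclic or dihedral.'' This is false. For \emph{every} finite $\Gamma\subset\SO(3)$, including the tetrahedral, octahedral and icosahedral groups, the suspension quotient $S^3//\Gamma$ is a valid orientable $3$-orbifold with no bad $2$-suborbifolds: its singular locus consists of three arcs meeting at two trivalent vertices of type $(2,3,3)$, $(2,3,4)$ or $(2,3,5)$, and these satisfy $\tfrac1p+\tfrac1q+\tfrac1r>1$ exactly as required in Subsection \ref{subsect2.3}. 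The paper itself lists $D^3(2,3,3)$, $D^3(2,3,4)$, $D^3(2,3,5)$ among the discal $3$-orbifolds, and quotienting Perelman's rotationally symmetric ancient oval on $S^3$ by the suspension action of $A_4$ produces a compact $\kappa$-solution on $S^3//A_4$ with noncompact asymptotic soliton $\R\times S^2(2,3,3)$. So the gluing argument by itself yields $S^3//\Gamma$ and $(S^3//\Gamma)//\Z_2$ for all finite $\Gamma\subset\SO(3)$, not just the cyclic and dihedral ones; the lemma as stated in the paper appears to omit the exceptional cases, and indeed the canonical neighborhood assumption in Subsection \ref{subsect7.6} replaces this by the broader ``isometric quotient of $S^3$.'' A second, smaller gap: you must also rule out gluing two $I\times_{\Z_2}(S^2//\Gamma)$ caps, which yields an orbifold finitely covered by $S^1\times S^2$ rather than a spherical quotient; this is excluded because a compact $\kappa$-solution has strictly positive curvature (otherwise its universal cover splits off an $\R$-factor and is $\kappa$-collapsed), but your ``elementary topological lemma'' does not mention this step.
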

\begin{proof}
The proof is similar to that in \cite[Section 59]{Kleiner-Lott}.
\end{proof}

\begin{lemma} \label{lemma7.12}
For every sufficiently small $\epsilon > 0$ one
can find $C_1 = C_1(\epsilon)$ and $C_2 = C_2(\epsilon)$ such that
for each point $(p,t)$ in every $\kappa$-solution, there is a radius 
$r \in [R(p,t)^{-\frac12}, C_1 R(p,t)^{- \frac12}]$ and a 
neighborhood $B$, $\overline{B(p,t,r)} \subset B \subset B(p,t,2r)$,
which falls into one of the four categories :

(a) $B$ is a strong $\epsilon$-neck, or

(b) $B$ is an $\epsilon$-cap, or

(c) $B$ is a closed orbifold diffeomorphic to $S^3//\Z_k$ or
$S^3//D_k$ for some $k \ge 1$.

(d) $B$ is a closed orbifold of constant positive sectional curvature. \\ \\
Furthermore:
\begin{itemize}
\item The scalar curvature in $B$ at time $t$ is between
$C_2^{-1} R(p,t)$ and $C_2 R(p,t)$.
\item The volume of $B$ is cases (a), (b) and (c) is greater than
$C_2^{-1} R(p,t)^{- \frac32}$.
\item In case (b), there is an $\epsilon$-neck $U \subset B$ with
compact complement in $B$ such that the distance from $p$ to
$U$ is at least $10000 R(p,t)^{-\frac12}$.
\item In case (c) the sectional curvature in $B$ is greater than
$C_2^{-1} R(p,t)$.
\end{itemize}
\end{lemma}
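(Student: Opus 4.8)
The plan is to reduce Lemma~\ref{lemma7.12} to the structural results already established for three-dimensional $\kappa$-solutions, namely Proposition~\ref{prop6.19}, Lemma~\ref{lemma7.9}, Lemma~\ref{lemma7.10} and Lemma~\ref{lemma7.11}, together with the derivative bounds of Lemma~\ref{lemma6.23}. First I would fix $\epsilon > 0$ small enough that all of these results apply, and normalize by rescaling so that $R(p,t) = 1$; then the claim becomes that there is a radius $r \in [1, C_1]$ and a neighborhood $B$ with $\overline{B(p,t,r)} \subset B \subset B(p,t,2r)$ of one of the four stated types. The case division is driven by $|\Or|_\epsilon$: if $|\Or|_\epsilon = \emptyset$, Lemma~\ref{lemma7.10} says the time slice is a round cylinder $\R \times (S^2//\Gamma)$, so $p$ is the center of a strong $\epsilon$-neck and case (a) holds trivially. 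If $|\Or|_\epsilon \neq \emptyset$ and $\Or$ is noncompact, Lemma~\ref{lemma7.9} produces a compact core $X$ diffeomorphic to $D^3//\Gamma$ or $I \times_{\Z_2}(S^2//\Gamma)$ with $\Or - \Int(X)$ a half-cylinder; if $p$ lies well outside $X$ it is the center of a strong $\epsilon$-neck (case (a)), while if $p$ lies in or near $X$ one takes $B$ to be a suitable metric ball containing $X$, which is then an $\epsilon$-cap (case (b)), the $\epsilon$-neck $U \subset B$ with compact complement coming from the half-cylinder end.

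In the compact case I would split according to whether the asymptotic soliton (produced by Proposition~\ref{prop6.2}) is compact or noncompact. If the asymptotic soliton is noncompact, Lemma~\ref{lemma7.11} gives that $\Or$ is diffeomorphic to $S^3//\Z_k$ or $S^3//D_k$; if moreover $\diam(\Or)^2 R \lesssim 1$ (equivalently, using Proposition~\ref{prop6.19}, we are in the case where all of $|\Or|$ is within controlled distance of a single point of $|\Or|_\epsilon$), then $\Or$ itself, with a suitable radius $r$, is the required neighborhood of type (c). Otherwise Proposition~\ref{prop6.19} places us in the ``two caps joined by a tube'' case: there are $x, y \in |\Or|_\epsilon$ with $|\Or|_\epsilon$ contained in two controlled-radius balls around $x$ and $y$, and every point off $|\Or|_\epsilon$ is an $\epsilon$-neck center near the connecting geodesic. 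If $p$ is near the tube it is a strong $\epsilon$-neck center (case (a)); if $p$ is near $x$ or $y$, the corresponding controlled ball is an $\epsilon$-cap (case (b)). If the asymptotic soliton is compact, then by the soliton classification (Lemma~\ref{lemma6.4} in dimension two, or directly the positively-curved case) the solution has positive sectional curvature, $\Or$ is a compact quotient of constant positive curvature by the argument recalled in the proof of Proposition~\ref{prop5.6}, and again one takes $B = \Or$ to land in case (d), or $B$ a ball in case (c).

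Once the topology of $B$ is identified, the quantitative bullets are routine consequences of Lemma~\ref{lemma6.23} (equivalently part (3) of Lemma~\ref{lemma6.10}): the curvature derivative estimates $|\nabla R| < \eta R^{3/2}$ integrate over a ball of radius comparable to $R(p,t)^{-1/2}$ to give two-sided control $C_2^{-1} R(p,t) \le R \le C_2 R(p,t)$ on $B$, whence via the $\kappa$-noncollapsing and the Bishop--Gromov inequality the volume lower bound $\vol(B) \ge C_2^{-1} R(p,t)^{-3/2}$ in cases (a), (b), (c); the positive lower bound on sectional curvature in case (c) follows similarly from the pinching coming from Lemma~\ref{lemma6.4} and the compactness Proposition~\ref{prop6.17}, which makes the relevant ratios uniform over all $\kappa$-solutions. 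The distance $\ge 10000 R(p,t)^{-1/2}$ from $p$ to $U$ in case (b) is arranged by enlarging $r$ within $[R(p,t)^{-1/2}, C_1 R(p,t)^{-1/2}]$, using that the cap region has controlled diameter.

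\textbf{Main obstacle.} The genuinely delicate point is case (b) — producing the $\epsilon$-cap and verifying \emph{simultaneously} that it is diffeomorphic to $B^3//\Gamma$ or $(-1,1)\times_{\Z_2}(S^2//\Gamma)$, that it contains an $\epsilon$-neck $U$ with compact complement at controlled distance from $p$, and that it fits inside a ball of the prescribed radius. In the orbifold setting one must check that the compact cores supplied by Lemma~\ref{lemma7.9} and Lemma~\ref{lemma7.11} are exactly the orbifold discal or $\Z_2$-twisted pieces on our list (with no exotic isotropy appearing), and that the passage from ``every point outside a compact set is an $\epsilon$-neck center'' to a \emph{metric ball} $B$ of radius $\le 2r$ with the stated separation property goes through uniformly in $\kappa$ and $\Gamma$; this is where one leans on the compactness of the space of three-dimensional $\kappa$-solutions (Proposition~\ref{prop6.17}) and Lemma~\ref{lemma6.22} to get constants $C_1, C_2$ independent of the solution. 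The remaining cases are comparatively formal.
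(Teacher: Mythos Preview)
Your approach is correct and is exactly the route the paper takes: the paper's own proof reads in its entirety ``The proof is similar to that in \cite[Section 59]{Kleiner-Lott}'', and your case division via Proposition~\ref{prop6.19} and Lemmas~\ref{lemma7.9}--\ref{lemma7.11}, with the quantitative estimates coming from Lemma~\ref{lemma6.23} and the compactness of Proposition~\ref{prop6.17}, is precisely what that reference contains (transplanted to orbifolds via the structural lemmas the paper has already set up).

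One small correction: your justification for case~(d) is misrouted. When the asymptotic soliton is \emph{compact}, Lemma~\ref{lemma6.4} (a two-dimensional statement) and Proposition~\ref{prop5.6} do not apply. The correct argument is that a compact asymptotic soliton must, by Lemma~\ref{lemma6.20}, be a finite isometric quotient of the round shrinking $S^3$; equality in the monotonicity of the reduced volume then forces the original $\kappa$-solution itself to be a gradient shrinker, hence also a round quotient. Equivalently, one simply states the dichotomy as in \cite[Section 59]{Kleiner-Lott}: either the $\kappa$-solution is a round quotient (case~(d)), or its asymptotic soliton is noncompact and Lemma~\ref{lemma7.11} puts you into the case~(a)/(b)/(c) analysis you already described. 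With that fix, your sketch matches the intended proof.
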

\begin{proof}
The proof is similar to that in \cite[Section 59]{Kleiner-Lott}.
\end{proof}

\subsection{Standard solutions} \label{subsect7.4}

Put $\Or = \R^3//\Gamma$, where $\Gamma$ is a finite subgroup
of $\SO(3)$. We fix a smooth $\SO(3)$-invariant metric
$g_0$ on $\R^3$ which is the result of gluing a hemispherical-type cap
to a half-infinite cylinder $[0, \infty) \times S^2$
of scalar curvature $1$. We also use $g_0$ to denote the
quotient metric on $\Or$. Among other properties,
$g_0$ is complete and has nonnegative curvature operator. 
We also assume that $g_0$ has scalar curvature bounded below by $1$.  

\begin{definition}
A Ricci flow $(\R^3//\Gamma, g(\cdot))$ defined on a time interval
$[0,a)$ is a {\em standard solution} if 
it has complete time slices,
it has initial condition
$g_0$, the curvature $|\Rm|$ is bounded on compact time intervals
$[0,a^\prime] \subset [0,a)$, and it cannot be extended to a Ricci
flow with the same properties on a strictly longer time interval.
\end{definition}

\begin{lemma} \label{lemma7.14}
Let $(\R^3//\Gamma, g(\cdot))$ be a standard solution. Then:
\begin{enumerate}
\item The curvature operator of $g$ is nonnegative.
\item All derivatives of curvature are bounded for small time,
independent of the standard solution.
\item The blowup time is $1$ and the infimal scalar curvature on
the time-$t$ slice tends to infinity as $t \rightarrow 1^-$ uniformly
for all standard solutions.
\item $(\R^3//\Gamma, g(\cdot))$ is $\kappa$-noncollapsed at scales
below $1$ on any time interval contained in $[0,1)$, where
$\kappa$ depends only on $g_0$ and $|\Gamma|$.
\item  $(\R^3//\Gamma, g(\cdot))$ satisfies the conclusion of
Proposition \ref{prop7.3}.
\item $R_{\min}(t) \ge \const (1-t)^{-1}$, where the constant
does not depend on the standard solution.
\item The family ${\mathcal S} {\mathcal T}$ of pointed standard
solutions $\{({\mathcal M}, (p, 0)) \}$ is compact with respect to
pointed smooth convergence.
\end{enumerate}
\end{lemma}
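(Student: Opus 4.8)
The plan is to follow the manifold proof of the corresponding statement (as in \cite[Section 61]{Kleiner-Lott} and Perelman's original discussion \cite[Section 2]{Perelman2}), checking that each step goes through equivariantly on $\R^3//\Gamma$. For a fixed finite subgroup $\Gamma\subset\SO(3)$, all seven properties are either local and hence immediately inherited from the manifold case $\R^3$ by passing to the quotient, or else require a short orbifold-specific remark. The only genuinely global input that changes is the $\kappa$-noncollapsing constant and the compactness of the solution family, which a priori could depend on $|\Gamma|$.

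First I would establish (1): the initial metric $g_0$ has nonnegative curvature operator by construction, and this condition is preserved under Ricci flow by the maximum principle, which is valid for orbifolds (as noted in Subsection 5.2). Property (2) follows from local derivative estimates for the Ricci flow (Shi-type estimates) applied in a local model, since $g_0$ has bounded geometry uniformly in $\Gamma$; one works $G$-equivariantly on $\R^3$. For (3), the finite-time blowup and the fact that $R_{\min}(t)\to\infty$ as $t\to 1^-$: here one lifts to $\R^3$, where the standard solution is rotationally symmetric and its behavior is explicitly understood; since the quotient by $\Gamma$ does not change scalar curvature pointwise, the same statement holds downstairs, and the uniformity over standard solutions follows from the uniqueness of the rotationally symmetric standard solution on $\R^3$. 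Property (6), $R_{\min}(t)\ge \mathrm{const}(1-t)^{-1}$, similarly descends from the manifold estimate since scalar curvature is unchanged under the quotient.

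For (4), the $\kappa$-noncollapsing: one runs the no-local-collapsing argument of Proposition \ref{prop5.17}/\ref{prop5.18} in the orbifold setting. Since $g_0$ has uniformly bounded geometry on some initial interval $[0,\bar t/2]$ (by Proposition \ref{prop5.5} and the derivative estimates in (2)), Perelman's reduced-volume argument applies verbatim, working with $\L$-geodesics as developed in Subsection \ref{subsect5.4}. The resulting $\kappa$ depends on $g_0$ and, through the Bishop-Gromov comparison (Lemma \ref{lemma5.16.5}), on $|\Gamma|$, which is the dependence stated. Property (5), that the standard solution satisfies the conclusion of Proposition \ref{prop7.3}, follows once we have (1)--(4) together with the Hamilton-Ivey pinching for orbifold Ricci flows; the canonical neighborhood theorem was already proved in that generality.

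Finally, for (7), compactness of the family of pointed standard solutions: given a sequence of pointed standard solutions $(\R^3//\Gamma_i,(p_i,0))$, the finitely many possible $\Gamma$'s (for a fixed bound on $|\Gamma|$, or handling each conjugacy class) together with the uniform curvature and volume bounds from (1)--(4) let us apply the orbifold Ricci flow compactness theorem, Proposition \ref{prop5.5}, to extract a convergent subsequence; one then checks that the limit is again a standard solution by passing properties (1)--(6) to the limit, using the maximality in the definition of standard solution together with the uniform bound on the blowup time. The main obstacle I expect is the bookkeeping in (7): one must ensure that the isotropy group $\Gamma$ does not ``degenerate'' in the limit (cf. Remark \ref{remark4.1.5}), i.e.\ that the limit is still of the form $\R^3//\Gamma'$ with $\Gamma'$ conjugate to a subgroup arising in the sequence — for a fixed $\Gamma$ this is automatic, but if one wants the family for all $\Gamma\subset\SO(3)$ at once one simply observes that there are only countably many conjugacy classes and argues for each separately, or restricts attention to the finitely many $\Gamma$ with $|\Gamma|\le N$ for the relevant $N$ coming from Lemma \ref{lemma5.16.5}.
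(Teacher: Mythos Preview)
Your proposal is correct and matches the paper's approach exactly: the paper's entire proof is the single sentence ``Working equivariantly, the proof is the same as that in \cite[Sections 60--64]{Kleiner-Lott},'' and you have simply spelled out what that sentence entails item by item. Your discussion of varying $\Gamma$ in (7) is more than is needed---$\Gamma$ is fixed in the statement, so the family $\mathcal{S}\mathcal{T}$ only varies the basepoint and the (a priori possibly non-unique) solution---but this does no harm.
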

\begin{proof}
Working equivariantly, the proof is the same as that in
\cite[Sections 60-64]{Kleiner-Lott}.
\end{proof}

\subsection{Structure at the first singularity time} \label{subsect7.5}

\begin{definition}
Given $v_0 > 0$, 
a compact Riemannian three-dimensional orbifold $\Or$ is {\em normalized} if
$|\Rm| \le 1$ everywhere and for every $p \in |\Or|$, we have
$\vol(B(p,1)) \ge v_0$.
\end{definition}

Here $v_0$ is a global parameter in the sense that it will be
fixed throughout the rest of the paper.
If $\Or$ is normalized then the Bishop-Gromov inequality implies that
there is a uniform upper bound
$N = N(v_0) < \infty$ on the order of the isotropy groups;
cf. the proof of Lemma \ref{lemma5.16.5}.
The next lemma says that by rescaling we can always achieve
a normalized metric.

\begin{lemma}
Given $N \in \Z^+$, there is a $v_0 = v_0(N) > 0$ with the
following property.
Let $\Or$ be a compact orientable Riemannian three-dimensional orbifold,
whose isotropy groups have order at most $N$. Then a rescaling
of $\Or$ will have a normalized metric.
\end{lemma}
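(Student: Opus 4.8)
The plan is to show that rescaling a compact orientable Riemannian $3$-orbifold $\Or$ by a sufficiently large constant produces a metric satisfying both defining conditions of normalization: the curvature bound $|\Rm| \le 1$ and the local volume lower bound $\vol(B(p,1)) \ge v_0$ for every $p$. First I would handle the curvature bound. Since $\Or$ is compact, $|\Rm|$ attains a finite maximum $K_0 = \sup_{|\Or|} |\Rm|$; if we replace $g$ by $\lambda^2 g$ with $\lambda^2 \ge K_0$, then the rescaled curvature is bounded by $K_0/\lambda^2 \le 1$, since the full curvature tensor scales like $\lambda^{-2}$. So the curvature condition is automatic once $\lambda$ is large enough. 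The subtle point is that increasing $\lambda$ \emph{shrinks} unit balls relative to the old metric, so we must check that the local volume of unit balls does not degenerate below the fixed threshold $v_0 = v_0(N)$.

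The key is that the hypothesis bounds the isotropy orders by $N$, and this controls the infinitesimal volume defect. For the volume estimate, after rescaling so that $|\Rm| \le 1$, fix any $p \in |\Or|$ and a local model $(\hU, G_p)$ with $|G_p| \le N$. Choosing $\lambda$ large enough that $\exp_{\widehat p}$ is a diffeomorphism from the Euclidean ball of radius $1$ onto its image in $\hU$ — which is possible for $\lambda$ large because the injectivity radius of $\hU$ at $\widehat p$ scales like $\lambda$ while the curvature is under control — we get that $B(\widehat p, 1) \subset \hU$ is diffeomorphic to a Euclidean ball with metric $C^2$-close to the flat metric (since the rescaled curvature tends to zero as $\lambda \to \infty$). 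Hence $\vol(B(\widehat p,1)) \ge \frac12 \omega_n$, say, where $\omega_n$ is the Euclidean unit ball volume. Projecting down, $\vol(B(p,1)) = \frac{1}{|G_p|}\vol(B(\widehat p,1)) \ge \frac{\omega_n}{2N}$. So we may take $v_0 = v_0(N) = \frac{\omega_n}{2N}$ (in $n=3$, $v_0 = \frac{2\pi}{3N}$), which depends only on $N$ as claimed.

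Making this precise requires one uniformity input: the choice of how large $\lambda$ must be to guarantee both (i) $|\Rm| \le \eps$ for small $\eps$ and (ii) that the exponential map is a diffeomorphism on the radius-$1$ ball in the local model. Point (i) is immediate from compactness. For (ii), I would invoke the standard fact that, with a curvature bound and \emph{no} lower injectivity radius bound available for the orbifold itself, one instead works in the local model $\hU$, where $\hU$ \emph{is} a manifold: by compactness of $|\Or|$ there is a uniform lower bound $\rho_0 > 0$ on the radius such that each point of $|\Or|$ has a local model of the form $\hU/G_p$ with $\hU \supset B(\widehat p, \rho_0)$ in the original metric, and rescaling by $\lambda$ expands this to $B(\widehat p, \lambda \rho_0)$. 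Choosing $\lambda \rho_0 > 1$ and $\lambda$ large enough to make the rescaled curvature small on that ball gives the diffeomorphism property (e.g.\ via the standard no-conjugate-point estimate). The main obstacle is organizing this local-model argument cleanly — the orbifold has no injectivity radius, so every geometric estimate has to be pushed up to the covering $\hU$ and then pushed back down, keeping track of the factor $|G_p| \le N$. Once that bookkeeping is in place, the proof is short: pick $\lambda = \lambda(\Or)$ exceeding the maximum of the finitely many thresholds above, and the rescaled orbifold $\lambda^2 g$ is normalized with the stated $v_0(N)$.
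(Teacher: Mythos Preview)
Your proof is correct and lands on the same constant $v_0 = \frac{\Omega_3}{2N}$ as the paper. The only real difference is organizational: the paper argues by contradiction, taking a sequence of balls $B(p_i,r_i)$ with $r_i \to 0$ violating the volume bound, extracting a subsequential limit point $p'$, and then comparing with the single tangent-space model $T_{p'}\Or$; you instead argue directly by first establishing a uniform local-model radius $\rho_0$ over the compact $|\Or|$ and then rescaling so that $\lambda\rho_0 > 1$. The contradiction argument buys the paper the convenience of only needing one local model (at the limit point), whereas your direct route requires the uniform-$\rho_0$ statement, which you assert as a standard compactness fact but do not prove --- it is true (e.g.\ via the injectivity radius of the compact frame bundle $F\Or$, or by a Lebesgue-number argument on a finite cover by geodesic-ball charts), but a sentence of justification would strengthen the write-up. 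Otherwise the two arguments are interchangeable.
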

\begin{proof}
Let $c_3$ be the volume of the unit ball in $\R^3$.
Consider a ball $B_r$ of radius $r > 0$ with arbitrary center in a
Euclidean orbifold $\R^3//G$,
where $G$ is a finite subgroup of $O(3)$ with order at most $N$.
Applying the Bishop-Gromov inequality to compare the volume of $B_r$
with the volume of a very large ball having the same center, we
see that $\vol(B_r) \ge \frac{c_3}{N} r^3$.
Put $v_0 = \frac{c_3}{2N}$.
We claim that this value of $v_0$ satisfies the lemma.

To prove this by contradiction, suppose that there is an orbifold
$\Or$ which satisfies the hypotheses of the lemma but for which
the conclusion fails. Then there is a sequence $\{r_i\}_{i=1}^\infty$
of positive numbers
with $\lim_{i \rightarrow \infty} r_i = 0$ along with points
$\{p_i\}_{i=1}^\infty$ in $|\Or|$ so that for each $i$,
we have $\vol(B(p_i, r_i)) < v_0 r_i^3$. After passing to
a subsequence, we can assume that $\lim_{i \rightarrow \infty} p_i
= p'$ for some $p' \in |\Or|$. 
Using the inverse exponential map,
for large $i$ the ball $B(p_i, r_i)$ will, up to small distortion,
correspond to a ball of radius $r_i$ in the tangent space
$T_{p^\prime} \Or$. In view of our choice of $v_0$, this
is a contradiction.
\end{proof}

\begin{assumption}
Hereafter we assume that
our Ricci flows have normalized initial condition.
\end{assumption}

Consider the labels on the edges in the singular part of the orbifold.
They clearly do not change under a smooth Ricci flow.
If some components of the orbifold are discarded at a singularity time
then the set of
edge labels can only change by deletion of some labels. Otherwise,
the surgery procedure will be such that the set of edge labels does not
change,  
although the singular graphs will change. 
Hence the normalized initial condition implies
a uniform upper bound on the orders of the isotropy groups
for all time. 

Let $\Or$ be a connected closed oriented $3$-dimensional orbifold.
Let $g(\cdot)$ be a Ricci flow on $\Or$ defined on a maximal time
interval $[0,T)$ with $T < \infty$. For any
$\epsilon > 0$, we know that there are numbers $r = r(\epsilon) > 0$
and $\kappa = \kappa(\epsilon) > 0$ so that for any point $(p,t)$ with
$Q = R(p,t) \ge r^{-2}$, the solution in
$P(p,t,(\epsilon Q)^{-\frac12}, (\epsilon Q)^{-1})$ is (after rescaling
by the factor $Q$) $\epsilon$-close to the corresponding subset of a
$\kappa$-solution.

\begin{definition}
Define a subset $\Omega$ of $|\Or|$ by
\begin{equation}
\Omega = \{p \in |\Or| : \sup_{t \in [0,T)} |\Rm|(p,t) < \infty \}.
\end{equation}
\end{definition}

\begin{lemma} \label{lemma7.17}
We have
\begin{itemize}
\item $\Omega$ is open in $|\Or|$.
\item Any connected component of $\Omega$ is noncompact.
\item If $\Omega = \emptyset$ then $\Or$ is diffeomorphic to
$S^3//\Gamma$ or $(S^1 \times S^2)//\Gamma$.
\end{itemize}
\end{lemma}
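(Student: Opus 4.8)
The plan is to imitate the manifold argument at the first singular time, as in \cite[Section 67]{Kleiner-Lott}, working equivariantly in local models wherever the passage from manifolds to orbifolds is not literal. The three bullets are of increasing difficulty. For the first, note that if $p \in \Omega$ then by definition $|\Rm|(p, \cdot)$ is bounded on $[0,T)$; a local curvature bound near $p$ (obtained from Shi-type derivative estimates, which hold on orbifolds by working in a local model $(\hU, G_p)$) together with the fact that the metrics are uniformly comparable on a small parabolic neighborhood shows that nearby points also have bounded curvature, so $\Omega$ is open.

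For the second bullet, suppose a connected component $\Omega_0$ of $\Omega$ were compact. Then $\overline{\Omega_0}$ is a compact subset of $|\Or|$ on which the curvature stays bounded up to time $T$, so it is disjoint from the region where $|\Rm| \to \infty$; hence $\Omega_0$ is both open and closed in $|\Or|$, forcing $\Omega_0 = |\Or|$ (by connectedness of $\Or$) and giving a global curvature bound on $[0,T)$. Combined with $\kappa$-noncollapsing (Proposition \ref{prop5.17}) and Hamilton's compactness theorem for orbifold Ricci flows (Proposition \ref{prop5.5}), this would let the flow be continued past $T$, contradicting maximality of $[0,T)$. The main obstacle here is ensuring the continuation argument genuinely uses only orbifold-valid inputs — but $\kappa$-noncollapsing holds by Proposition \ref{prop5.17}, the compactness theorem is Proposition \ref{prop5.5}, and short-time existence for orbifolds was discussed in Subsection \ref{subsect7.4} and the de Turck remarks — so no new ingredient is needed.

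For the third bullet, suppose $\Omega = \emptyset$, i.e. every point develops unbounded curvature as $t \to T$. Fix $\epsilon$ small and let $r = r(\epsilon)$, $\kappa = \kappa(\epsilon)$ be as in the canonical neighborhood statement preceding Definition \ref{lemma7.17}. For $t$ close to $T$, $\min_{|\Or|} R(\cdot, t)$ is large (since there is no point of bounded curvature and $|\Or|$ is compact, $R_{\min}(t) \to \infty$); hence for $t$ near $T$ \emph{every} point $(p,t)$ satisfies $R(p,t) \ge r^{-2}$ and therefore has a canonical neighborhood, which by Lemma \ref{lemma7.12} is a strong $\epsilon$-neck, an $\epsilon$-cap, a closed orbifold diffeomorphic to $S^3//\Z_k$ or $S^3//D_k$, or a closed orbifold of constant positive curvature. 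If some point lies in case (c) or (d), its canonical neighborhood is all of $\Or$ and we are done: $\Or$ is diffeomorphic to $S^3//\Gamma$ (note $S^3//\Z_k$ and $S^3//D_k$ are instances of $S^3//\Gamma$ with $\Gamma$ acting by suspension). Otherwise every point is in a neck or a cap. Then $\Or$ is covered by $\epsilon$-necks together with at most finitely many $\epsilon$-caps; a standard gluing/overlap analysis (as in the $3$-manifold case, using Lemma \ref{lemma7.8} to see that cross-sectional $S^2//\Gamma$'s separate) shows that $\Or$ is obtained from a collection of $\epsilon$-tubes and $\epsilon$-caps glued along their $S^2//\Gamma$ ends. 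Since $\Or$ is closed, the resulting structure is either a union of two caps glued along a tube — giving $(S^2//\Gamma) \times S^1$ up to the $\Z_2$-twist, hence $(S^1 \times S^2)//\Gamma$ — or a closed graph of $S^2//\Gamma$-bundles over $S^1$, which is again $(S^1 \times S^2)//\Gamma$; a cap–tube–cap configuration where both caps are $\epsilon$-caps on $B^3//\Gamma$ yields $S^3//\Gamma$. This combinatorial step — showing the only closed orbifolds assembled entirely from $\epsilon$-necks and $\epsilon$-caps are $S^3//\Gamma$ and $(S^1 \times S^2)//\Gamma$ — is where the real work lies; the argument parallels \cite[Section 67]{Kleiner-Lott}, with the one extra subtlety that a cap may be modeled on $(-1,1) \times_{\Z_2} (S^2//\Gamma)$ rather than $B^3//\Gamma$, and one must track which gluings of such pieces produce a quotient of $S^1 \times S^2$ versus a quotient of $S^3$.
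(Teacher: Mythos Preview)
Your proposal is correct and follows exactly the approach of the paper, which simply refers to \cite[Section 67]{Kleiner-Lott} and transplants the manifold argument to orbifolds via local models. Two small fixes: short-time existence for orbifold Ricci flow is discussed in Section~\ref{sect5} (the de~Turck remarks), not Subsection~\ref{subsect7.4}, which concerns standard solutions; and in your third bullet the case enumeration is slightly tangled --- two $D^3//\Gamma$-type caps glued along a tube yield $S^3//\Gamma$ rather than $(S^1\times S^2)//\Gamma$ (your final clause catches this, but the preceding sentence says otherwise), and you should also account for the mixed case of one $D^3//\Gamma$ cap and one $I\times_{\Z_2}(S^2//\Gamma)$ cap, which again gives an isometric quotient of $S^3$.
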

\begin{proof}
The proof is similar to that in
\cite[Section 67]{Kleiner-Lott}.
\end{proof}

\begin{definition}
Put $\overline{g} = \lim_{t \rightarrow T^-} g(t) \Big|_\Omega$,
a smooth Riemannian metric on $\Or \Big|_\Omega$.
Let $\overline{R}$ denote its scalar curvature.
\end{definition}

\begin{lemma} \label{lemma7.19}
$(\Omega, \overline{g})$ has finite volume.
\end{lemma}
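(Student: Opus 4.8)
The plan is to follow the manifold argument of \cite[Section 67]{Kleiner-Lott}, which uses only the volume evolution equation together with a lower bound on scalar curvature; both ingredients are available here, since Ricci flow results based on the maximum principle — in particular the lower bound on scalar curvature — extend from manifolds to orbifolds, as noted in Subsection \ref{subsect5.2}. \textbf{Step 1: a uniform bound on the volumes of the time slices.} Since $g(0)$ is normalized, $|\Rm| \le 1$ at time zero, so the scalar curvature of $g(0)$ is bounded below by some absolute constant $- C$. Because $\partial_t R = \Delta R + 2 |\Ric|^2 \ge \Delta R$, the orbifold maximum principle gives that $\min R$ is nondecreasing in $t$, so $R(\cdot, t) \ge - C$ for all $t \in [0,T)$. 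Then
\begin{equation}
\frac{d}{dt} \vol(\Or, g(t)) \: = \: - \int_{|\Or|} R \: \dvol \: \le \: C \, \vol(\Or, g(t)),
\end{equation}
so Gronwall's inequality gives $\vol(\Or, g(t)) \le e^{CT} \vol(\Or, g(0))$ for all $t \in [0, T)$; this is finite because $\Or$ is compact and $T < \infty$.

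\textbf{Step 2: passing to the limit.} As $t \to T^-$, the metrics $g(t)$ converge to $\overline{g}$ smoothly on compact subsets of $|\Omega|$; this locally uniform convergence is exactly what makes $\overline{g}$ a smooth Riemannian metric on $\Or \Big|_\Omega$. Choose an exhaustion $K_1 \subset K_2 \subset \cdots$ of $|\Omega|$ by compact sets. For each $j$, the $\overline{g}$-volume of $K_j$ is the $t \to T^-$ limit of the $g(t)$-volume of $K_j$, which is at most $\vol(\Or, g(t)) \le e^{CT} \vol(\Or, g(0))$. Letting $j \to \infty$ and using continuity of the measure from below, $\vol(\Omega, \overline{g}) = \sup_j \operatorname{vol}_{\overline{g}}(K_j) \le e^{CT} \vol(\Or, g(0)) < \infty$.

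The only orbifold-specific wrinkle — and it is minor — is that $\vol$ denotes the $n$-dimensional Hausdorff measure of $|\Or|$, equivalently the Riemannian volume of the manifold $|\Or|_{\reg}$ (Subsection \ref{subsect2.5}); since $|\Or|_{\reg}$ has full Hausdorff $n$-measure and the convergence $g(t) \to \overline{g}$ is smooth there, both the volume evolution identity and the limiting argument apply verbatim. I do not expect any genuine obstacle: the entire content is the lower scalar-curvature bound together with $T < \infty$.
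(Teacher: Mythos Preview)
Your proof is correct and follows essentially the same approach as the paper, which simply cites \cite[Section 67]{Kleiner-Lott}: a uniform lower scalar-curvature bound (via the maximum principle, valid on orbifolds) controls the volume evolution, giving a uniform bound on $\vol(\Or,g(t))$ for $t\in[0,T)$, and smooth convergence on compact subsets of $\Omega$ passes this bound to $\vol(\Omega,\overline{g})$.
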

\begin{proof}
The proof is similar to that in
\cite[Section 67]{Kleiner-Lott}.
\end{proof}

\begin{definition}
For $\rho < \frac{r}{2}$, put 
$\Omega_\rho = \{p \in |\Omega| : \overline{R}(p) \le \rho^{-2} \}$.
\end{definition}

\begin{lemma} \label{lemma7.21}
We have
\begin{itemize}
\item $\Omega_\rho$ is a compact subset of $|\Or|$.
\item If $C$ is a connected component of $\Omega$ which does not
intersect $\Omega_\rho$ then $C$ is a double $\epsilon$-horn or a
capped $\epsilon$-horn.
\item There is a finite number of connected components of $\Omega$
that intersect $\Omega_\rho$, each such component having a finite
number of ends, each of which is an $\epsilon$-horn.
\end{itemize}
\end{lemma}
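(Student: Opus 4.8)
The plan is to follow Perelman's description of the geometry at the first singularity time, in the form given in \cite[Section 67]{Kleiner-Lott}, substituting for each manifold ingredient its orbifold analogue developed above: the canonical neighborhood theorem (Proposition \ref{prop7.3}), the curvature derivative estimates valid in canonical neighborhoods (Lemma \ref{lemma6.23}, together with Lemma \ref{lemma6.10}(3)), the structural dichotomy for three-dimensional $\kappa$-solutions (Lemma \ref{lemma7.12}), and the fact that a cross-sectional $S^2//\Gamma$ of an $\epsilon$-neck separates the two ends of an $\epsilon$-tube or $\epsilon$-horn (Lemma \ref{lemma7.8}). The standing assumption that $\Or$ has no bad $2$-dimensional suborbifolds is used throughout, so that every $\epsilon$-neck is modeled on $\R \times (S^2//\Gamma)$ with $S^2//\Gamma$ a quotient of the round shrinking $2$-sphere; this is what makes the $\epsilon$-tube/horn/cap terminology of Subsection \ref{subsect7.2} available.

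For the first bullet, since $|\Or|$ is compact it is enough to show that $\Omega_\rho$ is closed in $|\Or|$, i.e. that no sequence in $\Omega_\rho$ can converge to a point of $|\Or| \setminus \Omega$. I would first record that, letting $t \to T^-$ in the estimates of Lemma \ref{lemma6.23} (which are uniform in $t$), the limit metric $\overline{g}$ satisfies $|\nabla \overline{R}| \le \eta \, \overline{R}^{3/2}$ wherever $\overline{R} > r^{-2}$; hence $\overline{R}^{-1/2}$ is $\frac{\eta}{2}$-Lipschitz on that region. Combined with the temporal estimate $|\partial_t R| \le \eta R^2$ in a canonical neighborhood — which forces $R^{-1}(p,\cdot)$ to decrease at rate at most $\eta$ as long as it stays below $r^2$ — one sees in the usual way that a point $p$ with $\overline{R}(p) \le \rho^{-2}$ has a metric ball $B(p,\delta)$, with $\delta = \delta(\rho, r, \eta) > 0$ independent of $p$, on which $g(\cdot)$ has uniformly bounded geometry up to time $T$ and converges smoothly to $\overline{g}$; in particular $B(p,\delta) \subset \Omega$. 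A limit point of $\Omega_\rho$ in $|\Or|$ therefore lies in $\Omega$ and, by continuity of $\overline{R}$, in $\Omega_\rho$. Here it matters that ``bounded geometry'' is taken in the local-volume sense of Proposition \ref{prop4.1}, the relevant lower volume bound $\vol(B(p,\cdot,r)) \ge \const \cdot \overline{R}(p)^{-3/2}$ coming from the canonical neighborhood structure (Lemma \ref{lemma7.12}). I expect this combination of the spatial gradient estimate with the temporal estimate, and the transfer of a pointwise curvature bound to bounded geometry on a full neighborhood up to the singular time, to be the technical heart of the lemma and the step requiring the most care.

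Given compactness of $\Omega_\rho$, the second and third bullets are topological. By Lemma \ref{lemma7.17} each component $C$ of $\Omega$ is noncompact, and by Lemma \ref{lemma7.19} it has finite $\overline{g}$-volume, while curvature blows up along each end of $C$. A point $p \in C$ with $\overline{R}(p) \ge r^{-2}$ lies in a canonical neighborhood; since $C$ is noncompact and $\overline{g}$-incomplete with unbounded curvature at its ends, only the strong-$\epsilon$-neck and $\epsilon$-cap alternatives of Lemma \ref{lemma7.12} can occur at such a $p$. A maximal connected union of overlapping $\epsilon$-necks in $C$ is an $\epsilon$-tube whose cross-sections separate its ends by Lemma \ref{lemma7.8}; running the pointwise-to-global argument of \cite[Section 67]{Kleiner-Lott} then shows that each end of $C$ is an $\epsilon$-horn, that a component disjoint from $\Omega_\rho$ consists entirely of such neck and cap regions and so is a double $\epsilon$-horn or a capped $\epsilon$-horn, and that a component meeting $\Omega_\rho$ contains at most a controlled number of disjoint balls of unit curvature scale — by the finite-volume bound together with the volume estimate $\vol \ge \const \cdot \overline{R}^{-3/2}$ from Lemma \ref{lemma7.12} — so there are only finitely many such components, each with finitely many ends, each end an $\epsilon$-horn.
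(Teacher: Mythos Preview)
Your proposal is correct and follows the same approach as the paper: the paper's proof simply states that it is similar to \cite[Section 67]{Kleiner-Lott}, and you have spelled out precisely that argument with the orbifold substitutions (Proposition \ref{prop7.3}, Lemmas \ref{lemma6.23}, \ref{lemma7.12}, \ref{lemma7.8}, \ref{lemma7.17}, \ref{lemma7.19}) that the paper develops for this purpose.
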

\begin{proof}
The proof is similar to that in
\cite[Section 67]{Kleiner-Lott}.
\end{proof}

\subsection{$\delta$-necks in $\epsilon$-horns} \label{subsect7.6}

We define a {\em Ricci flow with surgery}
${\mathcal M}$  to be the obvious orbifold extension of
\cite[Section 68]{Kleiner-Lott}. 
The objects defined there have evident analogs in the orbifold setting.

The {\em $r$-canonical neighborhood assumption} is the obvious orbifold
extension of what's in \cite[Section 69]{Kleiner-Lott}, with condition
(c) replaced by ``$\Or$ is a closed orbifold diffeomorphic to an
isometric quotient of $S^3$''. 

The {\em $\Phi$-pinching assumption} is the same as in 
\cite[Section 69]{Kleiner-Lott}.  

The {\em a priori assumptions} consist of the $\Phi$-pinching
assumption and the $r$-canonical neighborhood assumption.

\begin{lemma} \label{lemma7.23}
Given the pinching function $\Phi$, a number
$\widehat{T} \in (0, \infty)$, a positive nonincreasing function
$r : [0, \widehat{T}] \rightarrow \R$ and a number $\delta \in (0, \frac12)$,
there is a nonincreasing function $h : [0, \widehat{T}] \rightarrow \R$
with $0 < h(t) < \delta^2 r(t)$ so that the following property is satisfied.
Let ${\mathcal M}$ be a Ricci flow with surgery defined on $[0,T)$,
with $T < \widehat{T}$, which satisfies the a priori assumptions
and which goes singular at time $T$. Let $(\Omega, \overline{g})$ denote
the time-$T$ limit. Put $\rho = \delta r(T)$ and
\begin{equation}
\Omega_\rho = \{(p,T) \in \Omega : \overline{R}(p,T) \le \rho^{-2}\}.
\end{equation}
Suppose that $(p,T)$ lies in an $\epsilon$-horn ${\mathcal H} \subset
\Omega$ whose boundary is contained in $\Omega_\rho$. Suppose also that
$\overline{R}(p,T) \ge h^{-2}(T)$. Then the parabolic region
$P(p,T,\delta^{-1} \overline{R}(p,T)^{- \frac12}, -
\overline{R}(p,T)^{-1})$ is contained in a strong $\delta$-neck.
\end{lemma}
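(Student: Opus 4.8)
The plan is to follow the manifold proof in \cite[Section 71]{Kleiner-Lott} essentially verbatim, working equivariantly in local models, and to point out the two places where the orbifold setting requires attention. The argument is by contradiction: suppose the statement fails for some fixed $\Phi$, $\widehat{T}$, and $r(\cdot)$. Then for every candidate nonincreasing function $h$ there is a Ricci flow with surgery $\mathcal M$, a singular time $T < \widehat T$, an $\epsilon$-horn $\mathcal H \subset \Omega$ with $\partial \mathcal H \subset \Omega_\rho$, and a point $(p,T) \in \mathcal H$ with $\overline R(p,T) \ge h^{-2}(T)$ for which $P(p,T,\delta^{-1}\overline R(p,T)^{-1/2}, -\overline R(p,T)^{-1})$ is \emph{not} contained in a strong $\delta$-neck. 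Taking a sequence $h_k \to 0$ pointwise (say $h_k(t) = \frac{1}{k}\,\delta^2 r(t)$), we get a sequence $\mathcal M_k$ of such flows with points $(p_k, T_k)$ and horns $\mathcal H_k$, where $Q_k := \overline R(p_k, T_k) \to \infty$. Rescale each $\mathcal M_k$ by $Q_k$ and shift time so that $(p_k, T_k)$ becomes the spacetime basepoint at time zero with scalar curvature $1$.

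The heart of the proof is then a pointed-compactness argument for the rescaled flows. First I would use the $r$-canonical neighborhood assumption and the $\Phi$-pinching assumption (which, as noted in Subsection \ref{subsect7.6}, carry over verbatim to orbifolds) to get uniform curvature and derivative bounds on larger and larger backward parabolic neighborhoods of $(p_k, T_k)$ in the rescaled flows: since $p_k$ sits deep inside an $\epsilon$-horn with boundary in $\Omega_\rho$, points near $p_k$ at scales comparable to $Q_k^{-1/2}$ have scalar curvature comparable to $Q_k$, so after rescaling the curvature is bounded on $\epsilon$-neck-sized balls, and the canonical neighborhood structure propagates this control backward in time and outward in space along the horn (using Lemma \ref{lemma7.8} that cross-sectional $2$-sphere quotients separate the ends, and the bounded-curvature-at-bounded-distance estimates of Lemma \ref{lemma6.10} for $\kappa$-solutions). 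One checks that the rescaled solutions are $\kappa$-noncollapsed at the relevant scales and that no surgeries interfere on the relevant spacetime region (for large $k$, since $h_k(T_k) \ll Q_k^{-1/2}$, the surgery necks are at a much finer scale and the canonical-neighborhood region around $(p_k,T_k)$ avoids the surgery caps). Applying the orbifold Ricci flow compactness theorem, Proposition \ref{prop5.5} — this is where the orbifold machinery enters, via the frame-bundle argument of Proposition \ref{prop4.1}, and where Remark \ref{remark4.1.5} guarantees the isotropy orders do not jump up — we extract a limit ancient Ricci flow $(\Or_\infty, g_\infty(\cdot))$ with bounded nonnegative-operator curvature, $\kappa$-noncollapsed, hence a $\kappa$-solution.

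Next I would identify this limit. Because $p_k$ lies in an $\epsilon$-horn, the limit orbifold $\Or_\infty$ is noncompact with $|\Or_\infty|_{\epsilon'} $ controlled in a way forcing it, by the structure theory of Subsection \ref{subsect7.3} (Lemmas \ref{lemma7.9}, \ref{lemma7.10}) together with Lemma \ref{lemma6.20}, to be the round evolving cylinder $\R \times (S^2//\Gamma)$; the only noncompact $\kappa$-solution whose every point near infinity is a neck center is the standard shrinking cylinder quotient (here the no-bad-$2$-suborbifold assumption, in force since Subsection \ref{subsect7.2}, rules out $\Sigma^2$ factors — neck regions are modeled on $\R \times (S^2//\Gamma)$). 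In particular, near the basepoint the limit flow on $[-1,0]$ is, up to isometry, the standard evolving $\R \times (S^2//\Gamma)$ which at its final time is isometric to $(-L,L) \times (S^2//\Gamma)$ with $S^2//\Gamma$ of scalar curvature $1$ and $L$ arbitrarily large. Therefore for large $k$ the rescaled parabolic region $P(p_k, T_k, \delta^{-1}, -1)$ is $\delta$-close to the corresponding region of this standard cylinder flow, i.e. $P(p_k, T_k, \delta^{-1}Q_k^{-1/2}, -Q_k^{-1})$ is a strong $\delta$-neck in $\mathcal M_k$ — contradicting the choice of $(p_k,T_k)$.

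The main obstacle is the compactness step: one must verify that the rescaled flows $\mathcal M_k$, restricted to the backward parabolic neighborhoods of $(p_k,T_k)$, genuinely have uniformly bounded curvature, uniform $\kappa$-noncollapsing, and are unaffected by surgeries, so that Proposition \ref{prop5.5} applies. This is exactly the content of the corresponding manifold argument and requires carefully combining the $r$-canonical neighborhood assumption with the geometry of $\epsilon$-horns (using that the horn's mouth lies in $\Omega_\rho$ at a definite scale $\rho = \delta r(T)$ while $\overline R(p,T) \ge h^{-2}(T)$ with $h$ as small as we like). The orbifold-specific subtleties — basepoints not being preserved under the limit maps (Remark \ref{remark4.2}), and isotropy orders being controlled (Lemma \ref{lemma5.16.5}, Remark \ref{remark4.1.5}) — are handled by the framework already set up, so no new difficulty arises there beyond bookkeeping.
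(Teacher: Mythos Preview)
Your proposal is correct and follows the same approach as the paper, which simply defers to \cite[Section 71]{Kleiner-Lott}; you have written out the contradiction/rescaling/compactness argument from there with the appropriate orbifold substitutions (Proposition \ref{prop5.5} for compactness, Lemma \ref{lemma6.4} and the no-bad-suborbifold hypothesis for the cross-section). One small sharpening: the cleanest way to identify the limit as $\R \times (S^2//\Gamma)$ is not via Lemmas \ref{lemma7.9}--\ref{lemma7.10} but via the splitting theorem (Proposition \ref{prop3.1}) --- since $p_k$ lies deep in a horn with unbounded rescaled distance to both $\Omega_\rho$ and the singular end, the limit contains a line and hence splits off an $\R$-factor, after which the two-dimensional classification finishes it.
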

\begin{proof}
The proof is similar to that in \cite[Section 71]{Kleiner-Lott}.
\end{proof}

\subsection{Surgery and the pinching condition} \label{subsect7.7}

\begin{lemma} \label{lemma7.25}
There exists $\delta^\prime = \delta^\prime(\delta)
> 0$ with $\lim_{\delta \rightarrow 0} \delta^\prime(\delta) = 0$
and a constant $\delta_0 > 0$ such that the following
holds. Suppose that $\delta < \delta_0$, $p \in \{0\} \times 
(S^2//\Gamma)$ and $h_0$ is a Riemannian metric on $(-A, \frac{1}{\delta})
\times (S^2//\Gamma)$ with $A > 0$ and $R(p) > 0$ such that:
\begin{itemize}
\item $h_0$ satisfies the time-$t$ Hamilton-Ivey pinching condition.
\item $R(p) h_0$ is $\delta$-close to $g_{cyl}$ in the 
$C^{[\frac{1}{\delta}]+1}$-topology.
\end{itemize}
Then there are a $B = B(A) > 0$ and a smooth metric $h$ on
$\R^3//\Gamma = (D^3//\Gamma) \cup ((-B, \frac{1}{\delta}) \times (S^2//\Gamma)
)$ such that
\begin{itemize}
\item $h$ satisfies the time-$t$ pinching condition.
\item The restriction of $h$ to $[0, \frac{1}{\delta}) \times (S^2//\Gamma)$
is $h_0$.
\item The restiction of $R(p) h$ to $(-B, -A) \times (S^2//\Gamma)$ is $g_0$,
the initial metric of a standard solution.
\end{itemize}
\end{lemma}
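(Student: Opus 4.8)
The plan is to perform the surgery as an explicit modification of $h_0$ in a controlled collar of the cross-section $\{0\}\times(S^2//\Gamma)$, working $\Gamma$-equivariantly on the $S^2$ factor throughout; since $\Gamma$ is a fixed finite subgroup of $\SO(3)$ acting only on the sphere, every construction below can be carried out on the round $S^2$ and then descended. First I would recall that the manifold version of this statement is the surgery construction of Hamilton, as presented in \cite[Section 72]{Kleiner-Lott} (and originally due to Perelman): given a metric on a long round neck, one interpolates between the neck metric and the standard cap metric $g_0$ over a bounded parameter interval $(-B,-A)$, with $B=B(A)$ chosen by the manifold argument. The key point is that the entire interpolation lives on $(-B,\tfrac1\delta)\times S^2$ with $\Gamma$ acting trivially on the interval coordinate, so it commutes with the $\Gamma$-action and passes to the quotient $(-B,\tfrac1\delta)\times(S^2//\Gamma)$, and the cap $D^3$ attached at the $-B$ end carries the $\Gamma$-action coming from $\SO(3)\subset\SO(4)$, yielding $D^3//\Gamma$ and hence $\R^3//\Gamma$.

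The steps, in order, would be: (1) Use the $\delta$-closeness of $R(p)h_0$ to $g_{cyl}$ in $C^{[1/\delta]+1}$ to transfer all the pointwise curvature and derivative estimates on the neck portion $(-A,\tfrac1\delta)\times(S^2//\Gamma)$ that are used in the manifold construction; these are scale-invariant after normalizing by $R(p)$. (2) Quote the manifold interpolation lemma to produce, from the normalized metric $R(p)h_0$ restricted to a neighborhood of the cross-section, a metric on $\R^3//\Gamma$ (using $g_0$ on $(-B,-A)\times(S^2//\Gamma)$ and $R(p)h_0$ on $[0,\tfrac1\delta)\times(S^2//\Gamma)$, interpolating on $(-A,0)$) that is $C^\infty$ and agrees with $R(p)h_0$ on the region $[0,\tfrac1\delta)$. (3) Rescale back by $R(p)^{-1}$ to obtain $h$ with $h|_{[0,1/\delta)\times(S^2//\Gamma)}=h_0$ and $R(p)h|_{(-B,-A)}=g_0$. (4) Verify the time-$t$ Hamilton–Ivey pinching condition for $h$: on the unmodified part it holds by hypothesis; on the cap region the standard-solution metric satisfies pinching (Lemma \ref{lemma7.14}(1) gives nonnegative curvature operator there, which is far stronger); and on the interpolation region one controls curvature by continuity of the construction and the fact, from the manifold argument, that the interpolated curvature operator is bounded below in a way compatible with $\Phi$ at scale $R(p)^{-1}$ — here one uses that pinching improves under the rescaling $R(p)h\mapsto$ larger scalar curvature, exactly as in \cite[Section 72]{Kleiner-Lott}. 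The function $\delta'(\delta)$ records how close the resulting metric is to being a standard initial condition, and $\delta'\to 0$ as $\delta\to 0$ follows from the $C^{[1/\delta]+1}$-closeness hypothesis.

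The main obstacle is the pinching verification in step (4) on the interpolation collar $(-A,0)\times(S^2//\Gamma)$: one must check that splicing does not create a point where some sectional curvature is more negative than $-\Phi(R)$ allows. In the manifold case this is handled by the a priori estimates and the observation that the curvature of the spliced metric, at scale $R(p)^{-1}$, differs from that of the model cylinder/cap by a controllably small amount, while the scalar curvature stays comparable to $R(p)$ which is large; since $\Phi(s)/s\to 0$, largeness of $R$ forces the pinching inequality. In the orbifold setting this argument is unchanged because all curvature computations are local and performed in the manifold charts $(-B,\tfrac1\delta)\times S^2$ (on which the metric is $\Gamma$-invariant), so no new phenomenon arises at the orbifold locus. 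I would therefore present step (4) by citing the manifold estimate verbatim and noting the equivariance, rather than redoing the computation.
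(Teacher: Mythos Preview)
Your proposal is correct and takes essentially the same approach as the paper: the paper's proof is the single sentence ``The proof is the same as that in \cite[Section 72]{Kleiner-Lott}, working equivariantly,'' and your outline is precisely an expansion of that sentence --- perform the Hamilton--Perelman neck-to-cap interpolation $\Gamma$-equivariantly on the $S^2$ factor and descend to the quotient, with the pinching verification unchanged because curvature is computed locally in manifold charts.
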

\begin{proof}
The proof is the same as that in \cite[Section 72]{Kleiner-Lott},
working equivariantly.
\end{proof}

We define a {\em Ricci flow with $(r, \delta)$-cutoff}
by the obvious orbifold extension of the definition
in \cite[Section 73]{Kleiner-Lott}.

In the surgery procedure, one first throws away all connected
components of $\Omega$ which do not intersect $\Omega_\rho$.
For each connected component $\Omega_j$ of $\Omega$ that intersects
$\Omega_\rho$ and for each $\epsilon$-horn of $\Omega_j$, take
a cross-sectional $S^2$-quotient that lies far in the $\epsilon$-horn.
Let $X$ be what's left after cutting the $\epsilon$-horns at the
$2$-sphere quotients and removing the tips. The (possibly disconnected)
postsurgery orbifold $\Or^\prime$ is the result of capping off
$\partial X$ by discal $3$-orbifolds.

\begin{lemma} \label{lemma7.26}
The presurgery orbifold can be obtained from the postsurgery orbifold
by applying the following operations finitely many times:
\begin{itemize}
\item Taking the disjoint union with a finite isometric quotient of
$S^1 \times S^2$ or $S^3$. 
\item Performing a $0$-surgery.
\end{itemize}
\end{lemma}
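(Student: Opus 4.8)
The plan is to analyze the surgery procedure step by step and track how the presurgery orbifold $\Or$ relates to the postsurgery orbifold $\Or'$, showing that each elementary move in the recipe is accounted for. Recall from Subsection \ref{subsect7.5} that at the singular time $T$ one has the open subset $\Omega \subset |\Or|$ with the limit metric $\overline{g}$, and by Lemma \ref{lemma7.21} the components of $\Omega$ are of two types: those that miss $\Omega_\rho$ (which are double $\epsilon$-horns or capped $\epsilon$-horns) and the finitely many that meet $\Omega_\rho$ (each with finitely many $\epsilon$-horn ends). The surgery discards the first type entirely, cuts each $\epsilon$-horn of the second type along a cross-sectional $S^2//\Gamma$, removes the tips, and caps off the resulting $S^2//\Gamma$ boundary components with discal $3$-orbifolds $D^3//\Gamma$.

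First I would show that $\Or$ is obtained from the union of the closures of the components of $\Omega$ meeting $\Omega_\rho$ (call this $\Omega_{\mathrm{good}}$) by a bounded sequence of $0$-surgeries. The complement $|\Or| \setminus \Omega$ is a compact set with a neighborhood swept out by $\epsilon$-horns, double $\epsilon$-horns, capped $\epsilon$-horns and $\epsilon$-caps (the structure being governed by Lemma \ref{lemma7.21} and the canonical neighborhood theorem, Proposition \ref{prop7.3}); each connected piece of the ``bad'' region, when capped off along the $S^2//\Gamma$-cross-sections where it meets $\Omega_{\mathrm{good}}$, is a closed orbifold covered by the canonical-neighborhood classification, i.e. an isometric quotient of $S^1 \times S^2$ or $S^3$ (using Lemmas \ref{lemma7.10}, \ref{lemma7.11}, \ref{lemma7.9} and \ref{lemma7.12}, together with the no-bad-suborbifold assumption so that necks are modelled on $\R \times (S^2//\Gamma)$ with $S^2//\Gamma$ a genuine spherical quotient). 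Undoing the capping along each such $S^2//\Gamma$ is exactly one $0$-surgery in the sense of Subsection \ref{subsect2.1}; thus $\Or$ differs from a disjoint union of $\Omega_{\mathrm{good}}$-pieces-capped-off and a finite collection of isometric quotients of $S^1 \times S^2$ or $S^3$ by finitely many $0$-surgeries, which is what the lemma asserts for this portion.

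Next I would observe that the postsurgery orbifold $\Or'$ is precisely obtained from $\Omega_{\mathrm{good}}$ by the same capping-off operation: one cuts each $\epsilon$-horn of $\Omega_{\mathrm{good}}$ at a cross-sectional $S^2//\Gamma$, throws away the ``thin'' end beyond the cut (which by Lemma \ref{lemma7.21} is itself part of an $\epsilon$-horn and hence diffeomorphic to a half-open cylinder $[0,\infty)\times(S^2//\Gamma)$), and glues in a $D^3//\Gamma$. So the only difference between ``$\Or$ with its bad components capped off'' and $\Or'$ is that in the former we have capped $\Omega_{\mathrm{good}}$ by these discal orbifolds, while additionally retaining the disjoint isometric quotients of $S^1\times S^2$ or $S^3$ and having performed the $0$-surgeries described above. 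Reversing the logic: starting from $\Or'$, taking disjoint unions with the finitely many spherical space-form-type quotients, and then performing the finitely many $0$-surgeries (each gluing back a neck where a horn was cut) reconstructs $\Or$ up to diffeomorphism, since a $0$-surgery along a pair of discal $3$-orbifolds $D^3//\Gamma$ glued by $I\times(D^3//\Gamma)$ is exactly the inverse of cutting along an $S^2//\Gamma$ and capping both sides. The bookkeeping here is the orbifold analogue of the manifold discussion in \cite[Section 73]{Kleiner-Lott}.

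The main obstacle I expect is verifying that the ``bad'' components, after being capped off along their $S^2//\Gamma$-cross-sections, are genuinely diffeomorphic to isometric quotients of $S^1 \times S^2$ or $S^3$ and nothing more exotic --- in particular ruling out closed orbifolds of the form $(S^1\times S^2)//\Gamma$ or $S^3//\Gamma$ with $\Gamma$ acting in a way that is not already captured by the canonical-neighborhood classification of Lemma \ref{lemma7.12} and Lemma \ref{lemma7.17}. This is handled by the no-bad-$2$-suborbifold assumption (which forces all necks to be round spherical-quotient necks) together with Lemmas \ref{lemma7.9}--\ref{lemma7.12}: an $\epsilon$-tube capped at both ends, or an $\epsilon$-cap, or a component with $|\Or|_\epsilon=\emptyset$, each falls into the list $S^3//\Gamma$, $(S^1\times S^2)//\Gamma$ by direct inspection of the canonical neighborhood types. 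The remaining work --- checking that the cutting loci can be chosen so that $\partial X$ consists of disjoint $S^2//\Gamma$'s bounding the caps, and that the gluing is well-defined up to diffeomorphism --- is routine given Lemma \ref{lemma7.8} (cross-sections separate the ends of a horn) and the neighborhood structure from Proposition \ref{prop7.3}.
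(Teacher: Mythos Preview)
Your proposal is correct and follows essentially the same approach as the paper, which simply defers to \cite[Section~73]{Kleiner-Lott}; you have spelled out the orbifold adaptation of that argument in detail. The key ingredients you identify---the structure of $\Omega$ from Lemma~\ref{lemma7.21}, the canonical-neighborhood classification (Lemmas~\ref{lemma7.9}--\ref{lemma7.12}), Lemma~\ref{lemma7.8} on separation by cross-sections, and the observation that capping off the discarded high-curvature pieces yields closed orbifolds that are finite isometric quotients of $S^1\times S^2$ or $S^3$---are exactly what the manifold argument uses, transported to the orbifold setting.
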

\begin{proof}
The proof is similar to that in \cite[Section 73]{Kleiner-Lott}.
\end{proof}

\subsection{Evolution of a surgery cap} \label{subsect7.8}

\begin{lemma} \label{lemma7.27}
For any $A < \infty$, $\theta \in (0,1)$ and
$\widehat{r} > 0$, one can find $\widehat{\delta} = 
\widehat{\delta}(A,\theta,\widehat{r}) > 0$ with the following property.
Suppose that we have a Ricci flow with $(r, \delta)$-cutoff defined
on a time interval $[a,b]$ with $\min r = r(b) \ge \widehat{r}$.
Suppose that there is a surgery time $T_0 \in (a,b)$ with
$\delta(T_0) \le \widehat{\delta}$. Consider a given surgery at the
surgery time and let $(p, T_0) \in {\mathcal M}^+_{T_0}$ be the
center of the surgery cap. Let
$\widehat{h} = h(\delta(T_0),\epsilon,r(T_0), \Phi)$ be the surgery
scale given by Lemma \ref{lemma7.23} and put $T_1 = \min(b, T_0 + \theta
\widehat{h}^2)$. Then one of the two following possibilities occurs:
\begin{enumerate}
\item The solution is unscathed on $P(p,T_0,A\widehat{h},T_1 - T_0)$. The
pointed solution there is, modulo parabolic rescaling, $A^{-1}$-close to
the pointed flow on $U_0 \times [0,(T_1-T_0) \widehat{h}^{-2}]$, where
$U_0$ is an open subset of the initial time slice $|{\mathcal S}_0|$ of
a standard solution ${\mathcal S}$ and the basepoint is the center of the
cap in $|{\mathcal S}_0|$.
\item Assertion (1) holds with $T_1$ replaced by some $t^+ \in [T_0, T_1)$,
where $t^+$ is a surgery time.  Moreover, the entire ball
$B(p,T_0, A\widehat{h})$ becomes extinct at time $t^+$.
\end{enumerate}
\end{lemma}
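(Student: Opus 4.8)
The plan is to reduce Lemma \ref{lemma7.27} to the compactness of the space of standard solutions (Lemma \ref{lemma7.14}(7)) together with the canonical neighborhood assumption, exactly as in the manifold case \cite[Section 74]{Kleiner-Lott}, working equivariantly in local models. First I would set up a contradiction argument: fix $A$, $\theta$, $\widehat r$, and suppose there is a sequence of Ricci flows with $(r,\delta_k)$-cutoff with $\delta_k \to 0$, surgery times $T_{0,k}$, surgery caps centered at $(p_k, T_{0,k})$, and surgery scales $\widehat h_k$, for which neither alternative (1) nor (2) holds. After parabolically rescaling by $\widehat h_k^{-2}$ and time-shifting so that the surgery occurs at time $0$, the postsurgery metric on the cap region is, by Lemma \ref{lemma7.25} and the construction of the standard cap, $\epsilon_k$-close (with $\epsilon_k \to 0$) to the initial time slice of a standard solution on $U_0 \subset |{\mathcal S}_0|$ near the cap center. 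The goal is to propagate this closeness forward in time up to $T_1 - T_0$.

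The key steps are: (i) a \emph{bounded curvature on a short time interval} argument — using the $r$-canonical neighborhood assumption and the a priori assumptions, show that for some definite time after the surgery the rescaled solutions have curvature bounded on the ball $B(p_k, A)$ of the rescaled metric, so that Hamilton's compactness theorem (Proposition \ref{prop5.5} in the orbifold setting) applies; (ii) extract a limiting Ricci flow, which is an incomplete solution whose initial slice contains a standard cap and all of whose high-curvature points have canonical neighborhoods, hence by the uniqueness-of-the-standard-solution argument (as in \cite[Section 64, 74]{Kleiner-Lott}, again working equivariantly in the local model $\R^3//\Gamma$) the limit agrees with a standard solution for as long as it is defined; (iii) a continuity/bootstrap argument in time: let $t^*$ be the supremum of times up to which the rescaled solutions stay $A^{-1}$-close to the standard solution without being scathed on $B(p_k, A\widehat h_k)$; if $t^* < \min(\theta, (T_1 - T_0)\widehat h_k^{-2})$ then either curvature blows up — forcing, via canonical neighborhoods and the fact that the standard solution becomes singular only at time $1 > \theta$, the entire ball to become extinct at a surgery time, which is alternative (2) — or one pushes past $t^*$, contradicting its definition. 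The pinching is preserved under the flow by the maximum principle (valid for orbifolds), and surgeries inside the parabolic neighborhood are controlled because their scales are $\ll \widehat h_k$.

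The main obstacle I anticipate is step (ii): proving that the limit of the rescaled flows really is a standard solution. This requires the orbifold analogue of the uniqueness theorem for the standard solution on $\R^3//\Gamma$. Since $\Gamma \subset \SO(3)$ acts freely away from the origin and the standard initial metric $g_0$ is $\SO(3)$-invariant, one can run the argument of \cite[Sections 61--64]{Kleiner-Lott} $\Gamma$-equivariantly on $\R^3$ (the Ricci flow commutes with isometries, so a $\Gamma$-invariant initial metric stays $\Gamma$-invariant), and then pass to the quotient; the canonical-neighborhood input needed there is supplied by Lemma \ref{lemma7.14}(5) and Proposition \ref{prop7.3}. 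The only genuinely orbifold-specific point is to verify that the curvature bounds in step (i) — which in \cite[Section 74]{Kleiner-Lott} come from a point-picking argument plus canonical neighborhoods — go through with ``injectivity radius'' replaced by ``local volume'' and with the upper bound $N$ on isotropy orders (available by the normalized initial condition); this is routine given Lemma \ref{lemma5.16.5} and Proposition \ref{prop4.1}, but it is where one must be careful that no curvature concentrates on the singular locus. Everything else is a direct transcription of the manifold proof.
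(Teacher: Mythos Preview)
Your proposal is correct and is essentially the same as the paper's approach: the paper simply says the proof is similar to that in \cite[Section 74]{Kleiner-Lott}, and what you have written is an accurate sketch of that argument together with the orbifold adaptations (equivariant standard-solution uniqueness, local volume replacing injectivity radius, Proposition \ref{prop4.1} for compactness) that the paper implicitly intends. If anything, you have given more detail than the paper does.
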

\begin{proof}
The proof is similar to that in \cite[Section 74]{Kleiner-Lott}.
\end{proof}

\subsection{Existence of Ricci flow with surgery} \label{subsect7.9}

\begin{proposition} \label{prop7.29}
There exist decreasing sequences $0 < r_j < \epsilon^2$,
$\kappa_j > 0$, $0 < \overline{\delta}_j < \epsilon^2$ for
$1 \le j \le \infty$, such that for any normalized initial data on
an orbifold $\Or$ and any
nonincreasing function $\delta : [0, \infty) \rightarrow (0, \infty)$
such that $\delta \le \overline{\delta}_j$ on
$[2^{j-1} \epsilon, 2^j \epsilon]$, the Ricci flow with
$(r, \delta)$-cutoff is defined for all time and is $\kappa$-noncollapsed
at scales below $\epsilon$. Here $r$ and $\kappa$ are the functions on
$[0, \infty)$ so that $r \Big|_{[2^{j-1}\epsilon, 2^j \epsilon]} = r_j$ and
$\kappa \Big|_{[2^{j-1}\epsilon, 2^j \epsilon]} = \kappa_j$, and
$\epsilon > 0$ is a global constant.
\end{proposition}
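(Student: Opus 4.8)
The plan is to carry out a simultaneous induction on $j$, exactly as in the manifold case \cite[Section 77]{Kleiner-Lott}, with the orbifold modifications assembled from Subsections \ref{subsect7.1}--\ref{subsect7.8}. The statement packages together three facts that must be proved in lockstep: (i) for the chosen $r_j$, the $r$-canonical neighborhood assumption holds on $[2^{j-1}\epsilon, 2^j\epsilon]$; (ii) for $\delta \le \overline{\delta}_j$ there, the Ricci flow with $(r,\delta)$-cutoff does not go extinct in finite time and the surgeries are well-defined (this uses Lemma \ref{lemma7.23}, Lemma \ref{lemma7.25} and Lemma \ref{lemma7.26}); and (iii) $\kappa_j$-noncollapsing holds at scales below $\epsilon$ on that interval. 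The inductive step assumes these on $[0, 2^{j-1}\epsilon]$ and produces the constants $r_j, \kappa_j, \overline{\delta}_j$ valid on the next dyadic block. Crucially, the normalized initial condition gives a uniform bound $N = N(v_0)$ on the orders of the isotropy groups for all time (as noted after the definition of ``normalized''), so Lemma \ref{lemma6.22} lets us work with a \emph{fixed} $\kappa_0 = \kappa_0(N)$ for all the $\kappa$-solutions that arise as blow-up limits; this is what makes the compactness arguments of Section \ref{sect6} applicable with constants independent of the particular flow.

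Concretely, the steps I would carry out are: First, establish the canonical neighborhood result with surgery, i.e. the orbifold analogue of \cite[Lemma 77.2 / Section 77]{Kleiner-Lott}: a point of large scalar curvature in a Ricci flow with $(r,\delta)$-cutoff satisfying the a priori assumptions on $[0,t]$ has a canonical neighborhood as in Lemma \ref{lemma7.12}. This is proved by contradiction and point-picking; the blow-up limit is built using the orbifold compactness theorem (Proposition \ref{prop5.5}), the limit is a $\kappa_0$-solution by Proposition \ref{prop5.18} together with Lemma \ref{lemma6.22}, and one invokes Lemma \ref{lemma7.27} to handle points that are close, in the backward parabolic region, to a surgery cap. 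Second, feed this into the construction of the flow: given the canonical neighborhood scale $r_j$, Lemma \ref{lemma7.23} produces the surgery scale $h(t)$, Lemma \ref{lemma7.25} shows the surgery can be performed compatibly with the Hamilton--Ivey pinching, and the pinching is preserved in forward time by the maximum principle (valid on orbifolds, Section \ref{sect5}). Third, derive $\kappa$-noncollapsing for the surgered flow: this is the orbifold version of \cite[Section 78]{Kleiner-Lott}, using $\mathcal L$-geodesics avoiding the surgery regions, the reduced volume monotonicity (Lemma \ref{lemma5.14}) and Lemma \ref{lemma7.23} to control how close $\mathcal L$-geodesics pass to surgery caps; the bound $|G_p| \le N$ enters to keep all the local volume comparisons uniform. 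Fourth, a standard finiteness argument (volume drops by a definite amount at each surgery, while the flow can only exist for bounded time between surgeries at a given scale) shows only finitely many surgeries occur on each compact time interval, so the flow is defined for all time.

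The main obstacle is the canonical neighborhood step in the presence of surgeries: one must rule out that a blow-up sequence around a high-curvature point drifts into a region that was just modified by a surgery, which is precisely where Lemma \ref{lemma7.27} (evolution of a surgery cap) is needed, and one must check that the orbifold bookkeeping — which $\Gamma$'s appear as fibers of the neck regions, and the fact (Assumption in Subsection \ref{subsect7.2}) that no bad $2$-suborbifolds occur so all necks are modeled on $\R \times (S^2//\Gamma)$ with $S^2//\Gamma$ spherical — is consistent under surgery; Lemma \ref{lemma7.26} and the discussion of edge labels after the normalization assumption guarantee that surgery does not create new isotropy or bad suborbifolds. A secondary technical point is that the choice of cross-sectional $S^2//\Gamma$ deep in an $\epsilon$-horn (Lemma \ref{lemma7.21}, Lemma \ref{lemma7.8}) must separate the horn, which is the orbifold version of the usual topological input; this is handled by Lemma \ref{lemma7.8}. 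Beyond these points, every estimate is the equivariant lift of the corresponding one in \cite[Sections 77--78]{Kleiner-Lott}, with $N$-uniformity supplied by Lemma \ref{lemma5.16.5} and Lemma \ref{lemma6.22}.
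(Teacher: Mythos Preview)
Your proposal is correct and follows essentially the same approach as the paper, which simply states that the proof is similar to that in \cite[Sections 77--80]{Kleiner-Lott}. Your outline is in fact a faithful and more detailed unpacking of exactly that reference, with the orbifold-specific inputs (Lemmas \ref{lemma7.23}--\ref{lemma7.27}, the isotropy bound from Lemma \ref{lemma5.16.5}, and the uniform $\kappa_0$ from Lemma \ref{lemma6.22}) slotted in at the right places; note only that the full reference should be Sections 77--80 rather than 77--78, since the noncollapsing justification and the inductive construction occupy the later sections as well, though your ``third'' and ``fourth'' steps already cover that content.
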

\begin{proof}
The proof is similar to that in \cite[Sections 77-80]{Kleiner-Lott}.
\end{proof}

\begin{remark} \label{remark7.30}
We restrict to $3$-orbifolds without bad $2$-suborbifolds in order to
perform surgery. Without this assumption, there could be a neckpinch
whose cross-section is a bad $2$-orbifold $\Sigma$.  In the case of a
nondegenerate neckpinch, the blowup limit
would be the product of $\R$ with an evolving Ricci soliton metric on
$\Sigma$. The problem in performing surgery is that after
slicing at a bad cross-section, there is no evident way to cap off the
ensuing pieces with $3$-dimensional orbifolds so as to preserve
the Hamilton-Ivey pinching condition.
\end{remark}

\section{Hyperbolic regions} \label{sect8}

In this section we show that the $w$-thick part of the evolving
orbifold approaches a finite-volume Riemannian orbifold with
constant curvature $- \: \frac14$.

As a standing assumption in this section,
we suppose that we
have a solution to the Ricci flow with $(r, \delta)$-cutoff
and with normalized initial data.

\subsection{Double sided curvature bounds in the thick part} \label{subsect8.1}

\begin{proposition} \label{prop8.1}
Given $w > 0$, one can find $\tau = \tau(w) > 0$,
$K = K(w) < \infty$, $\overline{r} = \overline{r}(w) > 0$ and
$\theta = \theta(w) > 0$ with the following property. 
Let $h_{max}(t_0)$ be the
maximal surgery radius on $[t_0/2, t_0]$. Let $r_0$ satisfy
\begin{enumerate}
\item $\theta^{-1} h_{max}(t_0) \le r_0 \le \overline{r} \sqrt{t_0}$.
\item The ball $B(p_0,t_0,r_0)$ has sectional curvatures at least
$- r_0^{-2}$ at each point.
\item $\vol(B(p_0,t_0,r_0)) \ge w r_0^3$.
\end{enumerate}
Then the solution is unscathed in $P(p_0,t_0,r_0/4,-\tau r_0^2)$
and satisfies $R < K r_0^{-2}$ there.
\end{proposition}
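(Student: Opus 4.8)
The plan is to mimic the manifold argument of Perelman \cite[Section 8.1]{Perelman2}, as presented in \cite[Section 82]{Kleiner-Lott}, working in the orbifold setting where it is available and invoking the orbifold tools developed in Sections 4--7 where the manifold argument uses manifold-specific facts. The overall scheme is a contradiction/compactness argument: suppose the proposition fails for some $w > 0$, so there is a sequence of Ricci flows with $(r,\delta)$-cutoff, points $(p_i, t_i)$ and scales $r_i$ satisfying (1)--(3) with $\tau$, $K$, $\overline r$, $\theta$ degenerating appropriately along the sequence, yet the solution is either scathed in the shrinking parabolic neighborhoods or has $R \geq K_i r_i^{-2}$ somewhere there. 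One then rescales by $r_i^{-2}$, recenters at $(p_i, t_i)$, and seeks a limiting Ricci flow that is ancient, $\kappa$-noncollapsed, of nonnegative curvature, with a definite amount of volume in the unit ball --- hence (by the structure theory) of bounded curvature --- contradicting the supposed curvature blowup.

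First I would set up the rescaled flows $g_i(t) = r_i^{-2} g(t_i + r_i^2 t)$ and observe that hypothesis (1), $\theta_i^{-1} h_{max}(t_i) \le r_i$ with $\theta_i \to 0$, forces the surgery scale to be small compared to $r_i$; this is what guarantees that, on a controlled backward parabolic neighborhood, the rescaled flows are unscathed for a definite amount of backward time --- the standard ``surgeries are sparse and small'' mechanism, whose orbifold version follows from Lemma \ref{lemma7.23} and Lemma \ref{lemma7.27} exactly as in the manifold case. Hypotheses (2) and (3) plus the $\Phi$-pinching give, after rescaling, curvature bounded below by $-1$ and volume of the unit ball bounded below by $w$; Corollary \ref{corollary6.15} then upgrades this to a two-sided curvature bound on a slightly smaller backward parabolic region, so that one may extract a limit using the orbifold Ricci flow compactness theorem, Proposition \ref{prop5.5}. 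The lower volume bound prevents collapse, so by Remark \ref{remark4.1.5} and Lemma \ref{lemma5.16.5} the orders of the isotropy groups stay bounded, and the limiting object $(\Or_\infty, g_\infty(\cdot))$ is a genuine Ricci flow solution on a three-dimensional orbifold with nonnegative curvature operator (by Hamilton--Ivey pinching, which passes to the limit), defined on a time interval of the form $(-a, 0]$.

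The crux of the argument is then to push $a$ to $\infty$, obtaining an ancient solution, and to show it is $\kappa$-noncollapsed at all scales: for this one uses that the original flows are $\kappa$-noncollapsed at scales below $\epsilon$ by Proposition \ref{prop7.29}, together with the long-time existence there, exactly as in \cite[Section 82]{Kleiner-Lott}. Once $(\Or_\infty, g_\infty(\cdot))$ is recognized as a $\kappa$-solution, part (1) of Lemma \ref{lemma6.10} gives a curvature bound on the unit ball from the volume lower bound, and then the canonical neighborhood / derivative estimates (Lemma \ref{lemma6.10}(3), Lemma \ref{lemma6.23}) provide uniform curvature control, contradicting the assumed existence of points with $R \ge K_i r_i^{-2}$; the unscathedness conclusion comes out of the same limiting picture. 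I expect the main obstacle to be the bookkeeping needed to guarantee that no surgery intrudes on the relevant parabolic neighborhoods along the sequence --- i.e. making precise, in the orbifold Ricci-flow-with-surgery framework, that one can choose $\theta(w)$ small enough that hypothesis (1) rules out scathing on $P(p_0, t_0, r_0/4, -\tau r_0^2)$; but this is structurally identical to the manifold argument, since surgeries in the orbifold setting occur along $S^2//\Gamma$ cross-sections of $\epsilon$-horns with controlled scale $h$, and Lemma \ref{lemma7.27} controls the evolution of surgery caps just as in \cite{Kleiner-Lott}.
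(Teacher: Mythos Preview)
Your overall approach matches the paper's: both defer to the manifold argument in \cite[Sections 81--86]{Kleiner-Lott}, adapting the relevant tools to orbifolds. However, your sketch contains a circularity that the paper's terse proof sidesteps by explicitly flagging Proposition~\ref{prop5.18}. You write that hypothesis (1) (surgery scale $\ll r_0$) ``guarantees that \ldots\ the rescaled flows are unscathed for a definite amount of backward time,'' and \emph{then} invoke Corollary~\ref{corollary6.15} for curvature bounds. But unscathedness does not follow from $h_{\max} \ll r_0$ alone: surgeries are performed where curvature is large ($\sim h^{-2}$), so to rule them out in $P(p_0,t_0,r_0/4,-\tau r_0^2)$ you must \emph{first} bound the curvature there. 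Conversely, Corollary~\ref{corollary6.15} presupposes an unscathed backward parabolic region. The two steps cannot bootstrap each other as you have them ordered.

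The resolution in \cite[Sections 82--84]{Kleiner-Lott} is to first obtain a curvature bound \emph{at the single time} $t_0$ on $B(p_0,t_0,r_0/4)$ by a point-selection and blowup argument; this is where Proposition~\ref{prop5.18} enters, supplying $\kappa$-noncollapsing at the selected high-curvature point (which need not be $p_0$ itself, so hypothesis (3) is not directly applicable there). With the time-$t_0$ curvature bound in hand, one then alternates: curvature control $\Rightarrow$ no surgeries for a short backward interval $\Rightarrow$ Corollary~\ref{corollary6.15} extends curvature control backward $\Rightarrow$ repeat. Your invocation of Proposition~\ref{prop7.29} for $\kappa$-noncollapsing is insufficient because that only gives noncollapsing at scales below the fixed parameter $\epsilon$, whereas the point-selection argument needs it at the (possibly much smaller) curvature scale of the selected point, located at distance up to $r_0/4$ from $p_0$ --- precisely the regime of Proposition~\ref{prop5.18}.
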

\begin{proof}
The proof is similar to that in \cite[Sections 81-86]{Kleiner-Lott}.
In particular, it uses Proposition \ref{prop5.18}.
\end{proof}

\subsection{Noncollapsed regions with a lower curvature bound are
almost hyperbolic on a large scale} \label{subsect8.2}

\begin{proposition} \label{prop8.2}

(a) Given $w, r, \xi > 0$, one can find $T = T(w,r,\xi) < 
\infty$ so that the following holds.  
If the ball $B(p_0, t_0, r\sqrt{t_0}) \subset
{\mathcal M}^+_{t_0}$ at some $t_0 \ge T$ has volume at least
$w r^3 r_0^{\frac32}$ and sectional curvatures at least $- r^{-2} t_0^{-1}$
then the curvature at $(p_0,t_0)$ satisfies
\begin{equation} \label{8.3}
|2tR_{ij}(p_0,t_0) + g_{ij}|^2 \le \xi^2.
\end{equation}

(b) Given in addition $A < \infty$ and allowing $T$ to depend on $A$,
we can ensure (\ref{8.3}) for all points in $B(p_0, t_0, Ar\sqrt{t_0})$.

(c) The same is true for $P(p_0, t_0, Ar\sqrt{t_0}, Ar^2t_0)$.
\end{proposition}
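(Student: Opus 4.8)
The plan is to follow the manifold argument of \cite[Sections 87--89]{Kleiner-Lott} almost verbatim, since the entire structure is a blowup/compactness argument combined with the previously established orbifold analogues of curvature control and Ricci flow compactness. The statement is the orbifold version of the assertion that a region which (after rescaling by $t_0^{-1}$) is noncollapsed and has an almost-nonnegative-curvature lower bound must, for $t_0$ large, be almost hyperbolic with normalized curvature $\Ric = - \frac{1}{2t}g$. First I would prove part (a) by contradiction: suppose there are $w, r, \xi > 0$ and a sequence of Ricci-flow-with-surgery solutions with times $t_0^{(k)} \to \infty$, points $p_0^{(k)}$, with $\vol(B(p_0^{(k)}, t_0^{(k)}, r\sqrt{t_0^{(k)}})) \ge w r^3 (t_0^{(k)})^{3/2}$ and sectional curvatures $\ge - r^{-2} (t_0^{(k)})^{-1}$ on that ball, but with $|2t_0^{(k)} R_{ij} + g_{ij}|^2 > \xi^2$ at $(p_0^{(k)}, t_0^{(k)})$.

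The key steps, in order: (1) Parabolically rescale the $k$-th solution by $(t_0^{(k)})^{-1}$ so the basepoint sits at time $1$; the hypotheses become a fixed lower volume bound $w r^3$ on a radius-$r$ ball and sectional curvature $\ge - r^{-2}$ there, uniformly in $k$. (2) Apply Corollary \ref{corollary6.15} (the orbifold version of the local curvature estimate assuming a lower volume bound and a lower curvature bound) to get a uniform scalar curvature upper bound on a slightly smaller parabolic neighborhood, backward in time. Combined with the $\Phi$-pinching, this gives a two-sided curvature bound, hence — together with Proposition \ref{prop8.1} — a uniform unscathed region with bounded geometry around $(p_0^{(k)}, 1)$ on a definite backward time interval. (3) Invoke the Ricci flow compactness theorem for orbifolds, Proposition \ref{prop5.5}, using precisely the no-local-collapsing/volume lower bound to guarantee no collapse (so the limit is a genuine $n$-dimensional orbifold, not lower-dimensional); extract a subsequential limit Ricci flow $(\Or_\infty, g_\infty(\cdot))$ defined on a backward time interval ending at time $1$, with nonnegative sectional curvature (the lower bounds $\ge - r^{-2}(t_0^{(k)})^{-1} \to 0$ pass to $\ge 0$ in the limit). (4) Now the limit is an ancient-in-a-window Ricci flow with bounded nonnegative curvature, and by the rigidity part of the monotonicity of the reduced volume / the argument in \cite[Section 88]{Kleiner-Lott} (which only uses the maximum principle and the Harnack inequality, both valid for orbifolds), the only way the volume lower bound and the pinching are compatible with the geometry is that the limit has constant curvature $-\frac14$ and satisfies $2tR_{ij} + g_{ij} = 0$; this contradicts $|2R_{ij} + g_{ij}|^2 \ge \xi^2$ in the limit at the basepoint.

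For parts (b) and (c) I would upgrade the conclusion from the single point $p_0$ to all points of $B(p_0, t_0, Ar\sqrt{t_0})$, respectively the parabolic neighborhood $P(p_0, t_0, Ar\sqrt{t_0}, Ar^2 t_0)$, by the same contradiction/compactness scheme with $A$ built into the size of the region one extracts; the point is that once $T$ is allowed to depend on $A$, the uniform geometry control from step (2) extends over the $A$-ball and the limiting rigidity applies there. I expect the main obstacle to be step (2)--(3): ensuring that the backward parabolic regions one extracts are genuinely unscathed (no surgeries intrude) and that the curvature and volume bounds hold uniformly long enough to run the orbifold compactness theorem — this is where Proposition \ref{prop8.1} and the surgery-scale hypotheses must be carefully threaded through, exactly as in the manifold case, and where the orbifold bookkeeping (uniform bound on $|G_p|$, via Lemma \ref{lemma5.16.5} and the normalized initial data) is used to keep the limit a controlled orbifold. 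The genuinely new orbifold input is minor: everything reduces to the orbifold compactness theorem and the orbifold maximum principle, both already in hand.
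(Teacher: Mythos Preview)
Your overall architecture --- contradiction, parabolic rescaling, Proposition~\ref{prop8.1} for curvature control and unscathedness, orbifold compactness via Proposition~\ref{prop5.5}, then rigidity on the limit --- is correct and is the approach of \cite[Sections 87--88]{Kleiner-Lott}. But step~(3)--(4) contains a real error. After rescaling the metric by $(t_0^{(k)})^{-1}$, the sectional curvature lower bound on the ball becomes $\ge -r^{-2}$, a \emph{fixed} negative constant; your sentence ``the lower bounds $\ge -r^{-2}(t_0^{(k)})^{-1} \to 0$ pass to $\ge 0$ in the limit'' applies the unrescaled bound to the rescaled metric. The limit flow is therefore not nonnegatively curved --- it will in fact be hyperbolic --- and neither the reduced volume nor Hamilton's Harnack inequality is the operative rigidity mechanism; those belong to the theory of $\kappa$-solutions, and a nonnegatively curved limit could never satisfy $2t\Ric+g=0$ anyway.

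The quantity that actually drives the argument in \cite[Section 87]{Kleiner-Lott} is the normalized total volume $\widehat{V}(t) = t^{-3/2} V(t)$ of the compact evolving orbifold. The maximum-principle bound $R_{\min}(t) \ge -\tfrac{3}{2(t+\mathrm{const})}$ gives $\widehat{V}' \le 0$, so $\widehat{V}(t)$ decreases to a finite limit $\bar V$ as $t \to \infty$. Under the rescaling this yields $R \ge -\tfrac{3}{2s}$ on the limit flow, together with the fact that the rescaled normalized volumes at any two times in $(0,1]$ both converge to the same $\bar V$; the equality case of the monotonicity then forces $R \equiv -\tfrac{3}{2s}$, whence $|\Ric|^2 = \tfrac{1}{3} R^2$ and $2s\,\Ric + g = 0$, giving the contradiction. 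One also has to iterate Proposition~\ref{prop8.1} to push the unscathed, curvature-controlled region backward from $[1-\tau,1]$ toward $s = 0^+$ in rescaled time; this bootstrapping is part of \cite[Section 87]{Kleiner-Lott} and is not addressed in your sketch.
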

\begin{proof}
The proof is similar to that in \cite[Sections 87 and 88]{Kleiner-Lott}.
\end{proof}

\subsection{Hyperbolic rigidity and stabilization of the thick part}
\label{subsect8.3}

\begin{definition}
\label{def-curvaturescale}
Let $\Or$ be a complete Riemannian orbifold. Define the curvature scale
as follows.
Given
$p \in |\Or|$, if the connected component of $\Or$ 
containing $p$ has nonnegative sectional curvature then put
$R_p = \infty$. Otherwise, let $R_p$ be the unique number
$r \in (0, \infty)$ such that
$\inf_{B(p,r)} \Rm = - r^{-2}$.
\end{definition}

\begin{definition}
Let $\Or$ be a complete Riemannian orbifold.
Given $w > 0$, the {\em $w$-thin part}
$\Or^-(w) \subset |\Or|$ is the set of points
$p \in \Or$ so that either 
$R_p = \infty$ or
\begin{equation}
\vol(B(p,R_p) < w R_p^3.
\end{equation}
The {\em $w$-thick part} is $\Or^+(w) = |\Or| - 
\Or^-(w)$.
\end{definition}

In what follows, we take ``hyperbolic'' to mean ``constant curvature
$- \frac14$''. 
When applied to a hyperbolic orbifold, the definitions of
the thick and thin parts are essentially equivalent to those in
\cite[Chapter 6.2]{BMP}, to which we refer for more information
about hyperbolic $3$-orbifolds.

Recall that a hyperbolic $3$-orbifold can be written as
$H^3//\Gamma$ for some discrete group $\Gamma \subset \Isom^+(H^3)$
\cite[Theorem 2.26]{CHK}.

\begin{definition}
A {\em Margulis tube} is a compact quotient of 
a normal neighborhood of
a geodesic in $H^3$ by
an elementary Kleinian group. 

A {\em rank-$2$ cusp neighborhood} is the quotient of a horoball in
$H^3$ by an elementary rank-$2$ parabolic group.

In either case, the boundary is a compact Euclidean $2$-orbifold.
\end{definition}

There is a Margulis constant $\mu_0 > 0$ so that for any 
finite-volume hyperbolic $3$-orbifold $\Or$, if $\mu \le \mu_0$ then the
connected components of the $\mu$-thin part of $\Or$ are Margulis tubes
or rank-$2$ cusp neighborhoods.

Furthermore, given a finite-volume hyperbolic $3$-orbifold $\Or$, 
if $\mu > 0$ is sufficiently small then the
connected components of the $\mu$-thin part are
rank-$2$ cusp neighborhoods.

Mostow-Prasad rigidity works just as well for finite-volume
hyperbolic orbifolds as for finite-volume hyperbolic manifolds.
Indeed, 
the rigidity statements are statements about 
lattices in $\Isom(H^n)$.

\begin{lemma} \label{lemma8.6}
Let $(\Or, p)$ be a pointed complete connected finite-volume three-dimensional
hyperbolic orbifold.  Then for each $\zeta > 0$, there exists $\xi > 0$
such that if $\Or^\prime$ is a
complete connected finite-volume three-dimensional
hyperbolic orbifold with at least as many cusps as $\Or$, and
$f : (\Or, p) \rightarrow \Or^\prime$ is a $\xi$-approximation
in the pointed smooth topology as in \cite[Definition 90.6]{Kleiner-Lott},
then there is an isometry $f^\prime : (\Or, p) \rightarrow \Or^\prime$ which
is $\zeta$-close to $f$
in the pointed smooth topology.  
\end{lemma}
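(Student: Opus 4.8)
The plan is to argue by contradiction, exploiting Mostow--Prasad rigidity together with a compactness argument for finite-volume hyperbolic orbifolds. Suppose the conclusion fails. Then there is some $\zeta > 0$ and a sequence $\xi_i \to 0$ together with complete connected finite-volume hyperbolic $3$-orbifolds $\Or_i^\prime$, each having at least as many cusps as $\Or$, and $\xi_i$-approximations $f_i : (\Or, p) \to \Or_i^\prime$ in the pointed smooth topology, such that no isometry $(\Or, p) \to \Or_i^\prime$ is $\zeta$-close to $f_i$. Because $f_i$ is a $\xi_i$-approximation, the image $|f_i|(B(p,\xi_i^{-1}))$ exhausts larger and larger metric balls in $|\Or_i^\prime|$, and the pulled-back metrics converge smoothly to the hyperbolic metric on $\Or$; in particular $\vol(\Or_i^\prime) \le \vol(\Or) + o(1)$, so the volumes are uniformly bounded. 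By the orbifold version of the Jfor\-gensen--Thurston compactness theorem (finiteness of volume and the presence of a definite-size thick part, using the Margulis lemma for orbifolds recalled just above and the Riemannian compactness theorem, Proposition \ref{prop4.1}), after passing to a subsequence the pointed orbifolds $(\Or_i^\prime, |f_i|(p))$ converge in the pointed smooth topology either to a complete finite-volume hyperbolic orbifold $\Or_\infty^\prime$ of the same volume-bounded type, or (via geometric limits) to a hyperbolic orbifold obtained by ``opening up'' some short geodesics into cusps. In either case the number of cusps of $\Or_\infty^\prime$ is at least the number of cusps of $\Or$.

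The next step is to identify the limit. Since $f_i$ is a $\xi_i$-approximation with $\xi_i \to 0$, the convergence $(\Or_i^\prime, |f_i|(p)) \to (\Or_\infty^\prime, p_\infty)$ is compatible with a limiting pointed smooth map $(\Or, p) \to (\Or_\infty^\prime, p_\infty)$ which is a local isometry; being a local isometry of complete connected hyperbolic orbifolds with $\Or$ complete, Lemma \ref{lemma2.11} shows it is a covering map $\Or \to \Or_\infty^\prime$. A covering map of finite-volume hyperbolic orbifolds is a finite covering, so $\vol(\Or) = d \cdot \vol(\Or_\infty^\prime)$ for some $d \in \Z^+$; comparing with $\vol(\Or_i^\prime) \le \vol(\Or) + o(1)$ and the fact that the limit volume is $\le \liminf \vol(\Or_i^\prime)$ forces $\vol(\Or_\infty^\prime) \ge \vol(\Or)$, hence $d = 1$ and the covering is an isometry. (One also uses the cusp count: a proper finite covering would strictly increase volume, which is excluded.) Thus $\Or_\infty^\prime$ is isometric to $\Or$, and in particular no short geodesics were opened up, so the $\Or_i^\prime$ genuinely converge to $\Or$.

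Finally, one upgrades smooth convergence to isometry using rigidity. Fix the isometry $\Or_\infty^\prime \cong \Or$ identifying $p_\infty$ with $p$. For large $i$ the $\xi_i$-approximation $f_i$, composed with the near-isometric identification of $\Or_i^\prime$ with a large subset of $\Or_\infty^\prime \cong \Or$ coming from the convergence, is an almost-isometric pointed self-quasi-map of $(\Or,p)$ defined on a huge ball; by Mostow--Prasad rigidity for finite-volume hyperbolic orbifolds (which, as the text notes, is a statement about lattices in $\Isom(H^3)$ and applies verbatim), the actual isometry $\Or \to \Or_i^\prime$ it approximates is unique, and standard closeness estimates show it is $\zeta$-close to $f_i$ once $i$ is large enough. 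This contradicts the choice of the sequence, proving the lemma. The main obstacle is the compactness/convergence step: one must rule out, via the volume and cusp-count bookkeeping together with the fact that $f_i$ is an \emph{increasingly good} approximation of the \emph{fixed} orbifold $\Or$, the a priori possibility that the geometric limit has strictly smaller volume (short geodesics degenerating to cusps), which is exactly what would break the rigidity argument; everything else is a routine orbifold adaptation of the corresponding manifold statement in \cite[Section 90]{Kleiner-Lott}.
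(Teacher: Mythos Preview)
Your contradiction--compactness--rigidity outline is the standard argument and is what the paper's one-line proof (``similar to \cite[Section 90]{Kleiner-Lott}, replacing `injectivity radius' by `local volume'\,'') is pointing to; the only orbifold-specific modification the paper flags is that the compactness step should use Proposition~\ref{prop4.1} (local volume) in place of injectivity-radius bounds, which you already do.

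Two points need repair. First, the volume bookkeeping is reversed. From $f_i$ being nearly an isometry on $B(p,\xi_i^{-1})$ you immediately get the \emph{lower} bound $\vol(\Or_i')\ge\vol(\operatorname{im}f_i)\ge\vol(\Or)-o(1)$; the upper bound $\vol(\Or_i')\le\vol(\Or)+o(1)$ you assert is not a formal consequence of the approximation alone. Likewise, the chain ``$\vol(\Or_i')\le\vol(\Or)+o(1)$ and $\vol(\Or_\infty')\le\liminf\vol(\Or_i')$'' yields $\vol(\Or_\infty')\le\vol(\Or)$, the opposite of what you conclude. The correct route to $d=1$ uses the lower bound together with $\vol(\Or_\infty')=\lim\vol(\Or_i')$.

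Second, and more substantively, the cusp-count hypothesis has to do real work in the last step, and you have not used it there. Knowing the \emph{pointed} limit is $\Or$ does not give an isometry $\Or\to\Or_i'$: there could be pieces of $\Or_i'$ far from $|f_i|(p)$ that the pointed limit does not see, and Mostow--Prasad needs a homotopy equivalence (equivalently, a $\pi_1$-isomorphism), not merely ``an almost-isometric self-map on a huge ball.'' The missing argument is that for large $i$ the complement of $\operatorname{im}f_i$ in $\Or_i'$ has exactly $k$ components (one per cusp of $\Or$), each bounded by a single small Euclidean $2$-orbifold lying in the thin part; since $\Or_i'$ has at least $k$ cusps and the image of $f_i$ already contains the thick part of $\Or_i'$, each complementary component must itself be a cusp neighborhood, so $f_i$ restricts to a diffeomorphism of truncations and hence a homotopy equivalence $\Or\to\Or_i'$. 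Only then does Mostow--Prasad apply. With these two fixes your argument is complete and matches the paper's.
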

\begin{proof}
The proof is similar to that in \cite[Section 90]{Kleiner-Lott},
replacing ``injectivity radius'' by ``local volume''.
\end{proof}

If ${\mathcal M}$ is a Ricci flow with surgery then we let
$\Or^-(w,t) \subset |{\mathcal M}^+_{t}|$ denote the $w$-thin part
of the orbifold at time $t$ (postsurgery if $t$ is a surgery time), 
and similarly for the $w$-thick part
$\Or^+(w,t)$.

\begin{proposition} \label{prop8.11}
Given a Ricci flow with surgery ${\mathcal M}$, there exist a number
$T_0 < \infty$, a nonincreasing function $\alpha : [T_0, \infty)
\rightarrow (0, \infty)$ with $\lim_{t \rightarrow \infty} \alpha(t) = 0$,
a (possibly empty) collection $\{(H_1, x_1), \ldots, (H_N, x_N)\}$ of
complete connected pointed finite-volume three-dimensional hyperbolic
orbifolds and a family of smooth maps
\begin{equation}
f(t) : B_t = \bigcup_{i=1}^N H_i \Big|_{B(x_i, 1/\alpha(t))} \rightarrow
{\mathcal M}_t,
\end{equation}
defined for $t \in [T_0, \infty)$, such that
\begin{enumerate}
\item $f(t)$ is close to an isometry:
\begin{equation}
\parallel t^{-1} f(t)^* g_{{\mathcal M}_t} - g_{B_t} 
\parallel_{C^{[1/\alpha(t)]}} < \alpha(t).
\end{equation}
\item $f(t)$ defines a smooth family of maps which changes smoothly with time:
\begin{equation}
|\dot{f}(p,t)| < \alpha(t) t^{-\frac12}
\end{equation}
for all $p \in |B_t|$, and
\item $f(t)$ parametrizes more and more of the thick part:
$\Or^+(\alpha(t),t) \subset \Image(|f(t)|)$ for all $t \ge T_0$.
\end{enumerate}
\end{proposition}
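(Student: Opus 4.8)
The plan is to follow the argument of \cite[Sections 90--91]{Kleiner-Lott}, systematically replacing ``injectivity radius'' by ``local volume'' and invoking the orbifold Ricci flow compactness theorem, Proposition \ref{prop5.5}, in place of its manifold counterpart. The four essential inputs are Proposition \ref{prop8.1} (two-sided curvature bounds in the thick part), Proposition \ref{prop8.2} (noncollapsed regions with a lower curvature bound are almost hyperbolic on a large scale), Proposition \ref{prop5.5}, and Lemma \ref{lemma8.6} (Mostow--Prasad rigidity for finite-volume hyperbolic orbifolds). At the outset I would record the volume bound $\vol({\mathcal M}_t) \le C t^{3/2}$ for large $t$: the normalized initial condition together with the Hamilton--Ivey estimate gives $R_{\min}(t) \ge - \const \cdot t^{-1}$, so that $\frac{d}{dt} \log \vol({\mathcal M}_t) \le - R_{\min}(t) \le \const \cdot t^{-1}$ on smooth time intervals, while surgeries only decrease volume.

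First I would produce the hyperbolic limits. Fix a small $w > 0$ and consider any sequence $t_k \to \infty$ with basepoints $p_k \in \Or^+(w, t_k)$. By the definition of the thick part the curvature scale at $(p_k, t_k)$ is comparable to $\sqrt{t_k}$ up to a factor depending only on $w$, so Proposition \ref{prop8.1} gives a two-sided curvature bound and an unscathed parabolic neighborhood of size $\sim \sqrt{t_k}$ backward in time. Rescaling space and time by $t_k^{-1}$ and applying Proposition \ref{prop5.5}, one extracts a subsequential limit Ricci flow $(\Or_\infty, g_\infty(\cdot))$ with basepoint $p_\infty$; since the rescaled local volume at $p_\infty$ is still $\ge w$, the limit is genuinely three-dimensional, and Remark \ref{remark4.1.5} also bounds its isotropy orders. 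Proposition \ref{prop8.2}, applied with approximation parameter tending to $0$, forces $2 t R_{ij} + g_{ij} \equiv 0$ on the limit, so a time slice of $g_\infty$ is, after rescaling, a complete orbifold of constant sectional curvature $- \frac14$, of finite volume by the bound $\vol({\mathcal M}_t) \le C t^{3/2}$. This limit need not be a manifold: when the singular graph of ${\mathcal M}_t$ runs through the thick part the limit carries a singular locus as well, which is precisely the case the orbifold version must handle. Writing the limit as $H^3//\Gamma$ with $\Gamma \subset \Isom^+(H^3)$ discrete makes it a legitimate target for Lemma \ref{lemma8.6}.

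Next I would establish finiteness and stabilization of the collection. Choosing basepoints that are maximally spread out in the thick part at times $t_k \to \infty$ yields a collection $\{(H_1, x_1), \ldots\}$ of pointed finite-volume hyperbolic orbifolds, each arising as a pointed limit of $(|{\mathcal M}_{t_k}|, t_k^{-1} g(t_k))$. For large $t$ each truncation $H_i \Big|_{B(x_i, 1/\alpha)}$ embeds almost isometrically into $(|{\mathcal M}_t|, t^{-1} g(t))$, the images being disjoint for distinct $i$ because separate hyperbolic pieces are divided by long rescaled-thin tubes; hence these regions contribute volume $\gtrsim t^{3/2} \sum_i \vol(H_i)$ to ${\mathcal M}_t$. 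Since a complete hyperbolic $3$-orbifold has volume bounded below by a universal positive constant and $\vol({\mathcal M}_t) \le C t^{3/2}$, the number $N$ of pieces is finite. Lemma \ref{lemma8.6} then pins down each $H_i$ up to isometry and promotes the discrete-time approximations to isometries; interpolating between them, with $\alpha(t) \downarrow 0$ chosen slowly enough, produces the smooth family $f(t)$ satisfying (1) and (2). The ``at least as many cusps'' hypothesis in Lemma \ref{lemma8.6} is what prevents cusps from opening up as $t$ increases, so the combinatorial type of the collection eventually stabilizes.

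The main obstacle is assertion (3), that $f(t)$ eventually parametrizes the whole $\alpha(t)$-thick part. I would argue by contradiction, as in the manifold case: if there were times $t_k \to \infty$ and points $q_k \in \Or^+(\alpha(t_k), t_k)$ outside $\Image(|f(t_k)|)$, then the limiting procedure of the second paragraph, based at $(q_k, t_k)$, produces yet another pointed finite-volume hyperbolic limit; by the maximality built into the collection $\{H_i\}$ this limit must coincide with one of the $H_i$, whence for large $k$ a neighborhood of $q_k$ already lies in $\Image(|f(t_k)|)$, a contradiction. Apart from checking that each manifold step runs equivariantly in local models, the only genuinely orbifold-specific verifications are that Proposition \ref{prop5.5} is applicable --- which it is, since ``thick'' supplies the required local volume lower bound and the isotropy orders are uniformly bounded throughout the flow --- and that the resulting limit is a bona fide complete finite-volume hyperbolic orbifold, hence an admissible input for Lemma \ref{lemma8.6}.
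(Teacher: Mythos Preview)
Your proposal is correct and follows precisely the approach the paper intends: the paper's own proof is the single line ``The proof is similar to that in \cite[Section 90]{Kleiner-Lott}'', and you have spelled out that reference with exactly the orbifold substitutions the surrounding sections set up (local volume for injectivity radius, Proposition \ref{prop5.5} for Hamilton compactness, and Lemma \ref{lemma8.6} for Mostow--Prasad rigidity).
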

\begin{proof}
The proof is similar to that in \cite[Section 90]{Kleiner-Lott}.
\end{proof}

\section{Locally collapsed $3$-orbifolds} \label{sect9}

In this section we consider compact Riemannian 
$3$-orbifolds $\Or$ that are locally collapsed
with respect to a local lower curvature bound. 
Under certain assumptions about smoothness and boundary behavior,
we show that $\Or$ is either the result of performing $0$-surgery
on a strong graph orbifold or is one of a few
special types.  We refer to Definition \ref{defnA.22} for the definition
of a strong graph orbifold.

We first consider the boundaryless case.

\begin{proposition} \label{prop9.1}
Let $c_3$ be the volume of the unit ball in $\R^3$, let $K \ge 10$ be
a fixed integer and let $N$ be a positive integer. Fix a function
$A : (0, \infty) \rightarrow (0, \infty)$. Then there is a 
$w_0 \in (0, c_3/N)$ such that the following holds.

Suppose that $(\Or, g)$ is a 
connected closed orientable Riemannian $3$-orbifold.
Assume in addition that for all $p \in |\Or|$,
\begin{enumerate} 
\item $|G_p| \le N$.
\item $\vol(B(p, R_p)) \le w_0 R_p^3$, where $R_p$ is the curvature scale at $p$, Definition
\ref{def-curvaturescale}.
\item For every $w^\prime \in [w_0, c_3/N)$, $k \in [0, K]$ and $r \le R_p$
such that $\vol(B(p,r)) \ge w^\prime r^3$, the inequality
\begin{equation}
|\nabla^k \Rm| \le A(w^\prime) r^{-(k+2)}
\end{equation}
holds in the ball $B(p,r)$.
\end{enumerate} 

Then $\Or$ is the result of performing $0$-surgeries on a strong graph orbifold or is diffeomorphic to an
isometric quotient of $S^3$ or $T^3$.
\end{proposition}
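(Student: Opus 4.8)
The plan is to run the argument of our collapsing theorem for $3$-manifolds \cite{Kleiner-Lott2}, replacing each manifold input by its orbifold counterpart established in Sections \ref{sect2}--\ref{sect6}. First I would rescale near each $p\in|\Or|$ by the curvature scale $R_p$ of Definition \ref{def-curvaturescale}. Hypothesis (2) says that at unit scale every ball is collapsed, hypothesis (3) gives two-sided curvature bounds with all derivatives on the sub-balls that are not too collapsed, and hypothesis (1) bounds the orders of the local groups. Passing to orthonormal frame bundles to turn local models into manifolds, as in the proof of Proposition \ref{prop4.1}, the equivariant collapsing theory of Cheeger-Fukaya-Gromov \cite{Cheeger-Fukaya-Gromov} and Fukaya \cite{Fukaya} applies to the frame bundles, which converge smoothly and equivariantly to a manifold carrying an isometric $O(3)$-action of strictly smaller dimension, with nilpotent Killing structures controlling the collapse. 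Quotienting by $O(3)$ one obtains a local structure theory: every point of $|\Or|$ has, at a controlled scale, a neighborhood which is an orbifold fiber bundle over a base of dimension $0$, $1$ or $2$, with fiber an infranilmanifold quotient, a circle, or a nonnegatively curved closed $2$-orbifold that is a quotient of $S^2$ or $T^2$. (Bad $2$-orbifold fibers are excluded by the standing assumption of Subsection \ref{subsect7.2}, since a neighborhood of such a fiber would contain a bad embedded $2$-suborbifold.)

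Next I would assemble these local pictures into a global decomposition of $|\Or|$, exactly as in \cite{Kleiner-Lott2}. On the locus where $\Or$ is locally a circle bundle over a $2$-dimensional base one obtains Seifert orbifold pieces, with solid-toric pieces $S^1\times D^2(k)$ and $S^1\times_{\Z_2} D^2(k)$ appearing near singular fibers and near pillowcase boundary behavior; here the classification of nonnegatively curved $2$-orbifolds (Lemma \ref{lemma3.15}) and the local model of Lemma \ref{lemma3.17} control the base. On the locus where $\Or$ fibers over a $1$-dimensional base one gets mapping-torus pieces with $T^2//\Gamma$ fiber (the Nil- and Sol-type pieces) and $I$-bundle-type pieces with $S^2//\Gamma$ or $T^2//\Gamma$ fiber, using Lemma \ref{lemma3.16}. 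On the locus of $0$-dimensional local collapse one has closed almost-homogeneous components: analyzing the holonomy and combining Lemmas \ref{lemma2.11} and \ref{lemma2.12} with Proposition \ref{prop5.6} shows that such a component either carries a Seifert or mapping-torus structure or is forced to be an isometric quotient of $S^3$ or $T^3$. Throughout, the critical point theory of Subsection \ref{subsect2.6} and the smoothing lemmas of Subsection \ref{subsect2.7} are used to promote the a priori only Lipschitz, Alexandrov-theoretic fibration maps to genuine smooth orbifold bundle maps, and Proposition \ref{prop3.14} rules out the degenerate ``tight neck'' configurations.

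Finally I would glue the pieces. Distinct strata overlap in transition regions that are product necks $\Or^\prime\times I$ with toric or pillowcase cross-section, so the Seifert and mapping-torus pieces are glued along incompressible Euclidean $2$-suborbifolds. After removing finitely many $S^3//\Gamma$ and $(S^1\times S^2)//\Gamma$ summands, attached by $0$-surgery, the result exhibits $\Or$ as a weak graph orbifold in the sense of the appendix; the appendix then shows that a weak graph orbifold is obtained from strong graph orbifolds by finitely many $0$-surgeries, and incorporating the removed summands as $0$-surgeries as well gives the conclusion, except in the cases where the global collapsing forces $\Or$ itself to be an isometric quotient of $S^3$ or $T^3$.

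I expect the main obstacle to be this local-to-global assembling step in the presence of the singular locus. One must choose the various local bundle structures compatibly on overlaps, keep track of how the singular graph of $\Or$ sits relative to them (running along fibers, transverse to fibers, or passing through singular fibers and exceptional points of the bases), and then verify that the resulting family of Seifert and mapping-torus pieces actually satisfies the definition of a weak graph orbifold. This bookkeeping, together with the proliferation of piece types (half-pillowcase boundaries, solid-toric orbifolds $S^1\times_{\Z_2} D^2(k)$, corner-reflector bases such as $D^2//D_k$), is what makes the orbifold case substantially heavier than \cite{Kleiner-Lott2}; the remaining steps are lengthy but routine adaptations of the manifold arguments combined with the classification results of Sections \ref{sect3} and \ref{sect5}.
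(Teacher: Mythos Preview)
Your overall strategy matches the paper's: extend the collapsing argument of \cite{Kleiner-Lott2} to orbifolds, assemble the result into a weak graph orbifold, then invoke Proposition \ref{propA.25}. There is one technical divergence worth noting. You propose Cheeger--Fukaya--Gromov equivariant collapsing on the frame bundle, whereas the paper (like \cite{Kleiner-Lott2} itself) deliberately avoids the full nilpotent-structure machinery and instead works with Alexandrov-space arguments on $|\Or|$, inserting smoothness only at isolated steps (orbifold critical point theory from Subsection \ref{subsect2.6}, the smoothing lemmas of Subsection \ref{subsect2.7}, the $C^K$-compactness of Proposition \ref{prop4.1}, and the classification lemmas of Sections \ref{sect3} and \ref{sect5}). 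The CFG route is not obviously wrong, but the hypotheses give only a lower sectional curvature bound at the curvature scale, with upper and derivative bounds available only conditionally via (3); this is exactly why \cite{Kleiner-Lott2} was designed with Alexandrov tools rather than CFG.

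Where the proofs really differ is the assembly step you correctly flag as the obstacle. The paper produces a concrete four-piece partition $\Or = \Or^{0-stratum}\cup\Or^{slim}\cup\Or^{edge}\cup\Or^{2-stratum}$ with explicit fiber types and compatibility on overlaps (Lemmas \ref{lemma9.4} and \ref{lemma9.5}), and then runs a detailed combinatorial reduction: spherical-boundary $0$-stratum and slim pieces are cut and capped to produce $0$-surgeries; Euclidean-boundary pieces with turnover boundaries $S^2(2,3,6)$, $S^2(2,4,4)$, $S^2(3,3,3)$ are handled separately (yielding the $T^3$- and $\Sol$-type exceptional cases); and the remaining skeleton is organized around explicit Seifert building blocks $C_m$, the $m$-fold covers of $C_1 = S^1\times_{\Z_2}(S^2-3B^2)$, onto which solid-toric orbifolds $S^1\times_{\Z_2}D^2(k_i)$ are glued. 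Your sketch has the right shape but underestimates this combinatorics: the transition regions are not only Euclidean necks, since spherical cross-sections $S^2$, $S^2(k,k)$, $S^2(2,2,k)$ genuinely occur and force $0$-surgeries; and the $0$-surgeries arise from cutting $\Or^{slim}_{Sph}$ along mid-slices, not from removing $(S^1\times S^2)//\Gamma$ summands identified in advance.
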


\begin{remark}
We recall that a strong graph orbifold is allowed to be disconnected.
By Proposition \ref{propA.25},
a weak graph orbifold is the result of performing
$0$-surgeries on a strong graph orbifold.
Because of this, to prove Proposition \ref{prop9.1} it is enough to show that
$\Or$ is the result of performing $0$-surgeries on a weak 
graph orbifold or is diffeomorphic to an
isometric quotient of $S^3$ or $T^3$.
\end{remark}

\begin{remark} \label{remark9.3}
A $3$-manifold which is an isometric quotient of $S^3$ or $T^3$ is
a Seifert $3$-manifold \cite[Section 4]{Scott}.  
The analogous statement for orbifolds is false \cite{Dunbar2}.
\end{remark}
\begin{proof}
We follow the method of proof of \cite{Kleiner-Lott2}. The basic
strategy is to construct a partition of $\Or$ into pieces whose
topology can be recognized.  Many of the arguments in
\cite{Kleiner-Lott2}, such as the stratification, are based
on the underlying Alexandrov space structure.  Such arguments will
extend without change to the orbifold setting. Other arguments
involve smoothness, which also makes sense in the orbifold setting.  We now
mention the relevant places in \cite{Kleiner-Lott2} where
manifold smoothness needs to be replaced by orbifold smoothness.
\begin{itemize}
\item The critical point theory in \cite[Section 3.4]{Kleiner-Lott2}
can be extended to the orbifold setting using the results in
Subsection \ref{subsect2.6}.
\item The results about the topology of nonnegatively curved manifolds
in \cite[Lemma 3.11]{Kleiner-Lott2} can be extended to the
orbifold setting using Lemma \ref{lemma3.16} and 
Proposition \ref{prop5.6}.
\item The smoothing results of \cite[Section 3.6]{Kleiner-Lott2} can
be extended to the orbifold setting using Lemma \ref{lemma2.18} and 
Corollary \ref{corollary2.19}.
\item The $C^K$-precompactness result of \cite[Lemma 6.10]{Kleiner-Lott2}
can be proved in the orbifold setting using Proposition \ref{prop4.1}.
\item The $C^K$-splitting result of 
\cite[Lemma 6.16]{Kleiner-Lott2}
can be proved in the orbifold setting using Proposition \ref{prop3.1}.
\item The result about the topology of the edge region in
\cite[Lemma 9.21]{Kleiner-Lott2}
can be extended to the orbifold setting using Lemma \ref{lemma3.17}.
\item The result about the topology of the slim stratum in
\cite[Lemma 10.3]{Kleiner-Lott2} can be extended to the orbifold
setting using Lemma \ref{lemma3.15}.
\item The results about the topology and geometry of the $0$-ball
regions in \cite[Sections 11.1 and 11.2]{Kleiner-Lott2} can be
extended to the orbifold setting using Lemma \ref{lemma2.17}
and Proposition \ref{prop3.11}.
\item The adapted coordinates in 
\cite[Lemmas 8.2, 9.12, 9.17, 10.1 and 11.3]{Kleiner-Lott2}
and their use
in \cite[Sections 12-14]{Kleiner-Lott2} extend without change
to the orbifold setting.
\end{itemize}

The upshot is that we can extend the results 
of \cite[Sections 1-14]{Kleiner-Lott2} to the orbifold setting.
This gives a partition of
$\Or$ into codimension-zero 
suborbifolds-with-boundary $\Or^{0-stratum}$, $\Or^{slim}$,
$\Or^{edge}$ and $\Or^{2-stratum}$, with the following properties.

\begin{itemize}
\item Each connected component of $\Or^{0-stratum}$ is diffeomorphic
either to a closed nonnegatively curved $3$-dimensional orbifold, or 
to the unit disk bundle in the normal bundle of a soul in a complete connected
noncompact nonnegatively curved $3$-dimensional orbifold.
\item Each connected component of $\Or^{slim}$ is
the total space of an orbibundle whose base is $S^1$ or $I$,
and whose fiber is a spherical or Euclidean orientable compact
$2$-orbifold.
\item Each connected component of $\Or^{edge}$ is
the total space of an orbibundle whose base is $S^1$ or $I$,
and whose fiber is $D^2(k)$ or $D^2(2,2)$.
\item Each connected component of $\Or^{2-stratum}$ is the
total space of a circle bundle over a smooth compact $2$-manifold.
\item Intersections of $\Or^{0-stratum}$, $\Or^{slim}$,
$\Or^{edge}$ and $\Or^{2-stratum}$ are $2$-dimensional
orbifolds, possibly with boundary. 
The fibration structures coming from two intersecting strata are compatible
on intersections. 
\end{itemize}

In order to prove the proposition,
we now follow the method of proof of \cite[Section 15]{Kleiner-Lott2}.

Each connected component of $\Or^{0-stratum}$ has boundary which
is empty, a spherical $2$-orbifold or a Eucldean $2$-orbifold.
By Proposition \ref{prop5.6}, 
if the boundary is empty then the component is diffeomorphic
to a finite isometric quotient of $S^1 \times S^2$, $S^3$ or $T^3$.
In the $S^1 \times S^2$ case, $\Or$ is a Seifert orbifold
\cite[p. 70-71]{Dunbar}.
Hence we can assume that the boundary is nonempty.
By Lemma \ref{lemma3.16}, if the boundary is a spherical $2$-orbifold then
the component is diffeomorphic to $D^3//\Gamma$ or
$I \times_{\Z_2} (S^2//\Gamma)$.
We group together such components as $\Or^{0-stratum}_{Sph}$.
By Lemma \ref{lemma3.16} again, if the boundary is
a Euclidean $2$-orbifold then the component is diffeomorphic to
$S^1 \times D^2$, $S^1 \times D^2(k)$, 
$S^1 \times_{\Z_2} D^2$, $S^1 \times_{\Z_2} D^2(k)$ 
or $I \times_{\Z_2} (T^2//\Gamma)$.
We group together such components as $\Or^{0-stratum}_{Euc}$.

If a connected component of $\Or^{slim}$ fibers over $S^1$ then 
$\Or$ is closed and has a geometric structure based on 
$\R^3$, $\R \times S^2$, $\Nil$ or $\Sol$
\cite[p. 72]{Dunbar}.  If the structure is $\R \times S^2$ or $\Nil$
then $\Or$ is a Seifert orbifold \cite[Theorem 1]{Dunbar}. 
If the structure is $\Sol$ then
$\Or$ can be cut along a fiber to see that it is a weak graph
orbifold.
Hence we can assume that each component of $\Or^{slim}$ fibers over $I$.
We group these components into
$\Or^{slim}_{Sph}$ and $\Or^{slim}_{Euc}$, where the distinction
is whether the fiber is a spherical $2$-orbifold or a 
Euclidean $2$-orbifold.

\begin{lemma} \label{lemma9.4}
Let $\Or^{0-stratum}_i$ be a connected component of $\Or^{0-stratum}$.
If $\Or^{0-stratum}_i \cap \Or^{slim} \neq \emptyset$ then
$\partial \Or^{0-stratum}_i$ is a boundary component of a connected
component of $\Or^{slim}$. 

If 
$\Or^{0-stratum}_i \cap \Or^{slim} = \emptyset$ then
we can write $\partial \Or^{0-stratum}_i = A_i \cup B_i$ where
\begin{enumerate}
\item $A_i = \Or^{0-stratum}_i \cap \Or^{edge}$ is a disjoint union
of discal $2$-orbifolds and $D^2(2,2)$'s.
\item $B_i = \Or^{0-stratum}_i \cap \Or^{2-stratum}$ is the total
space of a circle bundle and
\item $A_i \cap B_i = \partial A_i \cap \partial B_i$ is a union of
circle fibers.
\end{enumerate}
Furthermore, if $\partial \Or^{0-stratum}_i$ is Euclidean then
$A_i = \emptyset$ unless $\partial \Or^{0-stratum}_i = S^2(2,2,2,2)$,
in which case $A_i$ consists of two $D^2(2,2)$'s.
If $\partial \Or^{0-stratum}_i$ is spherical then the possibilities are \\
1. $\partial \Or^{0-stratum}_i = S^2$ and $A_i$ consists of
two disks $D^2$. \\
2. $\partial \Or^{0-stratum}_i = S^2(k,k)$ and $A_i$ consists of two 
$D^2(k)$'s. \\
3. $\partial \Or^{0-stratum}_i = S^2(2,2,k)$ and $A_i$ consists of 
$D^2(2,2)$ and $D^2(k)$.
\end{lemma}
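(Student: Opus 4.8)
The plan is to follow the proof of the corresponding manifold statement, \cite[Section~15]{Kleiner-Lott2}, feeding in the orbifold classifications already established. The starting observation is that the boundary $\partial\Or^{0-stratum}_i$ is a closed $2$-orbifold, tiled by the codimension-zero intersections $\Or^{0-stratum}_i\cap\Or^{slim}$, $\Or^{0-stratum}_i\cap\Or^{edge}$ and $\Or^{0-stratum}_i\cap\Or^{2-stratum}$, glued to one another along the $1$-dimensional triple intersections; as in \cite{Kleiner-Lott2}, one checks that $\Or^{0-stratum}_i$ is never adjacent to another component of $\Or^{0-stratum}$, so these three pieces account for all of $\partial\Or^{0-stratum}_i$. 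Since $\Or^{0-stratum}_i$ is a disk bundle over a soul in a complete noncompact nonnegatively curved $3$-orbifold (the closed case being excluded here, as $\partial\Or^{0-stratum}_i\neq\emptyset$), Lemma~\ref{lemma3.16} together with Proposition~\ref{prop5.6} gives the finite list of diffeomorphism types of $\Or^{0-stratum}_i$, and in particular the list of possible boundaries: $S^2//\Gamma$, $T^2$, $S^2(2,2,2,2)$, or $T^2//\Gamma$, each a connected closed spherical or Euclidean $2$-orbifold.

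For the dichotomy, suppose first that $\Or^{0-stratum}_i\cap\Or^{slim}\neq\emptyset$. We have arranged that every component of $\Or^{slim}$ fibers over $I$ with fiber a closed spherical or Euclidean $2$-orbifold; since an orbifiber bundle over a manifold has a local product structure and $I$ is contractible, such a component is a product, and its two boundary $2$-orbifolds are closed. Hence $\Or^{0-stratum}_i\cap\Or^{slim}$, being simultaneously a union of boundary components of $\Or^{slim}$-pieces and a subset of the closed $2$-orbifold $\partial\Or^{0-stratum}_i$, is a union of whole connected components of $\partial\Or^{0-stratum}_i$; by the first paragraph $\partial\Or^{0-stratum}_i$ is connected, so it coincides with a single boundary $2$-orbifold of a single component of $\Or^{slim}$. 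This is the first assertion.

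Now suppose $\Or^{0-stratum}_i\cap\Or^{slim}=\emptyset$, and put $A_i=\Or^{0-stratum}_i\cap\Or^{edge}$ and $B_i=\Or^{0-stratum}_i\cap\Or^{2-stratum}$. A component of $\Or^{edge}$ is the total space of a $D^2(k)$- or $D^2(2,2)$-bundle over $S^1$ or $I$; its boundary is the union of the fiber-part over $\partial I$ (copies of $D^2(k)$ or $D^2(2,2)$) and the vertical part, which is the circle subbundle $\partial D^2(k)$ (or $\partial D^2(2,2)$) over the base and is glued to $\Or^{2-stratum}$ along compatible circle-fibrations. Thus $A_i$ is a disjoint union of discal $2$-orbifolds and copies of $D^2(2,2)$, which is~(1). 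A component of $\Or^{2-stratum}$ is a circle bundle over a smooth compact $2$-manifold; its intersection $B_i$ with $\Or^{0-stratum}_i$ is the restriction of this bundle over a compact $1$-submanifold of the base, hence the total space of a circle bundle, and has empty singular locus; this is~(2). Finally, $A_i\cap B_i$ consists of the bounding circles $\partial D^2(k)$, $\partial D^2(2,2)$ of the pieces of $A_i$, which by compatibility of the fibrations are fibers of $B_i$; so $A_i\cap B_i=\partial A_i\cap\partial B_i$ is a union of circle fibers, which is~(3).

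It remains to establish the ``Furthermore'' clauses, and this is the main point of the argument. Here one uses that $B_i$, a $2$-dimensional circle bundle over a $1$-manifold with empty singular locus, is a disjoint union of annuli and $2$-tori (M\"obius bands and Klein bottles being ruled out by orientability), while each piece of $A_i$ is discal or a copy of $D^2(2,2)$. In particular all cone points of $\partial\Or^{0-stratum}_i$ lie in $A_i$, a $D^2(k)$ carrying one cone point of order~$k$ and a $D^2(2,2)$ carrying two cone points of order~$2$, and $\partial\Or^{0-stratum}_i$ with the pieces of $A_i$ removed is a union of annuli. Going through the list of possible boundary orbifolds from the first paragraph, and using the classification of spherical and Euclidean $2$-orbifolds to determine which embedded circles bound $D^2(k)$ or $D^2(2,2)$ and what lies on the complementary side, one checks that when $\partial\Or^{0-stratum}_i$ is Euclidean, $A_i$ is forced to be empty unless $\partial\Or^{0-stratum}_i=S^2(2,2,2,2)$, in which case $A_i$ is two copies of $D^2(2,2)$; and when $\partial\Or^{0-stratum}_i$ is spherical, only the three listed configurations occur (the turnovers $S^2(2,3,3)$, $S^2(2,3,4)$, $S^2(2,3,5)$ not arising in this case, since they admit no such decomposition). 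The expected obstacle is precisely this final case analysis: ruling out all other ways of assembling discal orbifolds, copies of $D^2(2,2)$, and annuli into one of the admissible boundary orbifolds.
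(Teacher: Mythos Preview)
Your argument is correct and follows essentially the same approach as the paper, which simply defers to \cite[Lemma 15.1]{Kleiner-Lott2}; you have written out in detail exactly the orbifold adaptation that the paper leaves implicit, feeding in Lemma~\ref{lemma3.16} and Proposition~\ref{prop5.6} for the classification of boundaries and carrying out the combinatorial case analysis for the ``Furthermore'' clauses.
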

\begin{proof}
The proof is similar to that of \cite[Lemma 15.1]{Kleiner-Lott2}.
\end{proof}

\begin{lemma} \label{lemma9.5}
Let $\Or^{slim}_i$ be a connected component of $\Or^{slim}$.
Let $Y_i$ be one of the connected components of $\partial \Or^{slim}_i$.
If $Y_i \cap \Or^{0-stratum} \neq \emptyset$ then
$Y_i = \partial \Or^{0-stratum}_i$ for some connected
component $\Or^{0-stratum}_i$ of $\Or^{0-stratum}$. 

If $Y_i \cap \Or^{0-stratum} = \emptyset$ then
we can write 
$\partial Y_i = A_i \cup B_i$
where
\begin{enumerate}
\item $A_i = Y_i \cap \Or^{edge}$ is a disjoint union
of discal $2$-orbifolds and $D^2(2,2)$'s,
\item $B_i = Y_i \cap \Or^{2-stratum}$ is the total
space of a circle bundle and
\item $A_i \cap B_i = \partial A_i \cap \partial B_i$ is a union of
circle fibers.
\end{enumerate}
Furthermore, if $Y_i$ is Euclidean then
$A_i = \emptyset$ unless $Y_i = S^2(2,2,2,2)$,
in which case $A_i$ consists of two $D^2(2,2)$'s.
If $Y_i$ is spherical then the possibilities are \\
1. $Y_i = S^2$ and $A_i$ consists of
two disks $D^2$. \\
2. $Y_i = S^2(k,k)$ and $A_i$ consists of two $D^2(k)$'s. \\
3. $Y_i = S^2(2,2,k)$ and $A_i$ consists of $D^2(2,2)$ and $D^2(k)$.
\end{lemma}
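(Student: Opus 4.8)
The plan is to mirror the proof of Lemma \ref{lemma9.4} (itself modelled on \cite[Lemma 15.1]{Kleiner-Lott2}), using the fact that in the adjacency pattern of the strata $\Or^{0-stratum}$, $\Or^{slim}$, $\Or^{edge}$, $\Or^{2-stratum}$ produced by extending \cite[Sections 1--14]{Kleiner-Lott2}, a boundary component $Y_i$ of a slim component plays exactly the same structural role as the boundary of a $0$-stratum component. First I would recall that, since we have already reduced to the case in which $\Or^{slim}_i$ fibers over $I$, the boundary $\partial \Or^{slim}_i$ consists of two copies of the fiber, so $Y_i$ is diffeomorphic to a spherical or Euclidean orientable compact $2$-orbifold; in particular $|Y_i|$ is $S^2$ or $T^2$, and $Y_i$ has no reflector boundary.

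Next I would restrict the partition of $\Or$ to the closed $2$-orbifold $Y_i$. By the compatibility clause in the structure result (intersections of strata are $2$-orbifolds and the several fibration structures agree on overlaps), this cuts $Y_i$ into $2$-dimensional suborbifolds-with-boundary $Y_i \cap \Or^{0-stratum}$, $Y_i \cap \Or^{edge}$, $Y_i \cap \Or^{2-stratum}$. If $Y_i$ meets $\Or^{0-stratum}$ then, exactly as in Lemma \ref{lemma9.4}, the interface between the slim and $0$-strata occurs along entire fibers, so $Y_i = \partial \Or^{0-stratum}_j$ for the corresponding component; this settles the first alternative. In the remaining case $Y_i \cap \Or^{0-stratum} = \emptyset$, set $A_i = Y_i \cap \Or^{edge}$ and $B_i = Y_i \cap \Or^{2-stratum}$, so that $Y_i = A_i \cup B_i$. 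Tracing local models: since $\Or^{edge}$ is an orbibundle over $S^1$ or $I$ with fiber $D^2(k)$ or $D^2(2,2)$, intersecting it transversally with the fiber $Y_i$ yields for $A_i$ a disjoint union of copies of $D^2(k)$ and $D^2(2,2)$; since $\Or^{2-stratum}$ is a circle bundle over a smooth $2$-manifold, $B_i$ is a circle bundle over a compact $1$-manifold-with-boundary, hence a disjoint union of annuli (a closed torus component would force $Y_i = T^2$); and the fibration compatibility gives $A_i \cap B_i = \partial A_i \cap \partial B_i$, a union of circle fibers.

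Finally I would run an Euler characteristic count: $\chi(|B_i|) = 0$, each component of $|A_i|$ is a disk, and $|A_i \cap B_i|$ is a disjoint union of circles, so the number of components of $A_i$ equals $\chi(|Y_i|) \in \{0, 2\}$. If $|Y_i| = T^2$ this gives $A_i = \emptyset$ and $Y_i = T^2$; if $|Y_i| = S^2$, then $A_i$ has exactly two components and the singular points of $Y_i$ are precisely those carried by $A_i$ (a $D^2(k)$ contributing one cone point of order $k$, a $D^2(2,2)$ contributing two of order $2$). Enumerating the two-component possibilities $\{D^2, D^2\}$, $\{D^2(k), D^2(k')\}$, $\{D^2(k), D^2(2,2)\}$, $\{D^2(2,2), D^2(2,2)\}$ together with their degenerations, and discarding the combinations that produce a bad orbifold ($S^2(k)$, or $S^2(k,k')$ with $k \neq k'$) — which are forbidden because $Y_i$ must be spherical or Euclidean — leaves exactly $S^2$, $S^2(k,k)$, $S^2(2,2,k)$ in the spherical case and $S^2(2,2,2,2)$ in the Euclidean case, with the asserted $A_i$. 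The hard part will be the bookkeeping behind the second paragraph: verifying that the $3$-dimensional stratified decomposition of \cite{Kleiner-Lott2}, together with the matching of the fiberings on overlaps, really does restrict on $Y_i$ to precisely this $2$-dimensional picture (discs and half-pillowcases glued along circle fibers to circle-bundle annuli), and that in the first case $Y_i$ coincides with a $0$-stratum boundary in its entirety rather than partially. Once that picture is in hand, the count and the spherical/Euclidean constraint make the classification routine.
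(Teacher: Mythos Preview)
Your proposal is correct and takes essentially the same approach as the paper, which simply defers to \cite[Lemma 15.2]{Kleiner-Lott2}; you have spelled out the Euler-characteristic count and the enumeration of $A_i$-pairs that this reference presumably contains, together with the observation that the boundary fiber $Y_i$ of a slim component plays the same role as $\partial \Or^{0-stratum}_i$ in Lemma \ref{lemma9.4}. The only point worth tightening is the degenerate case $\{D^2, D^2(2,2)\}\to S^2(2,2)$ in your enumeration, which should be absorbed into case~2 (or case~3 with $k=1$) rather than left implicit.
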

\begin{proof}
The proof is similar to that of \cite[Lemma 15.2]{Kleiner-Lott2}.
\end{proof}

Let $\Or^\prime_{Sph}$ be the union of the connected components of
$\Or^{0-stratum}_{Sph} \cup \Or^{slim}_{Sph}$ that do not
intersect $\Or^{edge}$. 
Then $\Or^\prime_{Sph}$ is either empty or is all of $\Or$, 
in which case $\Or$ is diffeomorphic to 
the gluing of two connected components of $\Or^{0-stratum}_{Sph}$
along a spherical $2$-orbifold. As each connected component is
diffeomorphic to some $D^3//\Gamma$ or $I \times_{\Z_2} (S^2//\Gamma)$,
it then follows that
$\Or$ is diffeomorphic to $S^3//\Gamma$, $(S^3//\Gamma)//\Z_2$ or
$S^1 \times_{\Z_2} (S^2//\Gamma)$, the latter of which is
a Seifert $3$-orbifold.
Hence we can assume that each connected component of
$\Or^{0-stratum}_{Sph} \cup \Or^{slim}_{Sph}$ intersects $\Or^{edge}$.
A component of $\Or^{slim}_{Sph}$ which intersects
$\Or^{0-stratum}_{Sph}$ can now only do so on one side, so we can
collapse such a component of $\Or^{slim}_{Sph}$ without changing the
diffeomorphism type. Thus we can assume that
each connected component of $\Or^{0-stratum}_{Sph}$ and
each connected component of 
$\Or^{slim}_{Sph}$ intersects $\Or^{edge}$, and that
$\Or^{0-stratum}_{Sph} \cap \Or^{slim}_{Sph} = \emptyset$.
By Lemmas \ref{lemma9.4} and \ref{lemma9.5},
 each of their boundary components is one of
$S^2$, $S^2(k,k)$ and $S^2(2,2,k)$.

Consider the connected components of
$\Or^{0-stratum}_{Euc} \cup \Or^{slim}_{Euc}$ 
whose boundary components are
$S^2(2,3,6)$, $S^2(2,4,4)$ or $S^2(3,3,3)$. 
They cannot intersect any other strata, so if there is one such
connected component then $\Or$ is formed entirely of such
components.  In this case $\Or$ is diffeomorphic to the result
of gluing together two copies of $I \times_{\Z_2} (T^2//\Gamma)$.
Hence $\Or$ fibers over $S^1//\Z_2$ and has a geometric
structure based on $\R^3$, $\Nil$ or $\Sol$
\cite[p. 72]{Dunbar}. If the structure is
$\Nil$ then $\Or$ is a Seifert orbifold \cite[Theorem 1]{Dunbar}.
If the structure is $\Sol$ then we can cut $\Or$ along a generic
fiber to see that it is a weak graph orbifold.
Hence we can assume that
there are no connected components of
$\Or^{0-stratum}_{Euc} \cup \Or^{slim}_{Euc}$ 
whose boundary components are
$S^2(2,3,6)$, $S^2(2,4,4)$ or $S^2(3,3,3)$. 
Next, consider the connected components of
$\Or^{0-stratum}_{Euc} \cup \Or^{slim}_{Euc}$ with $T^2$-boundary components.
They are weak graph orbifolds that do not intersect any strata
other than $\Or^{2-stratum}$. If $X_1$ is their complement
in $\Or$ then in order to show that $\Or$ is a weak graph orbifold, it
suffices to show that $X_1$ is a weak graph orbifold. Hence we can assume
that each connected component of 
$\Or^{0-stratum}_{Euc} \cup \Or^{slim}_{Euc}$ 
has $S^2(2,2,2,2)$-boundary components, in which case it necessarily
intersects $\Or^{edge}$. 
As above, after collapsing some components of
$\Or^{slim}_{Euc}$, we can assume that
each connected component of $\Or^{0-stratum}_{Euc}$ and
each connected component of 
$\Or^{slim}_{Euc}$ intersects $\Or^{edge}$, and that
$\Or^{0-stratum}_{Euc} \cap \Or^{slim}_{Euc} = \emptyset$.

A connected component of $\Or^{slim}_{Sph}$ is now diffeomorphic to
$I \times \Or^\prime$, where $\Or^\prime$ is diffeomorphic to 
$S^2$, $S^2(k,k)$ or $S^2(2,k,k)$. We cut each such component along
$\{\frac12\} \times \Or^\prime$ and glue on two discal caps.  
If
$X_2$ is the ensuing orbifold then $X_1$ is the result of performing
a $0$-surgery on $X_2$, so it suffices to prove that $X_2$ satisfies
the conclusion of the proposition.   Therefore we assume
henceforth that $\Or^{slim}_{Sph} = \emptyset$.

A remaining connected component of $\Or^{slim}_{Euc}$ is diffeomorphic to
$I \times \Or^\prime$, where $\Or^\prime = S^2(2,2,2,2)$.
It intersects $\Or^{edge}$ in four copies of $D^2(2,2)$.
We cut the connected component of $\Or^{slim}_{Euc}$ along
$\{\frac12\} \times \Or^\prime$. 
The result is two copies of $I \times \Or^\prime$, each with
one free boundary component and another boundary component which
intersects $\Or^{edge}$ in two copies of $D^2(2,2)$.
If the
result $X_3$ of all such cuttings satisfies the conclusion of the proposition 
then so does $X_2$, it being the result of gluing Euclidean boundary
components of $X_3$ together.

A connected component $C$ of $\Or^{edge}$ fibers over $I$ or $S^1$.
Suppose that it fibers over $S^1$. Then it is diffeomorphic to
$S^1 \times D^2(k)$ or $S^1 \times D^2(2,2)$, or else is the
total space of a bundle over $S^1$ with holonomy that interchanges
the two singular points in a fiber $D^2(2,2)$; this is because
the mapping class group of $D^2(2,2)$ is a copy of $\Z_2$,
as follows from
\cite[Proposition 2.3]{Farb-Margalit}. 
If $C$
is diffeomorphic to $S^1 \times D^2(k)$ or $S^1 \times D^2(2,2)$
then it is clearly a weak graph orbifold.  In the third case,
$|C|$ is a solid torus and the singular locus consists of a circle
labelled by $2$ that wraps twice around the solid torus.
See Figure \ref{fig-10}. We can
decompose $C$ as 
$C = (S^1 \times_{\Z_2} D^2) \cup_{S^2(2,2,2,2)}
C_1$, 
where $C_1 = S^1 \times_{\Z_2} (S^2 - 3 B^2)$ with one $B^2$ being sent
to itself by the $\Z_2$-action and the other two $B^2$'s being
switched.
See Figure \ref{fig-11}.
As $C_1$ is a Seifert orbifold, in any case
$C$ is a weak graph orbifold.
Put $X_4 = X_3 - \Int(C)$. If we can
show that $X_4$ is a weak graph orbifold then it follows that $X_3$ is
a weak graph orbifold.  Hence we can assume that each connected
component of $\Or^{edge}$ fibers over $I$.

\begin{figure}[h] 
\begin{center}  
\includegraphics[scale=1.2]{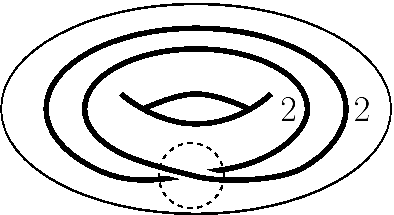} 
\caption{\label{fig-10}}
\end{center}
\end{figure}

\begin{figure}[h] 
\begin{center}  
\includegraphics[scale=1]{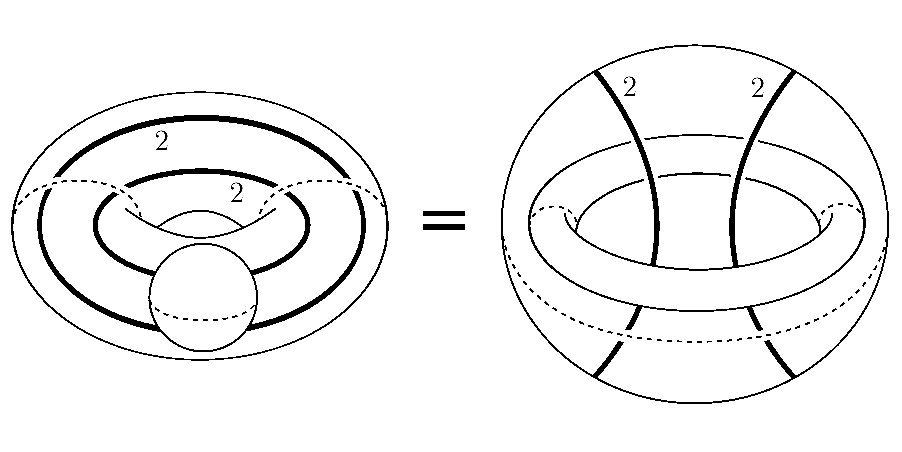} 
\caption{$C_1$\label{fig-11}}
\end{center}
\end{figure}

\begin{figure}
\begin{center}  
\includegraphics[scale=1.3]{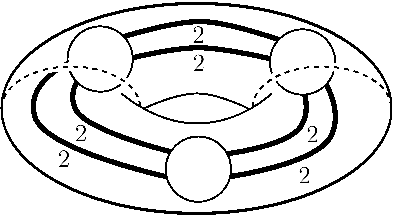} 
\caption{$C_m$, $m=3$\label{fig-12}}
\end{center}
\end{figure}

A connected component $Z$ of $X_4 - \Int(\Or^{2-stratum})$ can be described
by a graph, i.e. a one-dimensional CW-complex, of degree $2$.  
Its vertices correspond to copies of
\begin{itemize}
\item A connected component of $\Or^{0-stratum}_{Sph}$ with
boundary $S^2$ or $S^2(k,k)$,
\item A connected component of $\Or^{0-stratum}_{Euc}$ with
boundary $S^2(2,2,2,2)$, or
\item $I \times S^2(2,2,2,2)$.
\end{itemize}
Each edge corresponds to a copy of
\begin{itemize}
\item $I \times D^2$,
\item $I \times D^2(k)$ or
\item $I \times D^2(2,2)$.
\end{itemize}
If a vertex is of type $I \times S^2(2,2,2,2)$ then the edge orbifolds
only intersect the vertex orbifold
on a single one of its two boundary components.
Note that $|Z|$ is a solid torus with a certain number of balls
removed.

A connected component of $\Or^{0-stratum}_{Sph}$ 
is diffeomorphic to $D^3$, $D^3(k,k)$, $D^3(2,2,k)$,
$I \times_{\Z_2} S^2$, or $I \times_{\Z_2} S^2(2,2,k)$.
Now $I \times_{\Z_2} S^2$ is diffeomorphic to $\R P^3 \# D^3$,
$I \times_{\Z_2} S^2(k,k)$ is diffeomorphic to 
$(S^3(k,k)//\Z_2) \#_{S^2(k,k)} D^3(k,k)$  and
$I \times_{\Z_2} S^2(2,2,k)$ is diffeomorphic to
$(S^3(2,2,k)//\Z_2) \#_{S^2(2,2,k)} D^3(2,2,k)$, where $\Z_2$ acts
by the antipodal action. Hence we can reduce to the case when
each connected component of $\Or^{0-stratum}_{Sph}$ 
is diffeomorphic to $D^3$, $D^3(k,k)$ or $D^3(2,2,k)$, modulo
performing connected sums with the Seifert orbifolds
$\R P^3$, $S^3(k,k)//\Z_2$ and $S^3(2,2,k)//\Z_2$. 

Any connected component of $\Or^{0-stratum}_{Euc}$ with boundary
$S^2(2,2,2,2)$ can be written as the gluing of a weak graph orbifold
with $I \times S^2(2,2,2,2)$. Hence we may assume that there are no
vertices corresponding to 
connected components of $\Or^{0-stratum}_{Euc}$ with boundary
$S^2(2,2,2,2)$.

Suppose that there are no edges of type $I \times D^2(2,2)$.
Then $Z$ is $I \times D^2$ or $I \times D^2(k)$, which is a weak
graph orbifold.

Now suppose that there is an edge of type $I \times D^2(2,2)$.
We build up a skeleton for $Z$. First, the
orbifold corresponding to a graph with a single vertex of
type $I \times S^2(2,2,2,2)$, and a single edge of type
$I \times D^2(2,2)$, can be identified as the Seifert
orbifold $C_1 = S^1 \times_{\Z_2}
(S^2 - 3 B^2)$ of before. Let $C_m$ be the orbifold
corresponding to a graph with $m$ vertices of type
$I \times S^2(2,2,2,2)$ and $m$ edges of type
$I \times D^2(2,2)$.
See Figure \ref{fig-12}.
Then $C_m$ is an $m$-fold cover of $C_1$
and is also a Seifert orbifold.

Returning to the orbifold $Z$, there is some $m$ so that
$Z$ is diffeomorphic to the result of starting with $C_m$
and gluing some $S^1 \times_{\Z_2} D^2(k_i)$'s onto some
of the boundary $S^2(2,2,2,2)$'s, where $k_i \ge 1$. 
See Figure \ref{fig-13} for an illustrated example.

\begin{figure}[htb] 

\begin{center}  
\includegraphics[scale=1.3]{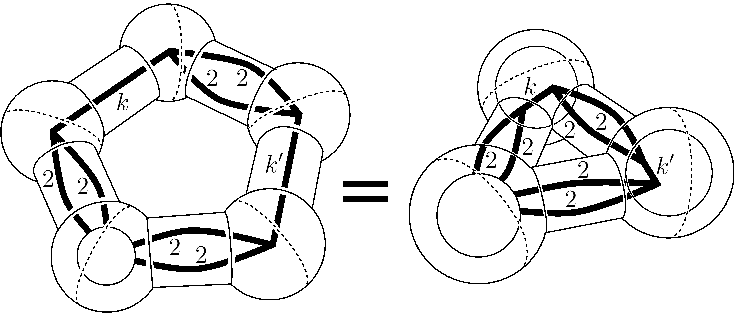} 
\caption{\label{fig-13}}
\end{center}
\end{figure}

Thus $Z$ is a weak
graph orbifold.

As $X_3$ is the result of gluing $Z$ to a circle bundle over a
surface, $X_3$ is a weak graph orbifold.
Along with Proposition \ref{propA.25}, this proves the proposition.
\end{proof}

\begin{proposition} \label{prop9.6}
Let $c_3$ be the volume of the unit ball in $\R^3$, let $K \ge 10$ be
a fixed integer and let $N$ be a positive integer. Fix a function
$A : (0, \infty) \rightarrow (0, \infty)$. Then there is a 
$w_0 \in (0, c_3/N)$ such that the following holds.

Suppose that $(\Or, g)$ is a 
compact connected orientable Riemannian $3$-orbifold with boundary.
Assume in addition that 
\begin{enumerate} 
\item $|G_p| \le N$.
\item The diameters of the connected components of $\partial \Or$ are 
bounded above by $w_0$.
\item For each component $X$ of $\partial \Or$, there is a hyperbolic
orbifold cusp ${\mathcal H}_X$ with boundary $\partial {\mathcal H}_X$,
along with a $C^{K+1}$-embedding of pairs
$e \: : \: (N_{100}(\partial {\mathcal H}_X), \partial {\mathcal H}_X) 
\rightarrow (\Or, X)$ which is $w_0$-close to an isometry.
\item For every $p \in |\Or|$ with 
$d(p, \partial \Or) \ge 10$, we have, $\vol(B(p, R_p)) \le w_0 R_p^3$.
\item For every $p \in |\Or|$, 
$w^\prime \in [w_0, c_3/N)$, $k \in [0, K]$ and $r \le R_p$
such that $\vol(B(p,r)) \ge w^\prime r^3$, the inequality
\begin{equation}
|\nabla^k \Rm| \le A(w^\prime) r^{-(k+2)}
\end{equation}
holds in the ball $B(p,r)$.
\end{enumerate} 

Then $\Or$ is diffeomorphic to
\begin{itemize}
\item  The result of performing $0$-surgeries on a strong graph orbifold,
\item A closed isometric quotient of $S^3$ or $T^3$,
\item $I \times S^2(2,3,6)$, $I \times S^2(2,4,4)$ or 
$I \times S^2(3,3,3)$, or
\item $I \times_{\Z_2} S^2(2,3,6)$, $I \times_{\Z_2} S^2(2,4,4)$ or 
$I \times_{\Z_2} S^2(3,3,3)$.
\end{itemize}
\end{proposition}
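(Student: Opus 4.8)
The plan is to follow the method of proof of Proposition \ref{prop9.1}, adapting the boundary case of \cite{Kleiner-Lott2} to the orbifold setting; these hypotheses are precisely the ones satisfied by the $w$-thin part of a large-time Ricci flow with surgery once the hyperbolic pieces of Proposition \ref{prop8.11} have been cut off along their cusp cross-sections, so this is the collapsing step of the geometrization argument. Hypotheses (2) and (3) say that each boundary component $X$ of $\Or$, together with a width-$100$ collar $N_{100}(X)$, is $w_0$-close to the boundary of a hyperbolic orbifold cusp; in particular $X$ is a closed orientable Euclidean $2$-orbifold, hence one of $T^2$, $S^2(2,2,2,2)$, $S^2(2,3,6)$, $S^2(2,4,4)$, $S^2(3,3,3)$, and the collar is, up to $w_0$, a warped product over $X$. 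Hypotheses (4) and (5) are the local collapsing and local-curvature-bound hypotheses of Proposition \ref{prop9.1}, imposed on $\{p \in |\Or| : d(p,\partial\Or) \ge 10\}$. So the geometry is understood in a large collar of $\partial\Or$ and collapsed in the interior, and the proof consists of gluing an interior structure theory to the cusp collars.

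First I would extend the structure results of \cite[Sections 1--14]{Kleiner-Lott2}, in their version with boundary, to orbifolds, by exactly the list of substitutions used in the proof of Proposition \ref{prop9.1}: the stratification and adapted-coordinate constructions use only the underlying Alexandrov structure and carry over verbatim, while the steps that use smoothness are handled by the orbifold critical point theory of Subsection \ref{subsect2.6}, the smoothing results of Subsection \ref{subsect2.7} (Lemma \ref{lemma2.18}, Corollary \ref{corollary2.19}), the compactness theorem (Proposition \ref{prop4.1}), the splitting and soul theorems (Propositions \ref{prop3.1}, \ref{prop3.11}, Lemma \ref{lemma2.17}), and the low-dimensional classifications (Lemmas \ref{lemma3.15}, \ref{lemma3.16}, \ref{lemma3.17}, Proposition \ref{prop5.6}). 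The cusp collar $N_{100}(X)$ is a thick mapping-cylinder-type piece fibering over the Euclidean orbifold $X$; it meets the collapsed interior where $d(\cdot,\partial\Or)$ is near $10$, and there the cusp fibration over $X$ is to be matched with the fibration produced by the collapse. The outcome is a partition of $\Or$ into codimension-zero suborbifolds-with-boundary $\Or^{0-stratum}$, $\Or^{slim}$, $\Or^{edge}$, $\Or^{2-stratum}$ with the same local descriptions as in Proposition \ref{prop9.1}, together with collar pieces, which one includes in $\Or^{slim}_{Euc}$, one of whose boundary components is an actual boundary component $X$ of $\Or$.

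Then I would run the topological case analysis of \cite[Section 15]{Kleiner-Lott2} exactly as in the proof of Proposition \ref{prop9.1}, using the orbifold analogues of Lemmas \ref{lemma9.4} and \ref{lemma9.5} and the recognition of the strata. Whenever no collar and no $\Or^{slim}_{Euc}$-piece has a Euclidean-turnover fiber, the argument is identical to the closed case and shows that $\Or$ is the result of performing $0$-surgeries on a weak graph orbifold, hence by Proposition \ref{propA.25} on a strong graph orbifold, or is a closed isometric quotient of $S^3$ or $T^3$. The one new phenomenon is a collar or slim piece whose fiber is a Euclidean turnover $S^2(2,3,6)$, $S^2(2,4,4)$ or $S^2(3,3,3)$: such a piece carries no circle fibration, so by the compatibility of the fibrations on the intersections of strata it cannot be glued fiberwise to any other stratum. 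Hence the connected component of $\Or$ containing it consists of one or two such pieces and is diffeomorphic to $I \times S^2(2,3,6)$, $I \times S^2(2,4,4)$, $I \times S^2(3,3,3)$ (two boundary components), or to $I \times_{\Z_2} S^2(2,3,6)$, $I \times_{\Z_2} S^2(2,4,4)$, $I \times_{\Z_2} S^2(3,3,3)$ (the piece doubled across a mirror stratum, with one boundary component) --- precisely the exceptional cases in the statement.

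The main obstacle I expect is the interface step: verifying that the cusp-collar fibration over the Euclidean boundary $X$ agrees, on the overlap where $d(\cdot,\partial\Or)$ is near $10$, with the fibration supplied by the collapsing structure, so that the collar genuinely extends the $\Or^{slim}_{Euc}$-part; and, in the turnover case, checking carefully that a collar over $S^2(2,3,6)$, $S^2(2,4,4)$ or $S^2(3,3,3)$ cannot attach fiber-compatibly to anything else --- taking account of the isometry groups of the Euclidean turnovers and of the possible $\Z_2$-quotients --- so that the only components involving such a fiber are the four exceptional types. This is the orbifold refinement of the corresponding step of \cite{Kleiner-Lott2}, and as in the closed case of Proposition \ref{prop9.1} it is the delicate part of the argument.
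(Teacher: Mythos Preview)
Your proposal is correct and follows essentially the same approach as the paper: extend the boundary case of \cite[Section 16]{Kleiner-Lott2} to orbifolds via the same substitutions used for Proposition \ref{prop9.1}, and observe that the only new phenomenon is collar pieces $I\times (T^2//\Gamma)$ over the Euclidean boundary orbifolds, which fit into the weak graph orbifold structure when $T^2//\Gamma$ is $T^2$ or $S^2(2,2,2,2)$ but yield the listed exceptional types when $T^2//\Gamma$ is a turnover. The only discrepancy is bookkeeping: the paper treats these collar pieces as additional components of $\Or^{0\text{-stratum}}$ rather than of $\Or^{slim}_{Euc}$, but this does not affect the argument.
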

\begin{proof}
We follow the method of proof of \cite[Section 16]{Kleiner-Lott}.
The effective difference from the proof of Proposition \ref{prop9.1} is that we
have additional components of $\Or^{0-stratum}$, which are
diffeomorphic to $I \times (T^2//\Gamma)$. If such a component
is diffeomorphic to $I \times T^2$ or $I \times S^2(2,2,2,2)$ then
we can incorporate it into the weak graph orbifold structure.
The other cases give rise to the additional possibilities listed
in the conclusion of the proposition.
\end{proof}

\section{Incompressibility of cuspidal cross-sections and
proof of Theorem \ref{theorem1.1}} \label{sect10}

In this section we complete the proof of Theorem \ref{theorem1.1}.

With reference to Proposition \ref{prop8.11}, 
given a sequence $t^\alpha \rightarrow
\infty$, let $Y^\alpha$ be the truncation of $\coprod_{i=1}^N H_i$
obtained by removing horoballs at distance approximately
$\frac{1}{2\beta(t^\alpha)}$ from the basepoints $x_i$. Put
$\Or^\alpha = \Or_{t^\alpha} - f_{t^\alpha}(Y^\alpha)$.

\begin{proposition} \label{prop10.1}
For large $\alpha$, the orbifold $\Or^\alpha$ satisfies the hypotheses of
Proposition \ref{prop9.6}.
\end{proposition}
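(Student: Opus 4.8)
The plan is to check, one at a time, the five hypotheses of Proposition~\ref{prop9.6}, applied to each connected component of $\Or^\alpha$ equipped with its metric rescaled by the factor $(t^\alpha)^{-1}$; all distances, volumes and curvatures below will refer to this metric, so that the hyperbolic pieces $H_i$ carry curvature $-\frac14$. For $N$ I would take the uniform bound on the orders of the isotropy groups furnished by the normalized initial condition (Subsection~\ref{subsect7.5}), which at once gives hypothesis~(1), since $|G_p|\le N$ holds on the time-$t^\alpha$ slice. Throughout, $\alpha$ is taken large, and I will use that the truncation function $\beta$ is, up to a harmless adjustment, the decay function appearing in Proposition~\ref{prop8.11}, so that $B_{t^\alpha}$ contains $Y^\alpha$ together with the $100$-neighborhood of $\partial Y^\alpha$ lying on the side of $\partial Y^\alpha$ away from $Y^\alpha$, once $\alpha$ is large.

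First I would dispose of the boundary hypotheses (2) and (3) using Proposition~\ref{prop8.11}. For $\alpha$ large, $f_{t^\alpha}$ is a diffeomorphism of $B_{t^\alpha}$ onto its image which, after rescaling, is $C^{[1/\beta(t^\alpha)]}$-close to an isometry, with $[1/\beta(t^\alpha)]\ge K+1$; hence $\partial\Or^\alpha=f_{t^\alpha}(\partial Y^\alpha)$ is, modulo a $C^{K+1}$-small error, a disjoint union of horospherical cross-sections of the cusps of the $H_i$ at depth approximately $1/(2\beta(t^\alpha))$. Each of these is a compact Euclidean $2$-orbifold whose intrinsic diameter decays exponentially in $1/\beta(t^\alpha)$, hence is $<w_0$ for $\alpha$ large: this is hypothesis~(2). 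For a boundary component $X$, I would let ${\mathcal H}_X$ be the corresponding cusp of the appropriate $H_i$, with $\partial{\mathcal H}_X$ the horosphere used in the truncation, and take $e$ to be the restriction of $f_{t^\alpha}$ to the part of ${\mathcal H}_X$ within distance $100$ of $\partial{\mathcal H}_X$; this region lies in $B_{t^\alpha}$ for $\alpha$ large, so $e$ is a $C^{K+1}$-embedding of pairs into $(\Or^\alpha,X)$ that is $w_0$-close to an isometry by Proposition~\ref{prop8.11}(1), giving hypothesis~(3).

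Next, hypothesis~(4). I would write $\Or^\alpha=(\Or_{t^\alpha}-\Image(|f_{t^\alpha}|))\cup f_{t^\alpha}(B_{t^\alpha}-Y^\alpha)$. By Proposition~\ref{prop8.11}(3) the thick part $\Or^+(\beta(t^\alpha),t^\alpha)$ is contained in $\Image(|f_{t^\alpha}|)$, so every $p$ in the first piece lies in the $\beta(t^\alpha)$-thin part and satisfies $\vol(B(p,R_p))\le\beta(t^\alpha)R_p^3\le w_0R_p^3$ for $\alpha$ large. Every $p$ in the second piece lies in a hyperbolic cusp at depth at least $1/(2\beta(t^\alpha))$, where the curvature is approximately $-\frac14$, so $R_p$ (the curvature scale, Definition~\ref{def-curvaturescale}) is pinched between universal constants while $\vol(B(p,R_p))$ is exponentially small in the depth; again $\vol(B(p,R_p))\le w_0R_p^3$ for $\alpha$ large. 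Thus $R_p$-balls are collapsed at every point of $|\Or^\alpha|$, in particular wherever $d(p,\partial\Or^\alpha)\ge10$.

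Finally, hypothesis~(5) is the orbifold analogue of the curvature estimates one proves in the thick part of a Ricci flow with surgery, and I would deduce it from Proposition~\ref{prop8.1} together with local parabolic (Shi-type) derivative estimates, which hold for orbifold Ricci flow by working equivariantly in local models. Given $p$, $w'\in[w_0,c_3/N)$, $k\le K$ and $r\le R_p$ with $\vol(B(p,r))\ge w'r^3$: the condition $r\le R_p$ supplies the lower bound $-r^{-2}$ on the sectional curvature over $B(p,r)$, and at the large time $t^\alpha$ the surgery radii are negligible relative to the scale under consideration, so Proposition~\ref{prop8.1} (in unrescaled variables, with $r_0=\sqrt{t^\alpha}\,r$) produces a two-sided bound $|\Rm|\le C(w')r^{-2}$ on a slightly smaller ball and on a backward parabolic neighborhood, whence the derivative estimates yield $|\nabla^k\Rm|\le A(w')r^{-(k+2)}$ on $B(p,r)$. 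The step I expect to require the most care is this last one: one must reconcile the truncation depth with the radius of $B_{t^\alpha}$, track the $(t^\alpha)^{-1}$-rescaling in every estimate, and check that the scale $r_0$ lands in the admissible window $[\theta^{-1}h_{max}(t^\alpha),\overline r\sqrt{t^\alpha}\,]$ of Proposition~\ref{prop8.1}; none of this is conceptually deep, but it is where the bookkeeping lives.
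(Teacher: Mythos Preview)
Your proposal is correct and follows essentially the same approach as the paper, which simply refers to \cite[Theorem 17.3]{Kleiner-Lott2} for the manifold case; your sketch spells out precisely the verification of hypotheses (1)--(5) that this reference carries out, with the orbifold modifications (the isotropy-group bound from the normalized initial condition, and working equivariantly in local models for the Shi estimates) inserted in the natural places. The one step you correctly flag as requiring care --- checking that the scale $r_0=\sqrt{t^\alpha}\,r$ lands in the admissible window of Proposition~\ref{prop8.1}, in particular that it dominates $\theta^{-1}h_{\max}(t^\alpha)$ --- is handled by the freedom to choose the surgery parameter $\delta(\cdot)$ to decay sufficiently fast, exactly as in the manifold case.
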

\begin{proof}
The proof is similar to that of \cite[Theorem 17.3]{Kleiner-Lott2}.
\end{proof}

So far we know that if $\alpha$ is large then the $3$-orbifold 
$\Or_{t^\alpha}$ has
a (possibly empty) hyperbolic piece whose complement satisfies the
conclusion of Proposition \ref{prop9.6}.  In this section we
show that there is such a decomposition of $\Or_{t^\alpha}$
so that the hyperbolic cusps, 
if any, are incompressible in $\Or_{t^\alpha}$.

The corresponding manifold result was proved by Hamilton in
\cite{Hamilton (1999)} using minimal disks. He used results of
Meeks-Yau \cite{Meeks-Yau (1982a)} 
to find embedded minimal disks with boundary on an
appropriate cross-section of the cusp.  The Meeks-Yau proof
in turn used a tower construction 
\cite{Meeks-Yau (1982b)}
similar to that used in the
proof of Dehn's Lemma in $3$-manifold topology. It is not clear
to us whether this line of proof extends to three-dimensional orbifolds,
or whether there are other methods using minimal disks which do 
extend.  To circumvent these issues, we use an alternative 
incompressibility argument due to
Perelman \cite[Section 8.2]{Perelman2} 
that exploits certain quantities which change monotonically 
under the Ricci flow. Perelman's monotonic quantity involved the smallest
eigenvalue of a certain Schr\"odinger-type operator. We will instead
use a variation of Perelman's argument involving the minimal
scalar curvature, following \cite[Section 93.4]{Kleiner-Lott}.

Before proceeding, we need two lemmas:

\begin{lemma}
\label{lem-freeconnectedsum}
Suppose $\eps>0$, and $\Or'$ is a Riemannian $3$-orbifold with scalar curvature
$\geq -\frac32$.   Then any orbifold $\Or$ obtained from $\Or'$ by $0$-surgeries
admits a  Riemannian metric with scalar
curvature $\geq -\frac32$,  such that 
$\vol(\Or)<\vol(\Or')+\eps$.

\end{lemma}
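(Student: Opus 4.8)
The statement is a standard "connected sum / $0$-surgery preserves a scalar curvature lower bound, at the cost of arbitrarily little volume" result, adapted to the orbifold setting. The model is the Gromov--Lawson / Schoen--Yau surgery construction, as used in this context by Perelman in \cite[Section 8.2]{Perelman2} and recounted in \cite[Section 93.4]{Kleiner-Lott}. First I would reduce to performing a single $0$-surgery, since a general $\Or$ is obtained from $\Or'$ by finitely many such surgeries (and by the definition of $0$-surgery in Subsection~\ref{subsect2.1}), applying the single-surgery statement repeatedly with $\eps$ replaced by $\eps/k$ where $k$ is the number of surgeries. So fix disjoint codimension-zero suborbifolds $D_1, D_2 \subset \Or'$, each oriented-diffeomorphic to $D^3//\Ga$, and recall $\Or = (\Or' - \Int(D_1) - \Int(D_2)) \cup_{\partial D_1 \sqcup \partial D_2} (I \times (D^3//\Ga))$.

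The key step is to produce, for any $\eta>0$, an isometric modification of $\Or'$ near $D_1$ and $D_2$ so that a neighborhood of each $\partial D_i$ is isometric to a small round cylindrical neck $(-\ell, \ell) \times (S^2//\Ga)$ of scalar curvature close to (but, crucially, bounded below by) the ambient bound, while changing the metric only on a set of small volume and keeping scalar curvature $\ge -\tfrac32$ throughout. This is exactly the Gromov--Lawson bending construction: one passes to the boundary sphere-quotient $S^2//\Ga = \partial(D^3//\Ga)$, works in the frame bundle / a local model $(\hU, \Ga)$ where everything is $\Ga$-equivariant, and uses that the round metric on $S^2$ has positive scalar curvature, so that thin enough cross-sectional necks contribute a large positive curvature term that dominates the (bounded) curvature of the ambient metric and the bounded second-fundamental-form contributions. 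All the estimates are local and $\Ga$-invariant, hence descend to the orbifold. The thinness of the neck makes the region where the metric is altered have volume $O(\ell^2)$, hence $<\eta$ for $\ell$ small. Once both $\partial D_1$ and $\partial D_2$ carry isometric round necks of the same radius, the gluing tube $I \times (S^2//\Ga)$ can be taken to be a straight (product, or slightly bent) round cylinder of scalar curvature $> -\tfrac32$ and of volume $O(\ell^2) < \eta$; matching the two necks is possible because after rescaling they are isometric standard cylinders and one has control of all derivatives, so a standard interpolation in a collar (again done equivariantly in a local model, cf. the surgery metric construction in Lemma~\ref{lemma7.25}) closes things up while retaining the curvature bound.

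Assembling: choose $\ell$ so small that the total volume of all altered and newly-added regions is $<\eps$; the resulting metric on $\Or$ has scalar curvature $\ge -\tfrac32$ and $\vol(\Or) < \vol(\Or') + \eps$. The main obstacle is bookkeeping the Gromov--Lawson bending in the orbifold setting with $\Ga$ a nontrivial subgroup of $\SO(3)$: one must check that the bending curve and the comparison estimates can be chosen $\Ga$-invariantly (they can, since the construction is rotationally symmetric about the normal direction to $\partial D_i$ and $\Ga$ acts through the $\SO(3)$-factor fixing that normal, so the standard radially-symmetric bending profile is automatically equivariant), and that the round cylinder metrics on $S^2//\Ga$ used on the two sides are genuinely isometric so the gluing introduces no curvature defect. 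Neither of these is serious; everything reduces to the manifold computation carried out $\Ga$-equivariantly in a local model, exactly as in the surgery constructions elsewhere in the paper.
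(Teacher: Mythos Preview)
Your approach is correct and complete in outline, but it differs from the paper's. You invoke the Gromov--Lawson bending construction, working $\Gamma$-equivariantly in a local model around the center of each $D^3//\Gamma$ and using the large positive scalar curvature of a thin $S^2$ cross-section to dominate all error terms while the modified region has volume $O(\ell^2)$. The paper instead takes a more explicit route: it puts on the connecting neck $(S^2//\Gamma)\times I$ the $\Gamma$-quotient of a slight perturbation of the doubled Schwarzschild metric on $S^2\times I$ (citing Anderson \cite[(1.23), p.~155]{Anderson} and Petean--Yun \cite{PeteanYun} for the manifold case), obtaining scalar curvature close to $-\tfrac32$ and volume increase at most $\tfrac{\eps}{10}$, then finishes with a global rescaling to restore $R\ge -\tfrac32$ exactly.

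Both arguments accomplish the same thing and both pass to orbifolds by the same mechanism (the construction on $S^2\times I$ or on the bent tube is $\SO(3)$-symmetric, hence $\Gamma$-equivariant). The Schwarzschild approach has the advantage of being entirely off-the-shelf with precise references; your Gromov--Lawson sketch is more self-contained but requires the reader to trust the standard bending estimates. One small point: in your write-up you assert the bending keeps $R\ge -\tfrac32$ on the nose, whereas the paper allows a small defect and repairs it by rescaling. Either is fine, but if you want to avoid the rescaling you should be explicit that the $1/\ell^2$ term from the $S^2$ factor makes the scalar curvature on the entire bent region strictly positive once $\ell$ is small, so no margin is lost there; the only place $R$ could approach $-\tfrac32$ is in the untouched part of $\Or'$.
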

\begin{proof}
If a $0$-surgery adds a neck $(S^2//\Gamma) \times I$ then we can
put a metric on the neck which is an isometric quotient of a slight
perturbation of the doubled Schwarzschild metric 
\cite[(1.23)]{Anderson} on $S^2 \times I$.
Hence we can perform the $0$-surgery so that the scalar curvature
is bounded below by $- \frac32 + \frac{\epsilon}{10}$ and the volume
increases by at most $\frac{\epsilon}{10}$; see
\cite[p. 155]{Anderson} and \cite{PeteanYun} for the analogous 
result in the manifold case.
The lemma now follows from an overall rescaling
to make $R \: \ge \: - \: \frac32$.
\end{proof}

\begin{lemma}
\label{lem-collapsingstronggraphmanifolds}
Suppose that $\Or$ is a strong graph orbifold with boundary components $C_1,\ldots,C_k$.
Let $H_1,\ldots,H_k$ be 
truncated
hyperbolic cusps, where $\D H_i$ is diffeomorphic to $C_i$ for all 
$i\in \{1,\ldots,k\}$.  Then for all $\eps>0$, there is a metric on $\Or$ with scalar 
curvature $\geq -\frac32$ such that $\vol(\Or)<\eps$, and $C_i$ has a collar which is
isometric to 
one side of a
collar neighborhood of a cuspical $2$-orbifold in $H_i$.
\end{lemma}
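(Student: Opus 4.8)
The plan is to build the small-volume, scalar-curvature-bounded metric on $\Or$ piece by piece along its graph structure, using the standard fact that collapsing circle fibers (more generally, collapsing Seifert pieces along fibers and tori along incompressible directions) can be done while keeping scalar curvature bounded below, and then to glue in the prescribed hyperbolic cusp collars near $\partial\Or$. First I would recall that, by Definition \ref{defnA.22}, a strong graph orbifold is built from Seifert pieces glued along $T^2$'s and $S^2(2,2,2,2)$'s, with the gluings arranged so that the circle fibrations are compatible on the gluing tori/pillowcases (this is exactly the ``strong'' condition, which rules out the obstructions to simultaneously collapsing all fibers). On each Seifert piece one has an $S^1$-orbibundle over a $2$-orbifold base; shrinking the fiber length to $\epsilon'$ while keeping the base fixed gives, by an O'Neill-type computation, scalar curvature $R = R_{\text{base}} + O(\epsilon'^2)$, so after an overall rescaling of the base one can make $R \ge -\tfrac32$ and make the volume of that piece as small as desired. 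The compatibility of fibrations on the interface $2$-orbifolds lets these metrics be glued (interpolating the metrics on a collar of each interface torus/pillowcase, which one can do without destroying the lower scalar curvature bound since the relevant warping functions and their derivatives are controlled). The boundary components $C_i$ are among the interface $2$-orbifolds, each a $T^2$ or $S^2(2,2,2,2)$ fibering over a $1$-orbifold, so near $C_i$ the metric is already a product (or twisted product) of a short circle with $C_i$-base data.

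Next I would handle the cusp matching. A truncated hyperbolic cusp $H_i$ with $\partial H_i \cong C_i$ carries, near $\partial H_i$, the metric $dr^2 + e^{-r}\,(\text{flat metric on } C_i)$ on a collar; in particular the induced metric on $C_i = \partial H_i$ is a flat Euclidean $2$-orbifold, and the collar has a definite isometry type determined by $H_i$. On the $\Or$ side, after the collapsing construction the metric on a collar of $C_i$ is a short-fiber twisted product; I would modify the construction near each $C_i$ so that the metric there is \emph{exactly} the hyperbolic-cusp collar metric $dr^2 + e^{-r} g_{C_i}$ on $r \in [0, r_0]$ (which is possible because this collar metric is itself a warped product over the $1$-orbifold base with circle fibers of length $e^{-r/?}$ times a constant — i.e. it is again a collapsed Seifert-type metric, consistent with the strong graph structure), and then interpolate on $r \in [r_0, 2r_0]$ between this cusp collar and the interior collapsing metric. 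The interpolation is the familiar ``bending a warped product'' move: one changes the warping function of the circle factor (and, for $S^2(2,2,2,2)$ interfaces, the analogous parameters) monotonically, and for $r_0$ large the $C^2$-size of the interpolation is $O(e^{-r_0})$, so the scalar curvature stays $\ge -\tfrac32$ and the added volume is $O(e^{-r_0})$. Choosing $r_0$ large and the interior fibers short enough makes $\vol(\Or) < \epsilon$.

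The main obstacle, and the step deserving the most care, is the \emph{simultaneous} collapse across all the gluing $2$-orbifolds: one must verify that the ``strong'' graph structure genuinely guarantees that the circle (Seifert) fibrations on adjacent pieces can be matched up on each interface torus or pillowcase, so that there is no monodromy obstruction forcing some piece to stay non-collapsed. For the $S^2(2,2,2,2)$ interfaces this is where the appendix's definition of strong graph orbifold (Definition \ref{defnA.22}) does the work, and I would quote that definition and the accompanying structural results (Proposition \ref{propA.25} and its supporting lemmas in the appendix) rather than reprove the combinatorics here. Everything else — the O'Neill scalar-curvature computation, the warped-product interpolation estimate, and the explicit form of the hyperbolic cusp collar metric — is routine and local, and the reduction to the manifold-case collapsing arguments (e.g. as in the analogous statements used in \cite{Kleiner-Lott2}) goes through equivariantly in local models by the tools developed in Sections \ref{sect2}--\ref{sect3}.
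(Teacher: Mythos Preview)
Your proposal rests on a misreading of Definition~\ref{defnA.22}. Condition~(2) of a strong graph orbifold says that for each splitting $2$-orbifold $E_i$, the circle fibrations induced from the two adjacent Seifert pieces are \emph{not} isotopic. So the fibrations are explicitly \emph{incompatible} across the interfaces, which is precisely the opposite of what your argument needs. Your plan to ``simultaneously collapse all fibers'' across the gluing tori/pillowcases therefore cannot work: if you shrink the fiber circle on one side of $E_i$, that circle is not homotopic to the fiber circle on the other side, and there is no single $S^1$-direction on $E_i$ to collapse. The ``monodromy obstruction'' you mention is not ruled out by the strong condition---it is guaranteed by it.

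The paper's proof handles exactly this mismatch, and by a quite different mechanism. Each Seifert block has its fibers shrunk independently. Then, on each side of a gluing torus $T^2_b$, one attaches a truncated hyperbolic cusp $ds^2+e^{-2s}g_{B_i}$ (sectional curvature $-1$, volume bounded by the area of $B_i$), pushing far enough down so the cross-sectional tori on the two sides have the same small area $A$. The two truncated cusps are then joined by a slab of a $\Sol$-invariant metric $e^{-2z}dx^2+e^{2z}dy^2+dz^2$ on $\R^3/\Gamma$; the point is that $\Sol$ geometry realizes an arbitrary $\SL(2,\Z)$ gluing of flat tori, so the non-isotopic fibrations pose no obstruction. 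Taking $A$ small makes the $\Sol$ slab's volume small too. After smoothing and rescaling one gets $R\ge -\tfrac32$ with total volume $<\epsilon$. For the boundary components one attaches the prescribed hyperbolic cusp collar directly. The orbifold case requires one further nontrivial check that you do not mention: one must show that every element of the (orientation-preserving) mapping class group of an orientable Euclidean $2$-orbifold $T^2//\Gamma$ is represented by an affine diffeomorphism, so that the $\Sol$-slab construction still applies; the paper does this case-by-case for $S^2(2,3,6)$, $S^2(2,4,4)$, $S^2(3,3,3)$, and $S^2(2,2,2,2)$.
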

\begin{proof}
We first prove the case when $\Or$ is a closed strong graph manifold.
The strong graph manifold structure gives a graph whose vertices
$\{v_a\}$ correspond to the Seifert blocks and whose edges 
$\{e_b\}$ correspond to $2$-tori. For each vertex $v_a$, let 
$M_a$ be the corresponding Seifert block.  We give it a Riemannian
metric $g_a$ which is invariant under the local $S^1$-actions and with
the property that the quotient metric on the orbifold base is
a product near its boundary.  Then $g_a$ has a product structure
near $\partial M_a$. 
Given $\delta > 0$, we uniformly shrink the Riemannian metric on
$g_a$ by $\delta$ in the fiber directions. As $\delta \rightarrow 0$,
the volume of $M_a$ goes to zero while the curvature stays bounded.

Let $T^2_b$ be the torus corresponding to the edge $e_b$. There are
associated toral boundary components $\{B_1, B_2\}$ of Seifert blocks.
Given $\delta > 0$ and $i \in \{1,2\}$, consider the warped product metric
$ds^2 + e^{-2s} g_{B_i}$ on a product manifold $P_{\delta, i} =
[0, L_{\delta, i}] \times B_i$. We attach this at $B_i$ to obtain
a $C^0$-metric, which we will smooth later.  The sectional curvatures of 
$P_{\delta, i}$ are
$-1$ and the volume of $P_{\delta, i}$ is bounded above by the area of $B_i$.
We choose $L_{\delta, i}$ so that the areas of the cross-sections
$\{ L_{\delta, 1} \} \times B_1$ and
$\{ L_{\delta, 2} \} \times B_2$ are both equal to some number 
$A$. Finally, consider
$\R^3$ with the $\Sol$-invariant metric
$e^{-2z} dx^2 + e^{2z} dy^2 + dz^2$. Let $\Gamma$ be a $\Z^2$-subgroup
of the normal $\R^2$-subgroup of $\Sol$.
Note that the curvature of $\R^3/\Gamma$ is independent of
$\Gamma$. The $z$-coordinate
gives a fibering $z \: : \: \R^3/\Gamma \rightarrow \R$
with $T^2$-fibers.  We can choose $\Gamma = \Gamma_\delta$ and an interval 
$[c_1,c_2] \subset \R$ so that $z^{-1}(c_1)$ is isometric to 
$\{ L_{\delta, 1} \} \times B_1$ and $z^{-1}(c_2)$ is isometric to 
$\{ L_{\delta, 2} \} \times B_2$.
Note that $[c_1, c_2]$ can be taken independent of $A$.
We attach $z^{-1}([c_1, c_2])$ to the previously described truncated
cusps, at
the boundary components $\{ L_{\delta, 1} \} \times B_1$ and
$\{ L_{\delta, 2} \} \times B_2$.
See Figure \ref{fig-14}.
\begin{figure}[h] 
\begin{center}  
\includegraphics[scale=1]{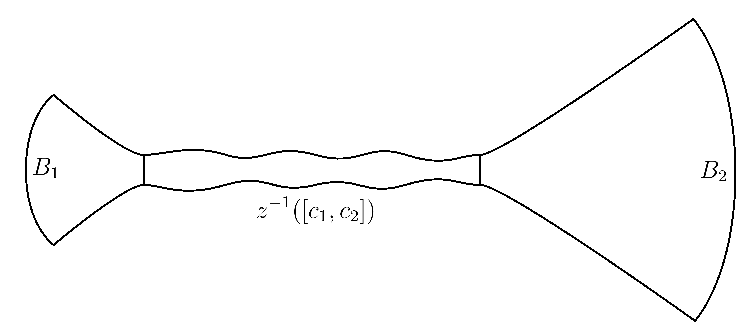} 
\caption{\label{fig-14}}
\end{center}
\end{figure}

Taking $A$ sufficiently small we can
ensure that 
\begin{equation}
\vol(P_{\delta,1}) + \vol(P_{\delta,2}) + 
\vol(z^{-1}([c_1, c_2])) < \area(B_1) + \area(B_2) + \delta.
\end{equation}

We repeat this process for
all of the tori $\{T^2_b\}$, to obtain a piecewise-smooth $C^0$-metric
$g_\delta$ on $\Or$.

As $\delta \rightarrow 0$, the sectional curvature stays
uniformly bounded on the smooth pieces. Furthermore, the volume
of $(\Or, g_\delta)$
goes to zero.  By slightly smoothing $g_\delta$ and performing
an overall rescaling to ensure that the scalar curvature is
bounded below by $- \: \frac32$, if $\delta$ is sufficiently small then
we can ensure that $\vol(\Or, g_\delta) < \epsilon$.
This proves the lemma when $\Or$ is a closed strong graph manifold.

If $\Or$ is a strong graph manifold but has nonempty boundary
components, as in the hypotheses of the lemma, then we treat
each boundary component $C_i$ analogously to a factor $B_1$
in the preceding construction. That is, given
parameters $0 < c_{1,C_i}< c_{2,C_i}$, we start
by putting a truncated hyperbolic metric
$ds^2 + e^{-2s} g_{\partial H_i}$ on 
$\left[ c_{1,C_i}, c_{2,C_i} \right] \times C_i$.
This will be the metric on the collar neighborhood of
$C_i$, where $\{ c_{1,C_i} \} \times C_i$ will end
up becoming a boundary component of $\Or$.
We take $c_{2,C_i}$ so that the area of 
$\{c_{2,C_i}\} \times C_i$ matches the area of a
relevant cross-section of the truncated cusp extending from
a boundary component $B_{2,i}$ of a Seifert block. We then
construct a metric $g_{\delta}$ on $\Or$ as before. 
If we additionally take the parameters 
$\{ c_{1,C_i} \}$ sufficiently large then
we can ensure that $\vol(\Or, g_\delta) < \epsilon$.

Finally, if $\Or$ is a strong graph orbifold then we can
go through the same steps.  The only additional point is
to show that elements of the (orientation-preserving) mapping class group
of an oriented Euclidean $2$-orbifold $T^2//\Gamma$ are represented
by affine diffeomorphisms, in order to apply the preceding construction
using the $\Sol$ geometry. To see this fact,
if $\Gamma$ is trivial then the 
mapping class group of
$T^2$ is isomorphic to $\SL(2, \Z)$ and the claim is clear.
To handle the case when $T^2//\Gamma$ is a sphere with three
singular points, we use the fact that the mapping class group
of a sphere with three marked points is isomorphic to
the permutation group of the three points
\cite[Proposition 2.3]{Farb-Margalit}. 
The mapping class group of the orbifold $T^2//\Gamma$ will then
be the subgroup of the permutation group that preserves the labels.
If $T^2//\Gamma$ is $S^2(2,3,6)$ then its mapping class group
is trivial. If $T^2//\Gamma$ is $S^2(2,4,4)$ then its mapping class group
is isomorphic to $\Z_2$. Picturing $S^2(2,4,4)$ as
two right triangles glued together, the nontrivial mapping class
group element is represented by the affine diffeomorphism which
is a flip around the ``$2$'' vertex that interchanges the two
triangles.
If $T^2//\Gamma$ is $S^2(3,3,3)$
then its mapping class group is isomorphic to $S_3$.
Picturing $S^2(3,3,3)$ as two equilateral triangles glued together,
the nontrivial mapping class group elements are represented by
affine diffeomorphisms as rotations and flips.  Finally, if
$T^2//\Gamma$ is $S^2(2,2,2,2)$ then its mapping class group is
isomorphic to $\PSL(2, \Z) \ltimes (\Z/2 \Z \times \Z/2 \Z)$
\cite[Proposition 2.7]{Farb-Margalit}.
These all lift to $\Z_2$-equivariant affine diffeomorphisms
of $T^2$. Elements of $\PSL(2, \Z)$ are represented by linear
actions of $\SL(2, \Z)$ on $T^2$. Generators of $\Z/2 \Z \times \Z/2 \Z$
are represented by rotations of the $S^1$-factors in 
$T^2 = S^1 \times S^1$ by $\pi$.
\end{proof}

Let $\Or$ be a closed connected orientable three-dimensional orbifold.
If $\Or$ admits a metric of positive scalar curvature then by finite
extinction time, $\Or$ is diffeomorphic to 
the result of performing $0$-surgeries on a disjoint collection of
isometric quotients of $S^3$ and $S^1 \times S^2$.

Suppose that $\Or$ does not admit a metric of positive scalar curvature.
Put 
\begin{equation}
\sigma(\Or) = \sup_g R_{\min}(g) V(g)^{\frac23}.
\end{equation}
Then $\sigma(\Or) \le 0$.

Suppose that we have a
given representation of $\Or$ as the result of performing $0$-surgeries on
the disjoint union of 
an orbifold $\Or^\prime$ and isometric quotients of $S^3$ and 
$S^1 \times S^2$,
 and that there
exists a  (possibly empty, possibly disconnected) 
finite-volume complete hyperbolic orbifold $N$
which can be embedded in $\Or^\prime$ so that the connected components of
the complement (if nonempty)
satisfy the conclusion of Proposition \ref{prop9.6}. 
Let $V_{hyp}$ denote the hyperbolic volume of $N$.
We do not assume that the cusps of $N$ are incompressible in $\Or^\prime$.

Let $\widehat{V}$ denote the minimum of $V_{hyp}$ over all such
decompositions of $\Or$. (As the set of volumes of complete finite-volume
three-dimensional hyperbolic orbifolds is well-ordered, there is a minimum.
If there is a decomposition with $N = \emptyset$ then $V_{hyp} = 0$.)

\begin{lemma} \label{lemma10.3}
\begin{equation}
\sigma(\Or) = - \frac32 \widehat{V}^{\frac23}.
\end{equation}
\end{lemma}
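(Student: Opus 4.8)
The plan is to prove the two inequalities $\sigma(\Or) \ge - \frac32 \widehat{V}^{\frac23}$ and $\sigma(\Or) \le - \frac32 \widehat{V}^{\frac23}$ separately, following the monotonicity argument used in the manifold case in \cite[Section 93.4]{Kleiner-Lott}.

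For the lower bound I would fix a decomposition of $\Or$ realizing $\widehat{V}$: write $\Or$ as the result of $0$-surgeries on some $\Or'$, where $\Or'$ contains an embedded complete finite-volume hyperbolic orbifold $N$ of constant curvature $-\frac14$ (hence of scalar curvature identically $-\frac32$) with $\vol(N) = \widehat{V}$, and where each connected component of the complement satisfies the conclusion of Proposition \ref{prop9.6}. Given $\eps > 0$, I would build a metric on $\Or$ with scalar curvature $\ge - \frac32$ and volume $< \widehat{V} + \eps$ as follows: truncate each cusp of $N$ far enough out that the discarded volume is less than $\eps/3$; on each complementary piece that is a $0$-surgery on a strong graph orbifold, use Lemma \ref{lem-collapsingstronggraphmanifolds} to produce a metric with scalar curvature $\ge - \frac32$ and arbitrarily small volume whose boundary collars are isometric to cuspidal collars of $N$, and then apply Lemma \ref{lem-freeconnectedsum} to perform the $0$-surgeries while keeping $R \ge - \frac32$ with an arbitrarily small volume increase; treat each exceptional piece $I \times S^2(2,3,6), I \times_{\Z_2} S^2(2,4,4), \ldots$ and each closed spherical or flat summand by a thin chunk of a hyperbolic cusp (respectively a rescaled round or flat metric) of small volume with matching collars, and perform the remaining $0$-surgeries via Lemma \ref{lem-freeconnectedsum}. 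Gluing these pieces along the cuspidal collars gives a metric $g_\eps$ on $\Or$ with $R_{\min}(g_\eps) = - \frac32$ and $V(g_\eps) < \widehat{V} + \eps$, so $\sigma(\Or) \ge R_{\min}(g_\eps) V(g_\eps)^{\frac23} \ge - \frac32 (\widehat{V} + \eps)^{\frac23}$, and letting $\eps \to 0$ gives $\sigma(\Or) \ge - \frac32 \widehat{V}^{\frac23}$.

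For the upper bound I would take an arbitrary metric $g_0$ on $\Or$, normalize it, and run the Ricci flow with $(r,\delta)$-cutoff, which by Proposition \ref{prop7.29} is defined for all $t \in [0,\infty)$. Since $\Or$ admits no metric of positive scalar curvature, $R_{\min}(g(t)) \le 0$ for all $t$ (a positive value at some time would, via Lemma \ref{lemma7.26} and the fact that $0$-surgery and disjoint union with spherical or $S^1 \times S^2$-quotients preserve the existence of a positive scalar curvature metric, produce such a metric on $\Or$). On each smooth interval, $\frac{d}{dt} R_{\min} \ge \frac23 R_{\min}^2$ by the maximum principle and $\frac{dV}{dt} = - \int R \, \dvol \le - R_{\min} V$, which together give
\begin{equation}
\frac{d}{dt}\left( R_{\min}\, V^{\frac23}\right) \ge \frac23\, V^{-\frac13}\, R_{\min}\left( R_{\min}\, V + \frac{dV}{dt}\right) \ge 0 ;
\end{equation}
at a surgery time the volume strictly decreases while $R_{\min}$ cannot decrease (the discarded regions have large curvature and the surgery caps have positive scalar curvature), so $R_{\min}\, V^{\frac23}$ is nondecreasing there as well. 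Hence $R_{\min}(g_0)\, V(g_0)^{\frac23} \le \liminf_{t \to \infty} R_{\min}(g(t))\, V(g(t))^{\frac23}$. To evaluate the right-hand side, the Hamilton-Ivey pinching estimate and the normalized initial condition give $R_{\min}(g(t)) \ge - \frac{3}{2t}(1 + o(1))$, while by Proposition \ref{prop8.11} the thick part of $\mathcal{M}_t$ is, after rescaling by $t^{-1}$, arbitrarily close to a fixed finite union $\bigcup_i H_i$ of complete finite-volume hyperbolic $3$-orbifolds of curvature $-\frac14$; so for large $t$ the scalar curvature on the thick part equals $- \frac{3}{2t}(1 + o(1))$ and the volume of the thick part equals $t^{\frac32}(W + o(1))$, where $W := \vol\big(\bigcup_i H_i\big)$. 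Consequently $R_{\min}(g(t)) = - \frac{3}{2t}(1 + o(1))$ and $\limsup_{t\to\infty} t^{-\frac32} V(t) \ge W$, so
\begin{equation}
\liminf_{t\to\infty} R_{\min}(g(t))\, V(g(t))^{\frac23} \; = \; - \frac32 \Big( \limsup_{t\to\infty} t^{-\frac32} V(t)\Big)^{\frac23} \; \le \; - \frac32\, W^{\frac23} .
\end{equation}
Finally, by Lemma \ref{lemma7.26} the orbifold $\Or$ is obtained from $\mathcal{M}_t^+$ by $0$-surgeries and disjoint unions with quotients of $S^3$ and $S^1 \times S^2$, and by Propositions \ref{prop10.1} and \ref{prop9.6} the complement of $\bigcup_i H_i$ in $\mathcal{M}_t^+$ is built from strong graph orbifolds and the allowed exceptional pieces; thus $\bigcup_i H_i$ exhibits a valid decomposition of $\Or$ in the sense of the definition of $\widehat{V}$, so $W \ge \widehat{V}$. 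Combining, $R_{\min}(g_0)\, V(g_0)^{\frac23} \le - \frac32 \widehat{V}^{\frac23}$ for every $g_0$, whence $\sigma(\Or) \le - \frac32 \widehat{V}^{\frac23}$.

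I expect the main difficulty to lie in the large-time bookkeeping of the upper-bound argument: verifying that $R_{\min}$ does not drop and $V$ strictly drops at each surgery, and, more substantively, identifying $\limsup_{t\to\infty} t^{-3/2}V(t)$ with the hyperbolic volume $W$ of the thick part, which rests on the structure theory of the large-time geometry (Propositions \ref{prop8.11}, \ref{prop10.1}, \ref{prop9.6}) and the pinching estimate. Beyond this, the orbifold-specific content is minor once Lemmas \ref{lem-freeconnectedsum} and \ref{lem-collapsingstronggraphmanifolds} are available.
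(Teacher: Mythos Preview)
Your proposal is correct and follows essentially the same approach as the paper, which simply refers to \cite[Proposition 93.10]{Kleiner-Lott} together with Lemmas \ref{lem-freeconnectedsum} and \ref{lem-collapsingstronggraphmanifolds}; you have in fact written out considerably more of the argument than the paper does. Two small points worth tightening: in the upper-bound step your asymptotic $R_{\min}(t) = -\tfrac{3}{2t}(1+o(1))$ presupposes $W>0$, so the case $W=0$ (where the conclusion reduces to $\sigma(\Or)\le 0$, which is immediate) should be separated; and the claim that $0$-surgery preserves positive scalar curvature in the orbifold setting, while true by the same thin-neck construction underlying Lemma \ref{lem-freeconnectedsum}, is not literally stated there and deserves a word of justification.
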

\begin{proof}
Using Lemmas
\ref{lemma7.26},
\ref{lem-freeconnectedsum} and 
\ref{lem-collapsingstronggraphmanifolds}, the proof is similar to that of 
\cite[Proposition 93.10]{Kleiner-Lott}.
\end{proof}

\begin{proposition} \label{prop10.5}
Let $N$ be a hyperbolic orbifold as above for which $\vol(N) = \widehat{V}$.
Then the cuspidal cross-sections of $N$ are incompressible in $\Or^\prime$. 
\end{proposition}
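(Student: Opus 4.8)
The plan is to argue by contradiction, following Perelman's monotonicity strategy as in \cite[Section 93.4]{Kleiner-Lott}: if some cuspidal cross-section of $N$ were compressible in $\Or'$, we will produce another decomposition of $\Or$ of the type considered before Lemma \ref{lemma10.3} whose hyperbolic volume is strictly less than $\vol(N) = \widehat{V}$, contradicting the minimality of $\widehat{V}$ and hence, by Lemma \ref{lemma10.3}, the identity $\sigma(\Or) = - \frac32 \widehat{V}^{\frac23}$.

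So suppose $\Sigma$ is a cuspidal cross-section of $N$ which is compressible in $\Or'$. Since a horospherical cross-section is $\pi_1$-injective in a complete finite-volume hyperbolic orbifold, $\Sigma$ is incompressible \emph{in $N$}; running the usual innermost-curve argument on the intersection of a compressing discal orbifold with $\partial N$, in the orbifold category, and using the orbifold loop theorem and Dehn's lemma (cf. \cite[Chapter 3]{BMP}), one may therefore either isotope the compressing discal orbifold off $\Int(N)$ or else replace $\Sigma$ by another cuspidal cross-section; in all cases we obtain an essential simple closed curve $\gamma$ on some cuspidal cross-section $\Sigma$ bounding an embedded discal orbifold inside the graph-orbifold part $W \defeq \Or' - \Int(N)$. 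Here $\Sigma$ is one of the orientable Euclidean $2$-orbifolds $T^2$ or $S^2(2,2,2,2)$: the remaining possibilities $S^2(2,3,6)$, $S^2(2,4,4)$, $S^2(3,3,3)$ are turnovers, hence automatically incompressible as noted in Subsection \ref{subsect2.3}.

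Compressing $\Sigma$ along this discal orbifold inside $W$ realizes $\Or$, up to finitely many further $0$-surgeries, as a $0$-surgery on $N' \sqcup W^\circ$, where $N'$ is the orbifold Dehn filling of $N$ along the slope $\gamma$ in the cusp $\Sigma$, and $W^\circ$ is obtained from $W$ by cutting along the discal orbifold and capping the resulting spherical boundary by a discal $3$-orbifold. By the appendix (in particular Proposition \ref{propA.25}), together with the fact that strong graph orbifolds are closed under capping $S^2//\Gamma$-boundary components by discal $3$-orbifolds, $W^\circ$ is again (a $0$-surgery on) a strong graph orbifold. Now invoke Thurston's hyperbolic Dehn surgery theorem, applied to a torsion-free finite-index cover of $N$ and then pushed down by equivariance, so that it holds for the orbifold $N$: the Gromov norm of $N'$ is strictly smaller than that of $N$, so that in a geometric decomposition of $N'$ (which we must separately justify, see below) the hyperbolic pieces have total volume strictly less than $\vol(N)$, while the remaining pieces are graph orbifolds. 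Absorbing those graph pieces into $W^\circ$ yields a decomposition of $\Or$ whose hyperbolic part $\widehat{N}$ (possibly empty) has volume $< \vol(N) = \widehat{V}$, the desired contradiction. The argument applies to each cuspidal cross-section, so all of them are incompressible in $\Or'$.

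The main obstacle is the last step: quantifying the drop in hyperbolic volume under the filling and ensuring that the non-hyperbolic fillings contribute only graph orbifolds, without circularly invoking geometrization of $N'$. This is where one uses that the Gromov simplicial norm is a purely topological invariant which is monotone, and in fact strictly decreasing, under (orbifold) Dehn filling of a cusped hyperbolic orbifold, combined with the classification of the finitely many exceptional fillings --- which, after passing to a manifold cover and using equivariance, are Seifert fibered, reducible, or split along an essential torus into a hyperbolic piece of strictly smaller volume plus graph pieces; in our setting these are handled by Proposition \ref{prop5.6} and Lemma \ref{lemma3.16}. The orbifold loop theorem and innermost-disk reduction of the second paragraph, and the bookkeeping that cutting and recapping $W$ preserves the strong-graph-orbifold structure (as in the appendix), are more routine but still require care, particularly for the pillowcase cross-section, where the ``slope'' $\gamma$ and the corresponding filling $S^1 \times_{\Z_2} D^2$ or $S^1 \times_{\Z_2} D^2(k)$ must be described in the orbifold sense.
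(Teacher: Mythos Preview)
Your approach is genuinely different from the paper's, and it has a real gap.

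The paper does not try to produce a new topological decomposition of $\Or$ with smaller hyperbolic volume. Instead, it constructs an explicit Riemannian metric $g$ on $\Or$ with $R(g)\ge -\frac32$ and $\vol(\Or,g)<\vol(N)=\widehat V$; by definition of $\sigma(\Or)$ this yields $\sigma(\Or)>-\frac32\widehat V^{2/3}$, contradicting Lemma~\ref{lemma10.3}. Concretely, the paper uses Lemma~\ref{lem-weakgraphcompressible} to identify the compressible side as a solid-toric $3$-orbifold (after $0$-surgeries on strong graph pieces), collapses the graph pieces metrically using Lemmas~\ref{lem-freeconnectedsum} and~\ref{lem-collapsingstronggraphmanifolds}, and then caps the relevant cusp by an Anderson-type metric on $S^1\times D^2(k)$ or $S^1\times_{\Z_2}D^2(k)$ whose volume is strictly smaller than the portion of the cusp it replaces. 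This is purely differential-geometric and uses nothing about the topology of the filled orbifold beyond the fact that it is solid-toric.

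Your argument, by contrast, performs the Dehn filling topologically and then needs to know the geometric decomposition of the filled orbifold $N'$ in order to exhibit a decomposition of $\Or$ with smaller $V_{hyp}$ and thereby contradict the minimality of $\widehat V$. You acknowledge this yourself (``which we must separately justify''), but the justification you offer does not work. Thurston's hyperbolic Dehn surgery theorem only tells you that \emph{all but finitely many} fillings are hyperbolic; the slope $\gamma$ here is forced on you by the compressing disk and may well be exceptional. Your proposed handling of the exceptional case --- ``Seifert fibered, reducible, or split along an essential torus into a hyperbolic piece of strictly smaller volume plus graph pieces'' --- is essentially the statement of geometrization for $N'$, which is exactly what Theorem~\ref{theorem1.1} is proving; invoking it here is circular. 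The references you give (Proposition~\ref{prop5.6} and Lemma~\ref{lemma3.16}) classify nonnegatively curved $3$-orbifolds and say nothing about exceptional Dehn fillings. The Gromov-norm inequality alone does not help either: to contradict the definition of $\widehat V$ you must \emph{exhibit} a decomposition of the required type, not merely bound an invariant.

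In short, the paper's metric construction is precisely designed to avoid the circularity your topological route runs into; that is why the authors follow Perelman's \cite[Section 8]{Perelman2} and \cite[Section 93.4]{Kleiner-Lott} rather than a Dehn-surgery/simplicial-volume argument.
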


\begin{proof}
As in \cite[Section 93]{Kleiner-Lott}, it suffices to show that if a
cuspidal cross-section of $N$ is compressible in $\Or^\prime$ then there
is a metric $g$ on $\Or$ with $R(g) \ge - \frac32$ and $\vol(\Or, g) <
\vol(N)$.

Put $Y=\Or'-N$. 
Suppose that some connected component $C_0$ of $\D Y$ is compressible,
with compressing discal $2$-orbifold $Z\subset\Or'$.  
We can make $Z$
transverse to $\D Y$ and then count the number of connected components of 
the intersection
$Z\cap \D Y$. Minimizing this number among all such compressing disks for
all compressible components of $\D Y$, we may assume
 -- after possibly replacing  
$C_0$
with a different component
of $\D Y$ -- that $Z$ intersects $\D Y$ only along $\D Z$.

By assumption, the components of $Y$ satisfy the conclusion of Proposition
\ref{prop9.6}. Hence $Y$ has a decomposition into connected
components $Y=Y_0\sqcup \ldots\sqcup  Y_n$, where $Y_0$  is the component
containing $C_0$, and $Y_0$
arises from a 
strong 
graph orbifold by $0$-surgeries, as otherwise there would not be a compressing
discal orbifold.   By Lemma \ref{lem-weakgraphcompressible}, 
$Y_0$ comes from a disjoint union $A\sqcup B$ via $0$-surgeries,
where $A$ is one of the four
solid-toric
possibilities of that Lemma, and $B$ is a
strong graph orbifold.    By Lemmas \ref{lem-freeconnectedsum}
and \ref{lem-collapsingstronggraphmanifolds}, we may assume
without loss of generality that $B=\emptyset$.  

To construct the desired metric on $\Or'$, we proceed as follows.
Let $H_0,\ldots, H_n$
be the cusps of the hyperbolic orbifold $N$, where $H_0$ corresponds to 
the component
$C_0$ of $Y$.   We first truncate $N$ along totally umbilic cuspical $2$-orbifolds
$C_0,\ldots,C_n$.   Pick $\eps>0$. For each $i\geq 1$ such that the component $Y_i$
comes from $0$-surgeries on a 
strong 
graph orbifold, we use Lemmas
\ref{lem-freeconnectedsum} and \ref{lem-collapsingstronggraphmanifolds} to find a  
metric 
with $R\geq -\frac32$ on $Y_i$, which 
glues isometrically along the corresponding cusps in 
$C_1\sqcup\ldots\sqcup C_n$, and which can be arranged to
have  volume $<\eps$ by taking the $C_i$'s to be deep in their respective cusps.
For the components $Y_i$, $i\geq 1$, which do not come from 
a 
strong
graph orbifold via $0$-surgery,  we may also find
metrics with $R\geq -\frac32$ and arbitrarily small volume, which glue isometrically onto
the corresponding truncated cusps of $N$ (when they have nonempty boundary).  
Our final
step   will be to find a metric on $Y_0=A$ with $R\geq -\frac32$ which glues isometrically to $C_0$,
and has volume strictly smaller than the portion of the cusp $H_0$ cut off by $C_0$.  Since 
$\eps$ is arbitrary, this will yield a contradiction.

Suppose first  that $A$ is 
$S^1 \times D^2$ or $S^1 \times D^2(k)$.
In the $S^1 \times D^2$ case, after going
far enough down the cusp, the desired metric $g$ on $S^1 \times D^2$ is
constructed in \cite[Pf. of Theorem 2.9]{Anderson}. 
(The condition $f_2(0) = a > 0$ in \cite[(2.47)]{Anderson} should be
changed to $f_2(0) > 0$.) In the $S^1 \times D^2(k)$-case,
\cite[(2.46)]{Anderson} gets changed to
$f_1^\prime(0) (1-a^2)^{1/2} = 1/k$. One can then make the
appropriate modifications to \cite[(2.54)-(2.56)]{Anderson} to construct
the desired metric $g$ on $S^1 \times D^2(k)$.

If $A$ is
$S^1 \times_{\Z_2} D^2$ or
$S^1 \times_{\Z_2} D^2(k)$  we can perform the construction of the previous paragraph
equivariantly with respect to the $\Z_2$-action, to form the desired
metric on $S^1 \times_{\Z_2} D^2$ (or
$S^1 \times_{\Z_2} D^2(k)$).
\end{proof}
\noindent
{\bf Proof of Theorem \ref{theorem1.1} : }
As mentioned before, if $\Or$ admits a metric of positive scalar curvature
then $\Or$ is diffeomorphic to 
the result of performing $0$-surgeries on a disjoint collection of
isometric quotients of $S^3$ and $S^1 \times S^2$, so the theorem is
true in that case. If $\Or$ does not admit a metric of positive scalar
curvature then by Proposition \ref{prop10.5}, 
\begin{enumerate}
\item $\Or$ is the result of performing
$0$-surgeries on an orbifold $\Or^\prime$ and a disjoint collection of
isometric quotients of $S^3$ and $S^1 \times S^2$, such that 
\item There is a finite-volume complete hyperbolic orbifold $N$ which
can be embedded in $\Or^\prime$ so that each connected component 
${\mathcal P}$ of
the complement (if nonempty) has a metric completion $\overline{\mathcal P}$
which satisfies the conclusion of Proposition
\ref{prop9.6}, and
\item The cuspidal cross-sections of $N$ are incompressible in $\Or^\prime$.
\end{enumerate}

Referring to Proposition \ref{prop9.6}, if $\overline{\mathcal P}$
is an isometric quotient of $S^3$ or $T^3$ then it already has a geometric
structure. If $\overline{\mathcal P}$
is $I \times S^2(p,q,r)$ with $\frac{1}{p} + \frac{1}{q} + \frac{1}{r} = 1$
then we can remove it without losing any information.
If $\overline{\mathcal P}$
is $I \times_{\Z_2} S^2(p,q,r)$ 
with $\frac{1}{p} + \frac{1}{q} + \frac{1}{r} = 1$ then 
${\mathcal P}$ has a Euclidean structure.

Finally, suppose that $\overline{\mathcal P}$ is the result of
performing $0$-surgeries on a collection of
strong graph orbifolds in the sense of
Definition \ref{defnA.22}. 
A Seifert-fibered $3$-orbifold 
with no bad $2$-dimensional suborbifolds is geometric
in the sense of Thurston \cite[Proposition 2.13]{BMP}.
This completes the proof of Theorem \ref{theorem1.1}.

\begin{remark}
The geometric decomposition of $\Or$ that we have produced, using
strong graph orbifolds, will not
be minimal if $\Or$ has $\Sol$ geometry. In such a case,
$\Or$ fibers over a $1$-dimensional orbifold. Cutting
along a fiber and taking the metric completion gives a product orbifold,
which is a graph orbifold. Of course, the
minimal geometric decomposition of $\Or$ would leave it with its $\Sol$
structure.
\end{remark}

\begin{remark}
Theorem \ref{theorem1.1} implies that $\Or$ is very good,
i.e. the quotient of a manifold by a finite group action
\cite[Corollary 1.3]{BLP}. Hence one could obtain the
geometric decomposition of $\Or$ by running Perelman's
proof equivariantly, as is done in detail for elliptic
and hyperbolic manifolds in \cite{Dinkelbach-Leeb}.
However, one cannot prove the geometrization
of orbifolds this way, as the reasoning would be circular;
one only knows that $\Or$ is very good after proving 
Theorem \ref{theorem1.1}. 
\end{remark}
\appendix

\section{Weak and strong graph orbifolds} \label{appendix}

In this appendix we provide proofs of some needed facts about
graph orbifolds.  We show that a weak graph orbifold
is the result of performing $0$-surgeries on a 
strong graph orbifold.
(Since we don't require strong graph orbifolds to be connected, 
we need only one.) 
A similar result appears in
\cite[Section 2.4]{Faessler}.

In order to clarify the arguments, we prove the corresponding
manifold results before proving the orbifold results.

\begin{definition} \label{defnA.1} 
A {\em weak graph manifold} is a compact orientable $3$-manifold $M$
for which there is a collection $\{T_i\}$ of disjoint embedded tori in
$\Int(M)$ so that after splitting $M$ open along $\{T_i\}$,
the result has connected components that
are Seifert-fibered $3$-manifolds (possibly with boundary).
\end{definition}

We do not assume that $M$ is connected.
Here ``splitting $M$ open along $\{T_i\}$'' means taking the metric
completion of $M - \bigcup_i T_i$ 
with respect to an arbitrary Riemannian metric on $M$.

\begin{remark} \label{remarkA.2}
In the definition of a weak graph manifold, we could have instead
required that the connected components of the metric
completion of $M - \bigcup_i T_i$ are
circle bundles over surfaces. This would give an equivalent notion,
since any Seifert-fibered $3$-manifold can be cut along tori into
circle bundles over surfaces.
\end{remark}

For notation, we will write $S^2 - k B^2$ for the complement of
$k$ disjoint separated open $2$-balls in $S^2$.

\begin{definition} \label{defnA.19}
A {\em strong graph manifold} is a compact orientable $3$-manifold $M$
for which there is a collection $\{T_i\}$ of disjoint embedded tori in
$\Int(M)$ such that
\begin{enumerate}
\item After splitting $M$ open 
along $\{T_i\}$, the result has connected components
that are Seifert manifolds (possibly with boundary).
\item For any $T_i$, the two circle fibrations on $T_i$ coming from the
adjacent Seifert bundles are not isotopic.
\item Each $T_i$ is incompressible in $M$.
\end{enumerate}
\end{definition}

\subsection{Weak graph manifolds are connected sums of strong graph manifolds}
\label{subsectA.2}

The next lemma states if we glue two solid tori (respecting orientations)
then the result is a
Seifert manifold.  The lemma itself is trivial, since we know that
the manifold is $S^1 \times S^2$, $S^3$ or a lens space, each of which is a
Seifert manifold.  However, we give a proof of the lemma
which will be useful in the orbifold case.

\begin{lemma} \label{Seifertlemma}
Let $U$ and $V$ be two oriented solid tori.  Let $\phi : \partial U
\rightarrow \partial V$ be an orientation-reversing diffeomorphism.
Then $U \cup_{\phi} V$ admits a Seifert fibration.
\end{lemma}
\begin{proof}
We first note that
the circle fiberings of $T^2$ are classified (up to isotopy) by the
image of the fiber in $(\HH^1(T^2; \Z) - \{0\})/\{\pm 1\} \simeq 
(\Z^2 - \{ 0 \})/\{\pm 1\}$.
There is one circle fibering of $\partial U$ 
(up to isotopy)
whose fibers bound
compressing disks in $U$.  Any other circle fibering of $\partial U$ is
the boundary fibration of a Seifert fibration of $U$.
Hence we can choose a circle fibering ${\mathcal F}$ of $\partial U$
so that ${\mathcal F}$ is the
boundary fibration of a Seifert fibration of $U$, and 
$\phi_* {\mathcal F}$ is the boundary fibration of a Seifert fibration of $V$.
The ensuing Seifert fibrations of $U$ and $V$ join together
to give a Seifert fibration of $U \cup_{\phi} V$.
\end{proof}

\begin{proposition} \label{propA.20}
If a connected strong graph manifold contains an essential
embedded $2$-sphere then it is diffeomorphic
to $S^1 \times S^2$ or $\R P^3 \# \R P^3$.
\end{proposition}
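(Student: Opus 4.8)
The plan is to reduce to the case in which $M$ is itself Seifert-fibered and then to invoke the classical classification of reducible Seifert-fibered $3$-manifolds. Write $M$ as the union of compact Seifert pieces $M_1,\ldots,M_m$ obtained by cutting along the disjoint incompressible tori $\{T_i\}$ of Definition \ref{defnA.19}, and let $S\subset M$ be an essential embedded $2$-sphere, i.e.\ one that does not bound a ball in $M$.

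First I would isotope $S$ to be disjoint from $\bigcup_i T_i$. Suppose $S$ meets $\bigcup_i T_i$ transversally and in the minimal, but nonzero, number of circles. An innermost circle of $S\cap\bigcup_i T_i$ on $S$ bounds a disk in $S$ with interior missing $\bigcup_i T_i$; since the $T_i$ are incompressible, that circle cannot be essential in its torus, so it bounds a disk there. Hence some circle $c$ of $S\cap T_i$ bounds a disk $D'\subset T_i$ whose interior is disjoint from $S$ (choose one innermost in $T_i$ among circles of $S\cap T_i$ bounding a disk in $T_i$; such an interior disk automatically contains no further intersection circle, essential circles not being nullhomotopic in $T^2$). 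It also bounds a disk $D\subset S$ because $S$ is a sphere. Surgering $S$ along $D'$ yields, after a small perturbation, two embedded spheres $S_1=D\cup D'$ and $S_2=(S\setminus D)\cup D'$, each meeting $\bigcup_i T_i$ in strictly fewer circles than $S$. Since $S$ is recovered from $S_1\sqcup S_2$ by an ambient tubing, if both $S_1$ and $S_2$ bounded balls then so would $S$; hence at least one remains essential, and replacing $S$ by it contradicts minimality. Therefore after isotopy $S$ lies in the interior of a single piece $M_j$, and it remains essential in $M_j$, since a ball in $M_j$ with boundary $S$ would be a ball in $M$.

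Now $M_j$ is an orientable Seifert-fibered $3$-manifold (possibly with boundary) containing an essential sphere. Here I would invoke the classical fact (Epstein's theorem on the irreducibility of Seifert-fibered spaces) that such a manifold is irreducible unless it is diffeomorphic to $S^1\times S^2$ or $\R P^3\#\R P^3$, both of which are closed. Consequently $M_j$ is closed, so it has no torus boundary component and was never cut along any $T_i$; since $M$ is connected this forces $M=M_j$, and $M$ is diffeomorphic to $S^1\times S^2$ or $\R P^3\#\R P^3$, as asserted.

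The step I expect to require the most care is the isotopy moving $S$ off the tori: one must check that each surgery strictly lowers the number of intersection circles and that essentiality survives passing to one of the two surgered spheres. This is exactly the place where the incompressibility of the $T_i$ is used; the remaining conditions in Definition \ref{defnA.19} (non-isotopic boundary fibrations) are not needed for this proposition. Once $S$ is confined to a single Seifert block, the conclusion is immediate from the structure theory of Seifert-fibered spaces.
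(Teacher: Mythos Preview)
Your argument is correct and follows essentially the same line as the paper's proof: use the incompressibility of the $T_i$ together with an innermost-circle argument to locate an essential sphere inside a single Seifert piece, then invoke the classification of reducible Seifert-fibered $3$-manifolds (Scott \cite[p.~432]{Scott}) to conclude that this piece---and hence all of $M$---is $S^1\times S^2$ or $\R P^3\#\R P^3$.

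The only cosmetic difference is in how the innermost argument is packaged. The paper takes a circle $C$ innermost on $S$, obtains a disk $D\subset S$ lying in a Seifert piece $U$, caps it with the disk $D'\subset T_k$ guaranteed by incompressibility, and observes that $D\cup D'$ is either inessential in $U$ (allowing the intersection to be reduced) or is itself the desired essential sphere in $U$. You instead select $c$ innermost in $T_k$ so that $\operatorname{int}(D')\cap S=\emptyset$, then surger $S$ along $D'$ and show one of the two resulting spheres remains essential with strictly fewer intersection circles. Your version is slightly more careful about guaranteeing that the surgery disk is disjoint from $S$ in its interior, which cleanly justifies the reduction step; the paper's version is terser but is making the same move. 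Your observation that condition~(2) of Definition~\ref{defnA.19} plays no role here is also correct.
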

\begin{proof}
Suppose that a connected strong graph manifold $M$ contains
an essential embedded $2$-sphere $S$. We can assume that
$S$ is transverse to $\bigcup_i T_i$. We choose $S$ among all such
essential embedded $2$-spheres so that the
number of connected components of $S \cap \bigcup_i T_i$ is as small
as possible.

If $S \cap \bigcup_i T_i = \emptyset$ then $S$ is an essential
$2$-sphere in one of the Seifert components.

If $S \cap \bigcup_i T_i \neq \emptyset$,
let $C$ be an innermost circle
in $S \cap \bigcup_i T_i$. Then $C \subset T_k$ for some
$k$ and $C = \partial D$ for some $2$-disk $D$ embedded in
a Seifert component $U$ with $T_k \subset \partial U$.
As $T_k$ is incompressible, $C =\partial D^\prime$ for some
$2$-disk $D^\prime \subset T_k$. If $D \cup D^\prime$ bounds
a $3$-ball in $U$ then we can isotope $S$ to remove the intersection
with $T_k$, which contradicts the choice of $S$. Thus
$D \cup D^\prime$ is an essential $2$-sphere in $U$.

In any case, we found an essential $2$-sphere in one of the
Seifert pieces. It follows that the Seifert piece,
and hence all of $M$, is diffeomorphic to $S^1 \times S^2$ or
$\R P^3 \# \R P^3$ \cite[p. 432]{Scott}.
\end{proof}

\begin{proposition} \label{propA.21}
A weak graph manifold is the result of performing $0$-surgeries on
a strong graph manifold.
\end{proposition}
\begin{proof}
Suppose that Proposition \ref{propA.21} fails. 
Let $n$ be the minimal number
of decomposing tori among weak graph manifolds which are counterexamples,
and let $M$ be a counterexample with decomposing tori $\{T_i\}_{i=1}^n$.

We first look for a torus $T_j$ 
for which the two induced circle fibrations
(coming from the adjacent Seifert bundles) are isotopic.  If
there is one then we extend
the Seifert fibration over $T_j$. 
In this case, by
removing $T_j$ from $\{T_i\}_{i=1}^n$, we get a weak graph
decomposition of $M$ with $(n-1)$ tori, contradicting
the definition of $n$.  

Therefore there is no such torus.
Since $M$ is a counterexample
to Proposition \ref{propA.21}, there must be a
torus in $\{T_i\}_{i=1}^n$ which is compressible.
Let $D$ be a compressing disk, which we can assume to be transversal
to $\bigcup_{i=1}^n T_i$. We choose such a compressing disk so that
$D \cap \bigcup_{i=1}^n T_i$ has the 
smallest possible number of connected components.
Let $C$ be an innermost circle in $D \cap \bigcup_{i=1}^n T_i$, say lying
in $T_k$.
Then $C$ bounds a disk $D^\prime$ in a Seifert bundle $V$
which has $T_k$ as a boundary component.

If $C$ also bounds a disk $D^{\prime \prime}
\subset T_k$ then $D^\prime \cup D^{\prime \prime}$ is an embedded
$2$-sphere $S$ in $V$.
If $S$ is not essential in $V$
then we can isotope $D$ so that it does not intersect $T_k$,
which contradicts the choice of $D$.  So $S$ is essential in $V$.
Then $V$ is diffeomorphic to $S^1 \times S^2$ or $\R P^3 \# \R P^3$,
which contradicts the assumption that it has $T_k$ as a boundary
component.

Thus we can assume that 
$D^\prime$ is a compressing disk for $V$, which is 
necessarily a solid torus \cite[Corollary 3.3]{Scott}. 

Let $U$ be the Seifert bundle on the other side of $T_k$ from
$V$.
Let $B$
be the orbifold base of $U$, with projection $\pi : U \rightarrow B$. 
There is a circle boundary component $R \subset \partial B$ so that
$T_k = \pi^{-1}(R)$. That is, 
$V$
is glued to $U$ along $\pi^{-1}(R)$. 
Choose a $D^2$-fibration $\sigma : V \rightarrow R$ that extends
$\pi : T_k \rightarrow R$.

If 
$C = \partial D^\prime \subset T_k$
is not isotopic to a fiber of $\pi \Big|_{T_k}$, 
let $u > 0$ be their algebraic intersection number in $T_k$. 
Then 
$U \cup_{T_k} V$
has a Seifert fibration over $B \cup_R D^2(u)$.
Removing $T_k$ from $\{T_i\}_{i=1}^n$, 
we again have a weak graph decomposition of $M$, now with 
$(n-1)$ tori, which is a contradiction.

Therefore 
$C = \partial D^\prime \subset T_k$
is isotopic to
a fiber of $\pi \Big|_{T_k}$. 
\\ \\
Step 1: If $B$ is 
diffeomorphic to 
$D^2$, $D^2(r)$ or $S^1 \times I$ then
put $M^\prime = M$
and $B^\prime = B$,
and go to Step 2. 
Otherwise, let
$\{\gamma_j\}_{j=1}^J$ be a maximal 
disjoint 
collection of smooth embedded arcs
$\gamma_j : [0,1] \rightarrow B_{reg}$, with
$\{\gamma_j(0), \gamma_j(1)\} \subset R$,
which determine distinct nontrivial homotopy classes
for the pair $(B_{reg},R)$.
(Note that $\partial B \subset B_{reg}$.)
If $B'$ is the result of splitting $B$ open along
$\{\gamma_j\}_{j=1}^J$,  then the connected components of 
$B'$ are diffeomorphic to 
$D^2$, $D^2(r)$ for some $r > 1$, or $S^1 \times I$. 
See Figure \ref{fig-15}.
\begin{figure}[h!] 
\includegraphics[scale=.9]{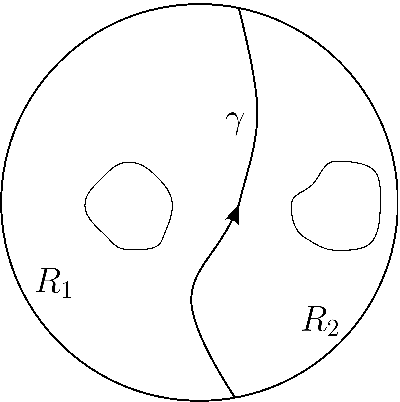} 
\caption{\label{fig-15}}
\end{figure}
Let $R^\prime$ be the result of splitting the $1$-manifold $R$
along the finite subset $\bigcup_{j=1}^J \{\gamma_j(0), \gamma_j(1) \}$.

Define a $2$-sphere $S^2_j \subset M$
by $S^2_j = \sigma^{-1}(\gamma_j(0)) 
\cup_{\pi^{-1}(\ga_j(0))} \pi^{-1}(\gamma_j) 
\cup_{\pi^{-1}(\ga_j(1))}
\sigma^{-1}(\gamma_j(1))$.
Let $Y$ be the result of splitting $M$ open along
$\{S^2_j\}_{j=1}^J$. It has
$2J$ spherical boundary components corresponding to the spherical cuts.
We glue on  $2J$ $3$-disks there, to obtain $M^\prime$.
By construction, $M$ is the result of performing $J$
$0$-surgeries on $M^\prime$.

We claim that $M^\prime$ is a weak graph manifold. To see this,
note that the union 
$W$
of  the $D^2$-bundle over $R^\prime$
and the $2J$ 3-disks is a
disjoint union of solid tori in $M^\prime$; see Figure \ref{fig-15}. 
The metric completion of
$M^\prime - W$
inherits a weak graph structure
from $M$. This shows that $M^\prime$ is a weak graph manifold.
\\ \\
Step 2 : 
For each 
component $P$ of $B^\prime$ that is diffeomorphic to 
$D^2$ or $D^2(r)$,
the corresponding component of $M^\prime$ is 
the result of gluing two solid tori: one being $\pi^{-1}(P)$
and the other one being a connected component
of $W$.
By Lemma \ref{Seifertlemma}, 
this component of $M^\prime$ is Seifert-fibered and
hence is a strong graph manifold.
We discard all such components of $M^\prime$ and 
let $\widehat{M}$ denote what's left.

A  
component $P$ of $B^\prime$ diffeomorphic to $S^1 \times I$
has a boundary consisting of
two circles $C_1$ and $C_2$, of which exactly one, say $C_1$,
does not intersect $R$.
In $\widehat{M}$, the preimage
$\pi^{-1}(C_1)$ is attached to the union of 
$\pi^{-1}(P)$
with a solid torus.  This union is itself a solid torus. 

In this way, we see that $\widehat{M}$ has a weak graph decomposition
with $(n-1)$ tori, since $T_k$ has disappeared. 
Since $M$ was a counterexample to Proposition \ref{propA.21}, it
follows that $\widehat{M}$
is also a counterexample. 
This contradicts the definition of $n$ and so proves the proposition.
\end{proof}

\subsection{Weak graph orbifolds are connected sums of strong graph orbifolds}
\label{subsectA.3}
 
In this section we only consider $3$-dimensional orbifolds that
do not admit embedded bad $2$-dimensional suborbifolds.

\begin{definition} \label{defnA.6}
A {\em weak graph orbifold} is a compact orientable $3$-orbifold $\Or$
for which there is a collection $\{E_i\}$ of disjoint embedded 
orientable Euclidean $2$-orbifolds in
$\Int(\Or)$ so that after splitting $\Or$ open along $\{E_i\}$,
the result has connected components that are Seifert-fibered 
orbifolds (possibly with boundary).
\end{definition}

\begin{definition} \label{defnA.22}
A {\em strong graph orbifold} is a compact orientable $3$-orbifold $\Or$
for which there is a collection $\{E_i\}$ of disjoint embedded
orientable Euclidean $2$-orbifolds in
$\Int(\Or)$ such that 
\begin{enumerate}
\item After splitting $\Or$ open along $\{E_i\}$, the result has
connected components that are
Seifert orbifolds (possibly with boundary).
\item For any $E_i$, the two circle fibrations on $E_i$ coming from the
adjacent Seifert bundles are not isotopic.
\item Each $E_i$ is incompressible in $\Or$.
\end{enumerate}
\end{definition}

From Subsection \ref{subsect2.4},
each $E_i$ is diffeomorphic to $T^2$ or $S^2(2,2,2,2)$.

\begin{lemma} \label{solidtoricorbigluing}
Let $U$ and $V$ be two oriented solid-toric $3$-orbifolds with
diffeomorphic boundaries. Let
$\phi : \partial U \rightarrow \partial V$ be an orientation-reversing
diffeomorphism.  Then $U \cup_{\phi} V$ admits a Seifert orbifold structure.
\end{lemma}
\begin{proof}
Suppose first that $\partial U$ is a $2$-torus. Then $U$ is diffeomorphic
to $S^1 \times D^2$ or $S^1 \times D^2(k)$. The
Seifert orbifold structures on $U$ are in one-to-one correspondence with
the Seifert manifold structures on $|U|$ 
\cite[p. 36-37]{Bonahon-Siebenmann (1985)}.
There is one circle fibering of $\partial U$
(up to isotopy)
whose fibers bound compressing
discal $2$-orbifolds in $U$. Any other circle fibering of $\partial U$
is the boundary fibration
of a Seifert fibration of $U$. As in the proof of Lemma \ref{Seifertlemma},
we can choose a circle fibering ${\mathcal F}$ of $\partial U$ so that
${\mathcal F}$ is the boundary fibration of a Seifert fibration of $U$, and
$\phi_* {\mathcal F}$ is the boundary fibration of a Seifert fibration of
$V$. The ensuing
Seifert fibrations of $U$ and $V$ join together to give a Seifert
fibration of $U \cup_{\phi} V$. 

Now suppose that $\partial U$ is diffeomorphic to
$S^2(2,2,2,2)$. The orbifiberings of $S^2(2,2,2,2)$ with one-dimensional
fiber are the $\Z_2$-quotients of $\Z_2$-invariant
circle fiberings of $T^2$. In particular, there is an infinite number of
such orbifiberings up to isotopy. (More concretely, given an orbifibering,
there are two disjoint arc fibers connecting pairs of singular points.
The complement of the two arcs in $|S^2(2,2,2,2)|$ is an open
cylinder with an induced circle fibering. The isotopy class of
the orbifibering is specified by the isotopy class of the two 
disjoint arcs.)

From \cite[p. 38-39]{Bonahon-Siebenmann (1985)}, the Seifert fibrations
of $U$ are the $\Z_2$-quotients of $\Z_2$-invariant Seifert fibrations
of its solid-toric double cover. It follows that there is one orbifibering of
$\partial U$ (up to isotopy)
whose fibers bound compressing discal $2$-orbifolds in $U$.
Any other orbifibering of $\partial U$ is the boundary fibration of a
Seifert fibration of $U$. Hence we can choose an orbifibering ${\mathcal F}$ 
of $\partial U$ so that ${\mathcal F}$ is the boundary fibration
of a Seifert fibration of $U$, and $\phi_* {\mathcal F}$ is the 
boundary fibration of a Seifert fibration of $V$. The ensuing
Seifert fibrations of $U$ and $V$ join together to give a Seifert
fibration of $U \cup_{\phi} V$. 
\end{proof}

\begin{proposition} \label{propA.23}
If a connected strong graph orbifold
contains an essential embedded spherical $2$-orbifold then it is diffeomorphic
to a finite isometric quotient of $S^1 \times S^2$.
\end{proposition}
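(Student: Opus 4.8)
The plan is to mimic the proof of Proposition~\ref{propA.20}, with the Euclidean $2$-orbifolds $\{E_i\}$ of the strong graph structure playing the role of the tori $\{T_i\}$ and ``essential $2$-sphere'' replaced by ``essential spherical $2$-orbifold''. So let $\Or$ be a connected strong graph orbifold, with its system $\{E_i\}$, containing an essential embedded spherical $2$-orbifold $S$. First I would isotope $S$ to be transverse to $\bigcup_i E_i$, so that $S\cap\bigcup_i E_i$ is a disjoint union of embedded circles in $|\Or|_{reg}$ (as in the proof of Proposition~\ref{propA.20}, a little bookkeeping is needed here near the singular locus of $\Or$, which I would handle by first sliding the finitely many cone points of $S$ along the singular graph so that they avoid $\bigcup_i E_i$). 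Among all such $S$, choose one for which the number of intersection circles is minimal.

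If $S\cap\bigcup_i E_i=\emptyset$, then $|S|$ lies in one connected component $U$ of the metric completion of $\Or-\bigcup_i E_i$, which is Seifert-fibered; since $S$ bounds no discal $3$-orbifold in $\Or$ it bounds none in $U$, so $U$ contains an essential spherical $2$-orbifold. If instead $S\cap\bigcup_i E_i\neq\emptyset$, I would proceed as follows. Since $|S|$ is a $2$-sphere, at least two of the regions into which $S$ is cut by these circles are disks; as $S$ has at most three cone points, at least one such disk region has closure $D$ in $S$ containing at most one cone point of $S$, so $D$ is a discal $2$-orbifold with $\partial D\subset E_k$ for some $k$, $D\cap E_k=\partial D$, and $D$ contained in a single Seifert component $U$ adjacent to $E_k$. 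Incompressibility of $E_k$ gives a discal $2$-orbifold $D'\subset E_k$ with $\partial D'=\partial D$. The union $\Sigma=D\cup_{\partial D}D'$ is an embedded $2$-suborbifold of $U$ of the form $S^2(m,m')$ (where $D\cong D^2//\Z_m$, $D'\cong D^2//\Z_{m'}$), so by the hypothesis that $\Or$ has no bad $2$-suborbifolds we must have $m=m'$ and $\Sigma$ is spherical. If $\Sigma$ bounded a discal $3$-orbifold $W$ in $U$, sweeping $D$ across $W$ would give an isotopy of $S$ replacing $D$ by a push-off of $D'$, removing the circle $\partial D$ without creating new intersections, contradicting minimality; hence $\Sigma$ is an essential spherical $2$-orbifold in $U$.

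In either case some Seifert-fibered component $U$ of the metric completion of $\Or-\bigcup_i E_i$ (which, being a suborbifold region of $\Or$, contains no bad $2$-suborbifolds) carries an essential spherical $2$-orbifold. I would then invoke the orbifold analogue of the classification of reducible Seifert fibered spaces (the orbifold counterpart of \cite[p.~432]{Scott}; see \cite{BMP} and Dunbar's classification \cite{Dunbar}): such a $U$ has an $S^2\times\R$ geometry, and an orientable closed orbifold with $S^2\times\R$ geometry is a finite isometric quotient of $S^1\times S^2$. In particular $U$ is closed, so $\partial U=\emptyset$; since $\Or$ is connected this forces $U=\Or$, and the proposition follows.

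The step I expect to be the main obstacle is the last one: pinning down, or citing cleanly, the orbifold version of ``a reducible Seifert fibered space is $S^1\times S^2$ or $\R P^3\#\R P^3$''. If a self-contained argument is preferred over a citation, I would isotope the essential spherical $2$-orbifold in $U$ to be vertical or horizontal with respect to the Seifert fibration $U\to B$; a vertical one would be a circle bundle over a $1$-orbifold, hence $T^2$, a Klein bottle or a pillowcase, none of which is spherical, so it is horizontal and the restricted projection to $B$ is an orbifold covering, forcing $\chi^{orb}(B)>0$, after which a short finite case analysis (using that $\Or$, hence $U$ and $B$, has no bad $2$-suborbifolds) identifies $U$ as a finite isometric quotient of $S^1\times S^2$.
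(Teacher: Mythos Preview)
Your proposal is correct and follows essentially the same approach as the paper: isotope $S$ so that $S\cap\bigcup_i E_i$ consists of circles in the regular part, minimize the number of such circles, use incompressibility of the $E_i$ on an innermost discal region to produce an essential spherical $2$-orbifold in a Seifert piece, and then classify. Your pigeonhole argument for finding an innermost region of $|S|$ with at most one cone point is in fact more explicit than the paper's.

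The one place the paper differs usefully is the endgame. Rather than citing an orbifold classification of reducible Seifert spaces or attempting a vertical/horizontal argument, the paper simply lifts to the universal cover of the Seifert piece $U$: the essential spherical $2$-orbifold lifts to an essential embedded $S^2$, and a simply-connected Seifert $3$-manifold containing an essential $S^2$ is $\R\times S^2$ (via \cite[Proposition~2.13]{BMP}), whence $U$, and hence $\Or$, is a finite isometric quotient of $S^1\times S^2$. This sidesteps exactly the step you flagged as the obstacle. Your vertical/horizontal alternative is shakier than you suggest: the standard isotopy theorems put \emph{incompressible} surfaces into vertical or horizontal position, and spherical $2$-orbifolds are not incompressible, so making that isotopy work is essentially the content of the classification you are trying to avoid citing.
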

\begin{proof}
Suppose that a connected strong graph orbifold $\Or$ contains
an essential embedded spherical $2$-orbifold $S$.

\begin{lemma} \label{lemmaA.24}
After an isotopy of $S$, we can assume that $S \cap \bigcup_i E_i$ is a
disjoint collection of closed curves in the regular part of $S$.
\end{lemma}
\begin{proof}
If $E_i$ is diffeomorphic to $T^2$ then a neighborhood of $E_i$ lies
in $|\Or|_{reg}$ and 
after isotopy,
$S \cap E_i$ is a
disjoint collection of closed curves in the regular part of $S$.
Suppose that 
$E_i$ is diffeomorphic to $S^2(2,2,2,2)$.
A neighborhood of $E_i$ is
diffeomorphic to $I \times E_i$. Suppose that $p \in S$ 
is
a singular
point of $E_i$. Then the local group of $p$ in $S$ must be $\Z_2$. 
After pushing a neighborhood of $p \in S$ slightly
in the $I$-direction of $I \times E_i$, we can remove the intersection of
$S$ with that particular singular point of $E_i$. In this way, we can
arrange so that $S$ intersects $\bigcup_i E_i$ transversely, with the
intersection lying in the regular part of $S$.
\end{proof}

We choose $S$ among all such essential
embedded spherical $2$-orbifolds so that the
number of connected components of $|S \cap \bigcup_i E_i|$ is as small
as possible.

If $S \cap \bigcup_i E_i =  \emptyset$ then $S$ is an essential
embedded spherical $2$-orbifold in one of the Seifert pieces.

If $S \cap \bigcup_i E_i \neq \emptyset$,
let $C \subset |S|$ be an innermost circle
in $|S \cap \bigcup_i E_i|$. Then $C \subset |E_k|$ for some
$k$, and $C = \partial D$ for some discal $2$-orbifold $D$ embedded in
a Seifert component $U$ with $E_k \subset \partial U$.
As $E_k$ is incompressible, $C =\partial D^\prime$ for some
discal $2$-orbifold $D^\prime \subset E_k$. Then $D \cup D^\prime$ is
an embedded $2$-orbifold with underlying space $S^2$ and at most two
singular points. As $\Or$ has no bad $2$-suborbifolds, $D \cup D^\prime$
must be 
diffeomorphic to $S^2(r,r)$
for some $r \ge 1$.
If $D \cup D^\prime$ bounds some $D^3(r,r)$ in $U$ 
then we can isotope $S$ to remove the intersection
with $E_k$, which contradicts the choice of $S$. Thus
$D \cup D^\prime$ is an essential embedded spherical $2$-orbifold in $U$.

In any case, we found an essential embedded 
spherical $2$-orbifold in one of the
Seifert pieces. Then the universal cover of the Seifert piece contains
an essential embedded $S^2$. It follows that the universal cover of the
Seifert piece is $\R \times S^2$ \cite[Proposition 2.13]{BMP}.
The Seifert piece,
and hence all of $\Or$, must then be diffeomorphic to a finite isometric
quotient of $S^1 \times S^2$.
\end{proof}

\begin{proposition} \label{propA.25}
A weak graph orbifold is the result of performing $0$-surgeries on a
strong graph orbifold.
\end{proposition}
\begin{proof}
Suppose that Proposition \ref{propA.25} fails.  Let $n$ be the minimal
number of decomposing Euclidean $2$-orbifolds among weak graph orbifolds
which are counterexamples, and let $\Or$ be a counterexample with
decomposing Euclidean $2$-orbifolds 
$\{E_i\}_{i=1}^n$.

We first look for a $2$-orbifold $E_j$ 
for which the two induced circle fibrations
(coming from the adjacent Seifert bundles) are isotopic,
in the sense of \cite[Chapter 2.5]{BMP}. 
If there is one then we extend the
Seifert fibration over $E_j$.
In this case, by removing $E_j$ from $\{E_i\}$, we get a weak graph
decomposition of $\Or$ with $(n-1)$ Euclidean $2$-orbifolds, contradicting
the definition of $n$.

Therefore there is no such Euclidean $2$-orbifold. Since $\Or$ is a 
counterexample to Proposition \ref{propA.25}, there must be a 
Euclidean $2$-orbifold in $\{E_i\}$ which is compressible.
Let $D$ be a compressing discal $2$-orbifold. As in Lemma \ref{lemmaA.24}, 
we can assume that $D$ intersects
$\bigcup_i E_i$ transversally, with the intersection lying 
in the regular part of $D$. 
We choose such a compressing discal $2$-orbifold so that $D \cap
\bigcup_i E_i$ has the smallest possible number of connected components.
Let $C$ be an innermost circle in $D \cap \bigcup_i E_i$, say lying
in $|E_k|$.
Then $C$ bounds a discal $2$-orbifold $D^\prime$ lying in 
a Seifert bundle $V$ which has $E_k$ as a boundary component.

If $C$ also bounds a discal $2$-orbifold $D^{\prime \prime}
\subset E_k$ then $D^\prime \cup D^{\prime \prime}$ is an embedded
$2$-orbifold $S$ in the Seifert bundle. As there are no 
bad $2$-orbifolds in $\Or$,  the suborbifold $S$ must be
diffeomorphic to $S^2(r,r)$ for some $r \ge 1$.
If $S$ is not essential in $V$ then
it bounds a $D^3(r,r)$ in $V$ and
we can isotope $D$ so that it does not intersect $E_k$,
which contradicts the choice of $D$.
So $S$ is essential in $V$.
From Proposition \ref{propA.23}, the Seifert bundle $V$
is diffeomorphic to a finite isometric quotient of $S^1 \times S^2$,
which contradicts the assumption that it has $E_k$ as a 
boundary component. 

Thus we can assume that 
$C$ bounds a compressing discal $2$-orbifold for $V$, which is 
necessarily a solid-toric orbifold
diffeomorphic to 
$S^1 \times D^2(r)$ or $S^1 \times_{\Z_2} D^2(r)$ for some
$r \ge 1$ \cite[Lemma 2.47]{CHK}. 

Let $U$ be the Seifert bundle on the other side of $E_k$ from
$V$. Let $B$
be the orbifold base of $U$, with projection $\pi : U \rightarrow B$. 
There is a $1$-orbifold boundary component $R \subset \partial B$,
diffeomorphic to $S^1$ or $S^1//\Z_2$, so that
$E_k = \pi^{-1}(R)$. That is, $V$ is glued to $U$
along $\pi^{-1}(R)$.  
Choose a discal orbifibration
$\sigma : V \rightarrow R$ that extends $\pi : E_k \rightarrow R$.

We refer to \cite[Chapter 2.5]{BMP} for a discussion of Dehn fillings,
i.e. gluings of $V$ to 
$\pi^{-1}(R)$.
If the meridian curve of $V$ is not isotopic to
a fiber of $\pi \Big|_{E_k}$, 
let $u > 0$ be the algebraic intersection number (computed
using the maximal abelian subgroup of $\pi_1(E_k)$). 
Then the gluing of $V$ to $U$, along
$\pi^{-1}(R)$, has a Seifert fibration.
Removing $E_k$ from $\{E_i\}$, we again have a weak graph orbifold 
decomposition of $\Or$, now with $(n-1)$ Euclidean 2-orbifolds, 
which is a contradiction.

Therefore, the meridian curve of $V$ is isotopic to
a fiber of $\pi \Big|_{E_k}$. \\ \\
Step 1 :
If one of the following possibilities holds then put $\Or^\prime = \Or$
and $B^\prime = B$,
and go to Step 2:
\begin{enumerate}
\item $B=D^2$.
\item $B=D^2(s)$ for some $s > 1$.
\item $B=D^2//\Z_2$.
\item $B=D^2(s)//\Z_2$ for some $s > 1$.
\item $B = S^1 \times I$.
\item $B = (S^1//\Z_2) \times I$.
\end{enumerate}
Otherwise, we split $B$ open along a disjoint collection of
smooth embedded arcs $\{\gamma_{j}\}_{j=1}^J \cup 
\{\gamma^\prime_{j^\prime}\}_{j^\prime = 1}^{J^\prime}$
of the following type.  A curve $\gamma_{j} : [0,1] \rightarrow B$
lies in $B_{reg}$ and
has $|\gamma_{j}|(0), |\gamma_j|(1) \in \Int(|R|)$. A curve
$\gamma_{j^\prime} : [0,1] \rightarrow B$ has $|\gamma_{j^\prime}|(0) \in 
\Int(|R|)$
and lies in $B_{reg}$, except for its endpoint $|\gamma_{j^\prime}|(1)$
which is in the interior of a reflector component of 
$\partial |B|$
but is not a corner reflector point. We can find a collection of such
curves so that if 
$B^\prime$ is the result of splitting $B$ open along them, then each 
connected component of $B^\prime$ is of type (1)-(6) above.
Put 
\begin{equation}
R^\prime = R - \bigcup_{j=1}^J \{ |\gamma_j|(0), |\gamma_j|(1) \} -
\bigcup_{j^\prime = 1}^{J^\prime} \{ |\gamma_{j^\prime}|(0) \}.
\end{equation}

Associated to $\gamma_j$ is a spherical $2$-orbifold $X_j$,
diffeomorphic to $S^2(r,r)$, given by
\begin{equation}
X_j = \sigma^{-1}(\gamma_j(0)) \cup_{\pi^{-1}(\gamma_j(0))} 
\pi^{-1}(\gamma_j) \cup_{\pi^{-1}(\gamma_j(1))}
\sigma^{-1}(\gamma_j(1)).
\end{equation}
Associated to $\gamma^\prime_{j^\prime}$ is a spherical $2$-orbifold
$X^\prime_{j^\prime}$, diffeomorphic to $S^2(2,2,r)$, given by
\begin{equation}
X^\prime_{j^\prime} = \sigma^{-1}(\gamma_{j^\prime}(0)) 
\cup_{\pi^{-1}(\gamma_{j^\prime}(0))} 
\pi^{-1}(\gamma_{j^\prime}).
\end{equation} 
Let $Y$ be the result of splitting $\Or$ open along
$\{X_j\}_{j=1}^J \cup \{X^\prime_{j^\prime}\}_{j^\prime=1}^{J^\prime}$.
It has $2(J + J^\prime)$ spherical boundary components corresponding to the
spherical cuts. We glue on $2J$ copies of $D^3(r,r)$ and $2J^\prime$
copies of $D^3(2,2,r)$, to obtain $\Or^\prime$. By construction,
$\Or$ is the result of performing $0$-surgeries on $\Or^\prime$.

We claim that $\Or^\prime$ is a weak graph orbifold. To see this, note
that the union
$W$
of $\sigma^{-1}(R^\prime)$ and the $2(J+J^\prime)$ 
discal $3$-orbifolds is a disjoint union of solid-toric $3$-orbifolds
in $\Or^\prime$.  The metric completion of 
$|\Or^\prime| - |W|$
in $|\Or^\prime|$ 
inherits a weak graph orbifold structure from $\Or$.
This shows that $\Or^\prime$ is a weak graph orbifold. \\ \\
Step 2 : 
For each connected component of $B^\prime$
of type (1)-(4) above,
the corresponding component of $\Or^\prime$ is the result of gluing
two solid-toric orbifolds:  one being the Seifert orbifold over that
component of $B^\prime$, and the other one being a connected component
of $W$.
By Lemma \ref{solidtoricorbigluing}, 
this component of $\Or^\prime$ is Seifert-fibered and
hence is a strong graph orbifold.
We discard all such components of $\Or^\prime$ and let
$\widehat{\Or}$ denote what's left.

Turning to the remaining possibilities,
an annular component $P$ of $B^\prime$
has a boundary consisting of two
circles $C_1$ and $C_2$, of which exactly one, say $C_1$,
does not intersect $R$.
In $\widehat{\Or}$, the preimage
$\pi^{-1}(C_1)$ is attached to the union of $\pi^{-1}(P)$ with
a solid-toric orbifold diffeomorphic to
$S^1 \times D^2(r)$.
This union is itself diffeomorphic to 
$S^1 \times D^2(r)$, since $\pi^{-1}(P)$ is diffeomorphic
to $S^1 \times S^1 \times I$.

Finally, if a 
component $P$ of $B^\prime$ is diffeomorphic to
$(S^1//\Z_2) \times I$ then
$\partial |P|$
consists
of a circle with two reflector components and two nonreflector 
components.
Exactly one of the nonreflector components, say $C_1$,
does not intersect $ R$.
In $\widehat{\Or}$, the preimage
$\pi^{-1}(C_1)$ is attached to the union of $\pi^{-1}(P)$ with
a solid-toric orbifold diffeomorphic to 
$S^1 \times_{\Z_2} D^2(r)$.
This union is itself diffeomorphic to
$S^1 \times_{\Z_2} D^2(r)$, since $\pi^{-1}(P)$ is diffeomorphic
to $(S^1 \times_{\Z_2} S^1) \times I$.

In this way, we see that $\widehat{\Or}$ has a weak graph orbifold
decomposition with $(n-1)$ Euclidean $2$-orbifolds, since
$E_k$ has disappeared.  Since $\Or$ was a counterexample to
Proposition \ref{propA.25}, it follows that $\widehat{\Or}$ is
also a counterxample.  This contradicts the definition of $n$ and
so proves the proposition.
\end{proof}

\subsection{Weak graph orbifolds with a compressible boundary component}
\label{subsectA.4}

\mbox{}

\begin{lemma}
\label{lem-weakgraphcompressible}
Suppose that $\Or$ is a weak graph orbifold, and that
$C\subset\D\Or$ is a compressible
boundary component.  Then $\Or$ arises from $0$-surgery on a disjoint collection
$\Or_0\sqcup\ldots \sqcup\Or_n$, where: 
\begin{itemize}
\item $\Or_i$ is a strong graph manifold for all $i$.
\item
$\D \Or_0= C$.
\item $\Or_0$ is 
a solid-toric $3$-orbifold.
\end{itemize}
\end{lemma}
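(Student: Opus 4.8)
The plan is to adapt the proof of Proposition \ref{propA.25}, run in reverse, to analyze what a compressible boundary component forces. First I would set up notation: since $\Or$ is a weak graph orbifold with a collection $\{E_i\}$ of embedded orientable Euclidean $2$-orbifolds cutting it into Seifert pieces, and $C \subset \partial \Or$ is compressible, I would choose a compressing discal $2$-orbifold $D$ for $C$ intersecting $\bigcup_i E_i$ transversely in the regular part (using Lemma \ref{lemmaA.24}), with the number of components of $D \cap \bigcup_i E_i$ minimal among all compressing discal $2$-orbifolds for all compressible boundary components of $\Or$. As in the proof of Proposition \ref{propA.25}, an innermost circle $C' = \partial D'$ lies in some $E_k$, and either $D' \cup D''$ (for $D'' \subset E_k$) is an essential spherical $2$-orbifold $S^2(r,r)$ in a Seifert piece --- forcing that piece and hence the relevant component of $\Or$ to be a finite isometric quotient of $S^1 \times S^2$ by Proposition \ref{propA.23}, which is itself of the desired solid-toric-plus-strong-graph form --- or $D'$ is a genuine compressing discal $2$-orbifold for an adjacent Seifert bundle $U'$, which by \cite[Lemma 2.47]{CHK} must be solid-toric, i.e. $S^1 \times D^2(r)$ or $S^1 \times_{\Z_2} D^2(r)$.

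Next I would trace through the five ``Steps'' of the second operation in the proof of Proposition \ref{propA.25}, now paying attention to which sub-orbifold $\Or_0$ contains $C$. In each Step, after cutting along a spherical suborbifold $X$ ($S^2(r,r)$ or $S^2(2,2,r)$) built from $\sigma^{-1}(\gamma(0))$, $\pi^{-1}(\gamma)$ and possibly $\sigma^{-1}(\gamma(1))$, one component $Y_i$ or the solid-toric piece $\sigma^{-1}(R_i) \cup D^3_i(\ldots)$ is precisely the part carrying the boundary $C$. The key observation is that the compression of $C$ happens \emph{inside} the solid-toric Seifert bundle glued along $E_k$, so after iterating the reduction, the component carrying $C$ collapses down to a solid-toric $3$-orbifold $\Or_0$ with $\partial \Or_0 = C$; all other components, which by Proposition \ref{propA.25} are $0$-surgeries on strong graph orbifolds, can be replaced (using Proposition \ref{propA.25} again, component by component) by honest strong graph orbifolds $\Or_1, \ldots, \Or_n$. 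I would need the finiteness/well-ordering argument from the end of Proposition \ref{propA.25} --- the bound on the number of Euclidean $2$-suborbifolds via \cite[Theorem 3.8]{BMP} --- to guarantee the process terminates.

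The main obstacle will be the bookkeeping to ensure the \emph{same} boundary component $C$ is carried through every reduction step without being altered, and that the terminal solid-toric piece is attached to the rest via a $0$-surgery rather than some more complicated gluing. Concretely, I must check in Steps 1--4 that when $\Or^\prime$ is cut as $\Or_1 \#_{X} \Or_2$ (or $\Or_1$ with $I \times X$ added, or $0$-surgery on $\Or_1$), the boundary $C$ lies entirely in one factor, and that factor remains a weak graph orbifold with $C$ still compressible --- so the induction on the number of Euclidean $2$-orbifolds applies. In Step 5, I must verify that the various $S^3//\Gamma$-type pieces produced are disposed of correctly and that the surviving $S^1 \times D^2(r)$ (or its $\Z_2$-quotient) is exactly $\Or_0$ with $\partial \Or_0 = C$; here the phrase ``after possibly replacing $C$ with a different component of $\partial \Or$'' in Proposition \ref{prop10.5} suggests one should be careful, but for the present lemma $C$ is fixed by hypothesis, so I would just record that the compression forces $\Or_0$ to be solid-toric directly. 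Once the structure of $\Or_0$ is pinned down, the remaining components are handled verbatim by Proposition \ref{propA.25}, completing the proof.
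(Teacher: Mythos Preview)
Your approach takes a genuinely different and much more laborious route than the paper's.  The paper does \emph{not} re-run the machinery of Proposition~\ref{propA.25} while tracking $C$.  Instead it applies Proposition~\ref{propA.25} once, as a black box, to write $\Or$ as $0$-surgeries on strong graph orbifolds $\Or_0\sqcup\ldots\sqcup\Or_n$ with $C\subset\partial\Or_0$.  Then two short innermost-circle arguments finish the job: first the compressing discal orbifold $Z$ is made disjoint from the spherical surgery $2$-orbifolds $\{S_j\}$ (any circle in a spherical $2$-orbifold bounds a discal orbifold on one side, and the no-bad-suborbifold hypothesis forces the swap to preserve the discal type of $Z$); then $Z$ is made disjoint from the Euclidean $2$-orbifolds $\{E_i\}$ in the strong graph structure of $\Or_0$, which now \emph{are incompressible}.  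Thus $Z$ sits in a single Seifert piece $U\subset\Or_0$, which by \cite[Lemma~2.47]{CHK} is solid-toric; since a solid-toric orbifold has exactly one boundary component, $\partial U=C$ and hence $U=\Or_0$.

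Your plan has a real gap at the ``key observation.''  The innermost disc $D'\subset D$ lies in some solid-toric piece $U'$ adjacent to $E_k$, but $\partial D\subset C$ may lie in an entirely different Seifert piece, possibly many $E_i$'s away.  Running Steps~1--5 of the second operation simplifies the structure near $U'$ and $E_k$, not near $C$; nothing you wrote explains why the component carrying $C$ is the one that ``collapses down.''  To make your approach work you would have to show, after each cut along a spherical $X$, that $C$ remains compressible in the piece containing it (another innermost-circle argument against $X$), and then observe that when the process of Proposition~\ref{propA.25} terminates, that piece is a strong graph orbifold --- at which point you still need exactly the paper's argument (using incompressibility of the $E_i$'s) to conclude it is solid-toric.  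So your route both reproves Proposition~\ref{propA.25} and, at the end, needs the paper's step anyway.  The paper's insight is that once you have the strong graph decomposition in hand, incompressibility of the $E_i$'s makes the innermost-circle argument work directly, eliminating all the bookkeeping.  (A minor point: your appeal to Proposition~\ref{propA.23} in the ``essential $S^2(r,r)$'' case is misplaced --- that proposition concerns strong graph orbifolds, and in any event the Seifert piece in question has boundary, so it is irreducible and that case simply does not arise.)
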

\begin{proof}
Let $Z$ be a compressing discal orbifold for $C$.   

By Proposition \ref{propA.25}  we know that $\Or$ comes from $0$-surgery on a collection
$\Or_0,\ldots,\Or_n$ of  strong graph orbifolds, where $\D\Or_0$ contains $C$.
Consider a collection $\mathcal{S}=\{S_1,\ldots,S_k\}\subset\Or$ of spherical  $2$-suborbifolds
associated with such a $0$-surgery description of $\Or$.   We may assume
that  $Z$ is
transverse  to $\mathcal{S}$, and that the number of connected components in the 
intersection $Z\cap \mathcal{S}$ is minimal among such compressing discal orbifolds.   
Reasoning as in the proof of Lemma \ref{lemmaA.24}, we conclude
that $Z$ is disjoint from $\mathcal{S}$.    Therefore after 
splitting $\Or$ open along
$\mathcal{S}$ and filling in the boundary components to undo the $0$-surgeries,
we get that $Z$ lies in $\Or_0$.  
Similar reasoning shows that $Z$ must lie in a single Seifert component $U$ of
$\Or_0$.  
An orientable Seifert $3$-orbifold with a compressible boundary component
must be a solid-toric $3$-orbifold \cite[Lemma 2.47]{CHK}. The
lemma follows.
\end{proof}

\end{document}